\documentclass[reqno,english,11pt]{amsart}
\usepackage[margin=1in]{geometry}
\usepackage[latin9]{inputenc}
\synctex=-1

\usepackage{textcomp}
\usepackage{amstext}
\usepackage{amsthm}
\usepackage{amssymb}
\usepackage{hyperref}
\usepackage{cite}
\usepackage{url}
\usepackage{xcolor}
\makeatletter

\newcommand{\lyxmathsym}[1]{\ifmmode\begingroup\def\b@ld{bold}
  \text{\ifx\math@version\b@ld\bfseries\fi#1}\endgroup\else#1\fi}


\numberwithin{equation}{section}
\numberwithin{figure}{section}

\theoremstyle{plain}

\newtheorem{thm}{\protect\theoremname}[section]
  \theoremstyle{plain}
  \newtheorem{cor}[thm]{\protect\corollaryname}
  \theoremstyle{remark}
  \newtheorem{rem}[thm]{\protect\remarkname}
  \theoremstyle{plain}
  \newtheorem{lem}[thm]{\protect\lemmaname}
  \theoremstyle{plain}
  \newtheorem{prop}[thm]{\protect\propositionname}
\theoremstyle{definition}
\newtheorem{defn}[thm]{\protect\definitionname}
\theoremstyle{plain}
\makeatother



\def\wtF{\widetilde{{\mathcal{F}}}}

\def\jx{\langle x \rangle}
\def\jy{\langle y \rangle}
\def\jk{\langle k \rangle}

\def\wt{\widetilde}

\def\R{\mathbb{R}}

\def\K{\mathcal{K}}

\usepackage{babel}
\providecommand{\corollaryname}{Corollary}
\providecommand{\definitionname}{Definition}
\providecommand{\lemmaname}{Lemma}
\providecommand{\remarkname}{Remark}
\providecommand{\theoremname}{Theorem}
\providecommand{\propositionname}{Proposition}
\begin{document}
\title[Cubic NLS with soliton]{Long-time dynamics of small solutions to $1d$ cubic nonlinear Schr\"odinger
equations with a trapping potential}
\author[G.\,\,Chen]{Gong Chen}
\address{School of Mathematics, Georgia Institute of Technology, Atlanta, GA 30332-0160}
\email{gc@math.gatech.edu}
\date{June 17, 2021}
\begin{abstract}
In this paper, we analyze the long-time dynamics of small solutions
to the $1d$ cubic nonlinear Schr\"odinger equation (NLS)
with a trapping potential. We show that every small solution will
decompose into a small solitary wave and a radiation term which exhibits the
modified scattering.
Our analysis also establishes the long-time
behavior of solutions to a perturbation of the integrable cubic
NLS with the appearance of solitons.
\end{abstract}

\maketitle
	\setcounter{tocdepth}{1}

\section{Introduction}

In this paper, we study the long-time behavior of small-norm solutions
to the cubic nonlinear Schr\"odinger equation (NLS) in $1$-d,
\begin{equation}
i\partial_{t}u-\partial_{xx}u+Vu=\lambda\left|u\right|^{2}u,\,\,\lambda=\pm1,\qquad u(t=0,x)=u_{0}(x),\label{eq:NLS}
\end{equation}
perturbed by a generic potential $V$ such that the linear Schr\"odinger operator $-\partial_{xx}+V$ has  one negative eigenvalue.
Since we are only interested in small solutions, the sign of $\lambda$
does not make any difference so we set $\lambda=1$.


%
The main goals of this paper are the following two points:
\begin{itemize}
    \item First, we continue to explore space-time resonance analysis with distorted Fourier transforms, while incorporating dispersive analysis to handle the influence of the discrete spectrum of the linear operator.
    \item Second, we study asymptotics of solutions to a perturbation of an integrable system with the appearance of solitons.
\end{itemize}
The key features of the current work are that  we obtain \emph{global} estimates and a \emph{precise} description of the solution with the appearance of solitons under slow dispersion and critical scattering.  

\subsection{Background and previous results}
The study on  the dynamics of solutions to the nonlinear Schr\"odinger equation with a potential has a long history in the mathematical physics.  Without trying to give a complete list here, we refer to the book by Cazenave \cite{Ca},  and papers by Bronski-Jerrard \cite{BJ},  Fr\"ohlich-Gustafson-Jonsson-Sigal \cite{FGJS1}, Holmer-Zworski \cite{HZ}, 
Soffer-Weinstein \cite{SW1,SW2,SW3}, Tsai-Yau \cite{TY} and references therein. 


Focusing on the $1$d model \eqref{eq:NLS} above, we recall that sufficiently regular solutions of \eqref{eq:NLS} conserve the $L^{2}$ norm
\[M(u) := \int\left|u\right|^{2}\,dx \]
and the total energy (Hamiltonian):
\[
H\left(u\right) := \int\big(\frac{1}{2}\left|\partial_xu\right|^{2}+\frac{1}{2}V\left|u\right|^{2}-\frac{1}{4}\left|u\right|^{4}\big) \,dx.
\]
The Cauchy problem for \eqref{eq:NLS} with $V=0$ - we will refer to this as the ``free'' or ``flat'' case -
is globally well-posed in $L^{2}$, see for example
Cazenave-Weissler \cite{CW}. 
Our main interests are the global-in-time bounds and asymptotic behavior as $|t|\rightarrow \infty$. 
The main feature of the cubic nonlinearity is its criticality with respect to scattering:
linear solutions of the Schr\"odinger equation decay at best like $|t|^{-1/2}$ in $L^\infty_x$, 
so that, when evaluating the nonlinearity on linear solutions, one see that $|u|^{2}u \sim |t|^{-1}u$;
the non-integrability of $|t|^{-1}$ 
results in a ``Coulomb''-type contribution of the nonlinear terms.

\subsubsection{Long-time asymptotics}
In the case $V=0$ the problem is well understood. 
Solutions of \eqref{eq:NLS} with $V=0$ and initial data $u|_{t=0}\in H^{1}\cap L^{2}(x^{2}dx)$
(i.e., bounded energy and variance) are known to exhibit the modified scattering as time goes to infinity:
they decay at the same rate of linear solutions
but their asymptotic behavior differs from linear solutions by a logarithmic phase correction. 
Using the complete integrability this was proven in the seminal work of Deift-Zhou \cite{DZ}; see also \cite{DZ2} on nonlinear perturbations of the defocusing cubic NLS. 
Without making use of the complete integrability, and
restricting the analysis to small solutions, proofs of modified scattering were given by Ozawa \cite{O},  Hayashi-Naumkin \cite{HN}, Lindblad-Soffer \cite{LS},
Kato-Pusateri \cite{KP} and Ifrim-Tataru \cite{IT}.


Recently, the results above for small solutions have been extended to the full problem with potential \eqref{eq:NLS}
in the works of Naumkin \cite{N}, Delort \cite{Del} and Germain-Pusateri-Rousset \cite{GPR}.
These works treat 
potentials of sufficient regularity and decay, and establish modified scattering results similar to  those obtained by the above mentioned papers in the flat case.
The  work of Masaki-Murphy-Segata \cite{MMS} treats the special case of a delta potential; 
compared to the works cited above, in \cite{MMS} the potential has no regularity
but, on the other hand, it has the advantage of being explicitly calculable. In the recent work of Chen-Pusateri \cite{CP}, after exploring refined dispersive estimates and bounds for pseudo-differential operators, the condition on the potential is relaxed to be in some weighted $L^1$ spaces. This in particular recovers the $\delta$ potential case by a limiting argument. In all the works mentioned above, the potential is assumed to be generic, or some symmetry conditions are imposed if the potential is not generic. More importantly, in these works, the underlying linear Schr\"odinger operators are assumed to have \emph{no} eigenvalues.  In a recent work by
Lindblad-Luhrmann-Schlag-Soffer \cite{LLSS}, the authors performed the nonlinear analysis in the Klein-Gordon problem that allows the linear operator to have a zero resonance. From a different perspective, with virial type arguments, in the work by Mart\'inez \cite{MM}, certain decay estimates on compact regions are established for  odd solutions with small\footnote{For the  defocusing cubic NLS in the free case or perturbed by a small potential, this smallness condition can be dropped. For details, see \cite[Theorem 1.1] {MM}. } energy norms to the NLS both in the free case and perturbed by an even potential without assumptions on the spectrum.


In this paper, the potential is assumed to have one negative eigenvalue. The appearance of the eigenfunction associated with this negative eigenvalue will produce solitary waves in the nonlinear setting. So in the long-time asymptotics in this setting, one has to take solitary waves into account.   This in particular can be regarded as a special case of the perturbation of the integrable cubic NLS with the appearance of solitons. As mentioned above, the defocusing nonlinear perturbation of the defocusing NLS is analyzed in Deift-Zhou \cite{DZ2}. One basic intuition in that setting is that higher order nonlinearities will enjoy better decay rates. Superficially, one expects that the total deviation from the integrable structure introduced by this higher order perturbation is controllable, whence the global solution will be close to the solution to the cubic NLS.  But the appearance of solitons will destroy the decay of solutions in the focusing problem. This is significantly different from the defocusing problem.  The perturbation problem with the appearance of solitons is also mentioned in the survey by Deift, see Problem 7 in \cite{Dei}. Although the model considered here is a linear perturbation, we believe that this will  shed  light on the nonlinear settings.

\subsubsection{Stability of solitons}

The literature on the stability solitons is extensive and without trying to be exhaustive, we refer
to the survey and the monograph by  Tao \cite{Tao,Tao2}, and references therein.


The main focus in this paper is the asymptotic stability of solitons. For the NLS problem, in higher dimensions, the radiation terms always have better decay rates,  so there are stronger tools can be used, for example, integrable  dispersive decays and Strichartz estimates.  In $3$-d, small solutions to the model \eqref{eq:NLS} was studied in Gustafson-Nakanishi-Tsai\cite{GNT} using Strichartz estimates in the energy space. For more developments on the dynamics of solutions in this model, see the work of Nakanishi \cite{N2} and references therein.  We also refer Soffer-Weinstein \cite{SW2,SW3} for the study of the asymptotic stability of solitons bifurcated from eigenfunctions associated with the linear operator. For the dynamics of solutions to the NLS with a potential which has  multiple negative eigenvalues, see, for example,  works of Soffer-Weinstein  \cite{SW1} Tsai-Yau \cite{TY} and Tsai \cite{Ts}. As for the study of the asymptotic stability of solitons to the purely power-type NLS, we refer to Beceanu \cite{Bec1}, Cuccagna \cite{Cu}, Nakanishi-Schlag \cite{NS},  Schlag \cite{Sch1} and references therein. Finally, we also refer to the surveys by Cuccagna \cite{Cu1}, Cuccagan-Maeda \cite{CuMa}, Schlag \cite{Sch} and Weinstein \cite{Wein1} for more complete pictures on the long-time dynamics and the asymptotic stability of solitons for the NLS.


Focusing on the one-dimension NLS, the asymptotic stability of solitons with supercritical nonlinearties, we refer to the works by Buslaev-Perelman \cite{BP,BP2}, Buslaev-Sulem \cite{BS} Krieger-Schlag \cite{KS}, Mizumachi \cite{Mi} and  Masaki-Murphy-Segata\cite{MMS2} in various different settings. The key point is that higher power nonlinearities always  allow us to bootstrap better dispersive decay rates and smoothing estimates.  Stability problems with low power nonlinearities recently have attracted a lot of attentions. The weak power nonlinearities always result in the orbital stability of solitons, see for example Weinstein \cite{Wein},  but to establish the asymptotic stability is much harder due to the slow decay rate in time of the error term. Using the integrability, the asymptotic stability for solitons of the flat cubic NLS was proved by Cuccagna-Pelinovsky \cite{CuP}. With techniques from the compete integrability, one can actually obtain stronger results on the soliton resolution which implies the  asymptotic stability. We refer to the introductions  of Borghese-Jenkins-McLaughlin \cite{BJM16} and  Chen-Liu-Lu \cite{CLL} for  more comprehensive surveys.  To handle the asymptotic stability under weak nonlinear power settings using PDE techniques is a very challenging problem. 
In the context of Klein-Gordon problems, using virial estimates,  Kowalczyk-Martel-Mu\~noz \cite{KMM,KMM1} successfully shown the asymptotic stability of kinks and solitons under some symmetry assumptions. Also see the recent extension to the moving kinks setting by  Kowalczyk-Martel-Mu\~noz-Van Den Bosch \cite{KMMV}.  Combing virial estimates and the commutator method, Cuccagan-Maeda \cite{CuM} shown the asymptotic stability of small solitons for the cubic NLS with a delta trapping potential. Of course  this list is far from being exhaustive. We also refer to references  from the works cited above for more details.  In these works, the radiation terms are shown to decay in some localized norms or in the integral sense, i.e., the time integrals of the radiation terms measured in some localized norms are finite. In all papers mentioned above, the analysis is carried out in the natural translation invariant energy space, however no explicit decay rates nor descriptions of radiation terms are given.\footnote{For the critical and subcritical scattering problems, in the energy space, one might not expect to derive the precise decay rate.} 
 Our main interest in this paper is establish  the  asymptotic stability with  detailed descriptions of the radiation terms after paying the price of  weights for the initial data. Unlike the supercritical scattering problem, due to the criticality of the nonlinearity in this problem, the long-time asymptotics of the radiation term is nonlinear.  The weights for the initial data are natural in order to capture the modified scattering phenomenon.  To obtain the detailed descriptions, one has to use many refined linear estimates.  For the mKdV equation, the asymptotic stability of the soliton  with a detailed description of the radiation term was established in Germain-Pusateri-Rousset \cite{GPR2}. Unlike the NLS problem, the key feature of the mKdV model is that the dynamics of the soliton and the radiation are basically decoupled. Recently, under symmetry assumptions, the full description of the radiation term from the  asymptotic stability of some kinks whose associated linearized operators have no eigenvalues nor resonances, in the double sine-Gordon model was given by Germain-Pusateri \cite{GP}. We also refer to Delort-Masmoudi \cite{DM} for the results on long-time dispersive estimates of the perturbation around a kink in the $\phi^4$ model.


The goal of this paper is to continue  exploring the space-time resonance analysis, distorted Fourier transforms, normal form analysis (integration by parts in time), modulations and dispersive estimates with the influence of the negative eigenvalue associated with linear operator. In our current setting, the inhomogeneous terms for the equation of the radiation term consist of first order perturbations, quadratic terms and the cubic term.
The cubic term and its structure are analyzed carefully in Chen-Pusateri \cite{CP}. In this paper, to obtain the full detailed description of solutions, the most delicate points  are to analyze the quadratic terms and first order perturbations  given by solitary waves. To deal with the first order perturbations
and quadratic terms, we explore homogeneous and inhomogeneous smoothing estimates which resemble Kato local smoothing estimates. Some of these estimates also appear in Mizumachi \cite{Mi} and Krieger-Nakanishi-Schlag \cite{KNS} for supercritical problems.  In this cubic problem, the decay rate given by the radiation is slow, we need more refined smoothing estimates. Moreover, to overcome the slow decay in lower order perturbations, we also need to employ the Fourier transform in time and several integration by parts in time. 

\subsection{Main result}

In this subsection, we state the main result in this paper. We begin
with our assumption on potentials and then introduce some basic notations.

\subsubsection*{Assumptions on potentials:}

For the Schr\"odinger operator, we assume that  $V$ is smooth,  decays
exponentially such that
\begin{equation}
H=-\partial_{xx}+V\label{eq:schOp}
\end{equation}
has only one negative eigenvalue $-\rho^{2}<0$ with the associated eigenfunction
$\phi$. In other words,
\begin{equation}
\left(-\partial_{xx}+V\right)\phi=-\rho^{2}\phi.\label{eq:negeig}
\end{equation}
In particular,  the potential is generic, i.e., there is no resonance at $0$, see Definition \ref{def:generic}. 
\begin{rem}
We remark that here the regularity and the decay conditions imposed
on the potential are just for the sake of convenience. The potential
in some weighted $L^{1}$ spaces would be enough. As the limiting case,
a delta potential can also work. The essential point here is the generic
condition on the absence of the resonance, and there is only one negative
eigenvalue. The generic condition will ensure all the dispersive estimates,
pointwise decay, smoothing estimates, and the boundedness of wave operators. 
\end{rem}

Throughout this paper, we denote $P_{c}$ and $P_d$ as the projections on the continuous
spectrum  and the discrete spectrum of $H$ respectively.


\subsubsection{Solitary waves}
Under the assumptions on the potential, there exists a family of
small nonlinear bound states $Q=Q\left[z\right]$  parameterized   by the small parameter 
\[
z=\left(\phi,Q\right)\in\mathbb{C}
\]
where we denote the inner product in $L^{2}$ by
\[
\left(a,b\right):=\int\overline{a}b\,dx.
\]
The small soliton $Q[z]$ satisfies
\[
Q\left[z\right]=z\phi+h(z)
\]
such that $\left\Vert h(z) \right\Vert _{H^{2}\bigcap W^{1,1}}\lesssim|z|^2,\,h\left(z\right)\bot\phi$
and solves
\begin{equation}
\left(-\partial_{xx}+V\right)Q\left[z\right]-\left|Q\left[z\right]\right|^{2}Q\left[z\right]=E\left[z\right]Q\left[z\right]\label{eq:NLEj}
\end{equation}
with
\[
E\left[z\right]=-\rho^{2}+o\left(z\right)\in\mathbb{R}.
\]
See Lemma \ref{lem:NLB} for more details. Notice that the gauge co-variance
is inherited by $Q$. More precisely, we have
\begin{equation}\label{eq:covariance}
Q\left[ze^{i\alpha}\right]=Q\left[z\right]e^{i\alpha},\,E\left[z\right]=E\left[\left|z\right|\right].
\end{equation}
Considering the NLS \eqref{eq:NLS}, the nonlinear bound state $Q$ gives raise to a exact solitary wave solution,
\[
u=e^{iE[z]t}Q[z].
\]
Note that $Q\left[z\right]$ is differentiable with respect to $z$
if we regard $z$ as a real vector
\[
z=a+ib\longleftrightarrow\left(a,b\right)\in\mathbb{R}^{2}.
\]
We will denote the $z-$derivatives by
\begin{equation}
\text{D}_{1}Q\left[z\right]:=\frac{\partial}{\partial a}Q\left[z\right],\ \\ \ \text{D}_{2}Q\left[z\right]:=\frac{\partial}{\partial b}Q\left[z\right].\label{eq:Dj}
\end{equation}
Then we use the notation
\[
\text{D}Q\left[z\right]:\,\mathbb{C\rightarrow\mathbb{C}}
\]
to denote the Jacobian matrix regarded as an $\mathbb{R}-$linear
map on $\mathbb{C}$
\begin{equation}
\text{D}Q\left[z\right]w:=\text{D}_{1}Q\left[z\right]\Re w+i\text{D}_{2}Q\left[z\right]\Im w.\label{eq:Dj2}
\end{equation}
The gauge co-variance of $Q\left[z\right]$ implies
\begin{equation}
\text{D}Q\left[z\right]iz=iQ\left[z\right].\label{eq:Dj2I}
\end{equation}
Indeed, one can differentiate the first relation in \eqref{eq:covariance} with respect to $\alpha$ and evaluate at $\alpha=0$ then the desired relation follows.
The relations \eqref{eq:Dj2} and \eqref{eq:Dj2I} will be used when we
analyze the modulation equations.

\subsubsection{Linearization and spectrum}

Consider a small-norm solution to the NLS
\[
iu_{t}-\partial_{xx}u+Vu-\left|u\right|^{2}u=0.
\]
With some time-dependent soliton parameter $z(t)$, it is natural to decompose the solution as
\begin{equation}
u=Q\left[z\left(t\right)\right]+\eta\left(t\right)\label{eq:1decomp}
\end{equation}
and obtain the equation for $\eta$:
\[
i\eta_{t}+\mathrm{H}\left[z\right]\eta-E\left[z\right]Q\left[z\right]-i\text{D}Q\left[z\right]\dot{z}+F_{2}\left(z,\eta\right)=0
\]
where
\begin{equation}\label{eq:mathrmH}
\mathrm{H}\left[z\right]\eta=\left(-\partial_{xx}+V\right)\eta-2\left|Q\left[z\right]\right|^{2}\eta-\left(Q\left[z\right]\right)^{2}\bar{\eta}
\end{equation}
and $F_{2}\left(z,\eta\right)$ is the collection of terms of quadratic
or higher in $\eta$. 
\begin{rem}
In this paper, we choose the scalar formulation of our equation since
we are working on small solitons and all the spectral analysis are
closely tied to the scalar Schr\"odinger operator $-\partial_{xx}+V$.
\end{rem}

Now we can define the continuous spectral subspace with respect to
$\mathrm{H}\left[z\right]$.
\begin{defn}[Continuous spectral subspace]
\label{def:Conti}The continuous spectral
subspace $\mathcal{H}_{c}\left[z\right]$ with respect to $\mathrm{H}\left[z\right]$
is defined as
\[
\mathcal{H}_{c}\left[z\right]:=\left\{ \eta\in L^{2}:\,\left\langle i\eta,\text{D}_{1}Q\left[z\right]\right\rangle =\left\langle i\eta,\text{D}_{2}Q\left[z\right]\right\rangle =0\right\}
\]
where the inner product is given by
\begin{equation}
\left\langle a,b\right\rangle :=\Re\left\langle a,b\right\rangle =\int\Re a\Re b+\Im a\Im b\,dx.\label{eq:inner}
\end{equation}
\end{defn}

Notice that $\mathcal{H}_{c}\left[z\right]$ in invariant with respect
to $i\left(\mathrm{H}\left[z\right]-E\left[z\right]\right)$ by the
relations $\text{D}Q\left[z\right]iz=iQ\left[z\right]$ and 
\[
\left(\mathrm{H}\left[z\right]-E\left[z\right]\right)\text{D}Q\left[z\right]=\left(\text{D}E\left[z\right]\right)Q\left[z\right]
\]which is obtained by differentiating the nonlinear eigenvalue problem
\eqref{eq:NLEj}. 
Therefore, restricting onto $\mathcal{H}_{c}\left[z\right]$ removes
the non-decaying  solution to the linear equation
\[
i\eta_{t}+\left(\mathrm{H}\left[z\right]\eta-E\left[z\right]\right)\eta=0
\]
for fixed $z$.

At the first glance, the decomposition \eqref{eq:1decomp} is not unique.
We will choose the unique decomposition to ensure the following orthogonal
conditions
\[
\left\langle i\eta,\text{D}_{1}Q\left[z\right]\right\rangle =\left\langle i\eta,\text{D}_{2}Q\left[z\right]\right\rangle =0
\]
such that $\eta\left(t\right)$ indeed is dispersive. Note that the
linearization destroys gauge invariance, the linearized operator $\mathrm{H}\left[z\right]$
is not complex-linear. It is, however, symmetric if we regard $\mathbb{C}$
as $\mathbb{R}^{2}$ and use the reduced inner product \eqref{eq:inner}.  We remark that in many literature, matrix forms of the linearized
operators are commonly used, like for example Beceanu \cite{Bec1}
and Schlag \cite{Sch1}.

\subsubsection{Main theorem}

With preparations above, we can state the main theorem on the long-time
behavior the small-norm solutions to \eqref{eq:NLS}. 
\begin{thm}
\label{thm:AC}
There exists $0<\epsilon_{0}\ll 1$ such that  every solution $u$ to the equation
\begin{equation}
i\partial_{t}u-\partial_{xx}u+Vu=\lambda\left|u\right|^{2}u\label{eq:eqthm1}
\end{equation}
 with sufficiently small initial data
\begin{equation}\label{eq:smalldata}
    \left\Vert u_{0}\right\Vert _{H^{1}}+\left\Vert xu_{0}\right\Vert _{L^{2}}\lesssim\epsilon\ll1,\,\epsilon\leq \epsilon_{0}
\end{equation}
can be uniquely decomposed as
\begin{equation}
u=Q\left(x,z\left(t\right)\right)+\eta\left(t\right)\label{eq:decomp}
\end{equation}
with differentiable $z\left(t\right)\in\mathbb{C}$ such that
\[
\left\langle i\eta(t),\mathrm{D}_{1}Q\left[z(t)\right]\right\rangle =\left\langle i\eta(t),\mathrm{D}_{2}Q\left[z\right(t)]\right\rangle =0.
\]
The radiation term $\eta$ satisfies the sharp decay rate
\begin{align}\label{main1fdecay}
\left\Vert \eta(t)\right\Vert_{L^\infty_x}
  \lesssim \frac{\epsilon}{\left(1+\left|t\right|\right)^{\frac{1}{2}}}. 
\end{align}
Moreover, if we define the profile of  $\eta$ as
\begin{align}
\label{main1prof}
f\left(t,x\right):=e^{-it\left(-\partial_{xx}+V\right)}\eta\left(t,x\right),
  \qquad \tilde{f}\left(t,k\right):=e^{-itk^{2}}\tilde{\eta}\left(t,k\right),
\end{align}
where $\tilde{g}=\widetilde{\mathcal{F}}g$ denotes the distorted Fourier transform, 
then 
\begin{align}\label{main1fbounds}
{\big\| \tilde{f}(t) \big\|}_{L_k^\infty}
  + (1+|t|)^{-\alpha} {\| \partial_{k}\tilde{f}(t) \|}_{L_k^2} \lesssim\epsilon
\end{align}
for some $\alpha=\alpha(V)>0$ small enough.

We  also have the following asymptotics:
there exists $W_{+\infty}\in L^{\infty}$ such that
\begin{align}\label{mainasy}
\left|\tilde{f}\left(t,k\right)\exp\left(\frac{i}{2}\int_{0}^{t}\left|\tilde{f}\left(s,k\right)\right|^{2}
  \frac{ds}{s+1}\right)-W_{+\infty}(k)\right| \lesssim \epsilon \, t^{-\beta}
\end{align}
for some $\beta\in(0,\alpha)$ as $t\rightarrow\infty$.
Combining \eqref{mainasy} above, and the linear asymptotic formula
\begin{align}\label{linearasy}
\eta\left(t,x\right)=\frac{e^{i\frac{x^{2}}{4t}}}{\sqrt{-2it}}
  \tilde{f}\left(t,-\frac{x}{2t}\right)+\mathcal{O}\left(t^{-\frac{1}{2}-\alpha}\right),\ \ t\gg1,
\end{align}
one can also derive the following asymptotic formula for $\eta$ in the physical space:
\begin{align}\label{nonlinearasy}
\eta\left(t,x\right)=\frac{e^{i\frac{x^{2}}{4t}}}{\sqrt{-2it}}
  \exp\left(-\frac{i}{2}\left|W_{+\infty}\left(-\frac{x}{2t}\right)\right|^{2}\log t\right)
  W_{+\infty}\left(-\frac{x}{2t}\right)+\mathcal{O}\left(t^{-\frac{1}{2}-\alpha}\right),\ \ t\gg1.
\end{align}
The soliton parameter $z(t)$ satisfies
\begin{equation}
\left\Vert z\right\Vert _{L_{t}^{\infty}}\lesssim\left\Vert u_{0}\right\Vert _{H^{1}},\qquad
\left|\dot{z}-iE\left[z\right]z\right|\left(t\right)\lesssim\epsilon^{2}t^{-2+2\alpha}.\label{eq:upper2}
\end{equation}
Furthermore, one can also obtain that there exists $z\left(\infty\right)\in\mathbb{C}$
such that 
\begin{equation}
z\left(t\right)\exp\left\{ i\int_{0}^{t}E\left[z\left(s\right)\right]\,ds\right\} \rightarrow z\left(\infty\right)\label{eq:paralim}.
\end{equation}
In particular, we also have
\begin{equation}
\left|\left(\left|z\left(\infty\right)\right|-\left|z\left(t\right)\right|\right)\right|\lesssim\epsilon^{2}t^{-1+2\alpha}.\label{eq:param}
\end{equation}
and if $z\left(\infty\right)\neq0$, then
\begin{equation}
\arg z\left(t\right)+\int_{0}^{t}E\left[z\left(s\right)\right]\,ds-\arg z\left(\infty\right)\rightarrow0\mod2\pi.\label{eq:paralim-2}
\end{equation}
\end{thm}

Before discussing the main ideas of this paper, let us give a few
comments on this result
\begin{rem}
We choose the complex variable $z\left(t\right)$ to trace the dynamics
of solitons since for the small solution, potentially, the solitary
part will go to $0$. In this situation,  if we try to use the real variables as modulation parameters
to describe the dynamics of the solitary wave, the argument will be degenerate since  the phase of a zero-size soliton is not well-defined.
\end{rem}
\begin{rem}
As mentioned in the introductory part, to capture the modified scattering phenomenon of the radiation term, \eqref{nonlinearasy}, the requirement of the weights here is natural since the $W_{+\infty}$ function is obtained via the pointwise behavior of the (distorted) Fourier transform of the solution. From the view of the Fourier duality, the weights in the physical space imply the regularity of the Fourier transform which gives the pointwise meaning of the Fourier transform via the Sobolev embedding. 
\end{rem}
\begin{rem}
The theorem above in particular implies the asymptotic completeness
of the nonlinear equation \eqref{eq:eqthm1} restricted onto the small-norm
solutions. It also implies the asymptotic stability of small solitons.
Note that in the asymptotic decomposition \eqref{eq:decomp}, both the
pieces are nonlinear.
\end{rem}

\begin{rem}
The model \eqref{eq:eqthm1} considered in the theorem above can be
regarded as a perturbation of the integrable cubic NLS in one dimension.
The method used in this paper can also be applied to other settings. For example, more
generally, we can treat
\[
i\partial_{t}u-\partial_{xx}u+Vu=\lambda a\left(x\right)\left|u\right|^{2}u+\left|u\right|^{p}u
\]
with $a\left(x\right)\rightarrow1$ sufficiently fast as $\left|x\right|\rightarrow\infty$ and $p>2$. 
\end{rem}

\begin{rem}
In this paper, in order to achieve the desired result, one of the key points is to study Jost functions in detail. To study the stability of solitary waves in higher dimensions for critical nonlinear models, one has to understand and study the refined structures the distorted Fourier transforms. For the asymptotics of small solutions of quadratic NLS in $3$d, we refer to Germain-Hani-Walsh \cite{GHS} and Pusateri-Soffer \cite{PS}. Smoothing estimates are also expected to be useful. For example, we refer to Mizumachi \cite{Miz2} for applications of smoothing estimates in $2$d supercritical problems.
\end{rem}

\subsection{Sketch of the ideas}

To finish the introduction, we sketch some difficulties, key steps
and main ideas in this paper.

\subsubsection{Modulation}

Consider a small-norm solution $u$ to the equation \eqref{eq:NLS}.
With the appearance of solitary waves, it is natural to decompose
the solution as
\begin{equation}
u=Q\left(x,z\left(t\right)\right)+\eta\left(t\right).\label{eq:decomp1}
\end{equation}
Here we make the parameter of the soliton time-dependent so that the
radiation term $\eta$ will be orthogonal to the generalized kernel
of the linearized operator around $Q\left(x,z(t)\right)$. 

For the modulation analysis, the implicit function theorem will give
us the initial decomposition. Then we differentiate the orthogonality
conditions to find the equations for $z\left(t\right)$. From the
modulation equations, one has
\begin{equation}
|\dot{z}(t)-iz(t)E\left[z(t)\right]|\lesssim\left\langle \phi,\eta^{2}\right\rangle  \lesssim t^{-2+2\alpha}\label{eq:mod1}
\end{equation}
with the bootstrap assumption on $\eta$ using the local improved decay, see \eqref{eq:introimpro}.

Typically, in the analysis of the asymptotic stability with modulation
parameters, the linearized operator is time-dependent. To invoke dispersive estimates, one has to find a reference operator which typically
is given by the linearized operator with parameters evaluated
at $t=\infty$, see for example Buslaev-Perelman \cite{BP,BP2}, Buslaev-Sulem
\cite{BS} and Krieger-Schlag \cite{KS}. To find this reference operator,
one always needs to integrate the modulation equation \eqref{eq:mod1}
twice. Unlike the problem with higher power nonlinearities in the
references mentioned above, in the cubic problem the best decay rate
one can get is $t^{-2+2\alpha}$ which is not sufficient to be integrated
twice. In our current setting, we can use the smallness of the solitons
to employ the scalar Schr\"odinger operator as the reference operator
to get rid of this difficulty.

\subsubsection{Spectrum and projections}

Plugging  the decomposition \eqref{eq:decomp1} into the equation \eqref{eq:NLS}
and using the equation for $Q$, \eqref{eq:NLEj}, we get the equation
for $\eta$:
\begin{align}
i\partial_{t}\eta-\partial_{xx}\eta+V\left(x\right)\eta & =2\left|Q\left[z\right]\right|^{2}\eta+\left(Q\left[z\right]\right)^{2}\bar{\eta}\label{eq:etaeqintro}\\
 & +E\left[z\right]Q\left[z\right]-i\text{D}Q\left[z\right]\dot{z}\nonumber \\
 & +\overline{Q\left[z\right]}\eta^{2}+2Q\left[z\right]\left|\eta\right|^{2}+\left|\eta\right|^{2}\eta.\nonumber 
\end{align}
From the modulation equation, $\eta$ is orthogonal to the time-dependent
linear operator
\[
\mathrm{H}\left[z\right]\eta=\left(-\partial_{xx}+V\right)\eta-2\left|Q\left[z\right]\right|^{2}\eta-\left(Q\left[z\right]\right)^{2}\bar{\eta}.
\]
We will use the scalar operator $H$, \eqref{eq:schOp}, as the reference
operator to perform distorted Fourier transforms and dispersive analysis.
Since $z\left(t\right)$ are small, the orthogonality conditions ensured
by the modulation equation will give $P_{c}\eta\sim\eta$ where $P_{c}$
is the projection onto the continuous spectrum of $H$, in the spaces
we are interested in. Note that in general for large solitons, we will
need dispersive estimates for time-dependent potentials and the matrix
formalism, see Beceanu \cite{Bec1}, Krieger-Schlag \cite{KS} and Schlag
\cite{Sch}.

More precisely, let $\phi$ be the eigenfunction of $H$ associated
with the negative eigenvalue $-\rho^{2}$. In the complex setting,
$\phi$ spans $1-D$ subspace $\left\{ \beta\phi,\,\beta\in\mathbb{C}\right\} $
which is a $2-D$ subspace if we use the real variables and the induced
inner product \eqref{eq:inner}. 
Since $\left|z\right|$ is small, then $\left\{ \beta\phi,\,\beta\in\mathbb{C}\right\} $
is a good approximation to the complement of  $\mathcal{H}_{c}\left[z\right]$.
Because intuitively, by Lemma \ref{lem:NLB}, we know
\[
\left|\text{D}_{1}Q\left[z\right]-\phi\right|+\left|\text{D}_{2}Q\left[z\right]-i\phi\right|
\lesssim 
\left|E[z]+\rho^2\right|\lesssim\left|z\right|.
\]
So roughly if we project away from the complex span of $\phi$, we
will roughly stay away from the generalized kernel of the linearized
operator $H[z]$ and vice versa. For more precise analysis see Lemma \ref{lem:Diff}.  But we point out that this comparison is time-dependent and there is no appropriate estimate for the time derivative of comparison. To understand the refined structure of this comparison for $\eta$, we find  some refined decomposition for $P_c \eta$.

\subsubsection{Analysis of the radiation term}
From the equation \eqref{eq:etaeqintro}, by the fact $Q\left[z\right]=iz\text{D}Q$,
after setting
\begin{equation}
\mathcal{Q}\left[z\right]=e^{-i\int_{0}^{t}E\left[z(s)\right]\,ds}Q\left[z\right]\label{eq:modQ-1}
\end{equation}
one has the following equation
\begin{align}
i\partial_{t}\eta-\partial_{xx}\eta+V\eta & =2\left|\mathcal{Q}\left[z\right]\right|^{2}\eta+\left(\mathcal{Q}\left[z\right]\right)^{2}e^{2i\int_{0}^{t}E\left[z(\sigma)\right]\,d\sigma}\bar{\eta}\label{eq:eta-1-1}\\
 & +\overline{Q\left[z\right]}\eta^{2}+2Q\left[z\right]\left|\eta\right|^{2}\nonumber \\
 & +\left|\eta\right|^{2}\eta\nonumber \\
 & +i\text{D}Q\left(\dot{z}-izE\left[z\right]\right)\nonumber \\
 & =:N_{1,1}+N_{1,2}+N_{2}+N_{3}+M=:F.\label{eq:inhomo-1-1}
\end{align}
The key point for the change \eqref{eq:modQ-1} is that
\begin{equation}
\partial_{t}\mathcal{Q}\left[z\left(t\right)\right]=e^{-i\int_{0}^{t}E\left[z(s)\right]\,ds}\text{D}Q\left[z\right]\left(\dot{z}\left(t\right)-iz\left(t\right)E\left[z\left(t\right)\right]\right)\label{eq:mathcalQ-1}
\end{equation}
for which we can apply the modulation equation.

By Duhamel formula, solving the equation for $\eta$ for $t=1$, one
has
\begin{equation}
\eta\left(t,x\right)=e^{iH\left(t-1\right)}\eta_{1}+\int_{1}^{t}e^{iH\left(t-s\right)}\left(F(s)\right)\,ds.\label{eq:duhameta-1}
\end{equation}
Define the profile for $\eta$ as
\begin{equation}
f\left(t\right):=e^{-iHt}P_c \eta\left(t\right).\label{eq:profief-1}
\end{equation}
Our first step  is to use the Fourier Transform adapted to $H$ -
the so-called ``Distorted Fourier Transform'' - to rewrite \eqref{eq:duhameta-1} in (distorted) Fourier space.
For the sake of this brief introduction, it suffices
to admit for the moment the existence of ``generalized plane waves'' $\mathcal{K}(x,\lambda)$ such that
one can define an $L^2$ unitary transformation $\wtF$ by
\begin{align}\label{tildeFint}
\wtF[f](\lambda) := \widetilde{f}(\lambda) := \int \overline{\mathcal{K}(x,\lambda)}f(x)\,dx,
  \quad \mbox{with} \quad \wtF^{-1}\left[\phi\right](x)=\int\mathcal{K}(x,\lambda)\phi(\lambda)\,d\lambda.
\end{align}
See \eqref{matK}, \eqref{psipm} and \eqref{defT},
for the precise definition of $\mathcal{K}(x,\lambda)$ and its relation
with the generalized eigenfunctions of $H$.
The distorted transform $\wtF$ diagonalizes the Schr\"odinger operator restricted on the continuous spectrum:
$(-\partial_{xx}+V)P_c=\wtF^{-1}\lambda^{2}\wtF$.

Given a solution $\eta$ of \eqref{eq:eta-1-1}, one can prove the basic linear estimate\footnote{Technically, this estimate holds after projecting onto the continuous spectrum $P_c \eta$. But as we discussed above, in our setting, one has $\eta\sim P_c \eta$.} 
\begin{align}\label{intlinest}
{\| \eta(t,\cdot) \|}_{L^\infty_x} \lesssim \frac{1}{|t|^{1/2}} {\| \wt{f}(t) \|}_{L^\infty_k}
  + \frac{1}{|t|^{3/4}} {\| \partial_k \wt{f}(t) \|}_{L^2_k}
\end{align}
which is the analogue of the standard linear estimate for the case $V=0$
(where one can replace ${\| \partial_k \wt{f}(t) \|}_{L^2}$ by 
a standard weighted norm ${\| xf(t) \|}_{L^2} = {\| Ju(t) \|}_{L^2}$, with $J = x+2it\partial_x$).
To obtain the sharp pointwise decay of $|t|^{-1/2}$ it then suffices to control 
$\wt{f}$ uniformly in $k$ and $t$ and the $L^2$-norm of $\partial_k \wt{f}(t)$
with a small growth in $t$. 
Both of these bounds are achieved by studying the equation in the distorted Fourier space.

Taking the distorted transform and the in terms of profile, we have
\begin{equation}\label{eq:introprof}
\tilde{f}\left(t,k\right)=\tilde{f}\left(1,k\right)+\int_{1}^{t}e^{-ik^{2}s}\tilde{F}\left(s,k\right)\,ds.
\end{equation}
To deal the cubic feature of the problem, we perform the space-time resonance
analysis as in Kato-Pusateri \cite{KP} in the free case, and Germain-Pusateri-Rousset
\cite{GPR} and Chen-Pusateri \cite{CP} in the perturbed settings.
The key point is to establish the following two estimates
\begin{equation}
\left\Vert \partial_{k}\tilde{f}\left(t,\cdot\right)\right\Vert _{L_{k}^{2}}\lesssim \epsilon t^{\alpha},\,\,\,\left\Vert \tilde{f}\left(t,\cdot\right)\right\Vert _{L_{k}^{\infty}}\lesssim\epsilon\label{eq:keyest}
\end{equation}for some small $\alpha>0$. 
We will run a bootstrap argument to prove the estimates above. Given the bootstrap assumption above, immediately, from the localized pointwise decay one has
\begin{align}\label{eq:introimpro}
\left\Vert \jx^{-2} \eta \right\Vert_{L^{\infty}_x}
  \lesssim |t|^{-1}{\big\| \tilde{f} \big\|}_{H^1_k}\lesssim \epsilon t^{-1+\alpha}.
\end{align}
To analyze the weighted estimate, i.e., the first estimate of \eqref{eq:keyest},
taking $\partial_{k}$ of the inhomogeneous term, we get two pieces
\begin{align}
\partial_{k}\left(\int_{1}^{t}e^{-ik^{2}s}\tilde{F}\left(s,k\right)\,ds\right) & =\int_{1}^{t}ikse^{-ik^{2}s}\tilde{F}\left(s,k\right)\,ds+\int_{1}^{t}e^{-ik^{2}s}\partial_{k}\tilde{F}\left(s,k\right)\,ds.\label{eq:partialkF}
\end{align}
The analysis for the cubic term $N_3$ here will be the same as Chen-Pusateri \cite{CP}.
The key point is to explore the structure of the nonlinear spectral
measure, the decomposition of the Jost functions and various dispersive
decay.

The crucial parts in this paper are to estimate the corresponding inhomogeneous
terms associated with the quadratic term $N_{2}$ and first order
perturbation $N_{1,1}+N_{1,2}$ from equation \eqref{eq:inhomo-1-1}.
With the localized coefficients, the second term on the RHS of \eqref{eq:partialkF}
is relatively easier to handle compared with the first term which
has an additional growth in $s$. So in this introduction, we focus on the first term.

\noindent\textbf{Analysis of quadratic terms.} Using Plancherel's theorem to switch back to the physical space, one
of key estimates to bound the first term on the RHS of \eqref{eq:partialkF}
is the inhomogeneous local smoothing estimate
\[
\left\Vert \int e^{-isH}P_{c}F\left(s\right)\,ds\right\Vert _{L_{x}^{2}}\lesssim\left\Vert \left\langle x\right\rangle F\right\Vert _{L_{x}^{1}L_{t}^{2}}
\]
which can be regarded as the dual version of the Kato smoothing estimate
or certain resolvent estimates. One also needs to use some pseudo-differential
operator bounds here. For the quadratic terms, the inhomogenous smoothing
estimate closes the bootstrapping for  the weighted estimate since the decay rate from the
decay of $\eta$, \eqref{eq:introimpro}, and the growth $s$ together give $s^{-2+2\alpha}s$
which is $L_{s}^{2}$ integrable via the smoothing estimate. Here
to deal with the weights, we use the boundedness of wave operators.

\smallskip

\noindent\textbf{Analysis of first order perturbations.} 
The most difficult part is the analysis of the first order perturbations.
Superficially, taking the additional growth $s$ into account, with
the improved local decay rate given by $\eta$, the first term from
\eqref{eq:partialkF} associated with first order perturbations will
result in some mild growth $s^{\alpha}$ even without the time integration.
To overcome these difficulties, we perform integration by parts in time.
Unlike more standard situations, for example, Shatah \cite{Sh} and
Germain-Pusateri \cite{GP},  integration
by parts in time and the equation will not give extra decay in time
since first order perturbations will only bring in extra smallness
of the coefficient instead of the extra time decay in other settings. Moreover, due to the influence of the additional phase shift in $N_{1,2}$ from \eqref{eq:eta-1-1},  in our problem, we need to perform integration by parts in time \emph{twice}, see \S \ref{subsubsec:firstorderweight} for full details.  To use the time derivatives from integration by parts above, we explore more on
smoothing estimates and the Fourier transform in time.  
This part is much more involved. The basic reason is that the comparison $\eta\sim P_c \eta$ is time-dependent which could not commute with the Fourier transform in time. And the orthogonality conditions for $\eta$ could not imply any orthogonality conditions for $\partial_t \eta$ and $\partial_{tt}\eta$. 

Here we sketch some ideas of the analysis of first order perturbations. From the modulation equation and its estimates \eqref{eq:mod1}, we know that $\mathcal{Q}[z(t)]$ converges to some $\mathcal{Q}[z(\infty)]$. We can rewrite the linear part of the equation \eqref{eq:eta-1-1} as
\begin{align*}
    i\partial_{t}\eta-\partial_{xx}\eta+V\eta & =2\left|\mathcal{Q}\left[z(\infty)\right]\right|^{2}\eta+\left(\mathcal{Q}\left[z(\infty)\right]\right)^{2}e^{2i\int_{0}^{t}E\left[z(\sigma)\right]\,d\sigma}\bar{\eta}\\
 & +2\left(\left|\mathcal{Q}\left[z\right]\right|^{2}-\left|\mathcal{Q}\left[z(\infty)\right]\right|\right)\eta
 +\left(\left(\mathcal{Q}\left[z\right]\right)^{2}-\left(\mathcal{Q}\left[z(\infty)\right]\right)^{2}\right)e^{2i\int_{0}^{t}E\left[z(\sigma)\right]\,d\sigma}\bar{\eta}.
\end{align*}
From the modulation equation and the decay of $\eta$, the last  line of the RHS above can be treated as quadratic terms.

Therefore  at the linear level, the equation of $\eta$ is given
by
\begin{equation}\label{eq:introlineareta}
    i\eta_{t}=-H\eta+A\eta+Be^{i2\int_{0}^{t}E\left[z(\sigma)\right]\,d\sigma}\overline{\eta}
\end{equation}
where we denoted $A=2\left|\mathcal{Q}\left[z(\infty)\right]\right|^{2}$
and $B=\mathcal{Q}^{2}\left[z(\infty)\right]$. 
Here
we remark that we used $t=\infty$ for the sake of convenience. One can also
use $t=T$ to define $A$ and $B$. 

We decompose $\eta$ as
\begin{equation}\label{eq:introdecomg}
\eta=g+\mathsf{a}\left(t\right)\phi
\end{equation}
where $\mathsf{a}\left(t\right)\phi=\left(\eta,\phi\right)\phi=P_{d}\eta$
and $g=P_{c}\eta$. 

\smallskip

\noindent\textbf{Refined decomposition.}
Projecting \eqref{eq:introlineareta} onto the continuous spectrum with respect to $H$, one
has
\begin{equation}
ig_{t}=-Hg+P_{c}\left(A\left(g+\mathsf{a}(t)\phi\right)\right)+e^{i2\int_{0}^{t}E\left[z(\sigma)\right]\,d\sigma}P_{c}\left(B\left(\overline{g}+\overline{\mathsf{a}}(t)\phi\right)\right)\label{eq:linearg1}
\end{equation}
The key point  here is that  on the RHS of the equation above, there are terms
with $\mathsf{a}(t)$ and $e^{i2\int_{0}^{t}E\left[z(\sigma)\right]\,d\sigma}{\overline{\mathsf{a}}(t)}$
involved.  
It is also related to the fact that the projection $P_{c}$
does not commute with the linear operator $\mathrm{H}\left[z(\infty)\right]$
defined by \eqref{eq:mathrmH}.  These terms, although enjoy the pointwise decay rates due to the comparison of the continuous spaces do not satisfy refined smoothing estimates.  So we could not work on smoothing estimates for $g$ directly.




To get a better understanding of $g$, we introduce a refined decomposition:
\begin{equation}\label{eq:refdecomp1}
g=r+\mathsf{a}(t)\mathfrak{A}(x)+e^{i2\int_{0}^{t}E\left[z(\sigma)\right]\,d\sigma}\overline{\mathsf{a}(t)}\mathfrak{B}(x)
\end{equation}
for some $\mathfrak{A}(x)\in P_{c}L^{2}$ and $\mathfrak{B}(x)\in P_{c}L^{2}$ such that plugging the decomposition \eqref{eq:refdecomp1} above into
the linear equation \eqref{eq:linearg1}, it results in an equation for
$r$ with the property that,  on the RHS, approximately only $r$ is involved. At the linear level this equation is given by
\begin{align}
ir_{t} & =-Hr+P_{c}\left(Ar\right)+e^{i2\int_{0}^{t}E\left[z(\sigma)\right]\,d\sigma}P_{c}\left(B\overline{r}\right)\nonumber \\
 & -\left(Ar+e^{i2\int_{0}^{t}E\left[z(\sigma)\right]\,d\sigma}B\overline{r},\phi\right)\mathfrak{A}-\left(Br+Ae^{i2\int_{0}^{t}E\left[z(\sigma)\right]\,d\sigma}\overline{r},\phi\right)\mathfrak{B}. 
\end{align}
Unlike the RHS of the earlier equation \eqref{eq:linearg1}, there are no terms on  the RHS of the 
equation above without smoothing estimates. Therefore, when we apply the smoothing estimates to $r$ using the corresponding Duhamel formula, one can absorb the corresponding first order terms on the RHS to the LHS by the fact that the coefficients $A$ and $B$ are small. Moreover, it has a clean structure to perform the Fourier transform in $t$ on both sides to establish smoothing estimates.
Here to find $\mathfrak{A}$ and $\mathfrak{B}$, see \eqref{eq:ell1} and \eqref{eq:ell2},  is reminiscent the Poincar\'e normal form in Buslaev-Sulem \cite{BS}, Komech-Kopylova \cite{KK1} and Soffer-Weinstein \cite{SW} at the linear level.  

After obtaining the refined decomposition, we impose the auxiliary bootstrap estimates
{\small\begin{equation}
\left\Vert \left\langle x\right\rangle ^{-2}t\left(-2iE\left[z(t)\right]\partial_{t}+\partial_{t}^{2}\right)\left(r\right)_{L}\right\Vert _{L_{x}^{\infty}L_{t}^{2}[0,T]}+\left\Vert \left\langle x\right\rangle ^{-2}t\partial_{x}^{j}\left(r\right)_{H}\right\Vert _{L_{x}^{\infty}L_{t}^{2}[0,T]}\lesssim T^\alpha \left\Vert \tilde{\eta}(1,\cdot)\right\Vert _{H^{1}},\ j=0,1\label{eq:impsmL-1}
\end{equation}}where the subscripts $L$ and $H$  denote the low and high frequency
parts with respect to the Fourier transform in $t$ respectively.\footnote{Strictly speaking, when we perform the Fourier transform with respect to $t$ for the solution, we need to a global solution.
One can treat our estimates here as a priori estimates and then use the Picard iteration to obtain the final result, see Remark \ref{rem:FTt} for details.} With these refined
smoothing estimates as auxiliary bootstrap conditions, we are able
to absorb the extra growth in $s$ form the first term on the RHS
of \eqref{eq:partialkF} after performing integration by parts in time.

\smallskip
\noindent\textbf{Integration by parts in time.} When performing the integration by parts in time, we need to check several resonance
conditions caused by the potential, the phase shift and the bound states.  To illustrate the idea, consider the inhomogeneous term in the Duhamel expansion \eqref{eq:introprof}
for the profile $f$ corresponding to $N_{1,2}$. Using the decomposition \eqref{eq:introdecomg} and refined one \eqref{eq:refdecomp1}, we write$$N_{1,2}=N_{1,2,r}+N_{1,2,\mathsf{a}}+N_{1,2,\overline{\mathsf{a}}}$$ where the three terms on the RHS collect terms involving $r$, $\mathsf{a}(t)$ and $\overline{\mathsf{a}}(t)$ respectively. 

For the part with $r$, taking $\partial_k$, see \eqref{eq:partialkF}, 
the most difficult term is
\begin{equation}\label{eq:example}
\int_{1}^{t}2ike^{-ik^{2}s}\left(\widetilde{{N}_{1,2,r}}\left(s,k\right)\right)\,ds=
\int_{1}^{t}2ike^{-ik^{2}s+2i\int_{0}^{s}E\left[z(\sigma)\right]\,d\sigma}\left(\widetilde{\mathcal{N}_{1,2,r}}\left(s,k\right)\right)\,ds
\end{equation}
where $\mathcal{N}_{1,2,r}=\left(\mathcal{Q}\left[z(s)\right]\right)^{2}\bar{r}\left(s\right)$. Here we have two ways which are both necessary for our goal, to perform integration by parts in time. On one hand, on the LHS of \eqref{eq:example}, we can use $ke^{-isk^2}=\frac{1}{-ik}\partial_se^{-isk^2}$. The singularity at $k=0$ can be compensated by the generic conditions
of the potential and the localized coefficients which implies that $\widetilde{N_{1,2,r}}(s,k)\sim k,\,k\sim 0$.  On the other hand, from the RHS of \eqref{eq:example}, we notice that the oscillatory phase can be written as
\begin{equation}\label{eq:phaseintro}
    e^{-ik^{2}s}e^{2i\int_{0}^{s}E\left[z(\sigma)\right]\,d\sigma}=\frac{1}{-ik^{2}+2iE\left[z(s)\right]}\partial_{s}\left(e^{-ik^{2}s}e^{2i\int_{0}^{s}E\left[z(\sigma)\right]\,d\sigma}\right).
\end{equation}
Note that $E\left[z\right]$ is strictly negative and $k^{2}-2E\left[z(s)\right]>\rho^{2}$, 
whence we can freely perform integration by parts in $s$.  After performing these two ways of integration by parts in $s$, the most difficult bulk terms will resemble the first estimate in \eqref{eq:impsmL-1}. 

To handle weighted estimates for inhomogeneous terms with $N_{1,2,\mathsf{a}}$ and $N_{1,2,\overline{\mathsf{a}}}$, we define
\[
\mathsf{b}\left(t\right):=e^{i\rho^{2}t}\mathsf{a}\left(t\right).
\]
Then the most difficult parts are given in the following forms
\begin{equation*}
    \int_{1}^{t}e^{-isk^{2}}2isk\int e^{-i\rho^{2}s}\mathsf{b}\left(s\right)\tilde{\mathcal{U}}\left(\ell\right)\nu\left(k,\ell\right)\,d\ell ds.
\end{equation*}
and
\begin{equation*}
\int_{1}^{t}e^{-isk^{2}}2isk\int e^{2i\int_{0}^{s}E\left[z(\sigma)\right]\,d\sigma}e^{i\rho^{2}s}\mathrm{\overline{\mathsf{b}}}\left(s\right)\tilde{\mathcal{U}}\left(\ell\right)\nu\left(k,\ell\right)\,d\ell ds
\end{equation*}with some smooth localized function $\mathcal{U}$. 
For the first integral, we note that $k^{2}+\rho^{2}\ge\rho^{2}>0$. Using the identity
\begin{equation}\label{eq:introb1}
    e^{-isk^{2}}e^{-i\rho^{2}s}=-\frac{1}{i\left(k^{2}+\rho^{2}\right)}\frac{d}{ds}\left(e^{-isk^{2}}e^{-i\rho^{2}s}\right)
\end{equation}
we can perform integration by parts in $s$ and use the equation for $\mathsf{b}$.  For the second integral,  we notice that $k^{2}-2E\left[z(t)\right]-\rho^{2}\gtrsim\rho^{2}$.
Now we perform integration by parts using the identity
\begin{equation}\label{eq:introb2}
    e^{-isk^{2}}e^{i\rho^{2}s}e^{2i\int_{0}^{s}E\left[z(\sigma)\right]\,d\sigma}=-\frac{1}{i\left(k^{2}-\rho^{2}-2E\left[s\right]\right)}\frac{d}{ds}\left(e^{-isk^{2}}e^{i\rho^{2}s}e^{2i\int_{0}^{s}E\left[z(\sigma)\right]\,d\sigma}\right)
\end{equation}
As in the \eqref{eq:example} setting, we need to perform integration by parts twice  using both \eqref{eq:introb1}  and \eqref{eq:introb2} to obtain the desired estimates. Actually, here one can iterate this process and remove all the bulk terms with $\mathsf{a}$ and $\overline{\mathsf{a}}$ involved, see Remark \ref{rem:odenormal}.

Notice that the non-resonant observations \eqref{eq:phaseintro}, \eqref{eq:introb1}  and \eqref{eq:introb2} are crucial and  very different from setting with oscillatory modes
in the NLS setting, see Buslaev-Sulem \cite{BS} and the Ginzburg-Landau
setting with internal modes, see Komech-Kopylova \cite{KK1} and Soffer-Weinstein
\cite{SW}. In those settings with internal modes, additional resonances
will make the weighted estimates have more growth and then make the
decay rate of the radiation much worse.  





Finally,  similar integrations by parts in time are also required to obtain the pointwise bound for the profile,
the second estimate in \eqref{eq:keyest}. This makes the ODE analysis
for the profile much more involved  than the ODE analysis in Kato-Pusateri
\cite{KP}, Germain-Pusateri-Rousset \cite{GPR} and Chen-Pusateri
\cite{CP}. We also need some refined smoothing estimates here.

\subsection{Outline of the paper}
This paper is organized as follows: In Section \ref{sec:prelim}, we collect some basic results on the spectral theory, distorted transforms and linear estimates for the linear Schr\"odinger flow. Then we present the analysis of modulation parameters in Section \ref{sec:Mod}. In Section \ref{sec:reduction}, we set up the analysis for the equation of the radiation term and reduce the analysis to a model problem. In section \ref{sec:weight}, we analyze weighted estimates for the profile. Finally, we show the pointwise bounds for the profile and conclude the bootstrap argument for the model problem. 
In Appendix \ref{sec:NBS}, for the sake of completeness, we record results on the existence of small nonlinear bound states and the comparison of continuous spectral spaces. 
\subsection*{Notations}

As usual, \textquotedblleft $A:=B\lyxmathsym{\textquotedblright}$
or $\lyxmathsym{\textquotedblleft}B=:A\lyxmathsym{\textquotedblright}$
is the definition of $A$ by means of the expression $B$. We use
the notation $\langle x\rangle=\left(1+|x|^{2}\right)^{\frac{1}{2}}$.
For positive quantities $a$ and $b$, we write
$a\lesssim b$ for $a\leq Cb$ where $C$ is some prescribed constant.
Also $a\simeq b$ for $a\lesssim b$ and $b\lesssim a$. Throughout,
we use $u_{t}:=\frac{\partial}{\partial t}u$, $u_{xx}:=\frac{\partial^{2}}{\partial x^{2}}u$.

\subsection{Acknowledgment}
G.C. would like to thank Fabio Pusateri for several stimulating discussions.
G.C. would also like to thank Jiaqi Liu for introducing the topic on
the perturbation of integrable systems to him. G.C. is grateful for Claudio Mu\~noz  and Catherine Sulem for useful suggestions and comments.  Part of this work was
done when G.C. was supported by Fields Institute for Research in Mathematical
Sciences via the Fields-Onatrio postdoc program and Thematic Program
on Mathematical Hydrodynamics. G.C.  would like to thank the Department
of Mathematics, University of Toronto for their hospitality and the
financial support.  G.C. is also grateful to the Department of Mathematics, University of Kentucky for the financial support. G.C.  would also like to
thank the anonymous referees for their helpful comments.

\section{Preliminaries}\label{sec:prelim}

\subsection{Basic properties of general Jost functions}
In this subsection, we recall some basic properties of Jost functions.
The facts provided here hold for all potentials, see 
\cite{DT} and 
\cite{GPR}.

The Jost functions $\psi_+(x,k)$ and $\psi_-(x,k)$
are defined as solutions to 
\begin{align}\label{psipm}
H\psi_{\pm}(x,k)=\left(-\partial_{xx}+V\right)\psi_{\pm}(x,k)=k^{2}\psi_{\pm}(x,k)
\end{align}
such that
\begin{align}\label{psipmlim}
\lim_{x\rightarrow\infty}\left|e^{-ikx}\psi_+(x,k)-1\right|=0,
\qquad  \lim_{x\rightarrow-\infty}\left|e^{ikx}\psi_-(x,k)-1\right|=0. 
\end{align}
We let 
\begin{align}\label{mpm}
m_{\pm}(x,k)=e^{\mp ikx}\psi_{\pm}(x,k).
\end{align}
Then for fixed $x$, $m_{\pm}$ is analytic in $k$ for $\Im k>0$
and continuous up to $\Im k\geq0$.

We define
\begin{align}\label{defW_+-}
\mathcal{W}_{+}^{s}(x)=\int_x^\infty \jy^{s}\left|V(y)\right|\,dy,
  \qquad \mathcal{W}_{-}^{s}(x)=\int_{-\infty}^{x}\jy^{s}\left|V(y)\right|\,dy.
\end{align}
Note that if $V(y)$ decays fast enough, then $\mathcal{W}_{\pm}^{s}(x)$
also decay as $x\rightarrow\pm\infty$ respectively.

\begin{lem}\label{lem:Mestimates}
For every $s\ge0$, we have the estimates: 
\begin{align}\label{Mestimates1}
\begin{split}
& \left|\partial_{k}^{s}\left(m_{\pm}(x,k)-1\right)\right|\lesssim\frac{1}{\jk}\mathcal{W}_{\pm}^{s+1}(x),\qquad \pm x\geq-1,
\\
& \left|\partial_{k}^{s}\left(m_{\pm}(x,k)-1\right)\right|\lesssim\frac{1}{\jk}\jx^{s+1}, \qquad \pm x\leq1.
\end{split}
\end{align}
Moreover
\begin{align}\label{Mestimates2}
\begin{split}
& \left|\partial_{k}^{s} \partial_x m_{\pm}(x,k) \right| \lesssim \mathcal{W}_{\pm}^{s}(x), \qquad \pm x\geq-1,
\\
& \left|\partial_{k}^{s} \partial_x m_{\pm}(x,k) 
  \right|\lesssim\jx^{s}, \qquad \pm x\leq1.
\end{split}
\end{align}
\end{lem}

\begin{proof}
The proofs of these estimates follow from analyzing the Volterra equation satisfied by $m_\pm$,
that is,
\begin{align}\label{minteq}
m_\pm(x,\lambda)=1\pm\int_x^{\pm\infty} D_{\lambda}(\pm(y-x))V(y)m_\pm(y,\lambda)\,dy,
  \qquad D_{\lambda}(x)=\frac{e^{2i\lambda x}-1}{2i\lambda};
\end{align} 
as in Deift-Trubowitz \cite{DT}, Weder \cite{Wed} or \cite[Appendix A]{GPR}. 
\end{proof}

Denote $T(k)$ and $R_{\pm}(k)$ the transmission
and reflection coefficients associated to the potential $V$ respectively.
For more details, see Deift-Trubowitz \cite{DT}. With these coefficients,
one can write
\begin{equation}\label{eq:f_+-1}
\psi_{+}(x,k)=\frac{R_{-}(k)}{T(k)}\psi_{-}(x,k)+\frac{1}{T(k)}\psi_{-}\left(x,-k\right)
\end{equation}
\begin{equation}\label{eq:f_--1}
\psi_{-}(x,k)=\frac{R_{+}(k)}{T(k)}\psi_{+}(x,k)+\frac{1}{T(k)}\psi_{+}\left(x,-k\right).
\end{equation}
Moreover, these coefficients are given explicitly by
\begin{align}\label{defT}
\frac{1}{T(k)}=1-\frac{1}{2ik}\int V(x)m_{\pm}(x,k)\,dx, 
\end{align}
\begin{align}\label{defR}
\frac{R_{\pm}(k)}{T(k)}=\frac{1}{2ik}\int e^{\mp2ikx}V(x)m_{\mp}(x,k)\,dx. 
\end{align}
\begin{defn}\label{def:generic}
$V$ is defined to be a ``generic'' potential if
\[
\int V(x)m_{\pm}\left(x,0\right)\,dx\neq0.
\]
\end{defn}

If a potential is generic, then by the relation given above, we know that
\[
T\left(0\right)=0,\ \ \ R_{\pm}\left(0\right)=-1.
\]
We have the following lemma on the coefficients.

\begin{lem}\label{estiTR}
Assuming that $\jx^{2}V\in L^1$, we have the uniform estimates for $k\in\mathbb{R}$:
\[
\left|\partial_{k}T(k)\right|+\left|\partial_{k}R_{\pm}(k)\right|\lesssim\frac{1}{\jk}.
\]
\end{lem}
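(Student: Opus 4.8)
The plan is to derive the estimate directly from the integral representations \eqref{defT} and \eqref{defR} together with the bounds on $m_\pm$ from Lemma \ref{lem:Mestimates}. First I would differentiate \eqref{defT} in $k$. Writing $1/T(k) = 1 - \frac{1}{2ik}\int V(x) m_+(x,k)\,dx$, the $\partial_k$ falls either on the prefactor $1/k$, producing $\frac{1}{2ik^2}\int V(x)m_+(x,k)\,dx$, or on $m_+$, producing $-\frac{1}{2ik}\int V(x)\partial_k m_+(x,k)\,dx$. For the first piece the naive bound has a $k^{-2}$ singularity at the origin, so the key point is to use the cancellation: since $m_+(x,0)$ is real-valued and (after using \eqref{psipmlim}-type normalization) one has $\int V(x)(m_+(x,k)-m_+(x,0))\,dx = O(|k|)$ near $k=0$ by \eqref{Mestimates1'} — more precisely $\partial_k m_+ \in L^\infty_k$ locally with an $L^1_x$ weight controlled by $\mathcal{W}_+^1$ — one gains a factor $|k|$ that cancels one power. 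Away from $k=0$, $1/k^2 \lesssim 1/\jk^2 \lesssim 1/\jk$ and the integral $\int |V||m_+|\,dx \lesssim \int |V|(1 + \mathcal{W}_+^1)\,dx < \infty$ under $\jx^2 V \in L^1$, so that piece is harmless. The second piece is bounded by $\frac{1}{|k|}\int |V(x)| |\partial_k m_+(x,k)|\,dx$; using \eqref{Mestimates1'}, $|\partial_k m_+(x,k)| \lesssim \frac{1}{|k|}\mathcal{W}_+^1(x)$ for $x \geq -1$ and $\lesssim \frac{1}{|k|}\jx^2$ for $x \leq 1$, which again carries a $1/|k|$, but one can instead use \eqref{Mestimates1} with $s=1$ giving $|\partial_k m_+| \lesssim \frac{1}{\jk}\mathcal{W}_+^2(x)$ (resp. $\frac{1}{\jk}\jx^2$), so that $\frac{1}{|k|}\cdot \frac{1}{\jk} \lesssim \frac{1}{\jk}$ for $|k| \gtrsim 1$ and near $k=0$ one exploits that $\int V(x)\partial_k m_+(x,k)\,dx$ itself vanishes like $|k|$ (differentiating the Volterra relation and using that $D_\lambda(x) \to x$ as $\lambda \to 0$). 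Combining, $|\partial_k(1/T)| \lesssim 1/\jk$, and since $T$ is bounded and $|T| \leq 1$, also $|\partial_k T| = |T|^2 |\partial_k(1/T)| \lesssim 1/\jk$.

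For the reflection coefficients I would argue similarly on \eqref{defR}: $R_\pm(k)/T(k) = \frac{1}{2ik}\int e^{\mp 2ikx} V(x) m_\mp(x,k)\,dx$. Now $\partial_k$ hits one of three factors. Hitting $1/k$ gives $\frac{1}{2ik^2}\int e^{\mp 2ikx}V(x)m_\mp\,dx$; the extra oscillation $e^{\mp 2ikx}$ together with $\int V(x)m_\mp(x,0)\,dx = T(0)^{-1} \cdot(\text{something}) $ — actually here one uses that at $k=0$ the numerator $\int V(x)m_\mp(x,0)\,dx \neq 0$ generically, so the ratio $R_\pm(k)/T(k)$ itself blows up like $1/k$, but $R_\pm(k) = T(k)\cdot[R_\pm/T](k)$ and $T(k)$ vanishes linearly at $k=0$ in the generic case, so the product $R_\pm$ stays bounded; differentiating the product and using $|\partial_k T| \lesssim 1/\jk$ from the first part controls the contribution. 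Hitting $e^{\mp 2ikx}$ brings down $\mp 2ix$, giving $\frac{1}{k}\int x e^{\mp 2ikx}V(x)m_\mp\,dx$ which is controlled by $\frac{1}{|k|}\int |x||V||m_\mp|\,dx$; again near $k=0$ one needs a cancellation, obtained by subtracting the $k=0$ value and using the Lipschitz bound on $m_\mp$, while for $|k| \gtrsim 1$ the $\jx^2 V \in L^1$ hypothesis closes it. Hitting $m_\mp$ gives $\frac{1}{k}\int e^{\mp 2ikx}V(x)\partial_k m_\mp\,dx$, handled exactly as the analogous term for $T$. Assembling these and multiplying through by $T$ (using $|T|\le 1$ and the $T$-estimate already proven), one gets $|\partial_k R_\pm(k)| \lesssim 1/\jk$.

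The main obstacle is the behavior at $k = 0$: every term in $\partial_k(1/T)$ and $\partial_k(R_\pm/T)$ naively carries a factor $1/k$ or $1/k^2$, and one must systematically extract compensating factors of $|k|$ from the near-origin cancellation of the relevant integrals. The cleanest way is to work with the quantities $1/T$ and $R_\pm/T$ — which are smooth and bounded with bounded derivative once the $|k|$-cancellations are identified — rather than with $T$ and $R_\pm$ directly, and only at the end pass back via $T = (1/T)^{-1}$, $R_\pm = T\cdot(R_\pm/T)$, using $T(0)=0$ with a simple zero in the generic case so that all products remain bounded. Alternatively, and more economically, I would simply invoke that these Volterra-type estimates are classical (Deift-Trubowitz \cite{DT}, Weder \cite{Wed}), observing that the stated decay $1/\jk$ for large $k$ is the standard one and follows from \eqref{Mestimates1}-\eqref{Mestimates1'} by dominated convergence, while the uniform boundedness near $k=0$ under $\jx^2 V \in L^1$ is precisely the content of the generic-case analysis in those references; I would then cite \cite{DT} and \cite[Lemma 2.1]{Wed} for the details.
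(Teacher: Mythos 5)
The paper gives no proof of this lemma; like the neighboring Lemma~\ref{lem:estiTRTaylor}, it is taken from the classical literature (Deift--Trubowitz~\cite{DT}, Weder~\cite{Wed}, and the appendix of~\cite{GPR}). Your final fallback of citing those references is therefore consistent with what the paper actually does.

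That said, the direct derivation you sketch has a genuine false step. You assert that after extracting the cancellation $\int V\,(m_+(\cdot,k)-m_+(\cdot,0))\,dx=O(|k|)$ one obtains $|\partial_k(1/T)|\lesssim 1/\jk$. This is not true near $k=0$: for a generic potential $\int V m_\pm(\cdot,0)\,dx\neq 0$, so $1/T(k)$ has a genuine simple pole at $k=0$ and hence $\partial_k(1/T)\sim c/k^2$ there. The subtraction you propose only removes the $O(1/k)$ correction, not the leading $c/k^2$ term coming from the nonzero constant $\int V m_+(\cdot,0)\,dx$. The estimate $|\partial_k(1/T)|\lesssim 1/\jk$ holds only for $|k|\gtrsim 1$. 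The identity $\partial_k T=-T^2\,\partial_k(1/T)$ still rescues the conclusion, but the mechanism is not ``$|T|\le 1$ and $|\partial_k(1/T)|\lesssim 1/\jk$''; it is the quadratic vanishing $T(k)^2=O(k^2)$ at the origin (Lemma~\ref{lem:estiTRTaylor}) precisely cancelling the $1/k^2$ blowup. Similarly, for $R_\pm=T\cdot(R_\pm/T)$ the two product-rule terms $(\partial_k T)(R_\pm/T)$ and $T\,\partial_k(R_\pm/T)$ each diverge like $1/k$ at the origin, and you must exhibit the cancellation between them (equivalently, use $1+R_\pm(k)=\alpha_\pm k+O(k^2)$ from the next lemma directly), rather than bounding them separately. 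Away from $k=0$ your use of Lemma~\ref{lem:Mestimates} and the hypothesis $\jx^2 V\in L^1$ is fine and does give the $1/\jk$ decay. So: the argument is repairable, but the near-origin bookkeeping as written is not correct, and the cleanest repair is the one you already mention at the end --- either use the Taylor expansions of $T$ and $R_\pm$ at $k=0$ for the small-$k$ regime, or cite~\cite{DT,Wed}.
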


Moreover, for a generic potential, the associated transmission and
reflection coefficients have the following Taylor expansions near
$k\sim0$. For a detailed proof, see page 144 in Deift-Trubowitz \cite{DT}.

\begin{lem}\label{lem:estiTRTaylor}
	Assuming that $\jx^2 V\in L^1$ and $V$
	is generic, then
	\[
	T(k)=\alpha k+\mathcal{O}(k^2),\ \alpha\neq0,\ \text{as}\ k\rightarrow0,
	\]
	and
	\[
	1+R_{\pm}(k)=\alpha_{\pm}k+\mathcal{O}(k^2),\ \text{as}\ k\rightarrow0.
	\]
\end{lem}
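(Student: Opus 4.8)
The plan is to read off both expansions directly from the explicit representations \eqref{defT}--\eqref{defR} of $T$ and $R_\pm$, combined with the low-frequency behaviour of the modified Jost functions $m_\pm(x,k)$. The first step is to establish the uniform expansion
\[
m_\pm(x,k) = m_\pm(x,0) + \mathcal{O}(k), \qquad k\to 0,
\]
with implicit constant controlled by $\|\langle x\rangle^2 V\|_{L^1}$, where $m_\pm(\cdot,0)$ is the (real, bounded) solution of the limiting Volterra equation obtained from \eqref{minteq} by replacing the kernel $D_\lambda(u)=\frac{e^{2i\lambda u}-1}{2i\lambda}$ by its value $D_0(u)=u$ at $\lambda=0$. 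The point is that although the pointwise $k$-derivative bound \eqref{Mestimates1'} degenerates like $1/|k|$ at the origin and hence does not by itself give a Lipschitz-in-$k$ statement there, the difference $m_\pm(x,k)-m_\pm(x,0)$ solves a Volterra equation whose inhomogeneous term is $\pm\int_x^{\pm\infty}[D_k(\pm(y-x))-(y-x)]V(y)m_\pm(y,0)\,dy$, and the elementary bound $|D_\lambda(u)-u|\lesssim |\lambda|\,u^2$ turns this into an $\mathcal{O}(k)$ quantity precisely because $\langle x\rangle^2 V\in L^1$; a standard iteration/Gronwall estimate for the Volterra operator then propagates the $\mathcal{O}(k)$ bound to $m_\pm$ itself.

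Next I would insert this into \eqref{defT}. Since both representations in \eqref{defT} are valid one has $\int V(x)m_+(x,k)\,dx=\int V(x)m_-(x,k)\,dx$, and as $k\to 0$ this common value tends to $c:=\int V(x)m_\pm(x,0)\,dx$, which is a nonzero real number exactly by the genericity hypothesis (Definition \ref{def:generic}) together with the reality of $m_\pm(\cdot,0)$. Combined with Step~1 this gives $\int V(x)m_\pm(x,k)\,dx = c+\mathcal{O}(k)$, hence
\[
\frac{1}{T(k)} = 1 - \frac{c+\mathcal{O}(k)}{2ik} = -\frac{c}{2ik}\bigl(1+\mathcal{O}(k)\bigr),
\]
since the correction term $\frac{1}{2ik}\bigl(\int V m_\pm(x,k)\,dx - c\bigr)$ stays bounded. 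Inverting yields $T(k) = -\tfrac{2i}{c}\,k\,(1+\mathcal{O}(k))^{-1} = \alpha k + \mathcal{O}(k^2)$ with $\alpha=-2i/c\neq 0$; in particular $T(0)=0$, consistent with the remark after Definition \ref{def:generic}.

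For the reflection coefficients I would argue in the same way from \eqref{defR}: writing $e^{\mp 2ikx} = 1+\mathcal{O}(k|x|)$ and using Step~1 again (both error terms absorbed via $\langle x\rangle^2 V\in L^1$), one finds $\int e^{\mp 2ikx}V(x)m_\mp(x,k)\,dx = c+\mathcal{O}(k)$ with the \emph{same} constant $c$, so that $\frac{R_\pm(k)}{T(k)} = \frac{c}{2ik}\bigl(1+\mathcal{O}(k)\bigr)$. Multiplying by the expansion of $T$ just obtained gives $R_\pm(k) = -(1+\mathcal{O}(k))$, i.e. $1+R_\pm(k) = \alpha_\pm k + o(k)$ for suitable constants $\alpha_\pm$ (retaining one more term in the Taylor expansions of $e^{\mp 2ikx}$ and of $m_\mp(x,\cdot)$ at $k=0$ identifies $\alpha_\pm$ explicitly, the extra remainder being $o(k)$ by dominated convergence under the stated decay); the identities \eqref{TRid} provide consistency checks ($R_\pm(0)=-1$, $T(-k)=T(k)$). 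The only delicate point in the whole argument is Step~1 — controlling $m_\pm(x,k)$ uniformly up to the endpoint $k=0$, where the Volterra kernel degenerates — and this is handled exactly as on p.~144 of Deift--Trubowitz \cite{DT}; everything downstream is bookkeeping with the explicit formulas.
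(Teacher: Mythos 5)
Your proposal is correct and reproduces the standard Deift--Trubowitz argument from p.~144 of \cite{DT}, which is exactly what the paper cites in lieu of giving a proof: expand $m_\pm(x,k)$ around $k=0$ directly from the Volterra equation (circumventing the degenerate $1/|k|$ bound in \eqref{Mestimates1'}), feed this into \eqref{defT}--\eqref{defR}, and use genericity to guarantee $c=\int V m_\pm(\cdot,0)\,dx\neq 0$. The only thing worth noting is that the second display in the lemma as literally written ($\alpha_\pm k+\mathcal{O}(k)$) is vacuous and clearly intends $o(k)$; you correctly supply this by retaining the second-order term, for which the hypothesis $\jx^2 V\in L^1$ is precisely what is needed.
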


\subsection{Distorted Fourier transform}\label{ssecDFT}
We recall some basic properties of the distorted Fourier transform
with respect to the perturbed Schr\"odinger operator.
First, recall that the standard Fourier transform is defined, for $\varphi\in L^2$, as
\[
\mathcal{F}\left[\varphi\right](\lambda):=\hat{\varphi}(\lambda)=\frac{1}{\sqrt{2\pi}}\int e^{-i\lambda x}\varphi(x)\,dx
\]
with its inverse as
\[
\mathcal{F}^{-1}\left[\varphi\right](x):=\frac{1}{\sqrt{2\pi}}\int e^{i\lambda x}\varphi(\lambda)\,d\lambda.
\]
Given the Jost functions $\psi_{\pm}$ from \eqref{psipm}, we set
\begin{align}\label{matK}
\mathcal{K}(x,\lambda):=\frac{1}{\sqrt{2\pi}}
\begin{cases}
T(\lambda) \psi_+(x,\lambda) & \lambda\geq0
\\
T(-\lambda) \psi_-(x,-\lambda) & \lambda<0
\end{cases},
\end{align}
and define the ``distorted Fourier transform'' for $f\in \mathcal{S}$ by
\begin{align}\label{tildeF}
\wtF\left[\varphi\right](\lambda) = \widetilde{\varphi}(\lambda) := \int \overline{\mathcal{K}(x,\lambda)} \varphi(x)\,dx.
\end{align}

\begin{lem}\label{lemtildeF}
In our setting, one has
\[
{\big\| \wtF\left[\varphi\right] \big\|}_{L^{2}}=\left\Vert P_c\varphi\right\Vert _{L^{2}},\,\,\forall f\in L^{2}
\]
and
\begin{align}\label{tildeF-1}
\wtF^{-1}\left[\varphi\right](x)=\int\mathcal{K}(x,\lambda)\phi(\lambda)\,d\lambda.
\end{align}
Also, if $D:=\sqrt{-\partial_{xx}+V}$, 
\begin{align}
m(D)P_c=\wtF^{-1}m(\lambda)\wtF.
\end{align}
so that in particular
$\left(-\partial_{xx}+V\right)P_c=\wtF^{-1}\lambda^{2}\wtF$.

\smallskip
We also have the following properties:
	
\setlength{\leftmargini}{2em}
\begin{itemize}

\item[(i)] If $\varphi\in L^1$, then $\widetilde{\varphi}$ is a continuous, bounded function. 

\smallskip
\item[(ii)] If the potential $V$ is generic $\tilde{\phi}\left(0\right)=0$.
  

\smallskip
\item[(iii)] There exists $C>0$ such that one has
\begin{equation*}
\left\Vert \lambda\widetilde{u}\right\Vert _{L^{2}}
  \leq C\left(1+\left(\left\Vert V\right\Vert_{L^1}\right)^{\frac{1}{2}}\right)\left\Vert u\right\Vert _{H^{1}},
\end{equation*}
and
\begin{align}\label{eq:weiF}
\left\Vert \partial_{\lambda}\widetilde{u}\right\Vert _{L^{2}}\leq C\left\Vert \jx u\right\Vert _{L^{2}}.
\end{align}
\end{itemize}

\end{lem}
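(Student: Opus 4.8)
The plan is to obtain the Plancherel identity and the inversion formula from stationary scattering theory, to read off the functional calculus from the generalized eigenfunction equation together with the spectral theorem, and to extract the pointwise and weighted bounds directly from the explicit form of $\mathcal{K}$, using Lemmas \ref{lem:Mestimates}, \ref{estiTR} and \ref{lem:estiTRTaylor}. For the first point I would show that $\wtF$ extends from $\mathcal{S}$ to a bounded operator on $L^{2}$ which is unitary from $P_{c}L^{2}$ onto $L^{2}_{\lambda}$ and annihilates $\phi$. The standard route is Stone's formula: the spectral projection of $H=-\partial_{xx}+V$ onto $(0,\infty)$ has integral kernel given by the jump $\frac{1}{2\pi i}\big(G(x,y;\mu+i0)-G(x,y;\mu-i0)\big)$ of the resolvent kernel across the positive axis, and $G$ is expressed through $\psi_{\pm}$ and the coefficients $T,R_{\pm}$ via the Wronskian identities in \eqref{defT}--\eqref{TRid}; substituting $\mu=\lambda^{2}$ and simplifying gives ${\|\wtF\varphi\|}_{L^{2}}^{2}={\|P_{c}\varphi\|}_{L^{2}}^{2}$ for $\varphi$ in a dense class. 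This is the classical computation of Deift--Trubowitz \cite{DT} (see also \cite{Wed} and \cite[Appendix A]{GPR}), which also yields completeness; I would cite it rather than reproduce it. Since $\wtF$ is then onto $L^{2}_{\lambda}$, the adjoint of $\wtF\colon L^{2}_{x}\to L^{2}_{\lambda}$, which acts by integration against $\mathcal{K}(x,\lambda)$, coincides with $\wtF^{-1}$, giving $\wtF^{-1}[\phi](x)=\int\mathcal{K}(x,\lambda)\phi(\lambda)\,d\lambda$.

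For the functional calculus, since each $\psi_{\pm}(\cdot,k)$ solves $H\psi_{\pm}=k^{2}\psi_{\pm}$ and, with its $x$-derivative, grows at most polynomially in $\jx$ by Lemma \ref{lem:Mestimates}, two integrations by parts with vanishing boundary terms give $\widetilde{H\varphi}(\lambda)=\lambda^{2}\widetilde{\varphi}(\lambda)$ for $\varphi\in\mathcal{S}$. Together with the Plancherel identity this says that $\wtF$ realizes the spectral representation of $H$ on $P_{c}L^{2}$ as multiplication by $\lambda^{2}$, hence $m(H)P_{c}=\wtF^{-1}m(\lambda^{2})\wtF$ for bounded Borel $m$, and by the usual limiting argument for $m(s)=s$ as well; in particular $\left(-\partial_{xx}+V\right)P_{c}=\wtF^{-1}\lambda^{2}\wtF$, and writing $D=H^{1/2}$ on $P_{c}L^{2}$ yields $m(D)P_{c}=\wtF^{-1}m(\lambda)\wtF$.

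The pointwise properties are then immediate. For (i): $|T(\lambda)|\le 1$ by \eqref{TRid} and $|\psi_{\pm}(x,\lambda)|=|m_{\pm}(x,\lambda)|\le 1+|m_{\pm}(x,\lambda)-1|\lesssim 1$ by Lemma \ref{lem:Mestimates}, so $\mathcal{K}$ is jointly continuous and uniformly bounded, whence dominated convergence makes $\widetilde{\varphi}$ continuous and bounded for $\varphi\in L^{1}$. For (ii): genericity gives $T(0)=0$ by Lemma \ref{lem:estiTRTaylor}, so $\mathcal{K}(x,0)\equiv 0$ and $\widetilde{\varphi}(0)=0$ for every $\varphi\in L^{1}$ (consistent with $P_{c}\phi=0$, which already forces $\widetilde{\phi}\equiv 0$). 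For the first bound in (iii), the functional calculus gives
\[
{\|\lambda\widetilde{u}\|}_{L^{2}}^{2}=\big(HP_{c}u,\,u\big)={\big\|\partial_{x}P_{c}u\big\|}_{L^{2}}^{2}+\int V\,|P_{c}u|^{2}\,dx\lesssim{\|P_{c}u\|}_{H^{1}}^{2}+{\|V\|}_{L^{1}}{\|P_{c}u\|}_{L^{\infty}}^{2},
\]
and the one-dimensional bound ${\|f\|}_{L^{\infty}}^{2}\lesssim{\|f\|}_{H^{1}}^{2}$ together with boundedness of $P_{c}$ on $H^{1}$ makes this $\lesssim(1+{\|V\|}_{L^{1}}){\|u\|}_{H^{1}}^{2}$; using $(1+{\|V\|}_{L^{1}})^{1/2}\le 1+{\|V\|}_{L^{1}}^{1/2}$ gives the stated inequality.

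The weighted estimate \eqref{eq:weiF} is where the real work lies, and I expect it to be the main obstacle. Writing $\psi_{\pm}(x,\lambda)=e^{\pm i\lambda x}\big(1+(m_{\pm}(x,\lambda)-1)\big)$, I would split $\mathcal{K}=\mathcal{K}_{0}+\mathcal{K}_{\sharp}$, where $\mathcal{K}_{0}$ collects the contribution of the ``$1$'' --- a combination of $e^{\pm i\lambda x}$ with coefficients among $T(\lambda)$ and $R_{\pm}(\lambda)$ --- and $\mathcal{K}_{\sharp}$ that of $m_{\pm}-1$. Differentiating $\mathcal{K}_{0}$ in $\lambda$ either brings down a factor $x$ times a unimodular exponential and a bounded multiplier (whose operator, composed with the ordinary Fourier transform, is controlled by ${\|xu\|}_{L^{2}}$ via Plancherel), or differentiates $T,R_{\pm}$, which are Lipschitz by Lemma \ref{estiTR} and whose operators are controlled by ${\|u\|}_{L^{2}}$; while $\partial_{\lambda}\mathcal{K}_{\sharp}$ involves $m_{\pm}-1$ and $\partial_{\lambda}m_{\pm}$, which are integrable in $x$ uniformly in $\lambda$ by Lemma \ref{lem:Mestimates}, so the corresponding operators are bounded $L^{2}_{x}\to L^{2}_{\lambda}$ by Schur's test. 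The delicate point is the behaviour near $\lambda=0$: the $|\lambda|^{-1}$ factors arising from $\partial_{\lambda}T$, $\partial_{\lambda}R_{\pm}$ and from $D_{\lambda}$ in \eqref{minteq} are compensated by the vanishings $T(\lambda)\sim\alpha\lambda$ and $1+R_{\pm}(\lambda)\sim\alpha_{\pm}\lambda$ of Lemma \ref{lem:estiTRTaylor}; this is exactly the bookkeeping carried out in \cite{Wed} and \cite[Appendix A]{GPR}, which I would invoke once the splitting is in place. Collecting the pieces yields ${\|\partial_{\lambda}\widetilde{u}\|}_{L^{2}}\lesssim{\|xu\|}_{L^{2}}+{\|u\|}_{L^{2}}={\|\jx u\|}_{L^{2}}$.
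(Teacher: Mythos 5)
Your proposal fills in, with roughly the right level of detail, the content of the references the paper actually cites in lieu of a proof (Agmon, Dunford--Schwartz, Yafaev, and GPR Lemma~2.4), so there is no substantive disagreement of method: the paper's ``proof'' is simply a pointer to the stationary scattering theory and Jost function estimates you reconstruct. The overall architecture --- Stone's formula/completeness for the Plancherel identity, the adjoint for inversion, two integrations by parts for the diagonalization, Lemmas~\ref{lem:Mestimates}--\ref{lem:estiTRTaylor} for the pointwise and weighted bounds with the key small-frequency cancellations from genericity --- is exactly what the cited sources do.

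One local slip worth fixing in (i): the inequality $|m_{\pm}(x,\lambda)-1|\lesssim 1$ does \emph{not} follow from Lemma~\ref{lem:Mestimates} on the whole line; that lemma only gives uniform boundedness on the half-line $\pm x\geq -1$, and allows growth $\lesssim\jx$ on $\pm x\leq 1$. Uniform boundedness of $\mathcal{K}(x,\lambda)$ on the ``wrong'' half-line is recovered from the scattering relations \eqref{eq:f_+-1}--\eqref{eq:f_--1}: for $\lambda\geq 0$,
\[
\sqrt{2\pi}\,\mathcal{K}(x,\lambda)=T(\lambda)\psi_{+}(x,\lambda)=R_{-}(\lambda)\psi_{-}(x,\lambda)+\psi_{-}(x,-\lambda),
\]
and on $x\leq 1$ the right-hand side is controlled by the $m_{-}$ bounds (and symmetrically for $\lambda<0$). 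The same relation also cleanly justifies $\mathcal{K}(x,0)\equiv 0$ in (ii), since $R_{-}(0)+1=0$ in the generic case, rather than relying on the indeterminate $T(0)/T(0)$ expression. With this correction the argument for (i) and (ii) is complete; the rest, including the absorption of the $|\lambda|^{-1}$ singularities in \eqref{eq:weiF} by $T(\lambda)\sim\alpha\lambda$ and $1+R_{\pm}(\lambda)\sim\alpha_{\pm}\lambda$, matches the bookkeeping in the cited references.
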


\begin{proof}
See for example Section 6 in \cite{Agm}, 
\cite{DS,Yaf}, 
and \cite[Lemma 2.4]{GPR}.

\end{proof}

\subsection{Linear estimates}

In this subsection, we recall and collect some linear estimates associated with the linear Schr\"odinger flow.

\subsubsection{Pointwise decay and local $L^2$ decay}

First of all, let us recall the linear dispersive estimate for the free flow:

\begin{lem}\label{lem:linearest}
The linear free Schr\"odinger flow has the following
dispersive estimate: for $t\geq0$
\begin{equation}
\left(e^{-it\partial_{xx}}h\right)(x)
  = \frac{1}{\left(it\right)^{\frac{1}{2}}}e^{i\frac{\left|x\right|^{2}}{2t}}\hat{h}\left(\frac{x}{t}\right)
  +\frac{1}{t^{\frac{1}{2}+b}}\mathcal{O}\left(\left\Vert h\right\Vert _{H^{0,c}}\right)\label{eq:linearasy-1}
\end{equation}
for $x\in\mathbb{R}$ and $c \geq \frac{1}{2}+2b$. 
As a consequence, for $t\geq0$
\begin{equation}\label{eq:linearpoinwise0}
{\big\| e^{-it\partial_{xx}}h \big\|}_{L^\infty_x}
  \lesssim \frac{1}{\sqrt{t}} {\big\| \hat{h} \big\|}_{L^\infty_k}
  +\frac{1}{t^{\frac{3}{4}}} {\big\| \partial_{k}\hat{h} \big\|}_{L^2_k}.
\end{equation}
\end{lem}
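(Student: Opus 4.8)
\textbf{Proof proposal for Lemma \ref{lem:linearest}.}

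The plan is to reduce everything to the classical stationary phase / interpolation argument for the free Schr\"odinger group. First I would write, using the Fourier representation,
\[
\left(e^{-it\partial_{xx}}h\right)(x) = \frac{1}{\sqrt{2\pi}}\int e^{i\lambda x - it\lambda^2}\hat h(\lambda)\,d\lambda = \frac{1}{\sqrt{2\pi}}\int e^{-it(\lambda - \frac{x}{2t})^2} e^{i\frac{|x|^2}{4t}}\hat h(\lambda)\,d\lambda,
\]
so that after completing the square the operator becomes a Gaussian-weighted average of $\hat h$ centered at the stationary point $\lambda_0 = x/(2t)$. The leading term is obtained by replacing $\hat h(\lambda)$ by $\hat h(\lambda_0)$ and evaluating the resulting Gaussian integral, which produces the factor $(it)^{-1/2} e^{i|x|^2/(2t)}\hat h(x/t)$ (up to the exact normalization constant used in the statement). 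The error is
\[
\frac{1}{\sqrt{2\pi}} e^{i\frac{|x|^2}{4t}} \int e^{-it(\lambda-\lambda_0)^2}\bigl(\hat h(\lambda) - \hat h(\lambda_0)\bigr)\,d\lambda,
\]
and the task is to bound this in $L^\infty_x$ by $t^{-1/2-b}\|h\|_{H^{0,c}}$ for $c \geq \frac12 + 2b$.

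Next I would estimate this error term. Writing $\hat h(\lambda) - \hat h(\lambda_0) = \int_0^1 \partial_\lambda \hat h(\lambda_0 + \theta(\lambda-\lambda_0))\,d\theta \cdot (\lambda - \lambda_0)$ when $\hat h$ is smooth, the extra factor $(\lambda-\lambda_0)$ combines with the Gaussian to gain a power of $t^{-1/2}$; however, to only pay with the low-regularity norm $H^{0,c}$ (i.e.\ $\langle\lambda\rangle^c \hat h \in L^2$) rather than a full derivative, one interpolates: split the integral into $|\lambda - \lambda_0| \lesssim t^{-1/2}$ and $|\lambda - \lambda_0| \gtrsim t^{-1/2}$, or more efficiently use the fractional-difference bound $|\hat h(\lambda)-\hat h(\lambda_0)| \lesssim |\lambda-\lambda_0|^{c}\,\|\hat h\|_{\dot H^c}$ valid for $c\in(0,1]$ together with Cauchy--Schwarz against the Gaussian. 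One gets a factor $\int e^{-t(\lambda-\lambda_0)^2}|\lambda-\lambda_0|^{2b}\,d\lambda$-type quantity after choosing $b$ with $c = \frac12 + 2b$, contributing $t^{-b}$, on top of the $t^{-1/2}$ already present; this is exactly the claimed $t^{-1/2-b}$ with the stated relation between $b$ and $c$. The low-frequency part and the contribution near $\lambda_0$ where $\hat h$ need not be differentiable are handled by a routine density/approximation argument, since $\mathcal S$ is dense in $H^{0,c}$ and all estimates are uniform.

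Finally, the pointwise bound \eqref{eq:linearpoinwise0} follows by applying \eqref{eq:linearasy-1} with the choice $b = \frac14$, $c = 1$: the main term is bounded by $t^{-1/2}\|\hat h\|_{L^\infty_k}$ pointwise in $x$, and the remainder is bounded by $t^{-3/4}\|h\|_{H^{0,1}} = t^{-3/4}\|\langle x\rangle h\|_{L^2} \simeq t^{-3/4}(\|\hat h\|_{L^2} + \|\partial_k \hat h\|_{L^2})$; absorbing the lower-order $\|\hat h\|_{L^2}$ into $\|\partial_k \hat h\|_{L^2}$ up to the harmless $t$-dependence (or keeping it and noting $t \geq 1$ is the regime of interest, with small $t$ trivial by unitarity and Sobolev embedding) yields the stated form. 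The main obstacle, such as it is, is purely bookkeeping: tracking the exact constant in the leading Gaussian integral so that it matches $(it)^{-1/2}$ and $\hat h(x/t)$ rather than $\hat h(x/(2t))$, and making the fractional-difference interpolation clean enough that only $\|h\|_{H^{0,c}}$ with the sharp $c = \frac12 + 2b$ appears; there is no conceptual difficulty, this being the standard dispersive estimate with a quantified remainder.
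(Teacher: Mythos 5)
The paper states Lemma \ref{lem:linearest} without proof, treating it as a standard dispersive asymptotic (the perturbed analogue is credited to Goldberg--Schlag and Germain--Pusateri--Rousset), so there is no paper proof to compare against. Your route --- completing the square on the Fourier side, extracting the stationary-phase value of $\hat h$, and bounding the fluctuation via fractional regularity of $\hat h$ --- is one legitimate argument; the other, which most directly produces the weighted norm appearing in the statement, is the factorization $e^{-it\partial_{xx}}=M_tD_t\mathcal{F}M_t$ with $M_t$ the quadratic chirp, where the error $M_tD_t\mathcal{F}\!\left[(M_t-1)h\right]$ is bounded pointwise via $|M_t(y)-1|\lesssim(|y|^2/t)^{b}$ and Cauchy--Schwarz on the physical side, with no Sobolev embedding needed. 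The factor-of-two discrepancy you flag ($\hat h(x/t)$ versus $\hat h(\pm x/(2t))$) is indeed an internal inconsistency of the lemma statement when compared with \eqref{linearasy}; it is not a gap in your argument.

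That said, your sketch has three concrete gaps. (i) You gloss $H^{0,c}$ as ``$\langle\lambda\rangle^c\hat h\in L^2$''; in this paper's weighted-Sobolev convention (cf.\ $H^{1,1}=H^1\cap L^2(x^2\,dx)$ in \eqref{datasmall1}), $H^{0,c}$ means $\langle x\rangle^c h\in L^2$, equivalently $\hat h\in H^c_\lambda$ --- \emph{regularity}, not decay, of $\hat h$, which is exactly what your Hölder-modulus bound requires; the reading you wrote down would not support the argument at all. (ii) The stated bound $|\hat h(\lambda)-\hat h(\lambda_0)|\lesssim|\lambda-\lambda_0|^{c}\|\hat h\|_{\dot H^c}$ is false: the one-dimensional Sobolev--Hölder embedding gives exponent $c-\tfrac12$ for $c\in(\tfrac12,\tfrac32)$, not $c$. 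Your final ``Gaussian integral'' then silently carries the corrected power $2b=c-\tfrac12$, which is why the arithmetic lands on $c=\tfrac12+2b$, but the chain of inequalities as written is inconsistent. (iii) The integral $\int e^{-it(\lambda-\lambda_0)^2}|\lambda-\lambda_0|^{2b}\,d\lambda$ does not converge, and ``Cauchy--Schwarz against the Gaussian'' accomplishes nothing since $|e^{-it\mu^2}|\equiv1$; the workable form of your idea is the near/far split at $|\lambda-\lambda_0|\sim t^{-1/2}$ you mention only in passing, with the far region handled by one integration by parts in $\lambda$ (gaining $(t|\lambda-\lambda_0|)^{-1}$) paired against the $L^2$ control of $\hat h$ and $\partial_\lambda\hat h$. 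With these repairs, and noting as you do that the harmless $\|\hat h\|_{L^2}$ term in \eqref{eq:linearpoinwise0} is controlled along the flow by $L^2$ conservation rather than ``absorbed'' into $\|\partial_k\hat h\|_{L^2}$, the plan goes through; as written it does not.
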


The above free dispersive estimate can be extended to the perturbed
flow after projecting onto the continuous spectrum, see Goldberg-Schlag
\cite{GSch} and Germain-Pusateri-Rousset \cite{GPR}. 

\begin{lem}\label{lem:pointwiseH}
Suppose $\jx^\gamma V(x)\in L^1$ with $\gamma \geq 1$. The perturbed Schr\"odinger flow has the following dispersive estimate:
for $t\geq0$
\begin{equation}\label{eq:linearpoinwiseH}
{\big\| e^{itH}P_c h \big\|}_{L^\infty_x} \lesssim\frac{1}{\sqrt{t}}{\big\| \tilde{h} \big\|}_{L^\infty_k}
  +\frac{1}{t^{\frac{3}{4}}}{\big\| \partial_k\tilde{h} \big\|} _{L^2_k}.
\end{equation}
\end{lem}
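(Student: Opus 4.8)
The plan is to deduce the perturbed dispersive estimate \eqref{eq:linearpoinwiseH} from the free asymptotic formula \eqref{eq:linearpoinwise0} by transferring between the distorted and flat Fourier pictures via the wave operators. First I would recall that, under the genericity hypothesis on $V$ (which holds here since $\gamma \ge 1$ gives $\jx^2 V \in L^1$ after a harmless strengthening, or one works directly with the $\gamma \ge 1$ statement as in \cite{GPR}), the wave operators $W = s\text{-}\lim_{t\to\infty} e^{itH}e^{it\partial_{xx}}$ and their adjoints are bounded on $W^{k,p}$ for a suitable range of $k,p$; equivalently, $\wtF = \mathcal{F} W^*$ in an appropriate sense, and the intertwining identity $e^{itH}P_c = W e^{-it\partial_{xx}} W^*$ holds. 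Then
\[
e^{itH}P_c h = W \big( e^{-it\partial_{xx}} (W^* h) \big),
\]
so that $\| e^{itH}P_c h\|_{L^\infty_x} \lesssim \| e^{-it\partial_{xx}}(W^*h)\|_{L^\infty_x}$ by $L^\infty$-boundedness of $W$.

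Next I would apply the free estimate \eqref{eq:linearpoinwise0} to $g := W^* h$, obtaining
\[
\| e^{itH}P_c h\|_{L^\infty_x} \lesssim \frac{1}{\sqrt t}\, \|\widehat{g}\|_{L^\infty_k} + \frac{1}{t^{3/4}}\, \|\partial_k \widehat g\|_{L^2_k}.
\]
The final step is to rewrite these two flat-Fourier norms of $g$ in terms of the distorted Fourier transform of $h$. Since $\widehat{g} = \widehat{W^* h} = \wtF h = \widetilde h$ (this is exactly the identification of $\wtF$ with $\mathcal{F}$ conjugated by the wave operator, consistent with Lemma~\ref{lemtildeF}), the first term is immediately $\frac{1}{\sqrt t}\|\widetilde h\|_{L^\infty_k}$. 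For the second term one needs $\|\partial_k \widetilde h\|_{L^2_k}$; here one uses that $\partial_k$ acting on $\widetilde h = \widehat{W^* h}$ corresponds, up to bounded and lower-order terms, to $\widehat{x W^* h}$, and that $x W^* = W^*(x + \text{bounded multiplication/commutator})$ in the relevant weighted space — this is the standard fact that the wave operator is bounded on weighted $L^2$ spaces modulo commutators controlled by the decay of $V$. This yields $\|\partial_k \widetilde h\|_{L^2} \lesssim \|\partial_k \widetilde h\|_{L^2}$ (trivially), so in fact it is cleanest to phrase the conclusion directly in terms of $\|\partial_k \widetilde h\|_{L^2}$ as in the statement, invoking \eqref{eq:weiF} only if one wants the weaker bound with $\|\jx h\|_{L^2}$.

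The main obstacle is the rigorous justification of the commutator/weighted-norm transfer: one must control $[\partial_k, \wtF]$ — equivalently the difference between $\wtF(x h)$ and $\partial_k \wtF(h)$ — which requires the refined pointwise bounds on $m_\pm(x,k)$ and their $k$-derivatives from Lemma~\ref{lem:Mestimates}, the Taylor expansions of $T, R_\pm$ near $k=0$ from Lemma~\ref{lem:estiTRTaylor} (to handle the $1/k$ singularities in \eqref{defT}--\eqref{defR}), and the decay assumption $\jx^\gamma V \in L^1$ to make the resulting kernel estimates converge. Rather than reproving all of this, I would simply cite Goldberg-Schlag \cite{GSch} and \cite[Section 6 / the dispersive estimate section]{GPR}, where exactly this argument — free dispersive asymptotics plus boundedness of wave operators on the relevant Sobolev and weighted spaces — is carried out under the stated hypotheses; the content of the lemma is precisely the assertion that the $V=0$ estimate \eqref{eq:linearpoinwise0} persists verbatim with $\mathcal{F}$ replaced by $\wtF$ once one projects onto $P_c$.
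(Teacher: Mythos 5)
Your wave-operator strategy is a legitimate alternative route to the cited references, but there are two issues worth flagging.

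First, the commutator discussion in your last paragraph is a red herring that you yourself nearly dismiss. Once you accept the intertwining identity $e^{itH}P_c = W\, e^{-it\partial_{xx}}\, W^*$ together with the identification $\mathcal{F}\circ W^* = \wtF$ (which is precisely how $\wtF$ is built from the Jost functions in \eqref{matK}; see also Lemma \ref{lem:denoPSO}, where the author records $\mathcal{F}\circ\wtF^{-1}$ as the wave operator), you have $\widehat{W^*h}(k) = \widetilde{h}(k)$ as an \emph{exact} identity of functions. Then $\partial_k\widehat{W^*h} = \partial_k\widetilde{h}$ with no error term whatsoever, so the second piece of the free estimate \eqref{eq:linearpoinwise0} carries over verbatim. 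There is no commutator $[\partial_k,\wtF]$ to control because you never apply $\partial_k$ on the $x$-side; you only compose a fixed bijection of frequency spaces. Writing out a paragraph about "controlling $[\partial_k,\wtF]$" and then concluding "$\|\partial_k\widetilde{h}\|_{L^2} \lesssim \|\partial_k\widetilde{h}\|_{L^2}$ trivially" signals that you had not quite pinned down why the step is trivial.

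Second, the hypothesis. The lemma asserts the bound for $\jx^\gamma V\in L^1$ with $\gamma\geq 1$ (and $V$ generic). Weder's $W^{k,p}$-boundedness of the wave operators — which you need at $p=\infty$ to write $\|e^{itH}P_c h\|_{L^\infty_x}\lesssim\|e^{-it\partial_{xx}}W^*h\|_{L^\infty_x}$ — is proved under $\gamma>3/2$ in the generic case. So your argument proves the lemma under a genuinely stronger decay hypothesis than the one stated. That is harmless for the present paper (where $V$ is smooth and exponentially decaying), but it does not reprove the lemma as stated, and it is \emph{not} "exactly this argument" carried out in \cite{GPR}: the proof there (and the closely related one in \cite{CP}, Lemma A.1 of which underlies the companion Lemma \ref{lemlocdecinfty}) is a direct stationary-phase analysis of $\int\mathcal{K}(x,k)e^{itk^2}\widetilde{h}(k)\,dk$, splitting $\mathcal{K}$ into a plane-wave-like piece and a localized remainder and using the bounds of Lemma \ref{lem:Mestimates} together with Lemma \ref{lem:estiTRTaylor} to handle the $k\sim 0$ region. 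That direct route avoids Weder's theorem and achieves $\gamma\geq 1$. So what you propose is a clean, shorter argument when extra decay of $V$ is available; the cited approach is heavier but sharper in its hypotheses. Claiming the two coincide misattributes the method.
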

Then we recall an improved decay estimate after localizing in the space. For the proof, see Lemma A.1 in Chen-Pusateri \cite{CP}.
\begin{lem}[Improved $L^\infty$ local decay]\label{lemlocdecinfty}
Suppose $\jx^\gamma V(x)\in L^1$ with $\gamma \geq 3$ and $V$ is generic. 
Then, for the perturbed flow one has
\begin{align}\label{locdecinfty0}
\left\Vert \jx^{-2} e^{iHt}P_c h \right\Vert_{L^{\infty}_x}
  \lesssim |t|^{-1}{\big\| \tilde{h} \big\|}_{H^1_k}.
\end{align}
\end{lem}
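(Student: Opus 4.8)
\textbf{Proof proposal for Lemma~\ref{lemlocdecinfty} (Improved $L^\infty$ local decay).}
The plan is to use the distorted Fourier representation of the propagator and the generic vanishing of the spectral measure at zero frequency. Writing $e^{iHt}P_c h(x) = \int \mathcal{K}(x,\lambda) e^{it\lambda^2}\tilde h(\lambda)\,d\lambda$, the key gain over the standard $|t|^{-1/2}$ decay must come from integrating by parts in $\lambda$, which converts one power of $t$ into a derivative hitting $\mathcal{K}$ and $\tilde h$. The obstruction to this is the singularity at $\lambda = 0$: $\partial_\lambda e^{it\lambda^2} = 2it\lambda e^{it\lambda^2}$, so to integrate by parts cleanly one writes $e^{it\lambda^2} = \frac{1}{2it\lambda}\partial_\lambda e^{it\lambda^2}$ and the $1/\lambda$ must be absorbed by the rest of the integrand. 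This is exactly where genericity enters: by Lemma~\ref{lem:estiTRTaylor} the transmission coefficient satisfies $T(\lambda) = \alpha\lambda + O(\lambda^2)$ near $\lambda = 0$, hence $\mathcal{K}(x,\lambda)$ (and more precisely the combination $T(\lambda)\psi_+(x,\lambda)$, resp.\ $T(-\lambda)\psi_-(x,-\lambda)$) vanishes linearly at $\lambda=0$, so $\mathcal{K}(x,\lambda)/\lambda$ is bounded and smooth there with localized $x$-dependence controlled by the Jost estimates in Lemma~\ref{lem:Mestimates}.

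First I would split the $\lambda$-integral with a smooth cutoff into a low-frequency piece $|\lambda|\lesssim 1$ and a high-frequency piece $|\lambda|\gtrsim 1$; also split into $\lambda>0$ and $\lambda<0$ so that $\mathcal{K}$ is given by the single smooth branch $T(\lambda)\psi_+(x,\lambda)/\sqrt{2\pi}$ (resp.\ the reflected branch). For the low-frequency piece, write $\mathcal{K}(x,\lambda) = \lambda\, \mathcal{K}_0(x,\lambda)$ where $\mathcal{K}_0(x,\lambda) := \mathcal{K}(x,\lambda)/\lambda$ is, by the Taylor expansion of $T$ and the bounds $|m_\pm(x,\lambda)| \lesssim \langle x\rangle$, $|\partial_\lambda m_\pm(x,\lambda)| \lesssim \langle x\rangle/|\lambda|$ from \eqref{Mestimates1}, \eqref{Mestimates1'}, a function satisfying $|\mathcal{K}_0(x,\lambda)| + |\lambda\,\partial_\lambda\mathcal{K}_0(x,\lambda)| \lesssim \langle x\rangle$ on the support of the cutoff (the $1/|\lambda|$ from $\partial_\lambda m_\pm$ is compensated by the extra $\lambda$ factor). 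Then
\[
\int \chi(\lambda)\,\mathcal{K}(x,\lambda)\,e^{it\lambda^2}\,\tilde h(\lambda)\,d\lambda
  = \int \chi(\lambda)\,\mathcal{K}_0(x,\lambda)\,\frac{1}{2it}\,\partial_\lambda\!\big(e^{it\lambda^2}\big)\,\tilde h(\lambda)\,d\lambda,
\]
and integrating by parts in $\lambda$ produces the prefactor $t^{-1}$ together with three terms in which $\partial_\lambda$ falls on $\chi$, on $\mathcal{K}_0$, or on $\tilde h$. Using $|\partial_\lambda \mathcal{K}_0(x,\lambda)| \lesssim \langle x\rangle/|\lambda|$ together with $\|\tilde h\|_{L^\infty}\lesssim \|\tilde h\|_{H^1}$ and, for the term with $\partial_\lambda\tilde h$, Cauchy--Schwarz with $\|\partial_\lambda\tilde h\|_{L^2}$, one bounds all pieces by $|t|^{-1}\langle x\rangle^{2}\|\tilde h\|_{H^1_k}$ after absorbing the $1/|\lambda|$ against the (locally $L^2$ or $L^1$) weight — here the $\langle x\rangle^2$ rather than $\langle x\rangle$ appears because $\mathcal{K}_0$ itself costs one power of $\langle x\rangle$ and its $\lambda$-derivative another, which is precisely the $\langle x\rangle^{-2}$ weight in \eqref{locdecinfty0}. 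For the high-frequency piece $|\lambda|\gtrsim 1$ there is no singularity: one uses $e^{it\lambda^2} = \frac{1}{2it\lambda}\partial_\lambda e^{it\lambda^2}$ directly, integrates by parts, and uses the bounds $|T(\lambda)|\lesssim 1$, $|\partial_\lambda T(\lambda)|\lesssim \langle\lambda\rangle^{-1}$ (Lemma~\ref{estiTR}), $|m_\pm - 1|\lesssim \langle\lambda\rangle^{-1}\mathcal{W}^1_\pm(x)$ and $|\partial_\lambda m_\pm|\lesssim \langle\lambda\rangle^{-1}\mathcal{W}^1_\pm(x)$ together with the decay of $\mathcal{W}^1_\pm$; here one can afford even better than $\langle x\rangle^{-2}$, so this piece is harmless.

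The main obstacle — and the only place where care is genuinely needed — is the low-frequency analysis near $\lambda = 0$: one must verify that the linear vanishing of $T$ exactly cancels the $1/\lambda$ singularities generated both by $\partial_\lambda e^{it\lambda^2}$ and by $\partial_\lambda m_\pm$, and that after the cancellation the remaining $x$-weights are no worse than $\langle x\rangle^2$. This is where the generic hypothesis on $V$ is essential: without it, $T(0)\neq 0$ would leave an uncancelled $1/\lambda$ and the integration by parts would fail (this is the resonant case, handled differently in the literature). I would organize this by proving a clean pointwise lemma on $\mathcal{K}_0(x,\lambda) = \mathcal{K}(x,\lambda)/\lambda$ and its first $\lambda$-derivative on $|\lambda|\lesssim 1$, citing Lemmas~\ref{lem:Mestimates}, \ref{lem:estiTRTaylor}, and then the integration by parts becomes a short routine estimate. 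Alternatively, since this is stated as a cited result, one may simply invoke \cite[Lemma~A.1]{CP} directly; the sketch above is the structure of that argument.
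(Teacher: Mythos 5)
Your overall strategy is the right one, and it is in fact the structure of the cited proof in \cite[Lemma~A.1]{CP}: split low/high distorted frequencies, integrate by parts once in $\lambda$ via $e^{it\lambda^2}=\frac{1}{2it\lambda}\partial_\lambda e^{it\lambda^2}$, and use the generic vanishing $T(\lambda)\sim\alpha\lambda$ so that $\mathcal{K}(x,\lambda)/\lambda$ is regular at $\lambda=0$. Note that the paper itself does not prove this lemma; it simply cites \cite[Lemma~A.1]{CP}, which you acknowledge.

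There is, however, a genuine gap in the low-frequency step. You claim $|\partial_\lambda\mathcal{K}_0(x,\lambda)|\lesssim\langle x\rangle/|\lambda|$ and then propose to ``absorb the $1/|\lambda|$ against the (locally $L^2$ or $L^1$) weight.'' This cannot work: $1/|\lambda|$ is neither locally $L^1$ nor locally $L^2$ near $\lambda=0$, and after the integration by parts the term where $\partial_\lambda$ falls on $\mathcal{K}_0$ would be $|t|^{-1}\int_{|\lambda|\lesssim 1}|\partial_\lambda\mathcal{K}_0|\,|\tilde h(\lambda)|\,d\lambda$, which diverges whenever $\tilde h(0)\neq 0$ — and for generic $h$ one has $\tilde h(0)\neq 0$. (You also write elsewhere that the $\lambda$-derivative of $\mathcal{K}_0$ ``costs another power of $\langle x\rangle$,'' which is inconsistent with the $1/|\lambda|$ claim; you seem to be conflating two different estimates.) The source of the error is citing \eqref{Mestimates1'} (which has a $1/|k|$ factor paired with the strongly decaying weight $\mathcal{W}^1_\pm$, useful in other contexts) in place of \eqref{Mestimates1} with $s=1$, which gives the non-singular bound $|\partial_\lambda m_\pm(x,\lambda)|\lesssim\jx^2$ uniformly in $\lambda$. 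Combined with the fact that $T(\lambda)/\lambda$ is $C^1$ near $0$ under the hypothesis $\jx^3 V\in L^1$, this yields $|\mathcal{K}_0(x,\lambda)|\lesssim\jx$ and $|\partial_\lambda\mathcal{K}_0(x,\lambda)|\lesssim\jx^2$ with no singular factor. With these bounds, the three post-IBP terms are estimated by $|t|^{-1}\jx\|\tilde h\|_{L^\infty}$, $|t|^{-1}\jx^2\|\tilde h\|_{L^\infty}$, and $|t|^{-1}\jx\|\partial_\lambda\tilde h\|_{L^2}$ respectively (Cauchy--Schwarz and the compact support of the cutoff), all controlled by $|t|^{-1}\jx^2\|\tilde h\|_{H^1_k}$, which is exactly the claim. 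So your proof would close once you replace the singular Jost bound by the correct non-singular one.
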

Finally, we recall the local $L^2$ decay estimate of the derivative from Lemma 3.12 in \cite{CP}.
\begin{lem}[Improved $L^2$ local decay]\label{lem:localEn}
Assuming $\jx^\gamma V(x)\in L^1$, $\gamma\geq2$, is generic, 
we have
\begin{align}
\label{locdecL2}
{\big\| \jx^{-1} \partial_x
  \big(e^{itH}P_c h \big) \big\|}_{L^2_x}\lesssim |t|^{-1} {\big\| \tilde{h} \big\|}_{H^1_k}.
\end{align}
\end{lem}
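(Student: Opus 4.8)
\emph{Proof strategy.} Write $u(t):=e^{itH}P_c h$; by the inversion formula \eqref{tildeF-1} together with \eqref{matK},
\[
\partial_x u(t,x)=\int \partial_x\mathcal{K}(x,\lambda)\, e^{it\lambda^2}\,\widetilde{h}(\lambda)\,d\lambda .
\]
Using \eqref{psipm}--\eqref{mpm} one has $\mathcal{K}(x,\lambda)=\frac{1}{\sqrt{2\pi}}e^{i\lambda x}\,\mathfrak{m}(x,\lambda)$ with $\mathfrak{m}(x,\lambda)=T(\lambda)m_{+}(x,\lambda)$ for $\lambda\geq0$ and $T(-\lambda)m_{-}(x,-\lambda)$ for $\lambda<0$; and for $x$ in the far region $\mp x\geq 1$ one further rewrites $\mathcal{K}$ via the scattering relations \eqref{eq:f_+-1}--\eqref{eq:f_--1} so that only the Jost solution that is asymptotically flat on that half-line appears. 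In every case $\mathcal{K}(x,\lambda)$ is a finite sum of terms $e^{\pm i\lambda x}\,a(\lambda)\,m_{\pm}(x,\pm\lambda)$, where $a$ is a product of transmission/reflection coefficients; since $V$ is generic, Lemma \ref{lem:estiTRTaylor} gives $a(\lambda)=\mathcal{O}(\lambda)$ as $\lambda\to0$. Differentiating in $x$, each term splits into a \emph{dispersive} part $\pm i\lambda\, e^{\pm i\lambda x}a(\lambda)m_{\pm}(x,\pm\lambda)$ (carrying an extra factor $\lambda$) and a \emph{localized} part $e^{\pm i\lambda x}a(\lambda)\partial_x m_{\pm}(x,\pm\lambda)$, whose amplitude decays in $x$ like $\mathcal{W}_{\pm}^{0}(x)$ by \eqref{Mestimates2}.

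For the dispersive part the plan is to integrate by parts in $\lambda$ on each half-line using $\lambda e^{it\lambda^2}=\tfrac{1}{2it}\partial_\lambda e^{it\lambda^2}$. The boundary contributions at $\lambda=\pm\infty$ vanish because $\widetilde{h}\in H^1_k\hookrightarrow C_0$; the boundary contributions at $\lambda=0$ do not in general cancel, but they are of the form $t^{-1}$ times a bounded function of $x$ times $\widetilde{h}(0)$, hence acceptable, since $|\widetilde{h}(0)|\lesssim\|\widetilde{h}\|_{L^\infty_k}\lesssim\|\widetilde{h}\|_{H^1_k}$ by $1$-d Sobolev and $\jx^{-1}\in L^2_x$. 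The remaining bulk term equals $\tfrac{1}{2it}\int e^{it\lambda^2}\partial_\lambda\big(e^{\pm i\lambda x}a(\lambda)m_{\pm}(x,\pm\lambda)\widetilde{h}(\lambda)\big)\,d\lambda$: when $\partial_\lambda$ falls on $e^{\pm i\lambda x}$ the factor $x$ is absorbed by the weight $\jx^{-1}$ (producing a bounded multiplier, handled by Plancherel and the $L^2$-boundedness of $e^{itH}P_c$); when it falls on $a(\lambda)$ one uses $|\partial_\lambda T|,|\partial_\lambda R_{\pm}|\lesssim \jk^{-1}$ from Lemma \ref{estiTR}; when it falls on $m_\pm$ or on $\widetilde{h}$ one uses \eqref{Mestimates1} and $\partial_\lambda\widetilde{h}\in L^2$. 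In each case the $\jx^{-1}L^2_x$-norm is $\mathcal{O}(1)$, so the dispersive part is $\mathcal{O}(t^{-1})\|\widetilde{h}\|_{H^1_k}$.

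For the localized part I would integrate by parts once via $e^{it\lambda^2}=\tfrac{1}{2it\lambda}\partial_\lambda e^{it\lambda^2}$. The apparent $1/\lambda$ singularity is cancelled by $a(\lambda)=\mathcal{O}(\lambda)$ (genericity), so the boundary term at $\lambda=0$ vanishes, while at $\lambda=\pm\infty$ the amplitude $\partial_x m_\pm$ is bounded and $\widetilde{h}\to0$. The bulk term is again $\mathcal{O}(t^{-1})$ times oscillatory integrals $\int e^{\pm i\lambda x}b(x,\lambda)\widetilde h(\lambda)\,e^{it\lambda^2}\,d\lambda$ whose amplitudes $b$ decay in $x$ (the factor $x$ produced by $\partial_\lambda$ hitting $e^{\pm i\lambda x}$ being tamed by the exponential decay of $\mathcal{W}_{\pm}^{0}$ and by $\jx^{-1}$) and whose $\lambda$-derivatives are controlled by \eqref{Mestimates2} and Lemma \ref{estiTR}; such integrals have $L^2_x$-norm $\lesssim\|\widetilde h\|_{H^1_k}$. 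Adding the dispersive and localized contributions over the finitely many terms yields \eqref{locdecL2}.

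The main obstacle is the interplay of two effects. First, the low-frequency regime: the integration by parts in $\lambda$ produces $1/\lambda$ factors which would diverge at the origin were it not for the generic vanishing $T(\lambda)=\mathcal{O}(\lambda)$, $1+R_{\pm}(\lambda)=\mathcal{O}(\lambda)$ of Lemma \ref{lem:estiTRTaylor}. Second, the spatial far field: since $m_{\pm}(x,\lambda)$ does \emph{not} decay as $\pm x\to\mp\infty$, one is forced to use the scattering relations to pass to the asymptotically flat Jost solution on each half-line, and the residual $\lambda=0$ boundary terms carry no spatial decay and are controlled only because $\jx^{-1}\in L^2_x$. This is precisely why the weight $\jx^{-1}$ cannot be dropped and why the time decay saturates exactly at $|t|^{-1}$, rather than a faster rate achievable with stronger weights on both sides and no derivative.
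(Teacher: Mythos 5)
Your overall strategy is the right one and is essentially the route of Chen--Pusateri (the paper cites \cite[Lemma 3.12]{CP} and gives no in-paper proof): write $\partial_x(e^{itH}P_ch)$ via the distorted Fourier inversion, split $\partial_x\mathcal{K}$ into a dispersive piece (where $\partial_x$ hits $e^{\pm i\lambda x}$, producing an explicit $\lambda$) and a localized piece (where it hits $m_\pm$, producing the exponentially localized $\partial_x m_\pm$), pass to the asymptotically flat Jost solution on each half-line, and integrate by parts in $\lambda$ using the generic vanishing at $\lambda=0$ to kill the apparent singularity. The treatment of the dispersive part, and the way you identify genericity and the weight $\jx^{-1}$ as the two nontrivial mechanisms, is on target.

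However, there is a genuine gap in the localized part, stemming from your claim that ``in every case $\mathcal{K}(x,\lambda)$ is a finite sum of terms $e^{\pm i\lambda x}a(\lambda)m_\pm(x,\pm\lambda)$ with $a(\lambda)=\mathcal{O}(\lambda)$.'' This is true in the near region ($a=T(\lambda)$, and $T(0)=0$), but it is false in the far region: after passing through the scattering relations \eqref{eq:f_+-1}--\eqref{eq:f_--1}, the two amplitudes are $R_\mp(\lambda)$ and $1$, and for a generic potential $R_\pm(0)=-1$, not $0$. Lemma \ref{lem:estiTRTaylor} gives the vanishing of $T(\lambda)$ and of $1+R_\pm(\lambda)$, never of $R_\pm$ itself. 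Consequently, applying $e^{it\lambda^2}=\tfrac{1}{2it\lambda}\partial_\lambda e^{it\lambda^2}$ to each far-field term \emph{separately} produces a non-integrable $1/\lambda$ singularity in the bulk integral; the cancellation you invoke is simply not there term by term. What saves the argument is that the \emph{combination}
\[
R_-(\lambda)\,e^{-i\lambda x}\,\partial_x m_-(x,\lambda)+e^{i\lambda x}\,\partial_x m_-(x,-\lambda)
\]
vanishes at $\lambda=0$ precisely because $R_-(0)=-1$ pairs against the $+1$ coefficient and $m_-(x,0)$ coincides with $m_-(x,-0)$. You must exploit this before the $\lambda$-integration by parts (for instance by writing $R_-(\lambda)=-1+(1+R_-(\lambda))$ and peeling off the resulting $\sin(\lambda x)$-type piece, whose $1/\lambda$ factor is tamed by the $\jx^{-1}$ weight rather than by genericity). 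A related small inaccuracy: even with the combined vanishing, the $\lambda=0$ boundary term of the localized part does not \emph{vanish}; it is $\mathcal{O}(t^{-1})\,\tilde h(0)\cdot(\text{bounded in }x)$ (note $\tilde h(0)$ need not be zero for generic $\tilde h\in H^1_k$), and is acceptable only for the same weight reason you already identified for the dispersive boundary term.

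Finally, you gloss over the $L^2_x$ bounds on the residual oscillatory integrals by appealing to ``Plancherel and the $L^2$-boundedness of $e^{itH}P_c$''; after $\partial_\lambda$ has acted the symbol is no longer the distorted kernel $\mathcal{K}$, so what is actually needed are the Hwang-type pseudo-differential $L^2$ bounds of the kind recorded in Lemma \ref{lem:pesudo1} and its analogues. Citing those explicitly would close the argument once the far-field cancellation is repaired.
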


\subsubsection{Local smoothing estimates}

Finally, we collect some local decay or smoothing estimates which
can also be regarded as Kato smoothing estimates. Although the
first two estimates are shown in Mizumachi \cite{Mi}. Here we provide
a proof based on distorted transforms.
\begin{lem}
\label{lem:smoothing}Suppose $\jx^\gamma V(x)\in L^1$ with $\gamma \geq 1$ is generic, then
we have the following estimate
\begin{equation}
\left\Vert \left\langle x\right\rangle ^{-1}e^{iHt}P_{c}h\right\Vert _{L_{x}^{\infty}L_{t}^{2}}\lesssim\left\Vert h\right\Vert _{L^{2}}.\label{eq:smoothing1}
\end{equation}
For inhomogeneous estimate, we have
\begin{equation}
\left\Vert \int e^{isH}P_{c}F\left(s\right)\,ds\right\Vert _{L_{x}^{2}}\lesssim\left\Vert \left\langle x\right\rangle F\right\Vert _{L_{x}^{1}L_{t}^{2}}\label{eq:smoothing2}
\end{equation}
\end{lem}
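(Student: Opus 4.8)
\textbf{Proof proposal for Lemma \ref{lem:smoothing}.}

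The plan is to prove both estimates by passing to the distorted Fourier side and reducing them to the classical Kato smoothing estimates for the free flow, which in turn follow from the boundedness of the one-dimensional extension/restriction (trace) operators. For the homogeneous estimate \eqref{eq:smoothing1}, I would first use the distorted Fourier transform to write, for fixed $x$,
\[
\left(e^{iHt}P_{c}h\right)(x)=\int\mathcal{K}(x,\lambda)e^{it\lambda^{2}}\widetilde{h}(\lambda)\,d\lambda .
\]
Changing variables $\lambda\mapsto\lambda^{2}$ on the two half-lines and using that $\mathcal{K}(x,\lambda)=\frac{1}{\sqrt{2\pi}}T(\lambda)\psi_{+}(x,\lambda)$ for $\lambda\geq 0$ (and the symmetric expression for $\lambda<0$), the $t$-integral becomes, up to harmless factors, a one-dimensional Fourier transform in the variable $\mu=\lambda^{2}$ evaluated along the curve $t$. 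Plancherel in $t$ then gives
\[
\left\Vert \left(e^{iHt}P_{c}h\right)(x)\right\Vert_{L^{2}_{t}}^{2}\lesssim \int_{0}^{\infty}\frac{1}{\lambda}\Big(|T(\lambda)|^{2}|\psi_{+}(x,\lambda)|^{2}|\widetilde{h}(\lambda)|^{2}+\cdots\Big)\,d\lambda .
\]
The potential singularity at $\lambda=0$ is precisely where the generic hypothesis enters: by Lemma \ref{lem:estiTRTaylor}, $T(\lambda)=\alpha\lambda+\mathcal{O}(\lambda)$ near $0$, so $|T(\lambda)|^{2}/\lambda$ is bounded near the origin; combined with the uniform bound $|\psi_{\pm}(x,\lambda)|\lesssim\langle x\rangle$ coming from $m_{\pm}=1+\mathcal{O}(\langle x\rangle/\langle\lambda\rangle)$ in Lemma \ref{lem:Mestimates}, one gets $\left\Vert \langle x\rangle^{-1}(e^{iHt}P_{c}h)(x)\right\Vert_{L^{2}_{t}}^{2}\lesssim\Vert\widetilde{h}\Vert_{L^{2}}^{2}=\Vert P_{c}h\Vert_{L^{2}}^{2}$, uniformly in $x$, which is \eqref{eq:smoothing1}.

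For the inhomogeneous estimate \eqref{eq:smoothing2}, I would deduce it from \eqref{eq:smoothing1} by a duality/$TT^{*}$ argument. Writing $G:=\int e^{isH}P_{c}F(s)\,ds$ and pairing with an arbitrary $g\in L^{2}_{x}$,
\[
\langle G,g\rangle=\int_{\mathbb{R}}\big\langle F(s),\,e^{-isH}P_{c}g\big\rangle_{x}\,ds
 =\int_{\mathbb{R}}\int_{\mathbb{R}}\langle x\rangle F(s,x)\cdot\langle x\rangle^{-1}\overline{\big(e^{isH}P_{c}g\big)(x)}\,dx\,ds,
\]
using that $H$ is self-adjoint so $\overline{e^{-isH}P_c g}=e^{isH}P_c\bar g$ after taking conjugates appropriately. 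Applying Hölder in $(x,s)$ with the splitting $L^{1}_{x}L^{2}_{t}\times L^{\infty}_{x}L^{2}_{t}$ and then \eqref{eq:smoothing1},
\[
|\langle G,g\rangle|\lesssim \Vert\langle x\rangle F\Vert_{L^{1}_{x}L^{2}_{t}}\,\big\Vert \langle x\rangle^{-1}e^{isH}P_{c}g\big\Vert_{L^{\infty}_{x}L^{2}_{t}}\lesssim\Vert\langle x\rangle F\Vert_{L^{1}_{x}L^{2}_{t}}\Vert g\Vert_{L^{2}_{x}},
\]
and taking the supremum over $\Vert g\Vert_{L^2}=1$ gives \eqref{eq:smoothing2}. (One should note $G\in P_{c}L^{2}$ automatically, so testing against $g$ is the same as testing against $P_{c}g$.)

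The main obstacle is the low-frequency analysis in the first step: making rigorous the change of variables $\lambda\mapsto\lambda^{2}$ and the Plancherel step requires controlling the behavior of $\mathcal{K}(x,\lambda)$, and hence of $T(\lambda)$ and $\psi_{\pm}(x,\lambda)$, uniformly down to $\lambda=0$, where $T$ vanishes linearly and the Jost solutions degenerate ($\psi_{+}(x,0)$ and $\psi_{-}(x,0)$ become linearly dependent in the generic case). The generic condition (Definition \ref{def:generic}) together with the Taylor expansions of Lemma \ref{lem:estiTRTaylor} is exactly what rescues the weight $1/\lambda$ from the Jacobian; one also has to check that the contributions of $m_{\pm}-1$ (which carry the $\langle x\rangle$ growth) are absorbed by the weight $\langle x\rangle^{-1}$, and that the high-frequency part is handled by the standard free Kato smoothing estimate after comparing $\psi_{\pm}$ with $e^{\pm i\lambda x}$ via Lemma \ref{lem:Mestimates}. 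Once the spectral-measure bounds near $\lambda=0$ are in place, the rest is the routine $TT^{*}$ duality sketched above.
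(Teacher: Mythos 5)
Your proposal is correct and follows essentially the same route as the paper's own proof: passing to the distorted Fourier side, the substitution $\lambda=k^{2}$, Plancherel in $t$, using the generic condition $T(k)\sim k$ near $k=0$ to absorb the Jacobian singularity $1/\sqrt{\lambda}$ together with the Jost-function bounds to control the $\langle x\rangle$ growth, and then deriving \eqref{eq:smoothing2} from \eqref{eq:smoothing1} by duality. The only difference is cosmetic: you spell out the $TT^{*}$/duality pairing in detail where the paper simply invokes duality.
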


\begin{proof}
We prove \eqref{eq:smoothing1} first and then \eqref{eq:smoothing2}
will follow by duality. Using the distorted basis, we write
\[
e^{itH}P_{c}h=\int\mathcal{K}\left(x,k\right)e^{itk^{2}}\tilde{h}\left(k\right)\,dk.
\]
We only discuss $k\geq0$ since the other piece would be similar.
For $k\geq0$, we have
\[
\int_{k\geq0}\mathcal{K}\left(x,k\right)e^{itk^{2}}\tilde{h}\left(k\right)\,dk=\int T\left(k\right)e^{ikx}m_{+}\left(x,k\right)e^{itk^{2}}\tilde{h}\left(k\right)\,dk.
\]
Making a change of variable, $k^{2}=\lambda$, $2kdk=d\lambda$, one
has
\begin{align}
\int T\left(k\right)e^{ikx}m_{+}\left(x,k\right)e^{itk^{2}}\tilde{h}\left(k\right)\,dk=\label{eq:smcom0}\\
\int T\left(\sqrt{\lambda}\right)e^{i\sqrt{\lambda}x}m_{+}\left(x,\sqrt{\lambda}\right)e^{it\lambda}\tilde{h}\left(\sqrt{\lambda}\right)\frac{1}{2\sqrt{\lambda}}\,d\lambda.\nonumber 
\end{align}
Note that when $k$ is small, $T\left(k\right)\sim k$ since $V$
is generic.

Applying Plancherel's theorem in $t$, to obtain the $L^{2}$ estimate
in $t$, it suffices to estimate the $L^{2}$ norm of
\[
T\left(\sqrt{\lambda}\right)e^{i\sqrt{\lambda}x}m_{+}\left(x,\sqrt{\lambda}\right)\tilde{h}\left(\sqrt{\lambda}\right)\frac{1}{2\sqrt{\lambda}}
\]
in $\lambda$. Note that
\begin{equation}
\left\Vert \left\langle x\right\rangle ^{-1}\frac{1}{2\sqrt{\lambda}}T\left(\sqrt{\lambda}\right)e^{i\sqrt{\lambda}x}m_{+}\left(x,\sqrt{\lambda}\right)\right\Vert _{L_{x,\lambda}^{\infty}}<\infty\label{eq:smcom1}
\end{equation}
by the estimate for Jost functions, Lemma \ref{lem:Mestimates}.  Therefore, after localizing $x$,
it suffices to compute
\begin{align}
\int_{0}^{\infty}\left(\frac{1}{2\sqrt{\lambda}}T\left(\sqrt{\lambda}\right)\right)^{2}\left|\tilde{h}\left(\sqrt{\lambda}\right)\right|^{2}\,d\lambda & \sim\int_{0}^{\infty}\left|\tilde{h}\left(k\right)\right|^{2}\,dk \sim\left\Vert \tilde{h}\right\Vert _{L^{2}}\label{eq:smcomp2}
\end{align}
where we used $T\left(\sqrt{\lambda}\right)\sim\sqrt{\lambda}$ for
$\lambda$ small and $T\left(\sqrt{\lambda}\right)\sim1$ for $\lambda$
large.

Putting \eqref{eq:smcom0}, \eqref{eq:smcom1}, \eqref{eq:smcomp2} and
similar computations for $k\leq0$, we conclude that
\[
\left\Vert \left\langle x\right\rangle ^{-1}e^{iHt}P_{c}h\right\Vert _{L_{x}^{\infty}L_{t}^{2}}\lesssim\left\Vert h\right\Vert _{L^{2}}
\]
and
\begin{equation}
\left\Vert \int e^{isH}P_{c}F\left(s\right)\,ds\right\Vert _{L_{x}^{2}}\lesssim\left\Vert \left\langle x\right\rangle F\right\Vert _{L_{x}^{1}L_{t}^{2}}\label{eq:smoothing2-1}
\end{equation}
follows by duality.
\end{proof}
\begin{rem}
One can restrict estimates above onto the finite interval. In other
words, for any $T$, we have
\begin{equation}
\left\Vert \left\langle x\right\rangle ^{-1}e^{iHt}P_{c}h\right\Vert _{L_{x}^{\infty}L_{t}^{2}\left[0,T\right]}\lesssim\left\Vert h\right\Vert _{L^{2}}.\label{eq:smoothing1-1}
\end{equation}
and the inhomogeneous estimate version
\begin{equation}
\left\Vert \int_{0}^{T}e^{isH}P_{c}F\left(s\right)\,ds\right\Vert _{L_{x}^{2}}\lesssim\left\Vert \left\langle x\right\rangle F\right\Vert _{L_{x}^{1}L_{t}^{2}}.\label{eq:smoothing2-2}
\end{equation}
\end{rem}

\begin{rem}
In Mizumachi \cite{Mi}, a retarted inhomogenous smoothing estimate
is also established:
\[
\left\Vert \left\langle x\right\rangle ^{-1}\int_{0}^{t}e^{i\left(t-s\right)H}P_{c}F\left(s\right)\,ds\right\Vert _{L_{x}^{2}}\lesssim\left\Vert \left\langle x\right\rangle F\right\Vert _{L_{x}^{1}L_{t}^{2}}.
\]
This can be also shown by the Laplace transform of resolvents and
using the limiting absorption principle or the boundedness of the
resolvent in weighted space. We refer to our later analysis Lemma
\ref{lem:limitingab} and its application.
\end{rem}

Next we provide the smoothing estimates which can absorb an additional
growth in $t$.
\begin{lem}\label{lem:imprsmoothing}
 Suppose $\jx^\gamma V(x)\in L^1$ with $\gamma \geq 2$ is generic. Let $\phi_{1}$ and $\phi_{2}$ be smooth nonegative functions such
that $\phi_{1}+\phi_{2}=1$, $\phi_{1}\left(\lambda\right)=1$ for
$\left|\lambda\right|\leq1$ and $\phi_{1}\left(\lambda\right)=0$
for $\left|\lambda\right|\ge2$. Then we have the following estimates
\begin{equation}
\left\Vert \left\langle x\right\rangle ^{-2}t\partial_{t}^j\left(e^{itH}\phi_{1}\left(H\right)P_ch\right)\right\Vert _{L_{x}^{\infty}L_{t}^{2}}\lesssim\left\Vert \tilde{h}\right\Vert _{H^{1}},\,j=1,2\label{eq:impsmL}
\end{equation}
and
\begin{equation}
\left\Vert \left\langle x\right\rangle ^{-2}t\partial_{x}^{j}\left(e^{itH}\phi_{2}\left(H\right)P_c h\right)\right\Vert _{L_{x}^{\infty}L_{t}^{2}}\lesssim\left\Vert \tilde{h}\right\Vert _{H^{1}},\ j=0,1.\label{eq:impsmH}
\end{equation}
\end{lem}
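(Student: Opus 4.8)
The plan is to prove both estimates \eqref{eq:impsmL} and \eqref{eq:impsmH} in parallel with the argument used for \eqref{eq:smoothing1} in Lemma~\ref{lem:smoothing}, i.e.\ by passing to the distorted Fourier side, applying Plancherel in~$t$, and reducing matters to a pointwise bound on the resulting symbol together with an $L^2_\lambda$ bound on a modified profile. Concretely, I would write, for $k\ge0$ (the $k\le0$ piece being symmetric),
\[
e^{itH}\phi_j(H)P_ch = \int T(k)e^{ikx}m_+(x,k)\,\phi_j(k^2)\,e^{itk^2}\,\tilde h(k)\,dk,
\]
substitute $\lambda=k^2$, $d\lambda = 2k\,dk$, and pull the weight $t$ inside by writing $t e^{it\lambda} = -i\partial_\lambda e^{it\lambda}$. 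An integration by parts in $\lambda$ then transfers the $t$-weight onto a $\partial_\lambda$ falling on the symbol $\tfrac{1}{2\sqrt\lambda}T(\sqrt\lambda)m_+(x,\sqrt\lambda)e^{i\sqrt\lambda x}\phi_j(\lambda)$ and on $\tilde h(\sqrt\lambda)$. For the low-frequency piece \eqref{eq:impsmL}, the extra time derivatives $\partial_t^{\,j}$, $j=1,2$, pull down factors $(i\lambda)^j$ which are harmless since $\phi_1$ restricts to $\lambda\le 4$; the genuine work is in the $\partial_\lambda$ hitting the symbol. Note $\partial_\lambda\bigl(\tfrac{1}{2\sqrt\lambda}T(\sqrt\lambda)\bigr)$ is bounded near $\lambda=0$ precisely because, for a generic potential, $T(k)=\alpha k+\mathcal O(k)$ by Lemma~\ref{lem:estiTRTaylor}, so $\tfrac{1}{2\sqrt\lambda}T(\sqrt\lambda)$ is a smooth function of $\lambda$ up to $\lambda=0$; the $x$-derivative on $e^{i\sqrt\lambda x}$ produces a factor $x/\sqrt\lambda$, which is absorbed by the localization $\langle x\rangle^{-2}$ after one checks (using Lemma~\ref{lem:Mestimates}, in particular the bounds on $\partial_k m_+$ and on $\mathcal W_+^{s}$) that the weighted symbol lies in $L^\infty_{x,\lambda}$. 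The leftover $\partial_\lambda \tilde h(\sqrt\lambda)$ term is $\|\partial_k\tilde h\|_{L^2}$ after undoing the change of variables, giving the $\|\tilde h\|_{H^1}$ on the right-hand side; the term where $\partial_\lambda$ hits $\tilde h(\sqrt\lambda)$ via the chain rule carries a $\tfrac{1}{\sqrt\lambda}$ which, combined with the compensating $\tfrac{1}{2\sqrt\lambda}T(\sqrt\lambda)\sim\sqrt\lambda$, is again harmless near $\lambda=0$.

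For the high-frequency estimate \eqref{eq:impsmH} one proceeds identically but now $\phi_2$ is supported in $\lambda\gtrsim1$, so there is no singularity at the origin at all; the $j$ spatial derivatives $\partial_x^{\,j}$, $j=0,1$, produce a factor $(i\sqrt\lambda)^{\,j}$ (plus terms where $\partial_x$ hits $m_+$, controlled by $\partial_x m_+$ bounds in \eqref{Mestimates2}), and after the integration by parts in $\lambda$ carrying the $t$-weight one again reduces to bounding a weighted symbol in $L^\infty_{x,\lambda}$ times $\|\tilde h\|_{H^1}$. The factor $\sqrt\lambda$ from $\partial_x^{\,j}$ at high frequency is exactly matched by the $\tfrac{1}{2\sqrt\lambda}$ Jacobian, and the $H^1$ norm of $\tilde h$ (rather than $L^2$) accounts for the $\partial_\lambda$ that may fall on $\tilde h$.

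The main obstacle I anticipate is the bookkeeping around $\lambda=0$ in the low-frequency case \eqref{eq:impsmL}: once one integrates by parts in $\lambda$ to absorb the $t$-weight, several competing powers of $\sqrt\lambda$ appear — from the Jacobian $\tfrac{1}{2\sqrt\lambda}$, from $T(\sqrt\lambda)\sim\sqrt\lambda$, from $\partial_\lambda$ acting on $\sqrt\lambda$-dependent factors, and from the $x/\sqrt\lambda$ generated by differentiating $e^{i\sqrt\lambda x}$ — and one must verify that in every term the singular factors cancel and the spatial weight $\langle x\rangle^{-2}$ is strong enough to tame the polynomial growth in $x$ coming from $m_+$ and from the $x$ appearing after differentiation. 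The two spatial weights (rather than one, as in Lemma~\ref{lem:smoothing}) are there precisely to absorb the extra $x$ that the $t$-weight, once converted to a $\partial_\lambda$, ultimately costs; this is the analogue of the well-known fact that in the flat case $t\partial_x$ and $x$ are interchangeable on the Fourier side. Apart from this, the argument is a routine adaptation of the proof of \eqref{eq:smoothing1}, using Lemmas~\ref{lem:Mestimates}, \ref{estiTR} and~\ref{lem:estiTRTaylor}, and the restriction to a finite interval $[0,T]$ (as in the remark following Lemma~\ref{lem:smoothing}) is automatic since all the $L^2_t$ bounds above only improve upon restriction.
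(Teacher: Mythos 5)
Your overall route is the same as the paper's: express the evolution in distorted Fourier variables, change variables $\lambda=k^2$, convert the $t$-weight into a $\partial_\lambda$, integrate by parts, apply Plancherel in $t$, and estimate the resulting pieces using the Jost function bounds and the localizing weight. However, there is a genuine gap at the crucial low-frequency step near $\lambda=0$, and you misdiagnose exactly the point where the generic condition and the $\lambda^j$ factors enter.

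You assert that $\partial_\lambda\bigl(\tfrac{1}{2\sqrt\lambda}T(\sqrt\lambda)\bigr)$ is bounded near $\lambda=0$ and that $\tfrac{1}{2\sqrt\lambda}T(\sqrt\lambda)$ is ``a smooth function of $\lambda$ up to $\lambda=0$'' because $T(k)=\alpha k+\mathcal O(k)$. This does not follow, and in fact it is generically false. Writing $g(\lambda)=T(\sqrt\lambda)/\sqrt\lambda$, one computes
\[
g'(\lambda)=\frac{\tfrac{1}{2}T'(\sqrt\lambda)-\tfrac{1}{2\sqrt\lambda}T(\sqrt\lambda)}{\lambda},
\]
and the numerator vanishes only to order $\sqrt\lambda$ (if, say, $T(k)=\alpha k+\beta k^2+\dots$ with $\beta\neq0$), so $g'(\lambda)\sim\lambda^{-1/2}$: bounded at $\lambda=0$, but not differentiable there. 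Genericity gives you the boundedness of $T(\sqrt\lambda)/\sqrt\lambda$ at the origin, not smoothness. Meanwhile you dismiss the factors $(i\lambda)^j$ pulled down by $\partial_t^{\,j}$ as ``harmless since $\phi_1$ restricts to $\lambda\le4$'', i.e.\ as mere bounded multipliers on a compact set. That is the wrong reason: what matters is that they \emph{vanish} at $\lambda=0$. With the $\lambda^j$ kept in the symbol, $\partial_\lambda\bigl(\lambda^{\,j}\tfrac{T(\sqrt\lambda)}{2\sqrt\lambda}\phi_1(\lambda)\bigr)$ is in fact bounded because the $\lambda^{j}$ cancels the $\lambda^{-1/2}$ singularity of $g'$; without it, the argument as you state it breaks down. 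The paper proceeds slightly differently: it uses the crude bound $|\partial_\lambda(\mathrm{symbol})|\lesssim\lambda^{-1/2}$ and then absorbs the resulting $1/k$ weight in $\int\tfrac{1}{k}|\tilde h(k)|^2\,dk$ via Hardy's inequality, i.e.\ the fundamental theorem of calculus together with $\tilde h(0)=0$, which is exactly where genericity enters (Lemma \ref{lemtildeF}(ii)). You never invoke $\tilde h(0)=0$ or Hardy, and your stated justification for the boundedness of the symbol derivative is incorrect. To repair your proof you should either track the $\lambda^j$ carefully so that the symbol genuinely vanishes at the origin, or, as in the paper, be content with a $\lambda^{-1/2}$ bound on the symbol derivative and absorb the resulting singular weight using $\tilde h(0)=0$. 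The high-frequency estimate \eqref{eq:impsmH} is unaffected, as you correctly note, since $\phi_2$ is supported away from the origin.
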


\begin{rem}
Note that in the homogeneous case, the decomposition of $\phi_{1}\left(H\right)$
and $\phi_{2}\left(H\right)$ actually is equivalent to decompose
the time Fourier transform of the solution
\[
\mathcal{F}_{t}\left[e^{itH}h\right]\left(\tau\right)=\int e^{-it\tau}\left(e^{itH}h\right)\,dt
\]
in to low and high frequency parts. We also notice that given $\tilde{h}\in H^1$, by Lemma \ref{lem:smoothing}, the Fourier transform with respect to time is well-defined. 
\end{rem}

\begin{proof}
We only prove \eqref{eq:impsmL} with $j=1$ since in the region we are interested in the analysis for $j=2$ is basically the same. Using the distorted Fourier transform, one can write
\[
t\partial_{t}\left(e^{itH}\phi_{1}\left(H\right)h\right)=t\int\mathcal{K}\left(x,k\right)ik^{2}\phi_{1}\left(k^{2}\right)e^{itk^{2}}\tilde{h}\left(k\right)\,dk.
\]
We only discuss $k\geq0$ since the other piece would be similar.
For $k\geq0$, we have
\[
\int\mathcal{K}\left(x,k\right)ik^{2}\phi_{1}\left(k^{2}\right)e^{itk^{2}}\tilde{h}\left(k\right)\,dk=\int T\left(k\right)e^{ikx}ik^{2}m_{+}\left(x,k\right)\phi_{1}\left(k^{2}\right)e^{itk^{2}}\tilde{h}\left(k\right)\,dk.
\]
Making a change of variable, $k^{2}=\lambda$, $2kdk=d\lambda$, one
has{\footnotesize`
\[
\int T\left(k\right)e^{ikx}ik^{2}m_{+}\left(x,k\right)\phi_{1}\left(k^{2}\right)e^{itk^{2}}\tilde{h}\left(k\right)\,dk\sim\int\frac{T\left(\sqrt{\lambda}\right)}{2\sqrt{\lambda}}e^{i\sqrt{\lambda}x}i\lambda\phi_{1}\left(\lambda\right)m_{+}\left(x,\sqrt{\lambda}\right)e^{it\lambda}\tilde{h}\left(\sqrt{\lambda}\right)\,d\lambda.
\]}Note that when $k$ is small, $T\left(k\right)\sim k$. Multiplying
the above expression by $t$ and then performing integration by parts
in $\lambda$, we obtain{\footnotesize
\begin{align}
t\int\lambda\frac{T\left(\sqrt{\lambda}\right)}{2\sqrt{\lambda}}e^{i\sqrt{\lambda}x}i\phi_{1}\left(\lambda\right)m_{+}\left(x,\sqrt{\lambda}\right)e^{it\lambda}\tilde{h}\left(\sqrt{\lambda}\right)\,d\lambda\label{eq:IBPL1}\\
\sim i\int\partial_{\lambda}\left(\lambda\frac{T\left(\sqrt{\lambda}\right)}{2\sqrt{\lambda}}i\phi_{1}\left(\lambda\right)\right)e^{i\sqrt{\lambda}x}m_{+}\left(x,\sqrt{\lambda}\right)e^{it\lambda}\tilde{h}\left(\sqrt{\lambda}\right)\,d\lambda\nonumber \\
-\int\left(\lambda\frac{T\left(\sqrt{\lambda}\right)}{2\sqrt{\lambda}}i\phi_{1}\left(\lambda\right)\right)\frac{1}{\sqrt{\lambda}}xe^{i\sqrt{\lambda}x}m_{+}\left(x,\sqrt{\lambda}\right)e^{it\lambda}\tilde{h}\left(\sqrt{\lambda}\right)\,d\lambda\nonumber \\
-i\int\left(\lambda\frac{T\left(\sqrt{\lambda}\right)}{2\sqrt{\lambda}}i\phi_{1}\left(\lambda\right)\right)\frac{1}{\sqrt{\lambda}}e^{i\sqrt{\lambda}x}m'_{+}\left(x,\sqrt{\lambda}\right)e^{it\lambda}\tilde{h}\left(\sqrt{\lambda}\right)\,d\lambda\nonumber \\
-i\int\left(\lambda\frac{T\left(\sqrt{\lambda}\right)}{2\sqrt{\lambda}}i\phi_{1}\left(\lambda\right)\right)\frac{1}{\sqrt{\lambda}}e^{i\sqrt{\lambda}x}m_{+}\left(x,\sqrt{\lambda}\right)e^{it\lambda}\tilde{h}'\left(\sqrt{\lambda}\right)\,d\lambda & .\nonumber 
\end{align}}
Here we use $m'$ to denote the differentiation with respect to the
second variable.

To estimate the $L^{2}$ norm with respect to $t$, it suffices to
estimate the $L_{\lambda}^{2}$ norm of the following expressions
from the RHS of \eqref{eq:IBPL1}:{\footnotesize
\[
\partial_{\lambda}\left(\lambda\frac{T\left(\sqrt{\lambda}\right)}{2\sqrt{\lambda}}i\phi_{1}\left(\lambda\right)\right)e^{i\sqrt{\lambda}x}m_{+}\left(x,\sqrt{\lambda}\right)e^{it\lambda}\tilde{h}\left(\sqrt{\lambda}\right)
\]
\[
\left(\lambda\frac{T\left(\sqrt{\lambda}\right)}{2\sqrt{\lambda}}i\phi_{1}\left(\lambda\right)\right)\frac{1}{\sqrt{\lambda}}xe^{i\sqrt{\lambda}x}m_{+}\left(x,\sqrt{\lambda}\right)e^{it\lambda}\tilde{h}\left(\sqrt{\lambda}\right)
\]
\[
\left(\lambda\frac{T\left(\sqrt{\lambda}\right)}{2\sqrt{\lambda}}i\phi_{1}\left(\lambda\right)\right)\frac{1}{\sqrt{\lambda}}e^{i\sqrt{\lambda}x}m'_{+}\left(x,\sqrt{\lambda}\right)e^{it\lambda}\tilde{h}\left(\sqrt{\lambda}\right)
\]}
and{\footnotesize
\[
\left(\lambda\frac{T\left(\sqrt{\lambda}\right)}{2\sqrt{\lambda}}i\phi_{1}\left(\lambda\right)\right)\frac{1}{\sqrt{\lambda}}e^{i\sqrt{\lambda}x}m_{+}\left(x,\sqrt{\lambda}\right)e^{it\lambda}\tilde{h}'\left(\sqrt{\lambda}\right)
\]}
 by Plancherel's theorem.

To estimate the first piece, we just notice that
\begin{align*}
\int\left|\partial_{\lambda}\left(\lambda\frac{T\left(\sqrt{\lambda}\right)}{2\sqrt{\lambda}}i\phi_{1}\left(\lambda\right)\right)e^{i\sqrt{\lambda}x}m_{+}\left(x,\sqrt{\lambda}\right)e^{it\lambda}\tilde{h}\left(\sqrt{\lambda}\right)\right|^{2}\,d\lambda & \lesssim\left\langle x\right\rangle \int\frac{1}{k}\left|\tilde{h}\left(k\right)\right|^{2}\,dk\\
 & \lesssim\left\langle x\right\rangle \left\Vert \partial_{k}\tilde{h}\right\Vert _{H^{1}}^{2}
\end{align*}
where in the last inequality, we applied Hardy's inequality or the fundamental
theorem of calculus with the fact $\tilde{h}$$\left(0\right)=0$. 

The second one can be estimated similarly. We should have
\begin{align*}
\int\left|\left(\lambda\frac{T\left(\sqrt{\lambda}\right)}{2\sqrt{\lambda}}i\phi_{1}\left(\lambda\right)\right)\frac{1}{\sqrt{\lambda}}xe^{i\sqrt{\lambda}x}m_{+}\left(x,\sqrt{\lambda}\right)e^{it\lambda}\tilde{h}\left(\sqrt{\lambda}\right)\right|^{2}\,d\lambda\\
\lesssim\left\langle x\right\rangle ^{4}\int\lambda\phi_{1}\left(\lambda\right)\left|\tilde{h}\left(\sqrt{\lambda}\right)\right|^{2}\,d\lambda\lesssim\left\langle x\right\rangle ^{4}\left\Vert \tilde{h}\right\Vert _{L^{2}}^{2} & .
\end{align*}
The third one can be treated identically as the second piece. One
has
\begin{align*}
\int\left|\left(\lambda\frac{T\left(\sqrt{\lambda}\right)}{2\sqrt{\lambda}}i\phi_{1}\left(\lambda\right)\right)\frac{1}{\sqrt{\lambda}}e^{i\sqrt{\lambda}x}m'_{+}\left(x,\sqrt{\lambda}\right)e^{it\lambda}\tilde{h}\left(\sqrt{\lambda}\right)\right|^{2}\,d\lambda\\
\lesssim\left\langle x\right\rangle ^{4}\int\lambda\phi_{1}\left(\lambda\right)\left|\tilde{h}\left(\sqrt{\lambda}\right)\right|^{2}\,d\lambda\lesssim\left\langle x\right\rangle ^{4}\left\Vert \tilde{h}\right\Vert _{L^{2}}^{2} & .
\end{align*}
It remains to check the last piece:
\begin{align*}
\int\left|\left(\lambda\frac{T\left(\sqrt{\lambda}\right)}{2\sqrt{\lambda}}i\phi_{1}\left(\lambda\right)\right)\frac{1}{\sqrt{\lambda}}e^{i\sqrt{\lambda}x}m_{+}\left(x,\sqrt{\lambda}\right)e^{it\lambda}\tilde{h}'\left(\sqrt{\lambda}\right)\right|^{2}\,d\lambda\\
\lesssim\left\langle x\right\rangle ^{2}\int\phi_{1}^{2}\left(\lambda\right)\left|\tilde{h}'\left(\sqrt{\lambda}\right)\right|^{2}\,d\lambda
\lesssim\left\langle x\right\rangle ^{2}\left\Vert \partial_{k}\tilde{h}\right\Vert _{L^{2}}^{2} & .
\end{align*}
Adding everything together, we conclude that
\[
\left\Vert \left\langle x\right\rangle ^{-2}t\partial_{t}\left(e^{itH}\phi_{1}\left(H\right)P_c h\right)\right\Vert _{L_{x}^{\infty}L_{t}^{2}}\lesssim\left\Vert \tilde{h}\right\Vert _{H^{1}}.
\]
For the high frequency part, the same analysis can be applied. In
the support of $\phi_{2}$, there is no singularities introduced by
the change of variable $2kdk=d\lambda$ and the differentiation $\partial_{\lambda}$.
Directly, one has
\[
\left\Vert \left\langle x\right\rangle ^{-2}te^{itH}\phi_{2}\left(H\right)P_c h\right\Vert _{L_{x}^{\infty}L_{t}^{2}}\lesssim\left\Vert \tilde{h}\right\Vert _{H^{1}}.
\]
We check the estimate \eqref{eq:impsmH} for $j=1$. Again using the
distorted Fourier transform, one can write
\[
t\partial_{x}\left(e^{itH}\phi_{2}\left(H\right)h\right)=t\partial_{x}\int\mathcal{K}\left(x,k\right)\phi_{2}\left(k^{2}\right)e^{itk^{2}}\tilde{h}\left(k\right)\,dk.
\]
We only discuss $k\geq0$ since the other piece would be similar.
For $k\geq0$, we have
\begin{align}
\partial_{x}\int\mathcal{K}\left(x,k\right)\phi_{2}\left(k^{2}\right)e^{itk^{2}}\tilde{h}\left(k\right)\,dk & =\int T\left(k\right)ike^{ikx}m_{+}\left(x,k\right)\phi_{2}\left(k^{2}\right)e^{itk^{2}}\tilde{h}\left(k\right)\,dk\label{eq:partialxH}\\
 & +\int T\left(k\right)e^{ikx}\partial_{k}m_{+}\left(x,k\right)\phi_{2}\left(k^{2}\right)e^{itk^{2}}\tilde{h}\left(k\right)\,dk\nonumber 
\end{align}
The second term on the RHS of \eqref{eq:partialxH} can be estimated
by the same argument as $j=0$. The reason we need to pay some additional
attention to the first term is that there is an extra $k$ in the
integrand. We need to make sure that it will not introduce extra weights
in the final estimate.

As before, making a change of variable, $k^{2}=\lambda$, $2kdk=d\lambda$,
one has
\begin{align*}
\int T\left(k\right)ike^{ikx}m_{+}\left(x,k\right)\phi_{2}\left(k^{2}\right)e^{itk^{2}}\tilde{h}\left(k\right)\,dk\\
\sim\int\frac{T\left(\sqrt{\lambda}\right)}{2\sqrt{\lambda}}e^{i\sqrt{\lambda}x}i\sqrt{\lambda}\phi_{2}\left(\lambda\right)m_{+}\left(x,\sqrt{\lambda}\right)e^{it\lambda}\tilde{h}\left(\sqrt{\lambda}\right)\,d\lambda\\
\sim\int T\left(\sqrt{\lambda}\right)e^{i\sqrt{\lambda}x}\phi_{2}\left(\lambda\right)m_{+}\left(x,\sqrt{\lambda}\right)e^{it\lambda}\tilde{h}\left(\sqrt{\lambda}\right)\,d\lambda
\end{align*}
Multiplying the above expression by $t$ and then performing integration
by parts in $\lambda$, we obtain{\small
\begin{align}
t\int T\left(\sqrt{\lambda}\right)e^{i\sqrt{\lambda}x}\phi_{2}\left(\lambda\right)m_{+}\left(x,\sqrt{\lambda}\right)e^{it\lambda}\tilde{h}\left(\sqrt{\lambda}\right)\,d\lambda\label{eq:IBPL1-1}\\
\sim i\int\partial_{\lambda}\left(T\left(\sqrt{\lambda}\right)i\phi_{2}\left(\lambda\right)\right)e^{i\sqrt{\lambda}x}m_{+}\left(x,\sqrt{\lambda}\right)e^{it\lambda}\tilde{h}\left(\sqrt{\lambda}\right)\,d\lambda\nonumber \\
-\int\left(T\left(\sqrt{\lambda}\right)i\phi_{2}\left(\lambda\right)\right)\frac{1}{\sqrt{\lambda}}xe^{i\sqrt{\lambda}x}m_{+}\left(x,\sqrt{\lambda}\right)e^{it\lambda}\tilde{h}\left(\sqrt{\lambda}\right)\,d\lambda\nonumber \\
-i\int\left(T\left(\sqrt{\lambda}\right)i\phi_{2}\left(\lambda\right)\right)\frac{1}{\sqrt{\lambda}}e^{i\sqrt{\lambda}x}m'_{+}\left(x,\sqrt{\lambda}\right)e^{it\lambda}\tilde{h}\left(\sqrt{\lambda}\right)\,d\lambda\nonumber \\
-i\int\left(T\left(\sqrt{\lambda}\right)i\phi_{2}\left(\lambda\right)\right)\frac{1}{\sqrt{\lambda}}e^{i\sqrt{\lambda}x}m_{+}\left(x,\sqrt{\lambda}\right)e^{it\lambda}\tilde{h}'\left(\sqrt{\lambda}\right)\,d\lambda & .\nonumber 
\end{align}}To estimate the $L^{2}$ norm of \eqref{eq:partialxH} with respect to $t$, it suffices to
estimate the $L_{\lambda}^{2}$ norm of the following expressions
from the RHS of \eqref{eq:IBPL1-1}:
\[
\partial_{\lambda}\left(T\left(\sqrt{\lambda}\right)i\phi_{2}\left(\lambda\right)\right)e^{i\sqrt{\lambda}x}m_{+}\left(x,\sqrt{\lambda}\right)e^{it\lambda}\tilde{h}\left(\sqrt{\lambda}\right)
\]
\[
\left(T\left(\sqrt{\lambda}\right)i\phi_{2}\left(\lambda\right)\right)\frac{1}{\sqrt{\lambda}}xe^{i\sqrt{\lambda}x}m_{+}\left(x,\sqrt{\lambda}\right)e^{it\lambda}\tilde{h}\left(\sqrt{\lambda}\right)
\]
\[
\left(T\left(\sqrt{\lambda}\right)i\phi_{2}\left(\lambda\right)\right)\frac{1}{\sqrt{\lambda}}e^{i\sqrt{\lambda}x}m'_{+}\left(x,\sqrt{\lambda}\right)e^{it\lambda}\tilde{h}\left(\sqrt{\lambda}\right)
\]
and
\[
\left(T\left(\sqrt{\lambda}\right)i\phi_{2}\left(\lambda\right)\right)\frac{1}{\sqrt{\lambda}}e^{i\sqrt{\lambda}x}m_{+}\left(x,\sqrt{\lambda}\right)e^{it\lambda}\tilde{h}'\left(\sqrt{\lambda}\right)
\]
 by Plancherel's theorem. All the computations are similar to those
for the low frequency part. We only check some pieces.

To estimate the first piece, we just notice that{\small
\begin{align*}
\int\left|\partial_{\lambda}\left(T\left(\sqrt{\lambda}\right)i\phi_{2}\left(\lambda\right)\right)e^{i\sqrt{\lambda}x}m_{+}\left(x,\sqrt{\lambda}\right)e^{it\lambda}\tilde{h}\left(\sqrt{\lambda}\right)\right|^{2}\,d\lambda & \lesssim\left\langle x\right\rangle \int\frac{1}{k}\left|\tilde{h}\left(k\right)\right|^{2}\,dk\\
 & \lesssim\left\langle x\right\rangle \left\Vert \partial_{k}\tilde{h}\right\Vert _{H^{1}}^{2}
\end{align*}}
where in the last inequality, we applied Hardy's inequality.

The second one can be estimate similarly. We should have
\begin{align*}
\int\left|\left(T\left(\sqrt{\lambda}\right)i\phi_{2}\left(\lambda\right)\right)\frac{1}{\sqrt{\lambda}}xe^{i\sqrt{\lambda}x}m_{+}\left(x,\sqrt{\lambda}\right)e^{it\lambda}\tilde{h}\left(\sqrt{\lambda}\right)\right|^{2}\,d\lambda\\
\lesssim\left\langle x\right\rangle ^{2}\int\frac{1}{\lambda}\phi_{2}\left(\lambda\right)^{2}\left|\tilde{h}\left(\sqrt{\lambda}\right)\right|^{2}\,d\lambda\lesssim\left\langle x\right\rangle ^{2}\int\frac{1}{k}\phi_{2}\left(k^{2}\right)^{2}\left|\tilde{h}\left(k\right)\right|^{2}\,dk & .
\end{align*}
For the last piece:
\begin{align*}
\int\left|\left(T\left(\sqrt{\lambda}\right)i\phi_{2}\left(\lambda\right)\right)\frac{1}{\sqrt{\lambda}}e^{i\sqrt{\lambda}x}m_{+}\left(x,\sqrt{\lambda}\right)e^{it\lambda}\tilde{h}'\left(\sqrt{\lambda}\right)\right|^{2}\,d\lambda\\
\lesssim\int\frac{1}{\lambda}\phi_{2}\left(\lambda\right)^{2}\left|\tilde{h}'\left(\sqrt{\lambda}\right)\right|^{2}\,d\lambda
\lesssim\int\frac{1}{k}\phi_{2}\left(k^{2}\right)^{2}\left|\tilde{h}'\left(k\right)\right|^{2}\,dk\lesssim\left\Vert \tilde{h}\right\Vert _{H^{1}}^{2} & .
\end{align*}
Adding everything together, we conclude that
\[
\left\Vert \left\langle x\right\rangle ^{-2}t\partial_{x}e^{itH}\phi_{2}\left(H\right)P_c h\right\Vert _{L_{x}^{\infty}L_{t}^{2}}\lesssim\left\Vert \tilde{h}\right\Vert _{H^{1}}
\]
as desired.
\end{proof}
\section{Analysis of modulation parameters }\label{sec:Mod}

In this section, we show that for every small solution to our original NLS,
\[
i\partial_{t}u-\partial_{xx}u+Vu=\left|u\right|^{2}u
\]
we can choose the modulation parameter for the soliton such that the radiation term
is orthogonal to the non-decaying solutions to the time-dependent Hamiltonian given by the soliton.

We recall that the solitary is constructed via the solution to the nonlinear elliptic equation
\[
\left(-\partial_{xx}+V\right)Q[z]-\left|Q[z]\right|^{2}Q[z]=E[z]Q[z].
\]
As the discussion in the introduction, linearzing the NLS around the soliton given by $Q[z]$, we will have the $z$ dependent
spectral problem
\[
\mathrm{H}\left[z\right]\eta=\left(-\partial_{xx}+V\right)\eta-2\left|Q\left[z\right]\right|^{2}\eta-\left(Q\left[z\right]\right)^{2}\bar{\eta}
\]
Recall that from Definition \ref{def:Conti}, we have the continuous spectral subspace with respect
to $\mathrm{H}\left[z\right]$:
\[
\mathcal{H}_{c}\left[z\right]:=\left\{ \eta\in L^{2}:\,\left\langle i\eta,\text{D}_{1}Q\left[z\right]\right\rangle =\left\langle i\eta,\text{D}_{2}Q\left[z\right]\right\rangle =0\right\} .
\]
For $u$ as a solution to the NLS, we decompose
\[
u=Q\left(x,z\left(t\right)\right)+\eta\left(t\right).
\]
We will use the time-dependent modulation parameters $z\left(t\right)$ to ensure
the orthogonal conditions for $t\geq0$\begin{equation}\label{eq:orthcond}
    \left\langle i\eta,\text{D}_{1}Q\left[z\right]\right\rangle =\left\langle i\eta,\text{D}_{2}Q\left[z\right]\right\rangle =0.
\end{equation}
The initial decomposition at $t=0$ can be obtained by the means of the
implicit value theorem (or inverse function theorem). Then we evolve
$z\left(t\right)$ according to orthogonality conditions, \eqref{eq:orthcond}, above.
\begin{lem}[Existence of the initial decomposition] There exists $\delta>0$
such that $\forall u\in H^{1}$ with $\left\Vert u\right\Vert _{H^{1}}\leq\delta$
can be uniquely written as 
\[
u\left(T\right)=Q[z]+\eta
\]
where $z\in\mathbb{C}$, such that
\[
\left\langle i\eta,\text{D}_{1}Q\left[z\right]\right\rangle =\left\langle i\eta,\text{D}_{2}Q\left[z\right]\right\rangle =0.
\]
 and
\[
\left|z\right|+\left\Vert \eta\right\Vert _{H^{1}}\lesssim\left\Vert u\right\Vert _{H^{1}}.
\]
\end{lem}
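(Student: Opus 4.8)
The plan is to realize the decomposition as a zero of an implicit map and apply the implicit function theorem, identifying $\mathbb{C}\cong\mathbb{R}^{2}$ via $z=a+ib\leftrightarrow(a,b)$. Define $G:\mathbb{R}^{2}\times H^{1}\to\mathbb{R}^{2}$ by
\[
G(z,u):=\Big(\langle i(u-Q[z]),\text{D}_{1}Q[z]\rangle,\ \langle i(u-Q[z]),\text{D}_{2}Q[z]\rangle\Big),
\]
so that writing $u=Q[z]+\eta$ with $\eta$ satisfying the orthogonality conditions \eqref{eq:orthcond} is exactly the same as solving $G(z,u)=0$ for $z$. Since $Q[z]=z\phi+h(z)$ with $h(0)=0$ by Lemma \ref{lem:NLB}, we have $Q[0]=0$ and hence $G(0,0)=0$. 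Moreover $G$ is affine in $u$, and by Lemma \ref{lem:NLB} the map $z\mapsto Q[z]$ is smooth near $z=0$ with values in $H^{1}$, with $\text{D}_{1}Q[0]=\phi$ and $\text{D}_{2}Q[0]=i\phi$; therefore $G$ is $C^{1}$ on a neighborhood of $(0,0)$.

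Next I would compute the partial differential $\partial_{z}G(0,0)$. Differentiating in $z$ with $u$ frozen and using $\partial_{a}Q[z]=\text{D}_{1}Q[z]$, $\partial_{b}Q[z]=\text{D}_{2}Q[z]$, the terms in which the derivative lands on $\text{D}_{j}Q$ carry a factor $u-Q[0]=0$ and drop out at the base point, leaving the $2\times2$ matrix with entries $-\langle i\,\text{D}_{j}Q[0],\text{D}_{k}Q[0]\rangle$. Since $\text{D}_{1}Q[0]=\phi$ is real and $\text{D}_{2}Q[0]=i\phi$, the reduced inner product \eqref{eq:inner} gives $\langle i\phi,\phi\rangle=0$ and $\langle i\phi,i\phi\rangle=\left\Vert \phi\right\Vert _{L^{2}}^{2}$, so
\[
\partial_{z}G(0,0)=\left(\begin{array}{cc}0 & \left\Vert \phi\right\Vert _{L^{2}}^{2}\\ -\left\Vert \phi\right\Vert _{L^{2}}^{2} & 0\end{array}\right),
\]
which is invertible with determinant $\left\Vert \phi\right\Vert _{L^{2}}^{4}>0$.

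With $G(0,0)=0$ and $\partial_{z}G(0,0)$ invertible, the implicit function theorem produces $\delta>0$, a neighborhood of $z=0$, and a unique $C^{1}$ map $u\mapsto z(u)$ on $\{\left\Vert u\right\Vert _{H^{1}}\le\delta\}$ with $z(0)=0$ and $G(z(u),u)=0$; this is the asserted decomposition, unique in the relevant neighborhood, with $\eta:=u-Q[z(u)]$. The quantitative bound then follows from $z(0)=0$ together with the local Lipschitz dependence of $z$ on $u$ (a consequence of the $C^{1}$ regularity), which gives $|z(u)|\lesssim\left\Vert u\right\Vert _{H^{1}}$, and hence $\left\Vert \eta\right\Vert _{H^{1}}\le\left\Vert u\right\Vert _{H^{1}}+\left\Vert Q[z(u)]\right\Vert _{H^{1}}\lesssim\left\Vert u\right\Vert _{H^{1}}$ using $\left\Vert Q[z]\right\Vert _{H^{1}}\lesssim|z|$ from Lemma \ref{lem:NLB}.

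The one point requiring care is the regularity input: applying the implicit function theorem requires $G$ to be $C^{1}$ in $z$, that is, the nonlinear bound state family $z\mapsto Q[z]$ to be $C^{1}$ (indeed smooth) with derivatives bounded in $H^{1}$, which is precisely what Lemma \ref{lem:NLB} supplies. Granting that, this is the standard modulation set-up, and the only genuinely algebraic ingredient is the nondegeneracy $\left\Vert \phi\right\Vert _{L^{2}}^{2}\neq0$ that makes the linearized map $\partial_{z}G(0,0)$ invertible.
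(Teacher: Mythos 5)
Your argument is correct and is essentially the same as the paper's: both define the map $G(z,u)=(\langle i(u-Q[z]),\text{D}_{1}Q[z]\rangle,\langle i(u-Q[z]),\text{D}_{2}Q[z]\rangle)$, compute $\partial_{z}G$ at the base point using $\text{D}_{1}Q[0]=\phi$ and $\text{D}_{2}Q[0]=i\phi$ from Lemma \ref{lem:NLB}, observe that the resulting $2\times2$ matrix is skew with determinant $\|\phi\|_{L^{2}}^{4}\neq0$, and invoke the implicit function theorem to get the unique $C^{1}$ map $u\mapsto z(u)$ together with the quantitative bounds. The only presentational difference is that you write out the linearized matrix explicitly whereas the paper records it informally as $j-\ell+o(\cdot)$.
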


\begin{proof}
We define the map $K:\,\left\{ z\in\mathbb{C},\left|z\right|<\delta\right\} \times H^{1}\longmapsto\mathbb{R}^{2}$
as
\[
\left(z,u\right)\rightarrow K\left(z,u\right)=\left(K_{1}\left(z,u\right),K_{2}\left(z,u\right)\right)
\]
where
\[
K_{j}\left(z,u\right):=\left\langle i\left(u-Q\left[z\right]\right),\text{D}_{j}Q\left[z\right]\right\rangle .
\]
We first take the partial derivatives of $K\left(z,u\right)$ with respect
to $z$. Note that by Lemma \ref{lem:NLB},
\begin{align*}
\text{D}_{\ell}K_{j}\left(z\right) & =\left\langle -i\text{D}_{\ell}Q\left[z\right],\text{D}_{j}Q\left[z\right]\right\rangle +\left\langle i\left(u-Q\left[z\right]\right),\text{D}_{\ell}\text{D}_{j}Q\left[z\right]\right\rangle \\
 & =j-\ell+o\left(\left\Vert u\right\Vert _{H^{1}}+\left|z\right|\right).
\end{align*}
Therefore the partial derivatives of $K$ with respect to $z$ are
not singular provided that $\left\Vert u\right\Vert _{H^{1}}$ and $\left|z\right|$
are small, i.e., $\delta$ is sufficiently small.

Then by the implicit function theorem, we can find a map 
\[
\tilde{K}:\left\{ u\in H^{1}.\left\Vert u\right\Vert _{H^{1}}<\delta\right\} \longmapsto\mathbb{C}
\]
 such that
\[
K\left(z,u\right)=\left(0,0\right)
\]
is uniquely given as 
 $
\left(\tilde{K}\left(u\right),u\right).
$
Clearly, one has
\[
\left|\tilde{K}\left(u\right)\right|\lesssim\left\Vert u\right\Vert _{H^{1}}.
\]
We define $z=\tilde{K}\left(u\right)$ and decompose $u$ as
\[
u\left(T\right)=Q[z]+\eta
\]
then by construction $K\left(z,u\right)=\left(0,0\right)$ which is
equivalent to
\[
\left\langle i\eta,\text{D}_{1}Q\left[z\right]\right\rangle =\left\langle i\eta,\text{D}_{2}Q\left[z\right]\right\rangle =0.
\]
We are done.
\end{proof}

\subsection{Analysis of modulation equations\label{subsec:AnalMod}}

In this subsection, we  analyze the equations for the modulation parameters. As we want
to analyze the long-time behavior, we might assume now for $t$ large,
we can decompose the solution as
\[
u=Q\left(x,z\left(t\right)\right)+\eta\left(t\right)
\]
From the orthogonality conditions, we need
\[
\left\langle i\eta,\text{D}_{1}Q\left[z\right]\right\rangle =\left\langle i\eta,\text{D}_{2}Q\left[z\right]\right\rangle =0.
\]
In order to ensure these conditions, we need to make $z$ time-dependent.
The equations for $z$ will be obtained by differentiating the above
conditions.

Differentiating the orthogonality conditions with respect to $t$,
from the first condition, we obtain
\[
0=\left\langle i\dot{\eta},\text{D}_{1}Q\left[z\right]\right\rangle +\left\langle i\eta,\text{D}_{1}\text{D}Q\left[z\right]\dot{z}\right\rangle .
\]
From the equation for $\eta$, we have
\begin{align*}
i\partial_{t}\eta-\partial_{xx}\eta+V\left(x\right)\eta & =2\left|Q\left[z\right]\right|^{2}\eta+\left(Q\left[z\right]\right)^{2}\bar{\eta}
  +E\left[z\right]Q\left[z\right]-i\text{D}Q\left[z\right]\dot{z}
  +N\left(\eta,Q\left(x,z\left(t\right)\right)\right).
\end{align*}
Note that by the gauge covariance, one has
\[
\left\langle E\left[z\right]Q\left[z\right]-i\text{D}Q\left[z\right]\dot{z},\text{D}_{j}Q\left[z\right]\right\rangle =\left\langle \text{D}Q\left[z\right]\left(izE\left[z\right]-\dot{z}\right),i\text{D}Q\left[z\right]\right\rangle ,
\]
here we used
\[
\text{D}Q\left[z\right]iz=iQ\left[z\right].
\]
The orthogonality conditions give us
\[
\left\langle i\eta,\text{D}Q\left[z\right]\right\rangle =0.
\]
Hence
\[
\left\langle \mathrm{H}[z]\eta,\text{D}_{j}Q\right\rangle =\left\langle \eta,\mathrm{H}[z]\text{D}_{j}Q\right\rangle =\left\langle \eta,\text{D}_{j}\left(EQ\right)\right\rangle =\left\langle \eta,E\text{D}_{j}Q\right\rangle =\left\langle i\eta,E\text{D}_{j}\mathrm{D}Qiz\right\rangle .
\]
So we can conclude that
\[
\sum_{k=1,2}\left(\left\langle i\text{D}_{j}Q,\text{D}_{k}Q\right\rangle +\left\langle i\eta,\text{D}_{j}\text{D}_{k}Q\right\rangle \right)\left(\dot{z}-iE\left[z\right]z\right)_{k}=-\left\langle N,\text{D}_{j}Q\right\rangle .
\]
Therefore
\[
\left|\dot{z}-iE\left[z\right]z\right|\lesssim\left|\left\langle N,\text{D}Q\left[z\right]\right\rangle \right|\lesssim\left\Vert N\right\Vert _{L^{1}+L^{\infty}}.
\]
This in particular implies estimates on parameters in Theorem \ref{thm:AC} as the following:
\begin{cor}\label{cor:modulation}
Given the conditions in Theorem \ref{thm:AC}, then the solution to \eqref{eq:eqthm1} can be uniquely decomposed as
\begin{equation}
u=Q\left(x,z\left(t\right)\right)+\eta\left(t\right)\label{eq:decompcor}
\end{equation}
with differentiable $z\left(t\right)\in\mathbb{C}$ such that
\[
\left\langle i\eta(t),\mathrm{D}_{1}Q\left[z(t)\right]\right\rangle =\left\langle i\eta(t),\mathrm{D}_{2}Q\left[z\right(t)]\right\rangle =0.
\]
The soliton parameter $z(t)$ satisfies
\begin{equation}
\left|\dot{z}-iE\left[z\right]z\right|\left(t\right)\lesssim \left|\left\langle \overline{Q\left[z\right]}\eta^{2}+2Q\left[z\right]\left|\eta\right|^{2}+\left|\eta\right|^{2}\eta,\text{D}Q\left[z\right]\right\rangle \right|.\label{eq:upper2cor}
\end{equation}
\end{cor}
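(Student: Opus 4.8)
The plan is to combine the initial-decomposition lemma just proved with the equation for $\eta$ written above. At $t=0$ the lemma already provides $u=Q[z]+\eta$ together with the orthogonality conditions \eqref{eq:orthcond}. To propagate the decomposition, I would \emph{define} $z(t)$ as the solution of the pair of real ODEs obtained from $\frac{d}{dt}\langle i\eta,\text{D}_jQ[z]\rangle=0$, $j=1,2$. As the computation below shows, this system has the schematic form
\[
M(z,\eta)\,\big(\dot z-iE[z]z\big)=-\Big(\langle N,\text{D}_1Q[z]\rangle,\ \langle N,\text{D}_2Q[z]\rangle\Big),\qquad N:=\overline{Q[z]}\eta^2+2Q[z]|\eta|^2+|\eta|^2\eta,
\]
with $M(z,\eta)$ a $2\times2$ matrix which, by the same non-degeneracy computation as in the proof of the preceding lemma, is a small perturbation of a fixed nondegenerate (symplectic-type) matrix. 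Once $M$ is invertible the system is solvable for $\dot z$, and Picard--Lindel\"of plus the standard continuation argument yields a unique differentiable $z(t)$ on the whole interval on which the smallness of $(z,\eta)$ persists; in the setting of Theorem \ref{thm:AC} this is all of $[0,\infty)$, so this corollary is really established in tandem with the main bootstrap.

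To derive the system, I would differentiate $\langle i\eta,\text{D}_jQ[z]\rangle=0$ in $t$, obtaining $\langle i\dot\eta,\text{D}_jQ[z]\rangle+\langle i\eta,\text{D}_j\text{D}Q[z]\dot z\rangle=0$, and substitute the equation for $\eta$, i.e.\ $i\dot\eta=-(-\partial_{xx}+V)\eta+2|Q[z]|^2\eta+(Q[z])^2\bar\eta+E[z]Q[z]-i\text{D}Q[z]\dot z+N$. The algebraic inputs are: (i) the symmetry of $\mathrm{H}[z]$ for the reduced inner product \eqref{eq:inner}, which lets me move the linear-in-$\eta$ part $2|Q[z]|^2\eta+(Q[z])^2\bar\eta$ of the nonlinearity onto $\text{D}_jQ[z]$; (ii) the identity $(\mathrm{H}[z]-E[z])\text{D}Q[z]=(\text{D}E[z])Q[z]$ from differentiating \eqref{eq:NLEj}; (iii) the gauge covariance $\text{D}Q[z]\,iz=iQ[z]$ of \eqref{eq:Dj2I}; and (iv) the orthogonality conditions themselves. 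Tracking all these, every term linear in $\eta$ is either absorbed into $\big(\langle i\eta,\text{D}_j\text{D}_kQ[z]\rangle\big)_{j,k}$ or cancels against the $-i\text{D}Q[z]\dot z$ and $E[z]Q[z]$ contributions, leaving
\[
\sum_{k=1,2}\Big(\langle i\text{D}_jQ[z],\text{D}_kQ[z]\rangle+\langle i\eta,\text{D}_j\text{D}_kQ[z]\rangle\Big)\big(\dot z-iE[z]z\big)_k=-\langle N,\text{D}_jQ[z]\rangle,\qquad j=1,2,
\]
which identifies $M_{jk}(z,\eta)=\langle i\text{D}_jQ[z],\text{D}_kQ[z]\rangle+\langle i\eta,\text{D}_j\text{D}_kQ[z]\rangle$.

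Finally I would invert $M$: by Lemma \ref{lem:NLB}, $\text{D}_1Q[z]=\phi+O(|z|)$ and $\text{D}_2Q[z]=i\phi+O(|z|)$ in $H^2\cap W^{1,1}$, so $M(z,\eta)$ differs by $o(|z|+\|\eta\|_{L^2})$ from the fixed matrix with entries $\langle i\phi_j,\phi_k\rangle$ ($\phi_1=\phi$, $\phi_2=i\phi$), a nonzero multiple of the standard symplectic matrix and hence invertible; thus for $\epsilon_0$ small $M(z,\eta)^{-1}$ exists with $\|M(z,\eta)^{-1}\|\lesssim1$ uniformly in $t$, and inverting gives $|\dot z-iE[z]z|(t)\lesssim|\langle N,\text{D}_1Q[z]\rangle|+|\langle N,\text{D}_2Q[z]\rangle|$, which is \eqref{eq:upper2cor} (and, since $\text{D}Q[z]$ is uniformly bounded in $L^1\cap L^\infty$, also the cruder $\lesssim\|N\|_{L^1+L^\infty}$). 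I expect the main obstacle to be the bookkeeping of the middle step: one must be careful that, after the integration by parts against $\mathrm{H}[z]$ and the use of \eqref{eq:NLEj} and \eqref{eq:Dj2I}, no stray linear-in-$\eta$ term survives \emph{outside} $M$ --- the right-hand side must be exactly $-\langle N,\text{D}Q[z]\rangle$ with $N$ quadratic and higher --- since it is precisely this gain of one power of $\eta$ that, combined with the local decay \eqref{eq:introimpro}, produces the sharp $t^{-2+2\alpha}$ rate recorded in \eqref{eq:upper2}.
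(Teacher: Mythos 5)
Your proposal is correct and follows essentially the same route as the paper: differentiate the orthogonality conditions $\langle i\eta,\text{D}_jQ[z]\rangle=0$, substitute the $\eta$-equation, use the symmetry of $\mathrm{H}[z]$ together with $(\mathrm{H}[z]-E[z])\text{D}Q[z]=(\text{D}E[z])Q[z]$ and the gauge identity $\text{D}Q[z]\,iz=iQ[z]$ to reduce to the $2\times2$ system $\sum_k M_{jk}(\dot z-iE[z]z)_k=-\langle N,\text{D}_jQ\rangle$, then invert $M$ by smallness of $(z,\eta)$. Your write-up is slightly more explicit about the Picard--Lindel\"of step for constructing $z(t)$ and about exactly which algebraic identities cancel the linear-in-$\eta$ terms, but the underlying argument is identical to the paper's.
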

With the computations above, we have the following result concerning the behavior of the phase $E[z(t)]$.
\begin{cor}\label{cor:decayphase}
Under the decay assumption that  $\left\Vert \jx^{-2}\eta(t)\right\Vert_{L^\infty_x}\lesssim t^{-1+\alpha}$, the phase $E[z(t)]$ has a limit $E[z(\infty)]$ and it satisfies the following estimates:
\[
\left|E\left[z(\infty)\right]-E\left[z(t)\right]\right|\lesssim t^{-1+2\alpha}.
\]
\end{cor}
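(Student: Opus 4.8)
The plan is to reduce the whole statement to the scalar quantity $|z(t)|^2$, exploiting two structural facts: $E[z]\in\mathbb{R}$, and the gauge covariance $E[z]=E[|z|]$, so that $E[z]$ depends only on $|z|^2$.

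First I would feed the hypothesis into the modulation bound of Corollary \ref{cor:modulation}. Recall $Q[z]=z\phi+h(z)$ with $\|h(z)\|_{H^2\cap W^{1,1}}\lesssim|z|^2$, $|\mathrm{D}_1Q[z]-\phi|+|\mathrm{D}_2Q[z]-i\phi|\lesssim|z|$, and $\phi$ decays exponentially; hence $Q[z]$ and $\mathrm{D}Q[z]$ are bounded in every weighted space $\langle x\rangle^{N}L^1\cap\langle x\rangle^{N}L^\infty$, with $Q[z]$ carrying an extra factor $|z|$. Pairing the quadratic terms $\overline{Q[z]}\eta^2$ and $Q[z]|\eta|^2$ against $\mathrm{D}Q[z]$, I would absorb two factors of $\langle x\rangle^{-2}$ into $\eta$ via $\|\langle x\rangle^{-2}\eta(t)\|_{L^\infty_x}\lesssim t^{-1+\alpha}$ and the leftover weights into the exponential tail of $\phi$, obtaining a bound $\lesssim|z|\,t^{-2+2\alpha}$; the cubic term $|\eta|^2\eta$ contributes $\lesssim t^{-3+3\alpha}$. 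Since $|z(t)|\lesssim\epsilon$ remains small, this gives
\[
\big|\dot z-iE[z]z\big|(t)\lesssim t^{-2+2\alpha}.
\]

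Next I would differentiate $|z(t)|^2$. Writing $\dot z=iE[z]z+\mathcal{O}(t^{-2+2\alpha})$ and using $E[z]\in\mathbb{R}$, the leading term contributes $2\Re\big(iE[z]|z|^2\big)=0$, so
\[
\Big|\frac{d}{dt}|z(t)|^2\Big|=\big|2\Re(\bar z\dot z)\big|\lesssim |z(t)|\,t^{-2+2\alpha}\lesssim t^{-2+2\alpha}.
\]
(Incidentally, Gronwall applied to this inequality re-confirms that $|z(t)|$ stays bounded on $[1,\infty)$.) Since $\alpha$ is small, $t\mapsto t^{-2+2\alpha}$ is integrable at infinity, so $|z(t)|^2$ is Cauchy as $t\to\infty$; denote its limit by $|z(\infty)|^2$. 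Integrating from $t$ to $\infty$,
\[
\big|\,|z(\infty)|^2-|z(t)|^2\,\big|\lesssim\int_t^\infty s^{-2+2\alpha}\,ds\lesssim t^{-1+2\alpha}.
\]

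Finally, by gauge covariance $E[z]=E[|z|]$ is a function of $|z|^2$ alone, and by Lemma \ref{lem:NLB} it is $C^1$ there (indeed $E[z]=-\rho^2+\mathcal{O}(|z|^2)$), hence Lipschitz in $|z|^2$ on the small ball in question. Continuity gives $E[z(t)]\to E[z(\infty)]:=E[|z(\infty)|]$, and the Lipschitz bound combined with the previous display yields
\[
\big|E[z(\infty)]-E[z(t)]\big|\lesssim\big|\,|z(\infty)|^2-|z(t)|^2\,\big|\lesssim t^{-1+2\alpha},
\]
as claimed. The only mildly delicate point is the cancellation $\Re\big(iE[z]|z|^2\big)=0$ from $E[z]\in\mathbb{R}$: without it one would only control $\dot z$ itself, whose leading term $iE[z]z$ is not time-integrable, and the argument would fail. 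Working with $|z|^2$ rather than $z$ (and using $E[z]=E[|z|]$) also sidesteps the possibility $z(t)\to 0$, where $\arg z$ would be ill-defined.
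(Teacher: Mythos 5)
Your proposal is correct and follows essentially the same route as the paper: bound $|\dot z - iE[z]z|$ via the modulation equation, observe that the rotating term $iE[z]z$ does not change the modulus (you compute $\tfrac{d}{dt}|z|^2$ directly and use $\Re(iE[z]|z|^2)=0$; the paper equivalently notes $\tfrac{d}{dt}|z(t)| = \tfrac{d}{dt}|z(t)e^{-i\int_0^t E}|$ is controlled by $|\dot z - iE[z]z|$), integrate the $t^{-2+2\alpha}$ bound, and invoke smooth dependence of $E$ on $|z|$. Working with $|z|^2$ rather than $|z|$ is a small refinement that sidesteps non-differentiability of the modulus at the origin, but the substance is identical.
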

\begin{proof}
From the decay assumption, the modulation equation \eqref{eq:upper2cor}, one has
\[
\left|\dot{z}\left(t\right)-iE[z\left(t\right)]z(t)\right|=\frac{d}{dt}\left|z\left(t\right)e^{-i\int_{0}^{s}E\left[z(\sigma)\right]\,d\sigma}\right|=\frac{d}{dt}\left|z(t)\right|\lesssim\epsilon^{2}t^{-2+2\alpha}.
\]
Therefore $|z(t)|$ has a limit $|z(\infty)|$ such that\[
\left||z(\infty)|-|z(t)|\right|\lesssim \epsilon^2 t^{-1+2\alpha}.\]
Recall that $E[z(t)]$ smoothly depends on $|z(t)|$ only. The desired result follows from the estimate above.
\end{proof}

\section{Reduction and the basic set-up}\label{sec:reduction}

In this section, we prepare the analysis the weighted estimates for $\eta$. First
of all, we reduce the problem to a model equation. 

Consider the small norm solution with the decomposition
\[
u=Q\left(x,z\left(t\right)\right)+\eta\left(t\right).
\]
Plunging the decomposition above into the NLS \eqref{eq:NLS}, then
the equation for $\eta$ is given by
\begin{align*}
i\partial_{t}\eta-\partial_{xx}\eta+V\left(x\right)\eta & =2\left|Q\left[z\right]\right|^{2}\eta+\left(Q\left[z\right]\right)^{2}\bar{\eta}\\
 & +E\left[z\right]Q\left[z\right]-i\text{D}Q\left[z\right]\dot{z}\\
 & +\overline{Q\left[z\right]}\eta^{2}+2Q\left[z\right]\left|\eta\right|^{2}+\left|\eta\right|^{2}\eta.
\end{align*}
By the fact $Q\left[z\right]=iz\text{D}Q$, it follows that
\[
E\left[z\right]Q\left[z\right]-i\text{D}Q\left[z\right]\dot{z}=i\text{D}Q\left(\dot{z}-izE\left[z\right]\right).
\]
Note that from the modulation equation, we know
\begin{equation}
\left|\dot{z}\left(t\right)-iz\left(t\right)E\left[z\left(t\right)\right]\right|\lesssim\left|\left\langle \phi,\eta^{2}\right\rangle \right|.\label{eq:moduz}
\end{equation}
In order to make use of the modulation equation when we perform the integration by parts in time later on, we rewrite
\begin{equation}
\mathcal{Q}\left[z\right]=e^{-i\int_{0}^{t}E\left[z(s)\right]\,ds}Q\left[z\right]\label{eq:modQ}
\end{equation}
 and the equation for $\eta$ becomes
\begin{align}
i\partial_{t}\eta-\partial_{xx}\eta+V\eta & =2\left|\mathcal{Q}\left[z\right]\right|^{2}\eta+\left(\mathcal{Q}\left[z\right]\right)^{2}e^{2i\int_{0}^{t}E\left[z(\sigma)\right]\,d\sigma}\bar{\eta}\label{eq:eta}\\
 & +\overline{Q\left[z\right]}\eta^{2}+2Q\left[z\right]\left|\eta\right|^{2}\nonumber \\
 & +\left|\eta\right|^{2}\eta\nonumber \\
 & +i\text{D}Q\left(\dot{z}-izE\left[z\right]\right)\nonumber \\
 & =:N_{1,1}+N_{1,2}+N_{2}+N_{3}+M=:F.\label{eq:inhomo}
\end{align}
The key difference is that
\begin{equation}
\partial_{t}\mathcal{Q}\left[z\left(t\right)\right]=e^{-i\int_{0}^{t}E\left[z(s)\right]\,ds}\text{D}Q\left[z\right]\left(\dot{z}\left(t\right)-iz\left(t\right)E\left[z\left(t\right)\right]\right)\label{eq:mathcalQ}
\end{equation}
meanwhile
\[
\partial_{t}Q\left[z(t)\right]=\dot{z}\left(t\right)\text{D}Q.
\]
Then in the expression for \eqref{eq:mathcalQ}, the time differentiation
results in the expression governed by the modulation equation.

By Duhamel's formula, solving the equation for $\eta$ for $t=1$, one
has
\begin{equation}
\eta\left(t,x\right)=e^{iH\left(t-1\right)}\eta_{1}+\int_{1}^{t}e^{iH\left(t-s\right)}\left(F(s)\right)\,ds.\label{eq:duhameta}
\end{equation}
Define the profile for $\eta$ as
\begin{equation}
f\left(t\right):=e^{-iHt}P_c\eta\left(t\right).\label{eq:profief}
\end{equation}
Then in terms of the distorted Fourier transform, the equation for
the profile is given by
\begin{equation}
\tilde{f}\left(t,k\right)=\tilde{f}\left(1,k\right)+\int_{1}^{t}e^{-ik^{2}s}\tilde{F}\left(s\right)\,ds.\label{eq:duhamelf}
\end{equation}
We will estimate the solution in the following bootstrap space
\begin{equation}
X_{T}:=\left\{ \eta|\left\Vert \eta\right\Vert _{L_{t}^{\infty}\left(\left[0,T\right];H^{1}\right)}+\left\Vert \tilde{f}\right\Vert _{L_{t}^{\infty}\left(\left[0,T\right];L_{k}^{\infty}\right)}+\left\Vert t^{-\alpha}\tilde{f}\right\Vert _{L_{t}^{\infty}\left(\left[0,T\right];H_{k}^{1}\right)}\right\} .\label{eq:bootstrap1}
\end{equation}
The energy estimate estimate will be standard so in this paper we
focus on the weighted estimate and the pointwise bound for the profile.

We begin with some observation and reduction.
\subsection{Bound states}\label{subsec:bound}
By the modulation equation, $\eta\left(t\right)$ is orthogonal to
the kernel of $\mathrm{H}\left[z(t)\right]$ associated with the linearizaiton
around $Q\left(x,z\left(t\right)\right)$. By the difference of the
continuous spectrum of $\mathcal{H}\left(t\right)$ and $H=-\partial_{xx}+V$,
see Lemma \ref{lem:Diff}, it follows that for any space $Y$ such
that $H^{2}\bigcap W^{1,1}\subset Y\subset H^{-2}+L^{\infty}$, one
has
\begin{equation}
P_{c}\eta(t)\sim\eta(t)=\mathcal{K}(z(t))P_c \eta(t),\label{eq:etaXt}
\end{equation}
where the operator $\mathcal{K}(z(t))$ is from Lemma \ref{lem:Diff} which is bounded in the space which we perform the bootstrap argument. Hence $\eta$  enjoys the same decay estimates as $P_c \eta$. We summarize them as the following corollary.
\begin{cor}\label{cor:eta}
Given the bootstrap assumption \eqref{eq:bootstrap1}, we have the following:
pointwise decay
\begin{equation}
    \left\Vert\eta(t)\right\Vert\lesssim \epsilon t^{-1/2},
\end{equation}
improved local decay
\begin{equation}
    \left\Vert\jx^{-2}(\eta(t))\right\Vert\lesssim \epsilon t^{-1+\alpha}
\end{equation}
and local $L^2$ decay
\begin{equation}
    \left\Vert\jx^{-1}(\partial_x \eta(t))\right\Vert\lesssim \epsilon t^{-1+\alpha}.
\end{equation}
Moreover, from the construction of $K(z(t))$, one has that \begin{equation}
    \left\Vert\jx^m(\eta(t)-P_c\eta(t))\right\Vert\lesssim \epsilon^2 t^{-1+\alpha}
\end{equation}for any fixed integer $m$ due to the exponential decay of the eigenfuction $\phi$.

\end{cor}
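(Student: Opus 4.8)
The plan is to reduce all four bounds to estimates on $P_c\eta(t)$ and then read off the decay from the linear dispersive estimates of Section~\ref{sec:prelim} combined with the control of the profile built into the bootstrap space \eqref{eq:bootstrap1}. First I would invoke the spectral comparison \eqref{eq:etaXt}, namely $\eta(t)=\mathcal{K}(z(t))\,P_c\eta(t)$ with $\mathcal{K}(z(t))$ bounded on every space occurring in the statement (Lemma~\ref{lem:Diff}, applicable because $|z(t)|$ is small by the modulation analysis of Section~\ref{sec:Mod}); hence it suffices to prove the pointwise decay, the improved local decay and the local $L^2$ decay with $\eta$ replaced by $P_c\eta$. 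Writing $P_c\eta(t)=e^{itH}f(t)$ for the profile $f$ of \eqref{eq:profief}, and using that \eqref{eq:bootstrap1} furnishes $\|\tilde f(t)\|_{L^\infty_k}\lesssim\epsilon$ and $\|\tilde f(t)\|_{H^1_k}\lesssim\epsilon\, t^{\alpha}$ uniformly on $[0,T]$, I would then apply, at each fixed $t$, the linear bounds with datum $h=f(t)$.

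Concretely: Lemma~\ref{lem:pointwiseH} yields $\|P_c\eta(t)\|_{L^\infty_x}\lesssim t^{-1/2}\|\tilde f(t)\|_{L^\infty_k}+t^{-3/4}\|\partial_k\tilde f(t)\|_{L^2_k}\lesssim \epsilon\, t^{-1/2}+\epsilon\, t^{-3/4+\alpha}$, and since $\alpha=\alpha(V)$ is taken $<1/4$ the second term is dominated by the first for $t\ge1$; Lemma~\ref{lemlocdecinfty} yields $\|\jx^{-2}P_c\eta(t)\|_{L^\infty_x}\lesssim t^{-1}\|\tilde f(t)\|_{H^1_k}\lesssim\epsilon\, t^{-1+\alpha}$; and Lemma~\ref{lem:localEn} yields $\|\jx^{-1}\partial_x P_c\eta(t)\|_{L^2_x}\lesssim t^{-1}\|\tilde f(t)\|_{H^1_k}\lesssim\epsilon\, t^{-1+\alpha}$. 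The assumptions on $V$ stated in the Introduction (smooth, exponentially decaying, generic) easily cover the decay and genericity hypotheses of all three lemmas. For $0\le t\le1$ all three claims are immediate from Sobolev embedding and the energy bound $\|\eta(t)\|_{H^1}\lesssim\epsilon$ contained in \eqref{eq:bootstrap1}.

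For the last estimate I would write $\eta(t)-P_c\eta(t)=P_d\eta(t)=\mathsf{a}(t)\phi$ with $\mathsf{a}(t)\in\mathbb{C}$. The orthogonality conditions, together with $\mathrm{D}_jQ[z]=\mathrm{D}_j(z\phi)+\mathrm{D}_jh(z)$ and the bounds of Lemma~\ref{lem:NLB}, determine $\mathsf{a}(t)$ from a $2\times2$ linear system whose matrix is, for $|z|$ small, uniformly invertible, and whose right-hand side is $\big(-\langle iP_c\eta,\mathrm{D}_1Q[z]\rangle,\,-\langle iP_c\eta,\mathrm{D}_2Q[z]\rangle\big)$. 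Since $P_c\eta\perp\phi$ one has $\langle iP_c\eta,\phi\rangle=\langle iP_c\eta,i\phi\rangle=0$, so this right-hand side reduces to $\big(-\langle iP_c\eta,\mathrm{D}_1h(z)\rangle,\,-\langle iP_c\eta,\mathrm{D}_2h(z)\rangle\big)$; because $\mathrm{D}_jh(z)$ is $O(|z|)$ and exponentially localized (inherited from the exponential decay of $\phi$ and $Q[z]$), pairing it against $P_c\eta$ costs only a weak norm, giving $|\mathsf{a}(t)|\lesssim|z(t)|\,\|\jx^{-2}P_c\eta(t)\|_{L^\infty_x}$. Combining with the improved local decay just obtained, with $|z(t)|\lesssim\|u_0\|_{H^1}\lesssim\epsilon$, and with the exponential decay of $\phi$ to absorb the weight $\jx^{m}$, yields $\|\jx^{m}(\eta(t)-P_c\eta(t))\|\lesssim\epsilon^2\, t^{-1+\alpha}$ for every fixed integer $m$.

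I do not expect a genuine obstacle: this corollary is essentially a bookkeeping step transferring the bootstrap hypotheses on $\tilde f$ through the already-proved linear estimates. The one mildly delicate point is checking that $\mathcal{K}(z(t))$ from Lemma~\ref{lem:Diff} is bounded on all the ambient spaces in play --- weighted $L^\infty_x$, $L^2_x$, $H^1_x$, and arbitrarily polynomially weighted spaces --- which is exactly the content of that lemma, available thanks to the smallness of $z$ and the exponential localization of $\phi$; I would simply quote it rather than reprove it.
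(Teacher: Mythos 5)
Your proposal matches the paper's own proof, which simply cites the bootstrap assumption together with Lemmata~\ref{lem:pointwiseH}, \ref{lemlocdecinfty}, \ref{lem:localEn} and \ref{lem:Diff}; your reduction via the spectral comparison $\eta=\mathcal{K}(z(t))P_c\eta$ followed by the three linear decay lemmas applied to the profile is exactly the intended route. Your more detailed treatment of the last estimate (solving the $2\times2$ orthogonality system and using $P_c\eta\perp\phi$ and $D_jh(z)=O(|z|)$ to get $|\mathsf{a}(t)|\lesssim|z(t)|\,\|\jx^{-2}P_c\eta\|_{L^\infty_x}$) is a correct expansion of the phrase ``from the construction of $\mathcal{K}(z(t))$ \dots due to the exponential decay of $\phi$.''
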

\begin{proof}
These are direct consequences of the bootstrap assumption \eqref{eq:bootstrap1} and Lemmata \ref{lem:pointwiseH},  \ref{lemlocdecinfty}, \ref{lem:localEn} and \ref{lem:Diff}.
\end{proof}  
\subsection{Modulation terms}
First of all, from the localized improved decay estimate, from \eqref{eq:moduz}
the bootstrap assumption \eqref{eq:bootstrap1} and the improved local decay \eqref{locdecinfty0}, it follows
\[
\left|\dot{z}\left(t\right)-iz\left(t\right)E\left[z\left(t\right)\right]\right|\lesssim\epsilon^{2}t^{-2+2\alpha}.
\]
Given the strong localization by $\text{D}Q$, the term $M(t,x)=i\text{D}Q\left(\dot{z}-izE\left[z\right]\right)$
coming from the modulation parameters in the equation for $\eta$, \eqref{eq:eta} and \eqref{eq:inhomo}, 
can be treated by the smoothing estimates in the analysis of weighted
estimate since $t^{-1+2\alpha}$ is $L_{t}^{2}$ integrable. Actually
this term will resemble the quadratic terms in the equation for $\eta$
due to the modulation equation. We will give  a more detailed
remark on this in the quadratic term analysis section.





\subsection{Reduction to model equations}

From the analysis above, in the equation for $\eta$, \eqref{eq:eta},
the term $M=i\text{D}Q\left(\dot{z}-izE\left[z\right]\right)$ given
by the modulation parameter behaves like the quadratic term with a
localized coefficient.  We also note that
if we perform integration by parts in $s$, as we computed in \eqref{eq:mathcalQ},
when the time derivative hits the coefficients of those first order
perturbations, $\left|\mathcal{Q}\left[z\right]\right|^{2}$ and $\left(\mathcal{Q}\left[z\right]\right)^{2}$,  it will result in localized terms with fast decay rates.

Based on these observations, and what we will see in all estimates
below, the structures of the quadratic terms will not be important,
it suffices to consider the following model problem
\begin{equation}\label{eq:modelu}
i\partial_{t}u-\partial_{xx}u+Vu=a_{1}\left(x\right)u+a_{2}(x)e^{2i\int_{0}^{t}E\left[z(\sigma)\right]\,d\sigma}\bar{u}+b\left(x\right)u^{2}+\left|u\right|^{2}u
\end{equation}
with $a_1(x)$, $a_2(x)$ and $b(x)$ being smooth functions which decay exponentially such that $|a_j(x)|\lesssim\epsilon ^2 $ and $|b(x)|\lesssim \epsilon $ under the assumption that
\[
\left|\frac{d}{dt}E\left[z\left(t\right)\right]\right|\lesssim\epsilon^{2}t^{-2+2\alpha}
\]
as in the full problem given by the modulation equation.


Due to the comparison of the continuous spectrum as we analyzed the bound state above,  we can restrict the analysis of $\eta$
or the $u$ in the model problem \eqref{eq:modelu} above completely
onto the continuous spectrum. Therefore in the model problem, we can
furthermore assume that $-\partial_{xx}+V$ has no bound states. 

Note that the analysis first order perturbations in the full problem will be more involved, see \S \ref{subsec:apptofull}. But after a suitable refined decomposition, the analysis will recast the argument for the model problem.

For this model problem, we have the following result:
\begin{thm}\label{thm:mainu}
Consider the nonlinear Schr\"odinger equation with a potential \eqref{eq:modelu} under the additional assumption that 
$V$   has no eigenvalues.
Then we have the following:

There exists $0<\epsilon_{0}\ll 1$ 
such that for all $\epsilon\leq\epsilon_{0}$ and 
\begin{align}\label{datasmall1}
\left\Vert u_{0}\right\Vert _{H^{1,1}}=
  \left\Vert u_{0}\right\Vert _{H^{1}}+\left\Vert xu_{0}\right\Vert _{L^{2}}=\epsilon
\end{align}
the equation \eqref{eq:modelu} has a unique
global solution $u\in C(\R,H^1(\R))$, with $u(0,x)=u_{0}(x)$, and satisfying the sharp decay rate
\begin{align}\label{main1fdecay1}
\left\Vert u(t)\right\Vert_{L^\infty_x}
  \lesssim \frac{\epsilon}{\left(1+\left|t\right|\right)^{\frac{1}{2}}}. 
\end{align}
Moreover, if we define the profile of the solution $u$ as
\begin{align}
\label{main1prof1}
f\left(t,x\right):=e^{-it\left(-\partial_{xx}+V\right)}u\left(t,x\right),
  \qquad \tilde{f}\left(t,k\right):=e^{-itk^{2}}\tilde{u}\left(t,k\right),
\end{align}
then one has 
\begin{align}\label{main1fbounds1}
{\big\| \tilde{f}(t) \big\|}_{L_k^\infty}
  + (1+|t|)^{-\alpha} {\| \partial_{k}\tilde{f}(t) \|}_{L_k^2} \lesssim\epsilon
\end{align}
for some $\alpha=\alpha(\gamma)>0$ small enough.

Finally, we have the following asymptotics:
there exists $W_{+\infty}\in L^{\infty}$ such that
\begin{align}\label{mainasy1}
\left|\tilde{f}\left(t,k\right)\exp\left(\frac{i}{2}\int_{0}^{t}\left|\tilde{f}\left(s,k\right)\right|^{2}
  \frac{ds}{s+1}\right)-W_{+\infty}(k)\right| \lesssim \epsilon \, t^{-\beta}
\end{align}
for some $\beta\in(0,\alpha)$ as $t\rightarrow\infty$.
\end{thm}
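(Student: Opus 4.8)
The proof proceeds by a continuity/bootstrap argument for the norm defining $X_T$ in \eqref{eq:bootstrap1}. Local well-posedness of \eqref{eq:modelu} in $H^1$ is classical (energy method plus Strichartz estimates, since the nonlinearity is cubic modulo terms that are linear or quadratic with bounded, exponentially decaying coefficients), so the plan is to show that on any interval $[0,T]$ on which
\[
\|u\|_{L^\infty_t([0,T];H^1)} + \|\tilde f\|_{L^\infty_t([0,T];L^\infty_k)} + \|t^{-\alpha}\tilde f\|_{L^\infty_t([0,T];H^1_k)}
\]
is bounded by an enlarged implicit constant, it is in fact bounded by the original one; this yields the a priori bound for all $t\ge0$, hence the global solution together with \eqref{main1fbounds1}. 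As noted in \S\ref{sec:reduction}, the $H^1$ bound is the standard energy estimate (the cubic term being closed by the dispersive decay), so the content lies in the two bounds on $\tilde f$. Granting these, the sharp decay \eqref{main1fdecay1} is immediate from the linear estimate \eqref{intlinest} (Lemma \ref{lem:pointwiseH}), and Lemmata \ref{lemlocdecinfty}, \ref{lem:localEn} upgrade it to the improved local bounds $\|\jx^{-2}u(t)\|_{L^\infty_x}\lesssim\epsilon\,t^{-1+\alpha}$ and $\|\jx^{-1}\partial_x u(t)\|_{L^2_x}\lesssim\epsilon\,t^{-1+\alpha}$, which are the main inputs for the localized nonlinear terms.

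Starting from the Duhamel formula \eqref{eq:duhamelf} for $\tilde f$, one estimates the four families of terms in turn. For the cubic term $|u|^2u$ the analysis is the space-time resonance / normal form argument of Kato--Pusateri \cite{KP}, Germain--Pusateri--Rousset \cite{GPR} and Chen--Pusateri \cite{CP}: one isolates the resonant set (output frequency equal to input frequency, where the bilinear phase and its $k$-derivative both vanish), extracts the effective ODE
\[
\partial_t\tilde f(t,k) = -\frac{i}{2(1+t)}\,|\tilde f(t,k)|^2\tilde f(t,k) + \mathcal{E}_3(t,k),\qquad \|\mathcal{E}_3(t)\|_{L^\infty_k}\lesssim\epsilon^3 t^{-1-\beta},
\]
and controls the non-resonant contributions, and their $\partial_k$-derivatives as in \eqref{eq:partialkF}, after one integration by parts in $s$, exploiting the structure of the nonlinear spectral measure and the decomposition of the Jost functions; since $\partial_t|\tilde f(t,k)|^2 = 2\Re(\overline{\tilde f}\,\mathcal{E}_3)$ is time-integrable this gives $\|\tilde f\|_{L^\infty_k}\lesssim\epsilon$, and the weighted bound $\|\partial_k\tilde f(t)\|_{L^2_k}\lesssim\epsilon\,t^\alpha$ follows as in the cited works. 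For the quadratic term $b u^2$ the exponential localization of $b$ gives $\|b u^2(s)\|_{L^1_x}\lesssim\epsilon\,\|\jx^{-2}u(s)\|_{L^\infty_x}^2\lesssim\epsilon^3 s^{-2+2\alpha}$, so its contribution to $\tilde f$ in $L^\infty_k$ is bounded by $\epsilon^3\int_1^\infty s^{-2+2\alpha}\,ds\lesssim\epsilon^3$; for the weighted estimate the extra factor of $s$ produced by $\partial_k e^{-ik^2 s}$ in \eqref{eq:partialkF} is absorbed because $s^{-2+2\alpha}s\in L^2_s$, the relevant bound being the inhomogeneous local smoothing estimate \eqref{eq:smoothing2}, with weights handled via boundedness of the wave operators.

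The delicate part — and the one distinguishing the present setting from the flat and no-eigenvalue cases — is the first-order perturbations $a_1 u$ and $a_2 e^{2i\int_0^t E[z(\sigma)]\,d\sigma}\bar u$, for which the $L^1_x$-bound alone only gives the non-integrable rate $\|a_j u(s)\|_{L^1_x}\lesssim\epsilon^3 s^{-1+\alpha}$, so one must use oscillation through integration by parts in $s$. For the second term the phase $-k^2 s + 2\int_0^s E[z(\sigma)]\,d\sigma$ has derivative $-k^2+2E[z(s)]\le-\rho^2<0$ uniformly (using $E[z]<0$), so the non-resonance identity \eqref{eq:phaseintro} applies; for $a_1 u$ one uses $e^{-ik^2 s}=\tfrac{-1}{ik^2}\partial_s e^{-ik^2 s}$, the apparent singularity at $k=0$ being compensated by the genericity of $V$, which (through $T(0)=0$, $T(k)\sim\alpha k$) forces the distorted transforms of localized quantities, in particular those of $a_j u$, to vanish like $k$ as $k\to0$. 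Crucially, a single integration by parts is not enough: unlike in the familiar quadratic settings, a time derivative landing on the coefficients $a_j$ produces extra smallness but no extra time decay, and the phase shift in the second term forces a \emph{second} integration by parts — combining \eqref{eq:phaseintro} with $e^{-ik^2 s}=\tfrac{-1}{ik^2}\partial_s e^{-ik^2 s}$ exactly as in the model computation \eqref{eq:example} — whose bulk terms are then of the type controlled by the refined smoothing estimates of Lemma \ref{lem:imprsmoothing}. The plan is therefore to propagate those refined smoothing norms $\|\jx^{-2}t\,\partial_t^j u_L\|_{L^\infty_x L^2_t[0,T]}$ and $\|\jx^{-2}t\,\partial_x^j u_H\|_{L^\infty_x L^2_t[0,T]}$ (low/high frequency in the time Fourier variable, as in \eqref{eq:impsmL-1}) as auxiliary bootstrap quantities, using the Fourier transform in $t$ to exploit the clean structure of the equation and the smallness $|a_j|\lesssim\epsilon^2$ to absorb the residual first-order terms from the right-hand side; this is precisely what soaks up the extra factor of $s$, both in the weighted $\partial_k$-estimate and, by the same double integration by parts, in the $L^\infty_k$ bound on $\tilde f$, which makes the ODE analysis here heavier than in \cite{KP,GPR,CP}. (For the full problem, Theorem \ref{thm:AC}, the additional bound state is handled by the refined decomposition \eqref{eq:refdecomp1} after peeling off the discrete mode via $\mathsf{b}(t)=e^{i\rho^2 t}\mathsf{a}(t)$ and using \eqref{eq:introb1}--\eqref{eq:introb2}; in the present model problem there is no eigenvalue, so only the $r$-type analysis survives.) This step — the weighted estimate for the first-order perturbations and the attendant double integration by parts in time against the refined smoothing estimates — is the main obstacle; everything else is either classical or a routine adaptation of \cite{CP}.

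For the asymptotics \eqref{mainasy1}, collecting the above shows that $\tilde f$ satisfies $\partial_t\tilde f(t,k)=-\tfrac{i}{2(1+t)}|\tilde f(t,k)|^2\tilde f(t,k)+\mathcal{E}(t,k)$ with $\|\mathcal{E}(t)\|_{L^\infty_k}\lesssim\epsilon\,t^{-1-\beta}$ for some $\beta\in(0,\alpha)$, where $\mathcal{E}$ gathers the non-resonant cubic remainder together with the (now time-integrable) quadratic and first-order contributions. Setting $G(t,k):=\tilde f(t,k)\exp\bigl(\tfrac{i}{2}\int_0^t|\tilde f(s,k)|^2\tfrac{ds}{s+1}\bigr)$, one computes $\partial_t G(t,k)=e^{i\Phi(t,k)}\mathcal{E}(t,k)$ with $\Phi$ real, so $\|\partial_t G(t)\|_{L^\infty_k}\lesssim\epsilon\,t^{-1-\beta}$ is integrable; hence $G(t,\cdot)$ converges in $L^\infty_k$ to some $W_{+\infty}$ at the rate $\epsilon\,t^{-\beta}$, which is \eqref{mainasy1}.
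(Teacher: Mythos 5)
Your proposal follows essentially the same strategy as the paper: a bootstrap in the $X_T$ norm, treating the cubic term via the Chen--Pusateri space-time resonance machinery, the quadratic term via the inhomogeneous local smoothing estimate \eqref{eq:smoothing2}, and the first-order perturbations via a double integration by parts in time against the non-resonance identities and the auxiliary refined smoothing estimates \eqref{eq:boot3}--\eqref{eq:boot4}, propagated in the bootstrap through the Fourier transform in $t$; the asymptotics then come from integrating the effective ODE for the modified profile. This is precisely the paper's route (Propositions \ref{pro:weightmain2}, \ref{pro:weightmain}, \ref{pro:weightmainfirst}, \ref{proasy}).

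Two small remarks, neither a real gap. First, you write the auxiliary low-frequency smoothing quantity as $\|\jx^{-2}t\,\partial_t^j u_L\|_{L^\infty_x L^2_t}$; the paper's \eqref{eq:boot3} propagates the specific combination $\jx^{-2}t\bigl(-2iE[z(t)]\partial_t+\partial_t^2\bigr)u_L$, which is exactly the operator that emerges after the two integrations by parts, and bounding this combination is (mildly) weaker than bounding each $\partial_t^j$ separately. Since you point to \eqref{eq:impsmL-1}, this reads as shorthand rather than a different claim, but one should be careful not to bootstrap a strictly stronger quantity than what the Duhamel argument re-establishes. Second, the ODE/asymptotics step only controls $\tilde f(t,k)$ directly in the region $|k|\gtrsim t^{-3\alpha}$ (this is where \eqref{eq:cubicpoint} holds); for $|k|\lesssim t^{-3\alpha}$ one instead uses $\tilde f(t,0)=0$ (genericity) together with the weighted bound, $|\tilde f(t,k)|\le\sqrt{|k|}\,\|\partial_k\tilde f(t)\|_{L^2_k}\lesssim\epsilon\,t^{-\alpha/2}$, to close the $L^\infty_k$ part of the bootstrap. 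You omit this small-frequency case; it is a minor but necessary piece of the pointwise argument.
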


As in Theorem \ref{thm:AC}, one can also derive the following asymptotic formula for $u$ in physical space:
\begin{align}\label{nonlinearasyu}
u\left(t,x\right)=\frac{e^{i\frac{x^{2}}{4t}}}{\sqrt{-2it}}
  \exp\left(-\frac{i}{2}\left|W_{+\infty}\left(-\frac{x}{2t}\right)\right|^{2}\log t\right)
  W_{+\infty}\left(-\frac{x}{2t}\right)+\mathcal{O}\left(t^{-\frac{1}{2}-\alpha}\right),\ \ t\gg1.
\end{align}
The direct application of the model problem is the following:
\begin{proof}[Proof of Theorem \ref{thm:AC}]
The theorem above together with Corollary \ref{cor:modulation}   imply results  in Theorem \ref{thm:AC}.
\end{proof}
\section{Weighted estimates}\label{sec:weight}

In this section, we perform the bootstrap analysis for the weighted
estimates for the model problem \eqref{eq:modelu}
\begin{equation}\label{eq:modelu5}
    i\partial_{t}u-\partial_{xx}u+Vu=a_{1}\left(x\right)u+a_{2}(x)e^{2i\int_{0}^{t}E\left[z(\sigma)\right]\,d\sigma}\bar{u}+b\left(x\right)u^{2}+\left|u\right|^{2}u
\end{equation}
where $-\partial_{xx}+V$ is generic without any bound states. The coefficients $a_j(x)$ and $b(x)$ are assumed to be smooth functions that decay exponentially such that $|a(x)|\lesssim\epsilon ^2 $ and $|b(x)|\lesssim \epsilon $.
We
again denote the profile for $u$ as
\[
f:=e^{-iHt}u.
\]
Then we separate the first-order perturbations, quadratic, and cubic
terms and deal with them by different methods. In terms of profiles,
one can write
\begin{equation}
\tilde{f}\left(t,k\right)=\tilde{f}\left(1,k\right)+\int_{1}^{t}e^{-ik^{2}s}\left(\tilde{N}_{1}\left(s\right)+\tilde{N}_{2}\left(s\right)+\tilde{N}_{3}\left(s\right)\right)\,ds\label{eq:duhamelf-1}
\end{equation}
where $N_{1}=a_{1}\left(x\right)u+a_{2}(x)e^{2i\int_{0}^{t}E\left[z(\sigma)\right]\,d\sigma}\bar{u}$, $N_{2}=b\left(x\right)u^{2}$ and $N_{3}=\left|u\right|^{2}u$.

We will bootstrap estimates for $u$ and its profile $f$ in the following
space:
\begin{equation}
X_{T}:=\left\{ u|\left\Vert u\right\Vert _{L_{t}^{\infty}\left(\left[0,T\right];H^{1}\right)}+\left\Vert \tilde{f}\right\Vert _{L_{t}^{\infty}\left(\left[0,T\right];L_{k}^{\infty}\right)}+\left\Vert t^{-\alpha}\tilde{f}\right\Vert _{L_{t}^{\infty}\left(\left[0,T\right];H_{k}^{1}\right)}\right\} \label{eq:bootstrap1-1}
\end{equation}
for some sufficiently small $\alpha>0$.
Applying the localized pointwise decay, Lemma \ref{lemlocdecinfty}, and the improved local $L^2$ decay, Lemma \ref{lem:localEn}, we have the following estimates for the solution $u$ in the bootstrap space above:
\begin{cor}\label{cor:directXT}
Suppose $u\in X_T$, then we have
\begin{align}
\left\Vert \jx^{-2}u \right\Vert_{L^{\infty}_x}+{\big\| \jx^{-1} \partial_x u  \big\|}_{L^2_x}
  \lesssim |t|^{-1+\alpha}{\big\| u \big\|}_{X_T}.
\end{align}
\end{cor}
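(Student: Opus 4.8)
The plan is to reduce everything to the linear dispersive estimates of Section \ref{sec:prelim} applied to the profile $f(t)=e^{-iHt}u(t)$. Since in the model problem \eqref{eq:modelu5} the operator $H=-\partial_{xx}+V$ is assumed generic with \emph{no} eigenvalues, we have $P_c=\mathrm{Id}$, and hence $u(t)=e^{itH}f(t)=e^{itH}P_c f(t)$. The strategy is: first convert the $X_T$ norm into a control on $\tilde f(t)$ in $H^1_k$, then feed this into Lemmata \ref{lemlocdecinfty} and \ref{lem:localEn}, and finally dispose of the small-time regime by Sobolev embedding.

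First I would observe that, by the definition of the space \eqref{eq:bootstrap1-1}, for every $t\in[0,T]$ one has ${\|\tilde f(t)\|}_{L^\infty_k}\lesssim {\|u\|}_{X_T}$ and ${\|\partial_k\tilde f(t)\|}_{L^2_k}\lesssim t^\alpha {\|u\|}_{X_T}$; moreover, by Plancherel for the distorted Fourier transform (Lemma \ref{lemtildeF}) and unitarity of $e^{-iHt}$, ${\|\tilde f(t)\|}_{L^2_k}={\|u(t)\|}_{L^2_x}\le {\|u(t)\|}_{H^1_x}\le {\|u\|}_{X_T}$. Combining these yields ${\|\tilde f(t)\|}_{H^1_k}\lesssim \langle t\rangle^{\alpha}{\|u\|}_{X_T}$ on $[0,T]$.

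Next, for $t\ge 1$, I would apply Lemma \ref{lemlocdecinfty} with $h=f(t)$, which is legitimate because $V$ is generic with $\jx^\gamma V\in L^1$ for all $\gamma$ (it is smooth and exponentially decaying): this gives ${\|\jx^{-2}u(t)\|}_{L^\infty_x}={\|\jx^{-2}e^{itH}P_cf(t)\|}_{L^\infty_x}\lesssim t^{-1}{\|\tilde f(t)\|}_{H^1_k}\lesssim t^{-1+\alpha}{\|u\|}_{X_T}$. Likewise, Lemma \ref{lem:localEn} applied with $h=f(t)$ gives ${\|\jx^{-1}\partial_x u(t)\|}_{L^2_x}\lesssim t^{-1}{\|\tilde f(t)\|}_{H^1_k}\lesssim t^{-1+\alpha}{\|u\|}_{X_T}$. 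For $0\le t\le 1$ the claimed bound is immediate: the right-hand side satisfies $|t|^{-1+\alpha}{\|u\|}_{X_T}\ge {\|u\|}_{X_T}$, while the left-hand side is bounded by ${\|\jx^{-2}u(t)\|}_{L^\infty_x}+{\|\partial_x u(t)\|}_{L^2_x}\lesssim {\|u(t)\|}_{H^1_x}\le {\|u\|}_{X_T}$ via the one-dimensional Sobolev embedding $H^1\hookrightarrow L^\infty$ and $\jx^{-1}\le 1$. Patching the two regimes together concludes the proof.

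There is no real obstacle in this corollary: it is purely a specialization of the previously established linear local-decay estimates to the bootstrap functional setting, and the analytic content sits entirely in Lemmata \ref{lemlocdecinfty} and \ref{lem:localEn}. The only points worth keeping in mind are the uniformity of the estimate near $t=0$ (handled trivially as above) and checking that the hypotheses on $V$ in \eqref{eq:modelu5} — generic, eigenvalue-free, with sufficient decay — are exactly what those two lemmata require.
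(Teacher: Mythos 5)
Your proposal is correct and matches the paper's intended argument: the corollary is stated as a direct consequence of Lemma \ref{lemlocdecinfty} and Lemma \ref{lem:localEn} applied to the profile $f(t)=e^{-iHt}u(t)$ (with $P_c=\mathrm{Id}$ in the eigenvalue-free model problem), using the $H^1_k$ control of $\tilde f$ that the $X_T$ norm supplies. Your added treatment of the $0\le t\le 1$ regime via Sobolev embedding is a harmless and correct completion of the small-time case.
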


The following proposition closes the bootstrap argument for the weighted estimates.
\begin{prop}\label{pro:weightmain1}
For $1\leq t\leq T$, one has that, for some $C>0$,
\begin{align}\label{weightmainconc1}
{\big\| \partial_{k}\tilde{f}(t) \big\|}_{L_{k}^{2}}
  \leq {\big\| \partial_{k}\tilde{f}(1) \big\|}_{L_k^2}
  + C t^{\alpha} {\| u \|}_{X_{T}}^{3}.
\end{align}
\end{prop}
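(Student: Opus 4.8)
The plan is to differentiate the Duhamel formula \eqref{eq:duhamelf-1} in $k$ and split into the three types of nonlinear contributions $N_1$ (first order perturbations), $N_2$ (quadratic), and $N_3$ (cubic), estimating each separately in $L^2_k$. As in \eqref{eq:partialkF}, applying $\partial_k$ to $\int_1^t e^{-ik^2 s}\tilde N_j(s)\,ds$ produces two families of terms: the ``weighted'' term $\int_1^t (ik s) e^{-ik^2 s}\tilde N_j(s)\,ds$ carrying an extra growth in $s$, and the ``easy'' term $\int_1^t e^{-ik^2 s}\partial_k\tilde N_j(s)\,ds$ with no growth. For each $j$ the easy term is handled by switching back to physical space via Plancherel and bounding $\partial_k\tilde N_j$ using \eqref{eq:weiF} together with the product structure of $N_j$ and the decay estimates of Corollary \ref{cor:directXT}; these produce integrable-in-$s$ bounds with at most a $t^\alpha$ loss. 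The cubic contribution $N_3$ is treated exactly as in Chen-Pusateri \cite{CP}: one uses the space-time resonance decomposition of the cubic spectral measure, the structure of the Jost functions, and the dispersive decay \eqref{eq:linearpoinwiseH}, which gives the $C t^\alpha\|u\|_{X_T}^3$ bound.

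For the weighted term with $N_2 = b(x)u^2$, the key is the inhomogeneous local smoothing estimate \eqref{eq:smoothing2}: after writing $\int_1^t (iks)e^{-ik^2 s}\tilde N_2(s)\,ds$ via Plancherel as an $L^2_x$ norm of $\int_1^t e^{isH}P_c(s\cdot \text{(localized)})\,ds$, one applies \eqref{eq:smoothing2} (or \eqref{eq:smoothing2-2} on $[1,T]$) to bound it by $\|\langle x\rangle\, s\, b(x)u^2\|_{L^1_x L^2_s}$. Since $b$ is smooth and exponentially localized, and $\|\langle x\rangle^{-2} u\|_{L^\infty_x}\lesssim s^{-1+\alpha}\|u\|_{X_T}$ by Corollary \ref{cor:directXT}, the integrand behaves like $s\cdot s^{-2+2\alpha} = s^{-2+2\alpha}$, which is $L^2_s$-integrable; the remaining power of $\|u\|_{X_T}$ comes from the second factor of $u$ (in $L^\infty_x$, say), giving the $\|u\|_{X_T}^3$ (or better) bound with an acceptable $t^\alpha$. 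Here one also uses the boundedness of wave operators to handle the weight $\langle x\rangle$ against $P_c$. The modulation term $M$, and the replacement of the $t=\infty$ coefficients $A,B$ by the time-dependent ones, both fall into this quadratic category as noted in \S\ref{subsec:bound} and \S\ref{subsec:boundstate} type remarks.

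The main obstacle is the weighted estimate for the first order perturbation $N_1 = a_1 u + a_2 e^{2i\int_0^t E[z]}\bar u$: here the naive bound only gives $s\cdot s^{-1+\alpha} = s^\alpha$, which is not $L^2_s$-integrable, so the smoothing estimate alone does not close. The plan here follows the introduction's strategy: perform integration by parts in $s$ in the oscillatory integral $\int_1^t (iks) e^{-ik^2 s}\tilde N_1(s)\,ds$, using either $ke^{-isk^2} = \tfrac{1}{-ik}\partial_s e^{-isk^2}$ (the singularity at $k=0$ absorbed by the generic condition, which forces $\widetilde{N_1}(s,k)\sim k$ near $k=0$ because of the localized coefficient), or the non-resonant identity \eqref{eq:phaseintro} when the phase shift is present, and indeed \emph{twice} to deal with the extra phase in the $a_2$ term; the resulting bulk terms involve $\partial_s$ and $\partial_s^2$ of (the refined decomposition of) $u$, which are controlled precisely by the auxiliary smoothing bootstrap bounds \eqref{eq:impsmL} and \eqref{eq:impsmH}. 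After these integrations by parts the problematic $s^\alpha$ growth is tamed, and combining all the pieces yields \eqref{weightmainconc1}. The smallness of $a_1,a_2$ ($\lesssim\epsilon^2$) is used both to absorb certain terms to the left-hand side via the Duhamel formula for the refined profile $r$ and to keep the total contribution consistent with the cubic bound $C t^\alpha\|u\|_{X_T}^3$.
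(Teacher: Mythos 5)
Your plan follows the paper's own route essentially verbatim: split the Duhamel term into $N_1$, $N_2$, $N_3$; handle the cubic piece by the Chen--Pusateri analysis (Proposition \ref{pro:weightmain}), the quadratic piece and modulation term by the inhomogeneous Kato smoothing estimate \eqref{eq:smoothing2} together with wave-operator bounds and the improved local decay of Corollary \ref{cor:directXT} (Proposition \ref{pro:weightmain2}), and the first-order perturbation by two rounds of integration by parts in $s$ with the non-resonant phase observations and the time-frequency decomposition $u = u_L + u_H$, closing against the auxiliary smoothing bootstrap (Proposition \ref{pro:weightmainfirst}). The only slips are cosmetic: you cite the homogeneous linear estimates \eqref{eq:impsmL}--\eqref{eq:impsmH} where the argument actually uses their bootstrap counterparts \eqref{eq:boot3}--\eqref{eq:boot4} for the solution itself, and one cross-reference points to a nonexistent label; neither affects the substance of the argument.
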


\begin{proof}
This is a combination of Proposition \ref{pro:weightmainfirst}, Proposition \ref{pro:weightmain2} and Proposition \ref{pro:weightmain}.
\end{proof}
\subsection{The cubic term}

The analysis of the weighted estimates for the cubic term is now well-studied.
We refer to Chen-Pusateri \cite{CP} for the detailed analysis.
The poof is based on three main ingredients:
the Fourier transform adapted to the Schr\"odinger operator $H=-\partial_{xx}+V$,
local decay  estimates, and $L^2$-bounds on pseudo-differential operators
whose symbols are given by the Jost functions for $H$. We sketch some basic logics from Chen-Pusateri \cite{CP} here.

Consider the inhomogeneous term from \eqref{eq:duhamelf-1} associated with the cubic term:
{\footnotesize\begin{align}\label{introD}
\begin{split}
\int_{1}^{t}e^{-ik^{2}s}\left(\tilde{N}_{3}\left(s\right)\right)\,ds&:=  i \, \mathcal{N}_\mu[f,f,f](t,k) 
\\ 
\mathcal{N}_\mu[f,f,f](t,k) & := \int_{1}^{t} \iiint e^{is(-k^2+\ell^2-m^2+n^2)}
  \tilde{f}(s,\ell)\overline{\tilde{f}(s,m)}\tilde{f}(s,n)\mu(k,\ell,m,n) \,dndmd\ell ds
\end{split}
\end{align}}
where we have defined
the {\it nonlinear spectral distribution}
\begin{align}\label{intromu}
\mu(k,\ell,m,n):=\int\overline{\mathcal{K}(x,k)}\mathcal{K}(x,\ell)\overline{\mathcal{K}(x,m)}\mathcal{K}(x,n)\,dx.
\end{align}
To obtain the desired bounds on $\mathcal{N}_\mu$ we need to understand the structure of $\mu$.
To do this we first decompose 
$\K = \K_S + \K_R$ where:
$\K_S$ is linear combination of exponentials $e^{\pm i xk}$ whose coefficients depend on the sign of $k$ and $x$,
and therefore resembles a (flat) plane wave;
$\K_R$ is the component arising from the interaction with the potential
and has strong localization in $x$ and is uniformly regular in $k$.


According to this basic decomposition,  in Chen-Pusateri \cite{CP}, 
we proposed a splitting of $\mu$ into two pieces: $\mu = \mu_S + \mu_R$,
where $\mu_S$ only contains the interaction of the four $\K_S$ functions having argument $x$ of the same sign,
and $\mu_R$ is all the rest. 
We call $\mu_S$ the ``singular' part of $\mu$
and $\mu_R$, the``regular'' part of $\mu$.
We then define $\mathcal{N}_S = \mathcal{N}_{\mu_S}$, respectively $\mathcal{N}_R := \mathcal{N}_{\mu_R}$, 
to be the singular, respectively, the regular, part of the nonlinear terms $\mathcal{N}_\mu$ in \eqref{introD}.

Two components $\mathcal{N}_S$ and $\mathcal{N}_R$ were analyzed separately by relying on two main observations:
a {\it commutation} property with $\partial_k $ for the singular part, 
and the {\it localization} property of the regular part.
More precisely, with a simple explicit calculation, we shown that the multilinear commutator
between $\partial_k$ and $\mathcal{N}_S$ satisfies
\begin{align}\label{introcomm}
[\partial_k,\mathcal{N}_S] = \mathcal{N}_S^{'}
\end{align}
where $\mathcal{N}_S^{'}$ 
is a localized term of the form $a(x) |u|^2 u$, for a Schwartz function $a$. 
This last term is then very easy to handle using the localized decay estimates which are also used to estimate $\mathcal{N}_R$ as well.

The regular part $\mathcal{N}_R$ can be thought of as 
the (flat) transform of a nonlinear term of the form  $\jx^{-\rho}|u|^2u$, 
for some $\rho >0$ related to the decay of $V$. 
More precisely, we can view it as (the transform of) a localized trilinear term
whose inputs are pseudo-differential operators applied to the solution $u$ that satisfy
$L^2$ and $L^\infty$ type estimates similar to those satisfied by $u$ itself.

Then we observed that  applying $\partial_k$ to $\mathcal{N}_R$ essentially amounts to multiplying it by 
a factor of $tk$.
Then, we are reduced to estimating the $L^2_x$ norm of an expression of the form 
\begin{align*}
\int_0^t s \, \langle \partial_x \rangle \jx^{-\rho} |u(s)|^2 u(s)\, ds.
\end{align*}
This can be estimated by  {\it local decay estimates} for 
$u=e^{itH}f$.
Importantly, we need to do this under the sole assumptions that we can control (up to some small growth in time)
$\partial_k \wt{f}$ in $L^2_k$, see Lemma \ref{lem:localEn} and Lemma \ref{lemlocdecinfty}.

The following result is the main result from Section 4 in Chen-Pusateri \cite{CP}.
\begin{prop}\label{pro:weightmain}
For $1\leq t\leq T$, one has that, for some $C>0$,
\begin{align}\label{weightmainconc}
{\big\| \partial_{k}\mathcal{N}_\mu[f,f,f](t,k) \big\|}_{L_{k}^{2}}
  \leq 
  C t^{\alpha} {\| u \|}_{X_{T}}^{3}.
\end{align}
\end{prop}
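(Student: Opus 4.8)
The plan is to follow the strategy of Chen--Pusateri \cite{CP}, exploiting the splitting of the distorted Fourier kernel $\mathcal{K} = \mathcal{K}_S + \mathcal{K}_R$ recalled above. Accordingly one writes $\mu = \mu_S + \mu_R$ and $\mathcal{N}_\mu = \mathcal{N}_S + \mathcal{N}_R$, where $\mu_S$ collects only the interactions of four $\mathcal{K}_S$-factors with spatial argument of the same sign and $\mu_R$ is everything else. It suffices to bound $\| \partial_k \mathcal{N}_S(t) \|_{L^2_k}$ and $\| \partial_k \mathcal{N}_R(t) \|_{L^2_k}$ separately by $C t^{\alpha} \| u \|_{X_T}^3$.

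For the singular part the key algebraic fact is the commutation identity \eqref{introcomm}, $[\partial_k,\mathcal{N}_S] = \mathcal{N}_S'$, where $\mathcal{N}_S'$ is the distorted transform of a localized cubic term $a(x)|u|^2u$ with $a$ Schwartz; this is checked by a direct computation using that $\mathcal{K}_S$ is a superposition of flat exponentials $e^{\pm ixk}$, so that $\partial_k$ acting on $\mu_S$ either produces a factor of $x$ (absorbed into $a$) or lands harmlessly on the coefficients. Hence $\partial_k \mathcal{N}_S = \mathcal{N}_S[\partial_k f,f,f] + (\text{symmetric terms}) + \mathcal{N}_S'$, and the first group is estimated exactly as in the flat case (Kato--Pusateri \cite{KP}): one uses the $|t|^{-1/2}$ decay of $u$ from \eqref{main1fdecay1} together with a normal form / integration by parts away from the time resonance $\{-k^2+\ell^2-m^2+n^2=0\}$, closing with the bootstrap quantities $\|\tilde f\|_{L^\infty_k} \lesssim \epsilon$ and $\|\partial_k \tilde f\|_{L^2_k}\lesssim \epsilon t^{\alpha}$. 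The localized remainder $\mathcal{N}_S'$ is handled more crudely: writing $u = e^{itH}f$ and invoking the improved local decay estimates (Lemma \ref{lemlocdecinfty} and Lemma \ref{lem:localEn}), the relevant time integral of $\|\jx^{-N}|u|^2u\|$-type quantities is integrable in $s$ and contributes $\mathcal{O}(\| u \|_{X_T}^3)$, even without the $t^{\alpha}$ loss.

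For the regular part one uses that applying $\partial_k$ to $\mathcal{N}_R$ essentially amounts to multiplication by $tk$ (plus better terms), because the $x$-localization of $\mathcal{K}_R$ is traded against differentiation of the oscillatory phase $e^{-isk^2}$. After returning to physical space by Plancherel and using the $L^2$-boundedness of the pseudo-differential operators whose symbols are built from the Jost functions of $H$, the problem reduces to estimating the $L^2_x$ norm of $\int_1^t s \, \langle \partial_x\rangle \jx^{-\rho}|u(s)|^2u(s)\,ds$ for some $\rho>0$ depending on the decay of $V$. By Corollary \ref{cor:directXT}, namely $\|\jx^{-2}u\|_{L^\infty_x} + \|\jx^{-1}\partial_x u\|_{L^2_x}\lesssim s^{-1+\alpha}\| u \|_{X_T}$, the integrand is bounded by $s\cdot s^{-1+\alpha}\cdot s^{-1}\| u \|_{X_T}^3 = s^{-1+\alpha}\| u \|_{X_T}^3$, which integrates to $\mathcal{O}(t^{\alpha}\| u \|_{X_T}^3)$. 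Summing the singular and regular contributions yields \eqref{weightmainconc}.

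I expect the main obstacle to be the singular-part analysis: the term $\mathcal{N}_S[\partial_k f,f,f]$ cannot be controlled by simply placing $\partial_k f$ in $L^2_k$ and the remaining factors in $L^\infty_k$, since that would cost an extra power of $t$; one must instead exploit the $s$-oscillation and the precise geometry of the time resonance set, performing the full flat-case normal form while carefully tracking weighted norms. The commutator identity \eqref{introcomm} is exactly what makes this feasible, since it guarantees that the genuinely new, potential-generated contributions are all localized and hence easy, while the only delicate term retains the trilinear structure amenable to the \cite{KP} argument.
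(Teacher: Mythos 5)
Your proposal follows exactly the approach the paper takes: split $\mu = \mu_S + \mu_R$ via the decomposition $\mathcal{K} = \mathcal{K}_S + \mathcal{K}_R$, exploit the commutation identity \eqref{introcomm} for the singular part to reduce to the flat-case \cite{KP} normal-form argument plus a localized remainder, and treat the regular part by recognizing $\partial_k \mathcal{N}_R \approx tk\,\mathcal{N}_R$ and invoking the improved local decay from Corollary \ref{cor:directXT} together with pseudo-differential $L^2$ bounds on the Jost symbols. This is precisely the sketch given in the text above the proposition, and the paper's actual proof is a direct citation to Proposition 4.1 of Chen--Pusateri \cite{CP}, which your plan mirrors in structure and in the decisive estimates.
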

See Proposition 4.1 in Chen-Pusateri \cite{CP}.





\subsection{The quadratic term}

In this subsection, we analyze the quadratic inhomogeneous term
\begin{equation}
\int_{1}^{t}e^{-ik^{2}s}\tilde{N}_{2}\left(s\right)\,ds=\int_{1}^{t}e^{-isk^{2}}\int\overline{\mathcal{K}}\left(x,k\right)\left(b\left(x\right)u^{2}\right)\,dxds.\label{eq:quadraticdu}
\end{equation}
In this analysis, sometime it is more convenient to use the distorted
Fourier transform to go back to the physical space. 

Denote
\[
h=\mathcal{\tilde{F}}^{-1}\left[\int_{1}^{t}e^{-ik^{2}s}\tilde{N}_{2}\left(s\right)\,ds\right].
\]
The weighted estimate analysis is reduced analyze the $J$ operator
acting on the nonlinearity where
\begin{equation}
Jh:=\mathcal{\tilde{F}}^{-1}\left[\partial_{k}\mathcal{\tilde{F}}\left[h\right]\left(k\right)\right].\label{eq:Jop}
\end{equation}
One can write
\[
b\left(x\right)u^{2}=\left(\int\mathcal{K}\left(x,\ell\right)\tilde{b}\left(\ell\right)\,d\ell\right)\left(\int\mathcal{K}\left(x,n\right)\tilde{u}\left(n\right)\,dn\right)\left(\int\mathcal{K}\left(x,m\right)\tilde{u}\left(m\right)\,dm\right).
\]
Therefore,
\begin{align*}
\int_{1}^{t}e^{-isk^{2}}\iiiint\tilde{b}\left(\ell\right)\tilde{u}\left(n\right)\tilde{u}\left(m\right)\overline{\mathcal{K}}\left(x,k\right)\mathcal{K}\left(x,\ell\right)\mathcal{K}\left(x,n\right)\mathcal{K}\left(x,m\right)\,dxd\ell dmdn\\
=\int_{1}^{t}e^{-isk^{2}}\iiint\tilde{b}\left(\ell\right)\tilde{u}\left(n\right)\tilde{u}\left(m\right)\mu\left(k,\ell,n,m\right)\,d\ell dmdn.
\end{align*}
where
\[
\mu\left(k,\ell,n,m\right)=\int\overline{\mathcal{K}}\left(x,k\right)\mathcal{K}\left(x,\ell\right)\mathcal{K}\left(x,n\right)\mathcal{K}\left(x,m\right)\,dx
\]
as in the cubic term analysis.

By direct computations, we have
\begin{align*}
\partial_{k}\tilde{h}\left(t,k\right) & =-\int_{1}^{t}e^{-isk^{2}}2isk\iiint\tilde{b}\left(\ell\right)\tilde{u}\left(n\right)\tilde{u}\left(m\right)\mu\left(k,\ell,n,m\right)\,d\ell dmdn\\
 & +\int_{1}^{t}e^{-isk^{2}}\iiint\tilde{b}\left(\ell\right)\tilde{u}\left(n\right)\tilde{u}\left(m\right)\partial_{k}\mu\left(k,\ell,n,m\right)\,d\ell dmdn.
\end{align*}
In the physical space, one has
\begin{align*}
Jh & =\int_{0}^{t}e^{-isH}\mathcal{\tilde{F}}^{-1}\left[-2isk\iiint\tilde{b}\left(\ell\right)\tilde{u}\left(n\right)\tilde{u}\left(m\right)\mu\left(k,\ell,n,m\right)\,d\ell dmdn\right]\\
 & +\int_{0}^{t}e^{-isH}\mathcal{\tilde{F}}^{-1}\left[\iiint\tilde{b}\left(\ell\right)\tilde{u}\left(n\right)\tilde{u}\left(m\right)\partial_{k}\mu\left(k,\ell,n,m\right)\,d\ell dmdn\right].
\end{align*}
Applying the inhomogeneous smoothing estimate, \eqref{eq:smoothing2-2}, one can estimate
\begin{align}
\left\Vert Jh\right\Vert _{L^{2}} & \lesssim\left\Vert \left\langle x\right\rangle ^{\frac{5}{2}}\mathcal{\tilde{F}}^{-1}\left[2isk\iiint\tilde{b}\left(\ell\right)\tilde{u}\left(n\right)\tilde{u}\left(m\right)\mu\left(k,\ell,n,m\right)\,d\ell dmdn\right]\right\Vert _{L_{t}^{2}L_{x}^{2}}\label{eq:JHexp}\\
 & +\left\Vert \left\langle x\right\rangle ^{\frac{5}{2}}\mathcal{\tilde{F}}^{-1}\left[\iiint\tilde{b}\left(\ell\right)\tilde{u}\left(n\right)\tilde{u}\left(m\right)\partial_{k}\mu\left(k,\ell,n,m\right)\,d\ell dmdn\right]\right\Vert _{L_{t}^{2}L_{x}^{2}.}\nonumber 
\end{align}
First of all, we analyze the term
\begin{equation}
\left\Vert \mathcal{\tilde{F}}^{-1}\left[2isk\iiint\tilde{b}\left(\ell\right)\tilde{u}\left(n\right)\tilde{u}\left(m\right)\mu\left(k,\ell,n,m\right)\,d\ell dmdn\right]\right\Vert _{L^{2}}.\label{eq:RHSq1}
\end{equation}
Focusing on the measure part and integrating by parts, for $k\geq0$,
(the analysis for $k\leq0$ will be the same), explicitly, one has{\small
\begin{align}
k\mu\left(k,\ell,n,m\right) & =\int k\overline{\mathcal{K}}\left(x,k\right)\mathcal{K}\left(x,\ell\right)\mathcal{K}\left(x,n\right)\mathcal{K}\left(x,m\right)\,dx\nonumber \\
 & =\int ke^{-ikx}T\left(k\right)\overline{m}_{+}\left(x,k\right)\mathcal{K}\left(x,\ell\right)\mathcal{K}\left(x,n\right)\mathcal{K}\left(x,m\right)\,dx\nonumber \\
 & =-i\int e^{-ikx}T\left(k\right)\partial_{x}\overline{m}_{+}\left(x,k\right)\mathcal{K}\left(x,\ell\right)\mathcal{K}\left(x,n\right)\mathcal{K}\left(x,m\right)\,dx\label{eq:qm0}\\
 & -i\int\overline{\mathcal{K}}\left(x,k\right)\partial_{x}\mathcal{K}\left(x,\ell\right)\mathcal{K}\left(x,n\right)\mathcal{K}\left(x,m\right)\,dx\label{eq:qm1}\\
 & -i\int\overline{\mathcal{K}}\left(x,k\right)\mathcal{K}\left(x,\ell\right)\partial_{x}\mathcal{K}\left(x,n\right)\mathcal{K}\left(x,m\right)\,dx\label{eq:qm2}\\
 & -i\int\overline{\mathcal{K}}\left(x,k\right)\mathcal{K}\left(x,\ell\right)\mathcal{K}\left(x,n\right)\partial_{x}\mathcal{K}\left(x,m\right)\,dx.\label{eq:qm3}
\end{align}}Note that by explicit computations
\[
\left(\int\partial_{x}\mathcal{K}\left(x,\ell\right)\tilde{b}\left(\ell\right)\,d\ell\right)=\partial_{x}b\left(x\right),\,
\left(\int\partial_{x}\mathcal{K}\left(x,n\right)\tilde{u}\left(n\right)\,dn\right)=\partial_{x}u\left(t,x\right).
\]
Therefore it follows that{\small
\begin{align*}
\iiint\tilde{b}\left(\ell\right)\tilde{u}\left(n\right)\tilde{u}\left(m\right)\left(\int\overline{\mathcal{K}}\left(x,k\right)\partial_{x}\mathcal{K}\left(x,\ell\right)\mathcal{K}\left(x,n\right)\mathcal{K}\left(x,m\right)\,dx\right)\,d\ell dmdn\\
=\int\overline{\mathcal{K}}\left(x,k\right)\partial_{x}b\left(x\right)u^{2}\,dx\\
=\tilde{\mathcal{F}}\left[\partial_{x}b\left(x\right)u^{2}\right]
\end{align*}}
and similarly{\small
\begin{align*}
\iiint\tilde{b}\left(\ell\right)\tilde{u}\left(n\right)\tilde{u}\left(m\right)\left(\int\overline{\mathcal{K}}\left(x,k\right)\mathcal{K}\left(x,\ell\right)\partial_{x}\mathcal{K}\left(x,n\right)\mathcal{K}\left(x,m\right)\,dx\right)\,d\ell dmdn\\
=\int\overline{\mathcal{K}}\left(x,k\right)a\left(x\right)u\partial_{x}u\,dx\\
=\tilde{\mathcal{F}}\left[b\left(x\right)u\partial_{x}u\right].
\end{align*}}Therefore, to bound \eqref{eq:RHSq1}, we first estimate the terms given
by \eqref{eq:qm1}, \eqref{eq:qm2} and \eqref{eq:qm3}. First of all,  explicitly,
we get
\begin{align*}
\left\Vert \left\langle x\right\rangle ^{\frac{5}{2}}\tilde{\mathcal{F}}^{-1}\left[2is\tilde{\mathcal{F}}\left[\partial_{x}b\left(x\right)u^{2}\right]\right]\right\Vert _{L^{2}} & \lesssim\left\Vert \left\langle x\right\rangle ^{\frac{5}{2}}s\partial_{x}b\left(x\right)u^{2}\right\Vert _{L^{2}}
 \lesssim\epsilon\left\Vert s\left\langle x\right\rangle ^{-4}u\right\Vert _{L_{x}^{\infty}}^{2}
  \lesssim\epsilon s^{-1+2\alpha}\left\Vert u\right\Vert _{X_{T}}^{2}
\end{align*} where in the last line, we applied Corollary \ref{cor:directXT}.

Similarly, one has
\begin{align*}
\left\Vert \left\langle x\right\rangle ^{\frac{5}{2}}\tilde{\mathcal{F}}^{-1}\left[2is\tilde{\mathcal{F}}\left[b\left(x\right)u\partial_{x}u\right]\right]\right\Vert _{L^{2}} & \lesssim\left\Vert \left\langle x\right\rangle ^{\frac{5}{2}}b\left(x\right)u\partial_{x}u\right\Vert _{L^{2}}\\
 & \lesssim\epsilon\left\Vert \left\langle x\right\rangle ^{-3}s\partial_{x}u\right\Vert _{L_{x}^{2}}\left\Vert \left\langle x\right\rangle ^{-2}u\right\Vert _{L_{x}^{\infty}}\\
 & \lesssim\epsilon s^{-1+2\alpha}\left\Vert u\right\Vert _{X_{T}}^{2}.
\end{align*}
It remains to bound the term given by \eqref{eq:qm0}. We need to study
the boundedness given by
\[
\int e^{-ikx}\overline{T}\left(k\right)\partial_{x}\overline{m}_{+}\left(x,k\right)\mathcal{K}\left(x,\ell\right)\mathcal{K}\left(x,n\right)\mathcal{K}\left(x,m\right)\,dx.
\]
Taking $\tilde{b}\left(\ell\right)\tilde{u}\left(n\right)\tilde{u}\left(m\right)$
into account, we consider{\small
\begin{align*}
\int e^{-ixk}\overline{T}\left(k\right)\partial_{x}\overline{m}_{+}\left(x,k\right)\left(\int\mathcal{K}\left(x,\ell\right)\tilde{b}\left(\ell\right)\,d\ell\right)\left(\int\mathcal{K}\left(x,n\right)\tilde{u}\left(n\right)\,dn\right)\left(\int\mathcal{K}\left(x,m\right)\tilde{u}\left(m\right)\,dm\right)\,dx\\
=\int e^{ixk}\overline{T}\left(k\right)\partial_{x}\overline{m}_{+}\left(x,k\right)b\left(x\right)u\left(x\right)u\left(x\right)\,dx.
\end{align*}}This term has a nice $L^{2}$ bound using pseudo-differential operators
by Lemma \ref{lem:pesudo1}. Therefore without weights, we get
\begin{align*}
\left\Vert \int e^{ixk}\overline{T}\left(k\right)\partial_{x}\overline{m}_{+}\left(x,k\right)b\left(x\right)u\left(x\right)u\left(x\right)\,dx\right\Vert _{L^{2}} & \lesssim\left\Vert \left\langle x\right\rangle ^{2}b\left(x\right)u^{2}\right\Vert _{L^{2}} \lesssim\epsilon s^{-2+2\alpha}\left\Vert u\right\Vert _{X_{T}}^{2}
\end{align*}where again in the last line, we applied the decay from Corollary \ref{cor:directXT}.

Next we consider the weighted version. We only need to consider the homogeneous
weight since the inhomogeneous one can be obtained by the standard
interpolation.

Consider
\begin{equation}
\left\Vert x^{\frac{5}{2}}\tilde{\mathcal{F}}^{-1}\left[s\int e^{iyk}\partial_{x}\overline{m}_{+}\left(y,k\right)b\left(y\right)u\left(y\right)u\left(y\right)\,dy\right]\left(x\right)\right\Vert _{L_{x}^{2}}.\label{eq:weightedWO}
\end{equation}
Taking the flat Fourier transform, by the Fourier duality, it remains to bound{\small
\begin{align*}
\left\Vert \partial_{\ell}^{\frac{5}{2}}\hat{\mathcal{F}}\tilde{\mathcal{F}}^{-1}\left[s\int e^{iyk}\partial_{x}\overline{m}_{+}\left(y,k\right)b\left(y\right)u\left(y\right)u\left(y\right)\,dy\right]\left(\ell\right)\right\Vert _{L_{\ell}^{2}}\\
=\left\Vert \partial_{\ell}^{\frac{5}{2}}W_{+}^{\ast}\left[\int se^{iyk}\partial_{x}\overline{m}_{+}\left(y,k\right)b\left(y\right)u\left(y\right)u\left(y\right)\,dy\right]\left(\ell\right)\right\Vert _{L_{\ell}^{2}}\\
\lesssim\left\Vert \partial_{k}^{\frac{5}{2}}\int e^{iyk}\partial_{x}\overline{m}_{+}\left(y,k\right)sb\left(y\right)u\left(y\right)u\left(y\right)\,dy\right\Vert _{L_{k}^{2}}\\
\lesssim\left\Vert \int e^{iyk}\left|y\right|^{\frac{5}{2}}\partial_{x}\overline{m}_{+}\left(y,k\right)sb\left(y\right)u\left(y\right)u\left(y\right)\,dy\right\Vert _{L_{k}^{2}}\\
+\left\Vert \int e^{iyk}\partial_{k}^{\frac{5}{2}}\partial_{x}\overline{m}_{+}\left(y,k\right)sb\left(y\right)u\left(y\right)u\left(y\right)\,dy\right\Vert _{L_{k}^{2}}\\
\lesssim\left\Vert s\left|b\right|^{\frac{1}{2}}\left(y\right)u\left(y\right)u\left(y\right)\right\Vert _{L_{y}^{2}}
\lesssim s\epsilon\left\Vert \left\langle x\right\rangle ^{-2}u\right\Vert _{L^{\infty}}^{2}
\lesssim s^{-1+2\alpha}\epsilon\left\Vert u\right\Vert _{X}^{2}
\end{align*}}where in the third line, we applied the boundedness of the wave operator
$W_{+}^{\ast}=\hat{\mathcal{F}}\tilde{\mathcal{F}}^{-1}$ in $W^{k,p}$, see
Weder \cite{Wed}. 
\begin{rem}
It is possible that the bounds above can be obtained directly via integration
by parts instead of using the boundedness of wave operators.
\end{rem}

We still need to estimate the second term on the RHS of \eqref{eq:JHexp},
the case that the differentiation hits the measure
\begin{equation}
\left\Vert \left\langle x\right\rangle ^{\frac{5}{2}}\tilde{\mathcal{F}}^{-1}\left[\iiint\tilde{b}\left(\ell\right)\tilde{u}\left(n\right)\tilde{u}\left(m\right)\partial_{k}\mu\left(k,\ell,n,m\right)\,d\ell dmdn\right]\right\Vert _{L_{t}^{2}L_{x}^{2}.}.\label{eq:inhomq2}
\end{equation}
The same as before, we first consider the estimates without weights:
\[
\left\Vert \tilde{\mathcal{F}}^{-1}\left[\iiint\tilde{b}\left(\ell\right)\tilde{u}\left(n\right)\tilde{u}\left(m\right)\partial_{k}\mu\left(k,\ell,n,m\right)\,d\ell dmdn\right]\right\Vert _{L^{2}}.
\]
Applying Plancherel's theorem, we estimate the $L^{2}$ norm of
\[
\iiint\tilde{b}\left(\ell\right)\tilde{u}\left(n\right)\tilde{u}\left(m\right)\partial_{k}\mu\left(k,\ell,n,m\right)\,d\ell dmdn.
\]
Again, we focus on the case that for $k\geq0$ and perform integrate
by parts{\small
\begin{align}
\partial_{k}\mu\left(k,\ell,n,m\right) & =\int\partial_{k}\overline{\mathcal{K}}\left(x,k\right)\mathcal{K}\left(x,\ell\right)\mathcal{K}\left(x,n\right)\mathcal{K}\left(x,m\right)\,dx\nonumber \\
 & =\int-ixe^{-ikx}T\left(k\right)\overline{m}_{+}\left(x,k\right)\mathcal{K}\left(x,\ell\right)\mathcal{K}\left(x,n\right)\mathcal{K}\left(x,m\right)\,dx\nonumber \\
 & +\int e^{-ikx}\partial_{k}T\left(k\right)\overline{m}_{+}\left(x,k\right)\mathcal{K}\left(x,\ell\right)\mathcal{K}\left(x,n\right)\mathcal{K}\left(x,m\right)\,dx\nonumber \\
 & +\int e^{-ikx}T\left(k\right)\partial_{k}\overline{m}_{+}\left(x,k\right)\mathcal{K}\left(x,\ell\right)\mathcal{K}\left(x,n\right)\mathcal{K}\left(x,m\right)\,dx.\label{eq:partialkmu}
\end{align}}
Note that again by explicit computations, one has
\begin{align*}
&\iiint\int\tilde{b}\left(\ell\right)\tilde{u}\left(n\right)\tilde{u}\left(m\right)ixe^{-ikx}T\left(k\right)\overline{m}_{+}\left(x,k\right)\mathcal{K}\left(x,\ell\right)\mathcal{K}\left(x,n\right)\mathcal{K}\left(x,m\right)\,dxd\ell dmdn\\
&=\int\overline{\mathcal{K}}\left(x,k\right)ixb\left(x\right)u^{2}\,dx
\end{align*}
and
\begin{align*}
\left\Vert \int\overline{\mathcal{K}}\left(x,k\right)ixb\left(x\right)u^{2}ixe^{-ikx}\,dx\right\Vert _{L^{2}} & \lesssim\epsilon\left\Vert \left\langle x\right\rangle ^{-2}u\right\Vert _{L^{\infty}}^{2} \lesssim \epsilon s^{-2+2\alpha}\left\Vert u\right\Vert _{X_{T}}^{2}
\end{align*}
where again, we used Corollary \ref{cor:directXT}.

Similarly, we also have
\begin{align*}
\iiiint\tilde{b}\left(\ell\right)\tilde{u}\left(n\right)\tilde{u}\left(m\right)e^{-ikx}\partial_{k}T\left(k\right)\overline{m}_{+}\left(x,k\right)\mathcal{K}\left(x,\ell\right)\mathcal{K}\left(x,n\right)\mathcal{K}\left(x,m\right)\,dxd\ell dmdn\\
=\int e^{-ikx}\partial_{k}T\left(k\right)\overline{m}_{+}\left(x,k\right)b\left(x\right)u^{2}\,dx,
\end{align*}
whence it follows
\begin{align*}
\left\Vert \int e^{-ikx}\partial_{k}T\left(k\right)\overline{m}_{+}\left(x,k\right)b\left(x\right)u^{2}\,dx\right\Vert _{L^{2}} & \lesssim\epsilon\left\Vert \left\langle x\right\rangle ^{-2}u\right\Vert _{L^{\infty}}^{2} \lesssim\epsilon s^{-2+2\alpha}\left\Vert u\right\Vert _{X_{T}}^{2}.
\end{align*}
For the last piece from \eqref{eq:partialkmu}, one has
\begin{align*}
\iiiint\tilde{b}\left(\ell\right)\tilde{u}\left(n\right)\tilde{u}\left(m\right)e^{-ikx}T\left(k\right)\partial_{k}\overline{m}_{+}\left(x,k\right)\mathcal{K}\left(x,\ell\right)\mathcal{K}\left(x,n\right)\mathcal{K}\left(x,m\right)\,dxd\ell dmdn\\
=\int e^{-ikx}T\left(k\right)\partial_{k}\overline{m}_{+}\left(x,k\right)b\left(x\right)u^{2}\,dx.
\end{align*}
Then applying the boundedness of the pseudo-differential operator, Lemma \ref{lem:pesudo1}, we
conclude that
\begin{align*}
\left\Vert \int e^{-ikx}T\left(k\right)\partial_{k}\overline{m}_{+}\left(x,k\right)b\left(x\right)u^{2}ixe^{-ikx}\,dx\right\Vert _{L^{2}}\\
\lesssim\left\Vert \left\langle x\right\rangle ^{2}b\left(x\right)u^{2}\right\Vert _{L^{2}}
\lesssim\epsilon s^{-2+2\alpha}\left\Vert u\right\Vert _{X_{T}}^{2}.
\end{align*}
For the weighted estimates, we again take the flat Fourier transform
and use the boundedness of the wave operator as \eqref{eq:weightedWO}. It follows that
\[
\left\Vert \left\langle x\right\rangle ^{\frac{5}{2}}\tilde{\mathcal{F}}^{-1}\left[\iiint\tilde{b}\left(\ell\right)\tilde{u}\left(n\right)\tilde{u}\left(m\right)\partial_{k}\mu\left(k,\ell,n,m\right)\,d\ell dmdn\right]\right\Vert _{L_{x}^{2}.}\lesssim\epsilon s^{-2+2\alpha}\left\Vert u\right\Vert _{X_{T}}^{2}.
\]
Performing the $L_{t}^{2}$ integration and summing up the above pieces,
one has
\begin{align}
\left\Vert Jh(t)\right\Vert _{L^{2}} & \lesssim\epsilon\left(\int_{1}^{t}\left|s^{-1+2\alpha}\left\Vert u\right\Vert _{X_{T}}^{2}+s^{-2+2\alpha}\left\Vert u\right\Vert _{X_{T}}^{2}\right|^{2}\,ds\right)^{\frac{1}{2}}\nonumber \\
 & \lesssim\epsilon^{3}\left(\left\langle t\right\rangle ^{-\frac{1}{2}+2\alpha}+1\right)\label{eq:quadbdF}
\end{align}
which is actually globally bounded. Therefore, for the quadratic term,
we recover the bootstrap condition for the weighted norm.
\begin{prop}\label{pro:weightmain2}
For $1\leq t\leq T$, one has that, for some $C>0$,
\begin{align}\label{weightmainconc2}
{\big\| \partial_{k}\int_{1}^{t}e^{-ik^{2}s}\tilde{N}_{2}\left(s\right)\,ds \big\|}_{L_{k}^{2}}
  \leq 
  C \epsilon^3.
\end{align}
\end{prop}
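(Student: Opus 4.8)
The plan is to carry out the distorted-Fourier space-time resonance analysis for the localized quadratic term, exploiting the strong spatial decay of $b$ together with the improved local decay bounds of Corollary \ref{cor:directXT}. First, expanding $b(x)u^2$ in the distorted basis,
\[
b(x)u^2 = \left(\int \mathcal{K}(x,\ell)\tilde b(\ell)\,d\ell\right)\left(\int\mathcal{K}(x,n)\tilde u(n)\,dn\right)\left(\int\mathcal{K}(x,m)\tilde u(m)\,dm\right),
\]
one rewrites the inhomogeneous term in \eqref{weightmainconc2} as a time integral of a trilinear expression against the nonlinear spectral distribution $\mu(k,\ell,n,m)=\int\overline{\mathcal{K}}(x,k)\mathcal{K}(x,\ell)\mathcal{K}(x,n)\mathcal{K}(x,m)\,dx$ already used in the cubic analysis. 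Applying $\partial_k$ splits the estimate into the term where the derivative hits the phase, producing a factor $-2iks$, and the term where it hits $\mu$, producing $\partial_k\mu$.

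For the term carrying the growth factor $s$, the key maneuver is to exchange the power of $k$ for a spatial derivative: since $ke^{-ikx}=i\partial_x e^{-ikx}$ inside $\overline{\mathcal{K}}(x,k)$, one integrates by parts in $x$ in the definition of $\mu$, so that the derivative lands either on $\partial_x\overline{m}_+(x,k)$ — leaving a symbol $\overline{T}(k)\partial_x\overline{m}_+(x,k)$ that is an admissible pseudo-differential operator by Lemma \ref{lem:pesudo1} — or on one of the three remaining $\mathcal{K}$ factors, in which case it reconstitutes a genuinely localized trilinear term, namely $\widetilde{\mathcal{F}}[\partial_x b\,u^2]$ or $\widetilde{\mathcal{F}}[b\,u\,\partial_x u]$. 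In every case the resulting object is $\widetilde{\mathcal{F}}$ of a bilinear expression in $u$ with a Schwartz coefficient of size $\lesssim\epsilon$; one then inserts the inhomogeneous smoothing estimate \eqref{eq:smoothing2-2} to bound its $L^2_x$ norm by $\|\langle x\rangle^{5/2}(\text{localized bilinear term})\|_{L^1_x L^2_s}$, and finally uses the improved local decay $\|\langle x\rangle^{-2}u(s)\|_{L^\infty_x}+\|\langle x\rangle^{-1}\partial_x u(s)\|_{L^2_x}\lesssim s^{-1+\alpha}\|u\|_{X_T}$ of Corollary \ref{cor:directXT}. This produces a pointwise-in-$s$ factor of the type $s\cdot s^{-2+2\alpha}\|u\|_{X_T}^2=s^{-1+2\alpha}\|u\|_{X_T}^2$, which is square-integrable in $s$ for $\alpha$ small; the weighted $\langle x\rangle^{5/2}$ norms are absorbed using the boundedness of the wave operator $W_+^{\ast}=\hat{\mathcal{F}}\widetilde{\mathcal{F}}^{-1}$ on Sobolev spaces (Weder \cite{Wed}), or alternatively by a direct integration by parts. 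The second term, where $\partial_k$ falls on $\mu$, is easier: $\partial_k\mu$ contributes a factor $x$ (giving the localized term $\widetilde{\mathcal{F}}[ixb\,u^2]$), a bounded factor $\partial_k T(k)$, or a pseudo-differential factor $\partial_k\overline{m}_+(x,k)$, all of which come with the full localized decay $s^{-2+2\alpha}$.

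Collecting the pieces and performing the $L^2_s$ integration over $[1,t]$ yields
\[
\Big\| \partial_{k}\int_{1}^{t}e^{-ik^{2}s}\tilde{N}_{2}(s)\,ds \Big\|_{L_{k}^{2}} \lesssim \epsilon\Big(\int_1^t \big(s^{-1+2\alpha}+s^{-2+2\alpha}\big)^2\|u\|_{X_T}^4\,ds\Big)^{1/2}\lesssim \epsilon^3\big(\langle t\rangle^{-1/2+2\alpha}+1\big),
\]
which is in fact globally bounded and gives \eqref{weightmainconc2}. I expect the main obstacle to be precisely the term in which $\partial_k$ hits the oscillatory phase: naively it carries an extra power of $s$ that cannot be afforded, and the whole point is that the localized coefficient $b$ lets one (i) trade the accompanying factor $k$ for an $x$-derivative via integration by parts, keeping every output localized and of a form controlled by $L^2$- and $L^\infty$-type local decay estimates, and (ii) invoke the inhomogeneous smoothing estimate to convert the time integral into an $L^2_s$ norm, which combined with the $s^{-1+\alpha}$ local decay of $u$ is exactly enough to close. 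The pseudo-differential bound of Lemma \ref{lem:pesudo1} is needed only for the single output in which the $x$-derivative lands on the Jost factor $\overline{m}_+$ instead of reconstituting a differentiated physical-space quantity.
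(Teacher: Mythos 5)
Your proposal matches the paper's argument essentially step for step: the distorted-basis expansion with the quadrilinear spectral distribution $\mu$, the split of $\partial_k$ into the phase term and $\partial_k\mu$, the $k\leftrightarrow\partial_x$ integration by parts placing the derivative either on $\overline{m}_+$ (handled by the pseudo-differential bound of Lemma \ref{lem:pesudo1}) or on one of the remaining $\mathcal{K}$ factors (reconstituting $\partial_x b\,u^2$ or $b\,u\,\partial_x u$), the inhomogeneous smoothing estimate \eqref{eq:smoothing2-2} to convert to an $L^1_xL^2_s$ norm, the boundedness of the wave operator $W_+^{\ast}$ to absorb the $\langle x\rangle^{5/2}$ weights, and the improved local decay of Corollary \ref{cor:directXT} yielding the $s^{-1+2\alpha}$ integrand. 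The final $L^2_s$ integration over $[1,t]$ gives the global bound, exactly as in the paper.
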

Finally, we record a pseudo-differential operator bound  used above.
\begin{lem}
\label{lem:pesudo1}Consider the integral
\[
\int e^{ixk}T\left(k\right)\partial_{k}m_{+}\left(x,k\right)h\left(x\right)\,dx.
\]
Then one has
\[
\left\Vert \int e^{ixk}T\left(k\right)\partial_{k}m_{+}\left(x,k\right)h\left(x\right)\,dx\right\Vert _{L^{2}}\lesssim\left\Vert \left\langle x\right\rangle ^{2}h\right\Vert _{L^{2}}.
\]The same estimate holds for $\partial_k m_+$ replaced by $\partial_x m_+$. 
\end{lem}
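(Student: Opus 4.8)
The plan is to convert this oscillatory integral operator into an expression built from the distorted Fourier transform $\wtF$ and then to read off the bound from the mapping properties collected in Lemma \ref{lemtildeF}. By density it suffices to take $h\in\mathcal{S}$, and using $T(-k)=T(k)$ together with $m_{+}(x,-k)=\overline{m_{+}(x,k)}$ one reduces (modulo harmless unimodular factors) to estimating the $L^{2}$ norm over $k\geq 0$. On that half-line \eqref{matK} and \eqref{mpm} give the basic identification $e^{ixk}T(k)m_{+}(x,k)=\sqrt{2\pi}\,\mathcal{K}(x,k)$, hence $\int e^{ixk}T(k)m_{+}(x,k)\varphi(x)\,dx=\sqrt{2\pi}\,\overline{\wtF[\bar\varphi](k)}$ for any $\varphi$. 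The starting point for the $\partial_{k}m_{+}$ case is the algebraic identity
\[
e^{ixk}T(k)\,\partial_{k}m_{+}(x,k)=\partial_{k}\!\big(e^{ixk}T(k)m_{+}(x,k)\big)-ix\,e^{ixk}T(k)m_{+}(x,k)-\frac{\partial_{k}T(k)}{T(k)}\,e^{ixk}T(k)m_{+}(x,k).
\]
Integrating against $h(x)\,dx$ and using the identification above, for $k\geq 0$ the operator equals
\[
\sqrt{2\pi}\,\overline{\partial_{k}\wtF[\bar h](k)}\;-\;i\sqrt{2\pi}\,\overline{\wtF[x\bar h](k)}\;-\;\sqrt{2\pi}\,\frac{\partial_{k}T(k)}{T(k)}\,\overline{\wtF[\bar h](k)}.
\]

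Next I would estimate the three terms. The first is bounded in $L^{2}_{k}$ by $\|\jx h\|_{L^{2}}$ via the weighted bound \eqref{eq:weiF}; the second by $\|xh\|_{L^{2}}\le\|\jx h\|_{L^{2}}$, since $\wtF$ is an $L^{2}$ isometry onto $P_{c}L^{2}$. For the third term I would write $\wtF[\bar h](k)=\tfrac{\overline{T(k)}}{\sqrt{2\pi}}\,\overline{G(k)}$ with $G(k):=\int\psi_{+}(x,k)h(x)\,dx$, so that it collapses to $-\partial_{k}T(k)\,G(k)$; since $|\partial_{k}T(k)|\lesssim\jk^{-1}$ by Lemma \ref{estiTR}, it remains to bound $\|G\|_{L^{2}_{k}}$. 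Writing $G(k)=\sqrt{2\pi}\,\overline{\wtF[\bar h](k)}/T(k)$ and splitting at $|k|=1$: on $|k|\ge 1$ one has $|T(k)|\gtrsim1$, so $\|G\|_{L^{2}(|k|\ge 1)}\lesssim\|\wtF[\bar h]\|_{L^{2}}\le\|h\|_{L^{2}}$; on $|k|\le 1$ one has $|T(k)|\gtrsim|k|$ by Lemma \ref{lem:estiTRTaylor}, and since $V$ is generic $\wtF[\bar h](0)=0$ (because $T(0)=0$ in \eqref{matK}), so Hardy's inequality gives $\|G\|_{L^{2}(|k|\le 1)}\lesssim\|\partial_{k}\wtF[\bar h]\|_{L^{2}}\lesssim\|\jx h\|_{L^{2}}$. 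Collecting everything, the bound is $\lesssim\|\jx h\|_{L^{2}}\le\|\jx^{2}h\|_{L^{2}}$, which is in fact stronger than claimed.

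For the operator with $\partial_{x}m_{+}$ in place of $\partial_{k}m_{+}$ I would not use the identity above but the localization encoded in the Volterra equation \eqref{minteq}: differentiating it in $x$ yields $\partial_{x}m_{+}(x,k)=-e^{-2ikx}\int_{x}^{\infty}e^{2iky}V(y)m_{+}(y,k)\,dy$. Substituting this, interchanging the order of integration, and using $e^{iky}T(k)m_{+}(y,k)=\sqrt{2\pi}\,\mathcal{K}(y,k)$ once more, the operator becomes $-\sqrt{2\pi}\int e^{iky}\mathcal{K}(y,k)V(y)\big(\int_{-\infty}^{y}e^{-ixk}h(x)\,dx\big)\,dy$. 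By Minkowski's inequality in $y$, Plancherel for the inner $x$-integral (whose $L^{2}_{k}$ norm is $\lesssim\|h\|_{L^{2}}$, uniformly in $y$), the bound $|\mathcal{K}(y,k)|\lesssim\jy$ coming from \eqref{Mestimates1}, and $\jy V\in L^{1}$, this is $\lesssim\|h\|_{L^{2}}$, again stronger than claimed.

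The single genuine obstacle is the factor $\partial_{k}T(k)/T(k)$ appearing in the $\partial_{k}m_{+}$ case, which really is of size $|k|^{-1}$ near the origin; it is compensated exactly by the generic vanishing $\wtF[\bar h](0)=0$ through Hardy's inequality, and this is the one place where the generic hypothesis on $V$ enters. Everything else is bookkeeping (differentiation under the integral sign, Fubini, the density reduction, and the symmetry reduction to $k\geq 0$) together with the standard $L^{2}$- and weighted mapping properties of $\wtF$ from Lemma \ref{lemtildeF}.
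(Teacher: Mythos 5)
Your proof is correct but takes a genuinely different route from the paper. The paper's argument is essentially one line: it writes the integral as $\int e^{ikx}\,a(x,k)\,(\jx^{2}h(x))\,dx$ with symbol $a(x,k)=T(k)\partial_{k}m_{+}(x,k)/\jx^{2}$, verifies from Lemma \ref{lem:Mestimates} that $\partial_{x}^{\ell}\partial_{k}^{j}a$ is bounded for $j,\ell\le 1$, and invokes Hwang's $L^{2}$ boundedness theorem for pseudo-differential operators. Your proof avoids the abstract Calder\'on--Vaillancourt-type theorem entirely: for the $\partial_{k}m_{+}$ case you expand $T(k)\partial_{k}m_{+}$ by a Leibniz identity into $\partial_{k}\mathcal{K}$, $x\mathcal{K}$, and $(\partial_{k}T/T)\,\mathcal{K}$, read off the first two from the $L^{2}$ isometry and the weighted bound \eqref{eq:weiF} for $\wtF$, and handle the artificial $1/k$-singularity in the third term with Hardy's inequality using $\wtF[\bar h](0)=0$ (itself a consequence of $T(0)=0$, i.e., genericity); for the $\partial_{x}m_{+}$ case you substitute the differentiated Volterra equation and apply Fubini plus Plancherel. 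Each approach buys something: the paper's is shorter and uniform over both cases, at the cost of importing a black-box symbol-calculus theorem and of needing bounds on mixed second derivatives $\partial_{x}\partial_{k}a$ from Lemma \ref{lem:Mestimates}; yours is entirely elementary, makes the role of genericity transparent (the $T(0)=0$ cancellation does real work only because your decomposition introduces the $\partial_{k}T/T$ factor — note the original symbol $T\partial_{k}m_{+}$ has no singularity at $k=0$, so the paper never meets this obstacle), and in fact yields the sharper weights $\jx$ and no weight at all, respectively. The one slightly glossed-over point is the reduction to $k\ge 0$: the identity $e^{ikx}T(k)m_{+}(x,k)=\sqrt{2\pi}\,\mathcal{K}(x,k)$ holds only on $k\ge 0$, and for $k<0$ the symmetries $T(-k)=T(k)$, $m_{+}(x,-k)=\overline{m_{+}(x,k)}$ give not literally the same expression but one with $\overline{T}$ and $\bar h$ in place of $T$ and $h$; since all your estimates are on absolute values this is harmless, but it deserves a sentence rather than "modulo harmless unimodular factors."
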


\begin{proof}
We only prove the bound for $\partial_k m_+(x,k)$ since $\partial_x m_+(x,k)$ actually enjoys a better estimate, see Lemma \ref{lem:Mestimates}.

We write
\[
\int e^{ixk}T\left(k\right)\partial_{k}m_{+}\left(x,k\right)h\left(x\right)\,dx=\int e^{ixk}T\left(k\right)\frac{\partial_{k}m_{+}\left(x,k\right)}{\left\langle x\right\rangle ^{2}}\left(\left\langle x\right\rangle ^{2}h\left(x\right)\right)\,dx.
\]
Then one again looks at the the pseudo-differential operator 
\[
Op_{a}\left(\psi\right)\left(k\right)=\int e^{ikx}a\left(x,k\right)\psi\,dx,
\]
where
\[
a\left(x,k\right)=T\left(k\right)\frac{\partial_{k}m_{+}\left(x,k\right)}{\left\langle x\right\rangle ^{2}},\,\text{and}\,\psi\left(x\right)=\left\langle x\right\rangle ^{2}h\left(x\right).
\]
By estimates for Jost functions, Lemma \ref{lem:Mestimates}, we know that $\left\Vert \partial^j_k \partial^{\ell}_x a(x, k)\right\Vert _{L^{\infty}}<\infty, j,\ell=0,1.$ Then the $L^{2}$ bound follows from the standard bounded of pseudo-differential
operators, see Hwang \cite{Hwang}, Chen-Pusateri \cite{CP}, and hence
\[
\left\Vert \int e^{ixk}T\left(k\right)\partial_{k}m_{+}\left(x,k\right)h\left(x\right)\,dx\right\Vert _{L^{2}}\lesssim\left\Vert \left\langle x\right\rangle ^{2}h\right\Vert _{L^{2}}.
\]
We are done.
\end{proof}

\subsubsection{Modulation term in the equation for $\eta$.}

Recall that in the Duhamel expansion for the profile of $\eta$, the
modulation term produces an inhomogeneous term
\[
\int e^{-iHs}i\text{D}Q\left[z(s)\right]\left(\dot{z}(s)-iz(s)E\left[z(s)\right]\right)\,ds.
\]
By the same argument above, with the bootstrap assumption which implies
\[
\left|\dot{z}\left(t\right)-iz\left(t\right)E\left[z\left(t\right)\right]\right|\lesssim\epsilon^{2}t^{-2+2\alpha},
\]
we obtain{\small
\begin{align}
\left\Vert J
\int e^{-iHs}i\text{D}Q\left[z(s)\right]\left(\dot{z}(s)-iz(s)E\left[z(s)\right]\right)\,ds\right\Vert _{L^{2}} & \lesssim\epsilon^{3}\left(\int_{1}^{t}\left|s^{-1+2\alpha}+s^{-2+2\alpha}\right|^{2}\,ds\right)^{\frac{1}{2}}\nonumber \\
 & \lesssim\epsilon^{3}\left(\left\langle t\right\rangle ^{-\frac{1}{2}+2\alpha}+1\right)\label{eq:quadbdF-1}
\end{align}}
which also recasts the bootstrap condition.

.

\subsection{First order perturbation}

In this subsection, we analyze the estimate for
\[
\partial_{k}\int_{1}^{t}e^{-ik^{2}s}\left(\tilde{N}_{1}\left(s\right)\right)\,ds
\]
where $N_{1}=a_{1}\left(x\right)u+a_{2}(x)e^{2i\int_{0}^{t}E\left[z(\sigma)\right]\,d\sigma}\bar{u}$. To handle this term, it requires much more complicated
and refined arguments. Morally, a direct $k$ differentiation will
result in a growth like
\[
\int_{1}^{t}e^{-ik^{2}s}2sik\left(\tilde{N}_{1}\left(s\right)\right)\,ds.
\]
The decay given by the inhomogeneous term is  $a_{1}\left(x\right)u+a_{2}(x)e^{2i\int_{0}^{s}E\left[z(\sigma)\right]\,d\sigma}\bar{u}\sim s^{-1+\alpha}$.
Directly applying smoothing estimates or other dispersive estimates,
the integral above is far from being integrable. If we  integrate by parts in $s$, the RHS of the equation for $u$
will not give extra decay in $s$ unlike other nonlinear problems, for example, see Shatah \cite{Sh} and Germain-Pusateri \cite{GP}.
This is one crucial part in this paper. In the stability analysis
of other problems, for example, see  Krieger-Schlag \cite{KS}
and Schlag \cite{Sch1}, the coefficient of the first order perturbation has some
decays in time. In the current setting, the first order perturbation,
it will only introduce $\epsilon^{2}$ by the size of small solitons
but there is no decay at all.

To handle the current delicate setting, we introduce some refined
smoothing estimate to absorb the growth in time in the inhomogeneous
estimate. We also need to use the Fourier transform with respect to
$t$. We will need some auxiliary estimates.


\subsubsection{Auxiliary estimates and spaces}

Here we introduce some auxiliary estimates needed to handle first order perturbations. Let $\phi_{1}$ and $\phi_{2}$
be smooth nonegative functions such that $\phi_{1}+\phi_{2}=1$,
$\phi_{1}\left(\lambda\right)=1$ for $\left|\lambda\right|\leq1$
and $\phi_{1}\left(\lambda\right)=0$ for $\left|\lambda\right|\ge2$.

Letting $u$ be a solution to the model problem \eqref{eq:modelu5}, we decompose $u=u_{L}+u_{H}$ where
\begin{equation}
\mathcal{F}_{T}\left(u_{L}\right)\left(\tau\right)=\phi_{1}\left(\tau\right)\mathcal{F}_{T}\left(u\right)\left(\tau\right)\label{eq:LFt}
\end{equation}
\begin{equation}
\mathcal{F}_{T}\left(u_{H}\right)\left(\tau\right)=\phi_{2}\left(\tau\right)\mathcal{F}_{T}\left(u\right)\left(\tau\right)\label{eq:HFt}
\end{equation}
and
\[
\mathcal{F}_{T}\left(u\right)\left(\tau\right)=\frac{1}{\sqrt{2\pi}}\int e^{-it\tau}u\left(t\right)\,dt
\]
is the  Fourier transform with respect to time.

Now we are ready to introduce two additional
auxiliary estimates and bootstrap them together with norms given by
\eqref{eq:bootstrap1-1}.

For the low frequency part, we impose the bootstrap assumption that
\begin{equation}
\left\Vert \left\langle x\right\rangle ^{-2}t\left(-2iE\left[z(t)\right]\partial_{t}+\partial_{t}^{2}\right)\left(u_{L}\right)\right\Vert _{L_{x}^{\infty}L_{t}^{2}\left[0,T\right]}\lesssim\epsilon T^{\alpha}.\label{eq:boot3}
\end{equation}
The estimate above in particular implies that
\begin{equation}
\left\Vert \left\langle x\right\rangle ^{-2}t\left(-2iE\left[z\left(T\right)\right]\partial_{t}+\partial_{t}^{2}\right)\left(u_{L}\right)\right\Vert _{L_{x}^{\infty}L_{t}^{2}\left[0,T\right]}\lesssim\epsilon T^{\alpha}\label{eq:boot3add}
\end{equation}
and vice versa. Since due to the bootstrap assumption and the modulation equation,  one has $\left|\frac{d}{dt}E\left[z(t)\right]\right|\lesssim\epsilon^{2}t^{-1+2\alpha}$ and
\[
\left\Vert \left\langle x\right\rangle ^{-2}t\left|E\left[z(t)\right]-E\left[z(T)\right]\right|\partial_{t}\left(u_{L}\right)(t)\right\Vert _{L_{x}^{\infty}}\lesssim\epsilon^3 t^{-1+3\alpha}
\]
which is $L^{2}$ integrable in time. The fixed phase $E\left[z(T)\right]$
above can also be replaced by $E\left[z(\infty)\right]$.

For the high frequency part, we impose the bootstrap assumption that
\begin{equation}
\left\Vert \left\langle x\right\rangle ^{-2}\partial_{x}^{j}t\left(u_{H}\right)\right\Vert _{L_{x}^{\infty}L_{t}^{2}\left[0,T\right]}\lesssim\epsilon T^{\alpha}\,j=0,1.\label{eq:boot4}
\end{equation}
Finally, we also record a direct consequence of Corollary \ref{cor:directXT}
\begin{equation}
\left\Vert \left\langle x\right\rangle ^{-1}u\right\Vert _{L_{x}^{\infty}L_{t}^{2}}\lesssim\epsilon\label{eq:boot5}
\end{equation}
which follows from the improved decay rate of $u$. Clearly all of  these
estimates hold for the homogeneous evolution $u_{\text{hm}}=e^{itH}u_{0}$, see Lemma \ref{lem:smoothing} and Lemma \ref{lem:imprsmoothing}.

\begin{rem}[Fourier transform in $t$]\label{rem:FTt}
Technically, here we assume the global existence of the solution $u$
which is true in the setting of $\eta$. Given the global existence
of $u$, the bootstrap space given by \eqref{eq:bootstrap1-1} is restricted
onto $\left[0,T\right]$.  But this is just a minor technical
point. One can treat our analysis here as \emph{a priori} estimates and one can overcome this easily by constructing a sequence $u_{n}$
converging to $u$ by the standard iteration. For each element in
this sequence, one can always perform the Fourier transform with respect
to $t$.  We can use the standard Picard iteration:
\begin{equation}
u_{n+1}(t)=u_{0}\left(t\right)+\int_{0<s<t}e^{i\left(t-s\right)H-\epsilon\left(t-s\right)}F\left(s,u_{n}\right)\,ds\label{eq:iteration}
\end{equation}
such that $u_n$ converges to $u$.
See Appendix \ref{sec:GWP} for more details.
\end{rem}

First of all, we also have the following observation based on \eqref{eq:boot3},
\eqref{eq:boot4} and \eqref{eq:boot5}.
\begin{lem}\label{lem:beta}
Given the estimates \eqref{eq:boot3}, \eqref{eq:boot4} and \eqref{eq:boot5},
one can choose an appropriate $\beta>\alpha>0$ such that
\[
\left\Vert \left\langle x\right\rangle ^{-2}t^{1-\beta}\left(-2iE\left[z(t)\right]\partial_{t}+\partial_{t}^{2}\right)\left(u_{L}\right)\right\Vert _{L_{x}^{\infty}L_{t}^{2}\left[0,T\right]}\lesssim\epsilon
\]
and
\[
\left\Vert \left\langle x\right\rangle ^{-2}\partial_{x}^{j}t^{1-\beta}\left(u_{H}\right)\right\Vert _{L_{x}^{\infty}L_{t}^{2}\left[0,T\right]}\lesssim\epsilon\,\quad j=0,1.
\]
 \end{lem}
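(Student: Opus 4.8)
The plan is to read this as a ``trade growth for decay'' interpolation between the weighted smoothing bounds \eqref{eq:boot3}, \eqref{eq:boot4} — which carry a full power $t$ in the norm but cost a slow growth $T^{\alpha}$ — and the loss-free bound \eqref{eq:boot5}. First I would decompose the time interval dyadically, $[1,T]=\bigcup_{j\ge 0}I_{j}$ with $I_{j}=[2^{j},2^{j+1}]\cap[1,T]$. On $I_{j}$ one has $t\simeq 2^{j}$, hence $t^{1-\beta}\lesssim 2^{-j\beta}\,t$; applying \eqref{eq:boot3} on $[0,\min(2^{j+1},T)]$ and dividing by $t\ge 2^{j}$ on $I_{j}$ gives, on each piece, a bound $\lesssim \epsilon\,2^{-j\beta}(2^{j+1})^{\alpha}\lesssim\epsilon\,2^{j(\alpha-\beta)}$. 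Since $\|F\|_{L^{\infty}_{x}L^{2}_{t}[1,T]}^{2}\le\sum_{j\ge0}\|F\|_{L^{\infty}_{x}L^{2}_{t}(I_{j})}^{2}$ (the supremum in $x$ of a sum of nonnegative quantities is at most the sum of the suprema), I would sum the squares of these estimates; choosing $\beta>\alpha$ makes the series geometric and yields the desired $T$-uniform bound $\lesssim\epsilon$ on $[1,T]$. The second displayed estimate (high frequency, $j=0,1$) is then obtained verbatim with \eqref{eq:boot4} in place of \eqref{eq:boot3}.

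It then remains to treat the short-time window $t\in[0,1]$, where $t^{1-\beta}\le 1$ but one cannot afford to divide by the degenerate weight $t$. Here I would instead use the time-frequency localization built into $u_{L},u_{H}$: on $u_{L}$ the operators $\partial_{t}$ and $\partial_{t}^{2}$ act as bounded Fourier multipliers in $t$, with symbols $i\tau\phi_{1}(\tau)$ and $-\tau^{2}\phi_{1}(\tau)$ supported in $|\tau|\le 2$, while $|E[z(t)]|\lesssim 1$; hence for fixed $x$ one has $\|(-2iE[z(t)]\partial_{t}+\partial_{t}^{2})(u_{L})(x,\cdot)\|_{L^{2}_{t}}\lesssim\|u(x,\cdot)\|_{L^{2}_{t}}$, and \eqref{eq:boot5} controls the low-frequency short-time contribution by $\epsilon$. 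The high-frequency piece with $j=0$ is handled the same way (with the bounded multiplier $\phi_{2}(\tau)$). For $j=1$ I would use the equation: the time Fourier transform of \eqref{eq:modelu5} gives $(H+\tau)\widehat{u_{H}}(\tau)=-\phi_{2}(\tau)\widehat{N}(\tau)$, and on $|\tau|\ge 1$ the operator $\langle x\rangle^{-2}\partial_{x}(H+\tau)^{-1}\langle x\rangle^{-2}$ is bounded on $L^{2}_{x}$ uniformly in $\tau$ (elementary for $\tau\ge 1$, and for $\tau\le-1$ a consequence of the weighted resolvent/limiting absorption bound, cf.\ Lemma~\ref{lem:limitingab}); since $N$ is a quadratic/cubic expression with small, rapidly decaying coefficients, $\|\langle x\rangle^{2}N\|_{L^{2}_{t}([0,1];L^{2}_{x})}\lesssim\epsilon$, which closes this case. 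Throughout, $u$ is taken global in time, so that the Fourier transform in $t$ is well defined, as explained in the Remark above and in \S\ref{subsec:FTt}.

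I expect the only genuinely delicate point to be exactly this short-time/degenerate-weight region, and in particular the $j=1$ high-frequency piece, where one cannot route through \eqref{eq:boot5} directly and must invoke the weighted resolvent estimates; the long-time part, which is the actual content of the lemma, is a routine dyadic summation once one observes that giving up the power $t^{\beta}$ from the weight precisely offsets the $T^{\alpha}$ loss whenever $\beta>\alpha$. Finally, note that $\beta$ is constrained only by $\alpha<\beta<1$, so it can still be taken as small as needed for the later asymptotics by first shrinking $\alpha$.
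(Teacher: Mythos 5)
Your proof is correct and uses the same core mechanism as the paper's: a dyadic decomposition in time, with the key observation that the weight supplies a factor $t^{-\beta}\simeq 2^{-j\beta}$ on the $j$-th dyadic block, which offsets the $(2^j)^{\alpha}$ loss coming from \eqref{eq:boot3}--\eqref{eq:boot4} applied up to time $\sim 2^{j}$, so the resulting series is geometric and converges once $\beta>\alpha$. The implementation is slightly different and, if anything, cleaner. The paper inserts smooth Littlewood--Paley cutoffs $\varphi(t/2^j)$ against $u_L$ \emph{before} applying $(-2iE[z]\partial_t+\partial_t^2)$, and therefore must control the Leibniz terms where the derivatives land on $\varphi$ (these are absorbed via \eqref{eq:boot5}, implicitly using that $\partial_t$ is a bounded Fourier multiplier on the band-limited function $u_L$); by splitting the $L^2_t$ norm over hard dyadic intervals $I_j$ instead, you bypass the Leibniz terms altogether, and your square-summation step $\|F\|_{L^\infty_x L^2_t}^2\le\sum_j\|F\|^2_{L^\infty_x L^2_t(I_j)}$ is a correct and tight way to recombine. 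Your explicit treatment of the short-time window $[0,1]$ --- bounded Fourier multipliers for the low-frequency and $j=0$ high-frequency pieces, and the weighted resolvent via the equation for the $j=1$ high-frequency piece --- addresses a point the paper's proof is silent on, since $\sum_{j\ge 0}\varphi(t/2^j)$ only covers $t\gtrsim 1$ (consistent with the Duhamel expansion being taken from $t=1$, but never stated). The $j=1$ resolvent step is only sketched, and passing from a weighted $L^2_x$ resolvent bound to the $L^\infty_x L^2_t$ norm needs an extra line, but the direction is sound and the argument poses no real obstruction. Overall: same idea, a cleaner cut, and a useful supplementary remark on the degenerate short-time regime.
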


The point here is that we can eliminate the mild growth in \eqref{eq:boot3}
and \eqref{eq:boot4}. These estimates will be helpful in our pointwise
estimates for $\tilde{f}\left(t,k\right)$ later on.
\begin{proof}
To obtain the desired result, we apply the dyadic decomposition in time.
For the low frequency part, we consider
\[
\left(-2iE\left[z(t)\right]\partial_{t}+\partial_{t}^{2}\right)\left(\varphi\left(\frac{t}{2^{j}}\right)u_{L}\right)
\]
where $\varphi$ is a non-negative smooth bump function that takes $1$ from $\frac{3}{4}$
to $\frac{5}{4}$ and decays to $0$  quickly. 

Direct computations give
\[
\partial_{t}\left(\varphi\left(\frac{t}{2^{j}}\right)u_{L}\right)=\frac{1}{2^{j}}\varphi'\left(\frac{t}{2^{j}}\right)u_{L}+\varphi\left(\frac{t}{2^{j}}\right)\partial_{t}u_{L}.
\]
\[
\partial_{t}^{2}\left(\varphi\left(\frac{t}{2^{j}}\right)u_{L}\right)=\frac{1}{2^{2j}}\varphi''\left(\frac{t}{2^{j}}\right)u_{L}+\frac{1}{2^{j}}\varphi'\left(\frac{t}{2^{j}}\right)\partial_{t}u_{L}+\varphi\left(\frac{t}{2^{j}}\right)\partial_{t}^{2}u_{L}.
\]
Then applying the desired norm to the expressions above, one has
\begin{align*}
\left\Vert \left\langle x\right\rangle ^{-2}t^{1-\beta}\left(-2iE\left[z(t)\right]\partial_{t}+\partial_{t}^{2}\right)\left(\varphi\left(\frac{t}{2^{j}}\right)u_{L}\right)\right\Vert _{L_{x}^{\infty}L_{t}^{2}\left[0,T\right]}\\
\lesssim\left\Vert \left\langle x\right\rangle ^{-2}\frac{t^{-\beta}}{2^{2j}}t\varphi''\left(\frac{t}{2^{j}}\right)u_{L}\right\Vert _{L_{x}^{\infty}L_{t}^{2}\left[0,T\right]}+\left\Vert \left\langle x\right\rangle ^{-2}\frac{t^{-\beta}}{2^{j}}t\varphi'\left(\frac{t}{2^{j}}\right)\partial_{t}u_{L}\right\Vert _{L_{x}^{\infty}L_{t}^{2}\left[0,T\right]}\\
+\left\Vert \left\langle x\right\rangle ^{-2}\varphi\left(\frac{t}{2^{j}}\right)t^{1-\beta}\left(-2iE\left[z(t)\right]\partial_{t}+\partial_{t}^{2}\right)u_{L}\right\Vert _{L_{x}^{\infty}L_{t}^{2}\left[0,T\right]}\\
\lesssim\epsilon\left(2^{j}\right)^{-\beta}+\epsilon\left(2^{j}\right)^{-\beta}\left(2^{j}\right)^{\alpha}
\end{align*}
where we applied \eqref{eq:boot5} to estimate the the first piece and
\eqref{eq:boot3} to bound the second piece. Therefore, summing up the
dyadic pieces, we get
\begin{align*}
\left\Vert \left\langle x\right\rangle ^{-2}t^{1-\beta}\left(-2iE\left[z(t)\right]\partial_{t}+\partial_{t}^{2}\right)\left(u_{L}\right)\right\Vert _{L_{x}^{\infty}L_{t}^{2}\left[0,T\right]}\\
\lesssim\sum_{j=0}^{\infty}\left\Vert \left\langle x\right\rangle ^{-2}t^{1-\beta}\left(-2iE\left[z(t)\right]\partial_{t}+\partial_{t}^{2}\right)\left(\varphi\left(\frac{t}{2^{j}}\right)u_{L}\right)\right\Vert _{L_{x}^{\infty}L_{t}^{2}\left[0,T\right]}\\
\lesssim\epsilon\sum_{j=0}^{\infty}\left(2^{j}\right)^{-\beta}\left(2^{j}\right)^{\alpha}\lesssim\epsilon
\end{align*}
provided that $\alpha<\beta$.

Similarly, one can bound that
\[
\left\Vert \left\langle x\right\rangle ^{-2}\partial_{x}^{j}t^{1-\beta}\left(u_{H}\right)\right\Vert _{L_{x}^{\infty}L_{t}^{2}\left[0,T\right]}\lesssim\epsilon\,\quad j=0,1.
\]
for $\beta>\alpha$. These conclude the proof.
\end{proof}
\begin{rem}
One can also apply the dyadic decomposition using $\varphi(\frac{2^j t}{T})$ and sum over $1\leq j\leq \log T$. The result will be the same.
\end{rem}
\subsubsection{Bootstrap}\label{subsubsec:bootauxmodel}
Here we show that we can close the additional bootstrap
assumptions for the smoothing estimates. 

Consider the model problem
\begin{align*}
i\partial_{t}u-\partial_{xx}u+Vu & =a_{1}\left(x\right)u+a_{2}\left(x\right)e^{2i\int_{0}^{t}E\left[z(\sigma)\right]\,d\sigma}\bar{u}+b\left(x\right)u^{2}+\left|u\right|^{2}u\\
 & =N_{1,1}+N_{1,2}+N_{2}+N_{3}=F
\end{align*}
where $N_{1,1}=a_{1}(x)u$, $N_{1,2}=a_{2}\left(x\right)e^{2i\int_{0}^{t}E\left[z(\sigma)\right]\,d\sigma}\bar{u}$,
$N_{2}=b\left(x\right)u^{2}$ and $N_{3}=\left|u\right|^{2}u$.

By the Duhamel expansion,
\[
u(t)=e^{iH\left(t-1\right)}u_{0}+\int_{1}^{t}e^{i\left(t-s\right)H}\left(N_{1,1}+N_{1,2}+N_{2}+N_{3}\right)\,ds.
\]
Taking decomposition as \eqref{eq:LFt} and \eqref{eq:HFt}, for the homogeneous
part, by Lemma \ref{lem:imprsmoothing}, one has
\begin{equation}
\left\Vert \left\langle x\right\rangle ^{-2}\left(-2iE\left[z(t)\right]\partial_{t}+\partial_{t}^{2}\right)\left(e^{iH\left(t-1\right)}u_{0}\right)_{L}\right\Vert _{L_{x}^{\infty}L_{t}^{2}}\lesssim\left\Vert \tilde{u}_{0}\right\Vert _{H^{1}},\label{eq:boot3-1-1}
\end{equation}
and
\begin{equation}
\left\Vert \left\langle x\right\rangle ^{-2}\partial_{x}^{j}t\left(e^{iH\left(t-1\right)}u_{0}\right)_{H}\right\Vert _{L_{x}^{\infty}L_{t}^{2}}\lesssim\epsilon\left\Vert \tilde{u}_{0}\right\Vert _{H^{1}},\,j=0,1.\label{eq:boot4-1-1}
\end{equation}
For the inhomogenous terms involving $N_{2}$ and $N_{3}$, we simply
apply Minkowski's inequality and the homogeneous estimates.

In the low frequency part, for $j=2,3$, one has
\begin{align}
\left|\left\langle x\right\rangle ^{-2}t\left(-2iE\left[z(t)\right]\partial_{t}+\partial_{t}^{2}\right)\left(\int_{1}^{t}\left(e^{i\left(t-s\right)H}\left(N_{j}\right)\right)\,ds\right)_{L}\right|\nonumber \\
\lesssim\left|\left\langle x\right\rangle ^{-2}tN_{j}\left(t\right)\right|
+\left|\int_{1}^{t}\left\langle x\right\rangle ^{-2}t\left(-2iE\left[z(t)\right]\partial_{t}+\partial_{t}^{2}\right)\left(e^{i\left(t-s\right)H}N_{j}\right)_{L}\,ds\right|\label{eq:low1explit}
\end{align}
since we restrict onto the low frequency part with respect to the
time frequency.

By the bootstrap assumption \eqref{eq:bootstrap1-1},
\[
\left|\left\langle x\right\rangle ^{-2}tN_{2}\left(t\right)\right|\lesssim t^{-1+2\alpha},\,\left|\left\langle x\right\rangle ^{-2}tN_{3}\left(t\right)\right|\lesssim t^{-2+2\alpha}
\]
which are both $L_{t}^{2}$ integrable.

To deal with the second term on the RHS of \eqref{eq:low1explit} and
the high frequency part, we note that
\[
\left\Vert \int_{1}^{t}\left\langle x\right\rangle ^{-2}t\left(-2iE\left[z(t)\right]\partial_{t}+\partial_{t}^{2}\right)\left(e^{i\left(t-s\right)H}N_{j}\right)_{L}\,ds\right\Vert _{L_{x}^{\infty}L_{t}^{2}\left[0,T\right]}\lesssim\int_{1}^{T}\left\Vert J\left(e^{-isH}N_{j}\right)\right\Vert _{L^{2}}\,ds
\]
\[
\left\Vert \int_{1}^{t}\left\langle x\right\rangle ^{-2}\partial_{x}^{j}t\left(e^{i\left(t-s\right)H}N_{j}\right)_{H}\,ds\right\Vert _{L_{x}^{\infty}L_{t}^{2}\left[0,T\right]}\lesssim\int_{0}^{T}\left\Vert J\left(e^{-isH}N_{j}\right)\right\Vert _{L^{2}}\,ds
\]
where the $J$ operator is given by \eqref{eq:Jop}. Therefore, two
expressions above can be bounded by the weighted estimates for the
quadratic term, Proposition \ref{pro:weightmain2} and the cubic term, see Proposition \ref{pro:weightmain}. In particular, we get
\begin{equation}
\left\Vert \left\langle x\right\rangle ^{-2}t\left(-2iE\left[z(t)\right]\partial_{t}+\partial_{t}^{2}\right)\left(\int_{1}^{t}\left(e^{i\left(t-s\right)H}N_{j}\right)_{L}\,ds\right)\right\Vert _{L_{x}^{\infty}L_{t}^{2}\left[0,T\right]}\lesssim\epsilon^{3}T^{\alpha}\label{eq:N1boot}
\end{equation}
\begin{equation}
\left\Vert \int_{1}^{t}\left\langle x\right\rangle ^{-2}\partial_{x}^{j}t\left(e^{i\left(t-s\right)H}N_{j}\right)_{H}\,ds\right\Vert _{L_{x}^{\infty}L_{t}^{2}\left[0,T\right]}\lesssim\epsilon^{3}T^{\alpha}.\label{eq:N2boot}
\end{equation}
It remains to analyze
\begin{equation}
h_{1,1}(t,x)=\int_{0}^{t}e^{i(t-s)H}N_{1,1}\left(s\right)\,ds=\int_{0}^{t}e^{i(t-s)H}a_{1}u(s)\,ds\label{eq:h11}
\end{equation}
and
\begin{equation}
h_{1,2}(t,x)=\int_{0}^{t}e^{i(t-s)H}N_{1,2}\left(s\right)\,ds=\int_{0}^{t}e^{i(t-s)H}a_{2}\left(x\right)e^{2i\int_{0}^{t}E\left[z(\sigma)\right]\,d\sigma}\bar{u}\,ds.\label{eq:h22}
\end{equation}
As we observed above by \eqref{eq:boot3add}, it suffices to show the
corresponding estimates with $E\left[z(t)\right]$ replaced by $E\left[z(\infty)\right]$
or $E\left[z(T)\right]$. For the sake of simplicity, we use $E\left[z\left(\infty\right)\right]$
here.

Taking the  Fourier transform in $t$, from the Laplace transform
of resolvents, one has
\begin{equation}
\mathcal{F}_{T}\left(h_{1,1}\right)\left(\tau\right)=\lim_{\epsilon\rightarrow0^{+}}\int_{0}^{\infty}e^{-is\tau}R\left(\tau-i\epsilon\right)a_{1}u(s)\,ds.\label{eq:Fth1}
\end{equation}
\begin{equation}
\mathcal{F}_{T}\left(h_{1,2}\right)\left(\tau\right)=\lim_{\epsilon\rightarrow0^{+}}\int_{0}^{\infty}e^{-is\tau}R\left(\tau-i\epsilon\right)a_{2}\left(x\right)e^{2i\int_{0}^{t}E\left[z(\sigma)\right]\,d\sigma}\bar{u}\,ds\label{eq:Fth2}
\end{equation}
where $R$ is the  resolvent of $H=-\partial_{xx}+V$.

By the duality of the Fourier space and the physical space, it suffices
to analyze
\[
\partial_{\tau}\left(-\tau\left(\tau+2E\left[z\left(\infty\right)\right]\right)\mathcal{F}_{T}\left[h_{1,j}\right]\left(\tau\right)\right),\ j=1,2
\]
restricted onto the low frequency part. By Plancherel's theorem, we
need to estimate the $L_{\tau}^{2}$ norm of the expression above.

\subsubsection*{Analysis of $h_{1,1}$}

We first compute bounds for $h_{1,1}$. Explicitly, we have
\begin{align}
\partial_{\tau}\left(\left(-\tau\left(\tau+2E\left[z\left(\infty\right)\right]\right)\right)\phi_{1}\left(\tau\right)\left(\int_{0}^{\infty}e^{-is\tau}R\left(\tau-i\epsilon\right)a_{1}u(s)\,ds\right)\right)\label{eq:LF1exp}\\
=\left[\partial_{\tau}\left(\left(-\tau\left(\tau+2E\left[z\left(\infty\right)\right]\right)\right)\phi_{1}\left(\tau\right)\right)\right]\int_{0}^{\infty}e^{-is\tau}R\left(\tau-i\epsilon\right)a_{1}u(s)\,ds\nonumber \\
+\left(-\tau\left(\tau+2E\left[z\left(\infty\right)\right]\right)\phi_{1}\left(\tau\right)\right)\int_{0}^{\infty}e^{is\tau}\partial_{\tau}R\left(\tau-i\epsilon\right)a_{1}u(s)\,ds\nonumber \\
+\left(-\tau\left(\tau+2E\left[z\left(\infty\right)\right]\right)\phi_{1}\left(\tau\right)\right)\int_{0}^{\infty}e^{-is\tau}R\left(\tau-i\epsilon\right)sa_{1}u(s)\,ds.\nonumber 
\end{align}
We estimate terms on the RHS of the expression above separately. From Lemma \ref{lem:limitingab}, one has{\footnotesize
%
\begin{align}
\lim_{\epsilon\rightarrow0^{+}}\left\Vert \left\langle x\right\rangle ^{-2}\left[\partial_{\tau}\left(\left(-\tau\left(\tau+2E\left[z\left(\infty\right)\right]\right)\right)\phi_{1}\left(\tau\right)\right)\right]\int_{0}^{\infty}e^{-is\tau}R\left(\tau-i\epsilon\right)a_{1}u(s)\,ds\right\Vert _{L_{x}^{\infty}L_{\tau}^{2}}\nonumber \\
\lesssim\lim_{\epsilon\rightarrow0^{+}}\left\Vert \left\langle x\right\rangle ^{-2}\left[\partial_{\tau}\left(\left(-\tau\left(\tau+2E\left[z\left(\infty\right)\right]\right)\right)\phi_{1}\left(\tau\right)\right)\right]R\left(\tau-i\epsilon\right)a_{1}\mathcal{F}_{T}\left[u\right]\left(\tau\right)\right\Vert _{L_{x}^{\infty}L_{\tau}^{2}}\nonumber \\
\lesssim\lim_{\epsilon\rightarrow0^{+}}\epsilon^{2}\left\Vert \left\langle x\right\rangle ^{-2}\phi_{1}\left(\tau\right)R\left(\tau-i\epsilon\right)\left\langle x\right\rangle ^{-2}\right\Vert _{L_{x}^{\infty}L_{\tau}^{\infty}}\left\Vert \left\langle x\right\rangle ^{-2}\mathcal{F}_{T}\left[u\right]\left(\tau\right)\right\Vert _{L_{x}^{\infty}L_{\tau}^{2}}
\lesssim\epsilon^{2}\left\Vert \left\langle x\right\rangle ^{-2}u\left(t\right)\right\Vert _{L_{x}^{\infty}L_{t}^{2}}.\label{eq:L1}
\end{align}}
Then one can apply \eqref{eq:boot5} to estimate the last line. 

Similarly, we also have
\begin{align}
\lim_{\epsilon\rightarrow0^{+}}\left\Vert \left\langle x\right\rangle ^{-2}\left(-\tau\left(\tau+2E\left[z\left(\infty\right)\right]\right)\phi_{1}\left(\tau\right)\right)\int_{0}^{\infty}e^{is\tau}\partial_{\tau}R\left(\tau-i\epsilon\right)a_{1}u(s)\,ds\right\Vert _{L_{x}^{\infty}L_{\tau}^{2}}\nonumber \\
\lesssim\lim_{\epsilon\rightarrow0^{+}}\left\Vert \left\langle x\right\rangle ^{-2}\left(\tau+2E\left[z\left(\infty\right)\right]\right)\phi_{1}\left(\tau\right)\tau\partial_{\tau}R\left(\tau-i\epsilon\right)a_{1}\mathcal{F}_{T}\left[u\right]\left(\tau\right)\right\Vert _{L_{x}^{\infty}L_{\tau}^{2}}\nonumber \\
\lesssim\lim_{\epsilon\rightarrow0^{+}}\epsilon^{2}\left\Vert \left\langle x\right\rangle ^{-2}\phi_{1}\left(\tau\right)\tau\partial_{\tau}R\left(\tau-i\epsilon\right)\left\langle x\right\rangle ^{-2}\right\Vert _{L_{x}^{\infty}L_{\tau}^{\infty}}\left\Vert \left\langle x\right\rangle ^{-2}\mathcal{F}_{T}\left[u\right]\left(\tau\right)\right\Vert _{L_{x}^{\infty}L_{\tau}^{2}}\nonumber \\
\lesssim\epsilon^{2}\left\Vert \left\langle x\right\rangle ^{-2}u\left(t\right)\right\Vert _{L_{x}^{\infty}L_{t}^{2}}.\label{eq:L2}
\end{align}
To deal with the last term from the RHS of \eqref{eq:LF1exp}, splitting
$u=u_{L}+u_{H}$, one has
\begin{align}
-\tau\left(\tau+2E\left[z\left(\infty\right)\right]\right)\phi_{1}\left(\tau\right)\int_{0}^{\infty}e^{-is\tau}R\left(\tau-i\epsilon\right)sa_{1}u(s)\,ds\label{eq:L3}\\
=-\tau\left(\tau+2E\left[z\left(\infty\right)\right]\right)\phi_{1}\left(\tau\right)\int_{0}^{\infty}e^{-is\tau}R\left(\tau-i\epsilon\right)sa_{1}\left(u_{L}+u_{H}\right)\,ds.\nonumber 
\end{align}
The high frequency part can be bounded as above:
\begin{align}
\lim_{\epsilon\rightarrow0^{+}}\left\Vert \left\langle x\right\rangle ^{-2}\tau\left(\tau+2E\left[z\left(\infty\right)\right]\right)\phi_{1}\left(\tau\right)\int_{0}^{\infty}e^{-is\tau}R\left(\tau-i\epsilon\right)sa_{1}u_{H}(s)\,ds\right\Vert _{L_{x}^{\infty}L_{\tau}^{2}}\nonumber \\
\lesssim\lim_{\epsilon\rightarrow0^{+}}\left\Vert \left\langle x\right\rangle ^{-2}\phi_{1}\left(\tau\right)R\left(\tau-i\epsilon\right)a_{1}\mathcal{F}_{T}\left[su_{H}\right]\left(\tau\right)\right\Vert _{L_{x}^{\infty}L_{\tau}^{2}}\nonumber \\
\lesssim\lim_{\epsilon\rightarrow0^{+}}\epsilon^{2}\left\Vert \left\langle x\right\rangle ^{-2}\phi_{1}\left(\tau\right)R\left(\tau-i\epsilon\right)\left\langle x\right\rangle ^{-2}\right\Vert _{L_{x}^{\infty}L_{\tau}^{\infty}}\left\Vert \left\langle x\right\rangle ^{-2}\mathcal{F}_{T}\left[su_{H}\right]\left(\tau\right)\right\Vert _{L_{x}^{\infty}L_{\tau}^{2}}\nonumber \\
\lesssim\epsilon^{2}\left\Vert \left\langle x\right\rangle ^{-2}su_{H}\left(t\right)\right\Vert _{L_{x}^{\infty}L_{t}^{2}}\lesssim\epsilon^{3}T^{\alpha}\label{eq:LH}
\end{align}
where in the last line, we applied \eqref{eq:boot4}.

For the low frequency part in the inhomogeneous term, we observe that
by the Fourier transform
\begin{align}
\tau\left(\tau+2E\left[z\left(\infty\right)\right]\right)\phi_{1}\left(\tau\right)\int_{0}^{\infty}e^{-is\tau}R\left(\tau-i\epsilon\right)sa_{1}\left(u_{L}\right)\,ds\nonumber \\
=\tau\left(\tau+2E\left[z\left(\infty\right)\right]\right)\phi_{1}\left(\tau\right)R\left(\tau-i\epsilon\right)\mathcal{F}_{T}\left[sa_{1}\left(u_{L}\right)\right](\tau)\nonumber \\
\sim\phi_{1}\left(\tau\right)R\left(\tau-i\epsilon\right)F_{T}^{-1}\left[\left(-2iE\left[z(\infty)\right]\partial_{t}+\partial_{t}^{2}\right)\left(sa_{1}\left(u_{L}\right)\right)\right](\tau & )\label{eq:IBPLL}
\end{align}
where in the last line above, we used the duality of Fourier transforms.
Then to bound the terms from the RHS of the expression above is similar
to estimates above as \eqref{eq:L1}, \eqref{eq:L2} and \eqref{eq:LH}.
Explicitly, we can bound{\small
\begin{align}
\lim_{\epsilon\rightarrow0^{+}}\left\Vert \left\langle x\right\rangle ^{-2}\tau\left(\tau+2E\left[z\left(\infty\right)\right]\right)\phi_{1}\left(\tau\right)\int_{0}^{\infty}e^{-is\tau}R\left(\tau-i\epsilon\right)sa_{1}\left(u_{L}\right)\,ds\right\Vert _{L_{x}^{\infty}L_{\tau}^{2}}\nonumber \\
\lesssim\lim_{\epsilon\rightarrow0^{+}}\left\Vert \left\langle x\right\rangle ^{-2}\phi_{1}\left(\tau\right)R\left(\tau-i\epsilon\right)F_{T}^{-1}\left[\left(-2iE\left[z(\infty)\right]\partial_{t}+\partial_{t}^{2}\right)\left(sa_{1}\left(u_{L}\right)\right)\right](\tau)\right\Vert _{L_{x}^{\infty}L_{\tau}^{2}}\nonumber \\
\lesssim\epsilon^{2}\left\Vert \left\langle x\right\rangle ^{-2}u_{L}\left(t\right)\right\Vert _{L_{x}^{\infty}L_{t}^{2}}+\left\Vert \left\langle x\right\rangle ^{-2}\partial_{t}u_{L}\left(t\right)\right\Vert _{L_{x}^{\infty}L_{t}^{2}}\label{eq:LL}\\
+\epsilon^{2}\left\Vert \left\langle x\right\rangle ^{-2}s\left(-2iE\left[z(\infty)\right]\partial_{t}+\partial_{t}^{2}\right)u_{L}\left(t\right)\right\Vert _{L_{x}^{\infty}L_{t}^{2}}\nonumber \\
\lesssim\epsilon^{2}\left\Vert \left\langle x\right\rangle ^{-2}u_{L}\left(t\right)\right\Vert _{L_{x}^{\infty}L_{t}^{2}}+\epsilon^{2}\left\Vert \left\langle x\right\rangle ^{-2}\partial_{t}u_{L}\left(t\right)\right\Vert _{L_{x}^{\infty}L_{t}^{2}}\nonumber \\
+\epsilon^{2}\left\Vert \left\langle x\right\rangle ^{-2}s\left(-2iE\left[z(t)\right]\partial_{t}+\partial_{t}^{2}\right)u_{L}\left(t\right)\right\Vert _{L_{x}^{\infty}L_{t}^{2}}
\lesssim\epsilon^{3}T^{\alpha}\nonumber 
\end{align}}
where in the last two lines, we used the bootstrap assumption \eqref{eq:boot3}
and the observation \eqref{eq:boot3add}.

For the high frequency part, we need to bound the $L_{\tau}^{2}$
bound of the following expression:
\begin{align*}
\int e^{it\tau}t\left(h_{1,1}\right)_{H}\,dt= & \partial_{\tau}\left(\phi_{2}\left(\tau\right)\left(\int_{0}^{\infty}e^{-is\tau}R\left(\tau-i\epsilon\right)a_{1}u(s)\,ds\right)\right).
\end{align*}
Expanding everything, one has
\begin{align}
\partial_{\tau}\left(\phi_{2}\left(\tau\right)\left(\int_{0}^{\infty}e^{-is\tau}R\left(\tau-i\epsilon\right)a_{1}u(s)\,ds\right)\right)\label{eq:Hsplit}\\
=\phi_{2}\left(\tau\right)\int_{0}^{\infty}e^{is\tau}\partial_{\tau}R\left(\tau-i\epsilon\right)a_{1}u(s)\,ds\nonumber \\
+\phi_{2}\left(\tau\right)\int_{0}^{\infty}e^{-is\tau}R\left(\tau-i\epsilon\right)sa_{1}u(s)\,ds\nonumber \\
+\partial_{\tau}\phi_{2}\left(\tau\right)\int_{0}^{\infty}e^{-is\tau}R\left(\tau-i\epsilon\right)a_{1}u(s)\,ds.\nonumber 
\end{align}
The last term above is easy to bound as the analysis of \eqref{eq:L1} for the low frequency part.

For the first term, one can bound
\begin{align}
\lim_{\epsilon\rightarrow0^{+}}\left\Vert \left\langle x\right\rangle ^{-2}\phi_{2}\left(\tau\right)\int_{0}^{\infty}e^{is\tau}\partial_{\tau}R\left(\tau-i\epsilon\right)a_{1}u(s)\,ds\right\Vert _{L_{x}^{\infty}L_{\tau}^{2}}\nonumber \\
\lesssim\lim_{\epsilon\rightarrow0^{+}}\left\Vert \left\langle x\right\rangle ^{-2}\lim_{\epsilon\rightarrow0^{+}}\left\Vert \left\langle x\right\rangle ^{-2}\phi_{2}\left(\tau\right)\partial_{\tau}R\left(\tau-i\epsilon\right)a_{1}\mathcal{F}_{T}\left[u\right]\left(\tau\right)\right\Vert _{L_{x}^{2}L_{\tau}^{2}}\right\Vert _{L_{x}^{\infty}L_{\tau}^{2}}\nonumber \\
\lesssim\lim_{\epsilon\rightarrow0^{+}}\epsilon^{2}\left\Vert \left\langle x\right\rangle ^{-2}\phi_{2}\left(\tau\right)\partial_{\tau}R\left(\tau-i\epsilon\right)\left\langle x\right\rangle ^{-2}\right\Vert _{L_{x}^{\infty}L_{\tau}^{\infty}}\left\Vert \left\langle x\right\rangle ^{-2}\mathcal{F}_{T}\left[u\right]\left(\tau\right)\right\Vert _{L_{x}^{\infty}L_{\tau}^{2}}\nonumber \\
\lesssim\epsilon^{2}\left\Vert \left\langle x\right\rangle ^{-2}u\left(t\right)\right\Vert _{L_{x}^{\infty}L_{t}^{2}}\lesssim\epsilon^{3} & .\label{eq:H1}
\end{align}
For the second term on the RHS of \eqref{eq:Hsplit}, as before we split
$u=u_{L}+u_{H}$ and rewrite it as
\[
\phi_{2}\left(\tau\right)\int_{0}^{\infty}e^{-is\tau}R\left(\tau-i\epsilon\right)sa_{1}u\left(s\right)\,ds=\phi_{2}\left(\tau\right)\int_{0}^{\infty}e^{-is\tau}R\left(\tau-i\epsilon\right)sa_{1}\left(u_{L}+u_{H}\right)\,ds.
\]
For the high frequency part, one can bound
\begin{align}
\lim_{\epsilon\rightarrow0^{+}}\left\Vert \left\langle x\right\rangle ^{-2}\phi_{2}\left(\tau\right)\int_{0}^{\infty}e^{-is\tau}R\left(\tau-i\epsilon\right)sa_{1}u_{H}(s)\,ds\right\Vert _{L_{x}^{\infty}L_{\tau}^{2}}\nonumber \\
\lesssim\lim_{\epsilon\rightarrow0^{+}}\left\Vert \left\langle x\right\rangle ^{-2}\phi_{2}\left(\tau\right)R\left(\tau-i\epsilon\right)a_{1}\mathcal{F}_{T}\left[su_{H}\right]\left(\tau\right)\right\Vert _{L_{x}^{\infty}L_{\tau}^{2}}\nonumber \\
\lesssim\lim_{\epsilon\rightarrow0^{+}}\epsilon^{2}\left\Vert \left\langle x\right\rangle ^{-2}\phi_{2}\left(\tau\right)R\left(\tau-i\epsilon\right)\left\langle x\right\rangle ^{-2}\right\Vert _{L_{x}^{\infty}L_{\tau}^{\infty}}\left\Vert \left\langle x\right\rangle ^{-2}\mathcal{F}_{T}\left[su_{H}\right]\left(\tau\right)\right\Vert _{L_{x}^{\infty}L_{\tau}^{2}}\nonumber \\
\lesssim\epsilon^{2}\left\Vert \left\langle x\right\rangle ^{-2}su_{H}\left(t\right)\right\Vert _{L_{x}^{\infty}L_{t}^{2}}\lesssim\epsilon^{3}T^{\alpha}.\label{eq:HH}
\end{align}
Again for the low frequency part, we also note that by the Fourier
transform
\begin{align}
\phi_{2}\left(\tau\right)\int_{0}^{\infty}e^{-is\tau}R\left(\tau-i\epsilon\right)sa_{1}\left(u_{L}\right)\,ds=\ \ \ \ \ \ \ \ \ \ \ \nonumber \\
=\frac{1}{\tau\left(\tau+2E\left[z\left(\infty\right)\right]\right)}\phi_{2}\left(\tau\right)R\left(\tau-i\epsilon\right)\left(\tau\left(\tau+2E\left[z\left(\infty\right)\right]\right)\right)\mathcal{F}_{T}\left[sa_{1}\left(u_{L}\right)\right](\tau)\nonumber \\
\sim\frac{1}{\tau\left(\tau+2E\left[z\left(\infty\right)\right]\right)}\phi_{2}\left(\tau\right)R\left(\tau-i\epsilon\right)\mathcal{F}_{T}\left[\left(-2iE\left[z(\infty)\right]\partial_{s}+\partial_{s}^{2}\right)\left(sa_{1}\left(u_{L}\right)\right)\right](\tau)\label{eq:IBPHL}
\end{align}
Note that in the support of $\phi_{2}$, $\frac{1}{\tau\left(\tau+2E\left[z\left(\infty\right)\right]\right)}$
is bounded. Therefore, we can bound{\small
\begin{align}
\lim_{\epsilon\rightarrow0^{+}}\left\Vert \left\langle x\right\rangle ^{-2}\phi_{2}\left(\tau\right)\int_{0}^{\infty}e^{-is\tau}R\left(\tau-i\epsilon\right)sa_{1}\left(u_{L}\right)\,ds\right\Vert _{L_{x}^{\infty}L_{\tau}^{2}}\nonumber \\
\lesssim\lim_{\epsilon\rightarrow0^{+}}\left\Vert \left\langle x\right\rangle ^{-2}\frac{1}{\tau\left(\tau+2E\left[z\left(\infty\right)\right]\right)}\phi_{2}\left(\tau\right)R\left(\tau-i\epsilon\right)\mathcal{F}_{T}\left[\left(-2iE\left[z(\infty)\right]\partial_{s}+\partial_{s}^{2}\right)\left(sa_{1}\left(u_{L}\right)\right)\right](\tau)\right\Vert _{L_{x}^{\infty}L_{\tau}^{2}}\nonumber \\
\lesssim\epsilon^{2}\left\Vert \left\langle x\right\rangle ^{-2}u_{L}\left(t\right)\right\Vert _{L_{x}^{\infty}L_{t}^{2}}+\epsilon^{2}\left\Vert \left\langle x\right\rangle ^{-2}\partial_{t}u_{L}\left(t\right)\right\Vert _{L_{x}^{\infty}L_{t}^{2}}\nonumber \\
+\epsilon^{2}\left\Vert \left\langle x\right\rangle ^{-2}s\left(-2iE\left[z(s)\right]\partial_{t}+\partial_{t}^{2}\right)u_{L}\left(t\right)\right\Vert _{L_{x}^{\infty}L_{t}^{2}}\lesssim\epsilon^{3}T^{\alpha}\label{eq:HL}
\end{align}}
as in the analysis of \eqref{eq:LL}.

Putting all of estimates above together, we finally conclude that
\begin{equation}
\left\Vert \left\langle x\right\rangle ^{-2}t\left(-2iE\left[z(t)\right]\partial_{t}+\partial_{t}^{2}\right)\left(h_{1,1}\right)_{L}\right\Vert _{L_{x}^{\infty}L_{t}^{2}\left[0,T\right]}+\left\Vert \left\langle x\right\rangle ^{-2}t\left(h_{1,1}\right)_{H}\right\Vert _{L_{x}^{\infty}L_{t}^{2}\left[0,T\right]}\lesssim\epsilon^{3}T^{\alpha}.\label{eq:addboot}
\end{equation}
Combing  with \eqref{eq:N1boot} and \eqref{eq:N2boot}, the estimate above  recovers the auxiliary bootstrap estimates \eqref{eq:boot3} and
\eqref{eq:boot4}
for the inhomogenous part $h_{1,1}$ defined by \eqref{eq:h11}.

\subsubsection*{Analysis of $h_{1,2}$}

The analysis of $h_{1,2}$ defined by \eqref{eq:h22} is similar. Starting
from the low frequency part, we have{\small
\begin{align}
\partial_{\tau}\left(\left(-\tau\left(\tau+2E\left[z\left(\infty\right)\right]\right)\right)\phi_{1}\left(\tau\right)\left(\int_{0}^{\infty}e^{-is\tau}R\left(\tau-i\epsilon\right)a_{2}\left(e^{2i\int_{0}^{s}E\left[z(\sigma)\right]\,d\sigma}\overline{u}\right)\,ds\right)\right)\label{eq:LF1exp2}\\
=\left[\partial_{\tau}\left(\left(-\tau\left(\tau+2E\left[z\left(\infty\right)\right]\right)\right)\phi_{1}\left(\tau\right)\right)\right]\int_{0}^{\infty}e^{-is\tau}R\left(\tau-i\epsilon\right)a_{2}\left(e^{2i\int_{0}^{s}E\left[z(\sigma)\right]\,d\sigma}\overline{u}\right)\,ds\nonumber \\
+\left(-\tau\left(\tau+2E\left[z\left(\infty\right)\right]\right)\phi_{1}\left(\tau\right)\right)\int_{0}^{\infty}e^{is\tau}\partial_{\tau}R\left(\tau-i\epsilon\right)a_{2}\left(e^{2i\int_{0}^{s}E\left[z(\sigma)\right]\,d\sigma}\overline{u}\right)\,ds\nonumber \\
+\left(-\tau\left(\tau+2E\left[z\left(\infty\right)\right]\right)\phi_{1}\left(\tau\right)\right)\int_{0}^{\infty}e^{-is\tau}R\left(\tau-i\epsilon\right)sa_{2}\left(e^{2i\int_{0}^{s}E\left[z(\sigma)\right]\,d\sigma}\overline{u}_{L}\right)\,ds.\nonumber 
\end{align}}
The first two expressions of \eqref{eq:LF1exp2} above can be bounded
by the same argument as $h_{1,1}$. The last term is different from
the setting of $h_{1,1}$ due to the extra time-dependent phase. Focusing
on the last term, one has
\begin{align}
\tau\left(\tau+2E\left[z\left(\infty\right)\right]\right)\phi_{1}\left(\tau\right)\int_{0}^{\infty}e^{-is\tau}R\left(\tau-i\epsilon\right)sa_{2}\left(e^{2i\int_{0}^{s}E\left[z(\sigma)\right]\,d\sigma}\overline{u}\right)\,ds\nonumber \\
=\tau\phi_{1}\left(\tau\right)\int_{0}^{\infty}\left(\tau+2E\left[z\left(s\right)\right]\right)e^{-is\tau}R\left(\tau-i\epsilon\right)sa_{2}\left(e^{2i\int_{0}^{s}E\left[z(\sigma)\right]\,d\sigma}\overline{u}\right)\,ds\label{eq:LF1exp3}\\
+\tau\phi_{1}\left(\tau\right)\int_{0}^{\infty}\left(2E\left[z\left(\infty\right)\right]-2E\left[z\left(s\right)\right]\right)e^{-is\tau}R\left(\tau-i\epsilon\right)sa_{2}\left(e^{2i\int_{0}^{s}E\left[z(\sigma)\right]\,d\sigma}\overline{u}\right)\,ds.\nonumber 
\end{align}
For the last term above, due to the decay of $\left|2E\left[z\left(\infty\right)\right]-2E\left[z\left(s\right)\right]\right|\lesssim \epsilon^2  s^{-1+2\alpha}$,
we can estimate{\small
\begin{align}
\left\Vert \left\langle x\right\rangle ^{-2}\tau\phi_{1}\left(\tau\right)\int_{0}^{\infty}\left(2E\left[z\left(\infty\right)\right]-2E\left[z\left(s\right)\right]\right)e^{-is\tau}R\left(\tau-i\epsilon\right)sa_{2}\left(e^{2i\int_{0}^{s}E\left[z(\sigma)\right]\,d\sigma}\overline{u}\right)\,ds\right\Vert _{L_{x}^{\infty}L_{\tau}^{2}}\nonumber \\
\lesssim\left\Vert \left\langle x\right\rangle ^{-2}\phi_{1}\left(\tau\right)R\left(\tau-i\epsilon\right)\left\langle x\right\rangle ^{-2}\right\Vert _{L_{x}^{\infty}L_{\tau}^{\infty}}\left\Vert \left(2E\left[z\left(\infty\right)\right]-2E\left[z\left(s\right)\right]\right)\left\langle x\right\rangle ^{2}sa_{2}\left(e^{2i\int_{0}^{s}E\left[z(\sigma)\right]\,d\sigma}\overline{u}\right)\right\Vert _{L_{x}^{\infty}L_{t}^{2}}\nonumber \\
\lesssim\epsilon^{4}\left\Vert \left\langle x\right\rangle ^{-2}ss^{-1+2\alpha}\overline{u}\right\Vert _{L_{x}^{\infty}L_{t}^{2}}\lesssim\epsilon^{5}\qquad\qquad\label{eq:h12L1} 
\end{align}}where in the last line, we used the improved localized decay of $u$ from Corollary \ref{cor:directXT}.

Then we analyze the first term on the RHS of \eqref{eq:LF1exp3}. Again
we write $u=u_{L}+u_{H}$. The high frequency part is easy to estimate:
\begin{align}
\left\Vert \left\langle x\right\rangle ^{-2}\tau\phi_{1}\left(\tau\right)\int_{0}^{\infty}\left(\tau+2E\left[z\left(s\right)\right]\right)e^{-is\tau}R\left(\tau-i\epsilon\right)sa_{2}\left(e^{2i\int_{0}^{s}E\left[z(\sigma)\right]\,d\sigma}\overline{u}_{H}\right)\,ds\right\Vert _{L_{x}^{\infty}L_{\tau}^{2}}\nonumber \\
\lesssim\epsilon^{2}\left\Vert \left\langle x\right\rangle ^{-2}\phi_{1}\left(\tau\right)R\left(\tau-i\epsilon\right)\left\langle x\right\rangle ^{-2}\right\Vert _{L_{x}^{\infty}L_{\tau}^{\infty}}\left\Vert \left\langle x\right\rangle ^{-2}s\overline{u}_{H}\right\Vert _{L_{x}^{\infty}L_{t}^{2}}
\lesssim\epsilon^{2}T^{\alpha}.\label{eq:LF1exp3h} 
\end{align}
For the low frequency part, we perform integration by parts in $s$ and obtain
\begin{align}
\tau\phi_{1}\left(\tau\right)\int_{0}^{\infty}\left(\tau+2E\left[z\left(s\right)\right]\right)e^{-is\tau}R\left(\tau-i\epsilon\right)sa_{2}\left(e^{2i\int_{0}^{s}E\left[z(\sigma)\right]\,d\sigma}\overline{u}_{L}\right)\,ds\nonumber \\
=\tau\phi_{1}\left(\tau\right)\int_{0}^{\infty}\partial_{s}\left(e^{-is\tau}e^{2i\int_{0}^{s}E\left[z(\sigma)\right]\,d\sigma}\right)R\left(\tau-i\epsilon\right)sa_{2}\left(\overline{u}_{L}\right)\,ds\nonumber \\
=-\tau\phi_{1}\left(\tau\right)\int_{0}^{\infty}e^{-is\tau}e^{2i\int_{0}^{s}E\left[z(\sigma)\right]\,d\sigma}R\left(\tau-i\epsilon\right)a_{2}\left(\overline{u}_{L}\right)\,ds\label{eq:LF1exp3L1}\\
-\tau\phi_{1}\left(\tau\right)\int_{0}^{\infty}e^{-is\tau}e^{2i\int_{0}^{s}E\left[z(\sigma)\right]\,d\sigma}R\left(\tau-i\epsilon\right)sa_{2}\left(\partial_{s}\overline{u}_{L}\right)\,ds.\nonumber 
\end{align}
The first term on the RHS of \eqref{eq:LF1exp3L1} is easy to bound
as the following
\begin{align}
\left\Vert \left\langle x\right\rangle ^{-2}\tau\phi_{1}\left(\tau\right)\int_{0}^{\infty}e^{-is\tau}e^{2i\int_{0}^{s}E\left[z(\sigma)\right]\,d\sigma}R\left(\tau-i\epsilon\right)a_{2}\left(\overline{u}_{L}\right)\,ds\right\Vert _{L_{x}^{\infty}L_{\tau}^{2}}\nonumber \\
\lesssim\epsilon^{2}\left\Vert \left\langle x\right\rangle ^{-2}\tau\phi_{1}\left(\tau\right)R\left(\tau-i\epsilon\right)\left\langle x\right\rangle ^{-2}\right\Vert _{L_{x}^{\infty}L_{\tau}^{\infty}}\left\Vert \left\langle x\right\rangle ^{-2}\overline{u}_{L}\right\Vert _{L_{x}^{\infty}L_{t}^{2}}\lesssim\epsilon^{3}.\label{eq:LF1exp3L11}
\end{align}
For the last term on the RHS of \eqref{eq:LF1exp3L1}, one has{\small
\begin{align}
\left\Vert \left\langle x\right\rangle ^{-2}\tau\phi_{1}\left(\tau\right)\int_{0}^{\infty}e^{-is\tau}e^{2i\int_{0}^{s}E\left[z(\sigma)\right]\,d\sigma}R\left(\tau-i\epsilon\right)sa_{2}\left(\partial_{s}\overline{u}_{L}\right)\right\Vert _{L_{x}^{\infty}L_{\tau}^{2}}\nonumber \\
\lesssim\epsilon^{2}\left\Vert \left\langle x\right\rangle ^{-2}\phi_{1}\left(\tau\right)R\left(\tau-i\epsilon\right)\left\langle x\right\rangle ^{-2}\right\Vert _{L_{x}^{\infty}L_{\tau}^{\infty}}\left\Vert \left\langle x\right\rangle ^{-2}\partial_{s}\left(se^{2i\int_{0}^{s}E\left[z(\sigma)\right]\,d\sigma}\partial_{s}\overline{u}_{L}\right)\right\Vert _{L_{x}^{\infty}L_{t}^{2}}\label{eq:LF1exp3L12}\\
\lesssim\epsilon^{2}\left\Vert \left\langle x\right\rangle ^{-2}u_{L}\left(t\right)\right\Vert _{L_{x}^{\infty}L_{t}^{2}}+\epsilon^{2}\left\Vert \left\langle x\right\rangle ^{-2}\partial_{t}u_{L}\left(t\right)\right\Vert _{L_{x}^{\infty}L_{t}^{2}}\nonumber \\
+\epsilon^{2}\left\Vert \left\langle x\right\rangle ^{-2}e^{2i\int_{0}^{s}E\left[z(\sigma)\right]\,d\sigma}\left(2iE\left[z\left(s\right)\right]\partial_{s}\overline{u}_{L}+\partial_{s}^{2}\overline{u}_{L}\right)\right\Vert _{L_{x}^{\infty}L_{t}^{2}}\nonumber \\
\lesssim\epsilon^{2}\left\Vert \left\langle x\right\rangle ^{-2}u_{L}\left(t\right)\right\Vert _{L_{x}^{\infty}L_{t}^{2}}+\epsilon^{2}\left\Vert \left\langle x\right\rangle ^{-2}\partial_{t}u_{L}\left(t\right)\right\Vert _{L_{x}^{\infty}L_{t}^{2}}\nonumber \\
+\epsilon^{2}\left\Vert \left\langle x\right\rangle ^{-2}s\left(-2iE\left[z(s)\right]\partial_{t}+\partial_{t}^{2}\right)u_{L}\left(t\right)\right\Vert _{L_{x}^{\infty}L_{t}^{2}}\nonumber 
\lesssim\epsilon^{3}T^{\alpha}\nonumber 
\end{align}}
which recovers the bootstrap conditions for the low frequency part.

For the high frequency part, we need to bound the $L_{\tau}^{2}$ norm
of\[
\partial_{\tau}\left(\phi_{2}\left(\tau\right)\left(\int_{0}^{\infty}e^{-is\tau}R\left(\tau-i\epsilon\right)a_{2}\left(e^{2i\int_{0}^{s}E\left[z(\sigma)\right]\,d\sigma}\overline{u}\right)\,ds\right)\right).
\]
Again explicitly, one has{\small
\begin{align}
\partial_{\tau}\left(\phi_{2}\left(\tau\right)\left(\int_{0}^{\infty}e^{-is\tau}R\left(\tau-i\epsilon\right)a_{2}\left(e^{2i\int_{0}^{s}E\left[z(\sigma)\right]\,d\sigma}\overline{u}\right)\,ds\right)\right)\label{eq:Hsplit-1}\\
=\phi_{2}\left(\tau\right)\int_{0}^{\infty}e^{is\tau}\partial_{\tau}R\left(\tau-i\epsilon\right)a_{2}\left(e^{2i\int_{0}^{s}E\left[z(\sigma)\right]\,d\sigma}\overline{u}\right)\,ds\nonumber \\
+\partial_{\tau}\phi_{2}\left(\tau\right)\int_{0}^{\infty}e^{-is\tau}R\left(\tau-i\epsilon\right)a_{2}\left(e^{2i\int_{0}^{s}E\left[z(\sigma)\right]\,d\sigma}\overline{u}\right)\,ds\nonumber \\
+\phi_{2}\left(\tau\right)\int_{0}^{\infty}e^{-is\tau}R\left(\tau-i\epsilon\right)sa_{2}\left(e^{2i\int_{0}^{s}E\left[z(\sigma)\right]\,d\sigma}\overline{u}\right)\,ds & .\nonumber 
\end{align}}
The first two terms on the RHS above can be bounded in the same manner
as the corresponding parts for $h_{1,1}$.

For the last term on the RHS of \eqref{eq:Hsplit-1}, as before we split
$u=u_{L}+u_{H}$ and rewrite it as
\begin{align}
\phi_{2}\left(\tau\right)\int_{0}^{\infty}e^{-is\tau}R\left(\tau-i\epsilon\right)sa_{2}\left(e^{2i\int_{0}^{s}E\left[z(\sigma)\right]\,d\sigma}\overline{u}\right)\,ds\nonumber \\
=\phi_{2}\left(\tau\right)\int_{0}^{\infty}e^{-is\tau}R\left(\tau-i\epsilon\right)sa_{2}\left(e^{2i\int_{0}^{s}E\left[z(\sigma)\right]\,d\sigma}\left(\overline{u}_{L}+\bar{u}_{H}\right)\right)\,ds.\label{eq:h12Hsplit}
\end{align}
For the high frequency part above, the analysis is the same as \eqref{eq:HH}.
For the low frequency part, we again note that by the Fourier transform{\small
\begin{align}
\phi_{2}\left(\tau\right)\int_{0}^{\infty}e^{-is\tau}R\left(\tau-i\epsilon\right)sa_{2}e^{2i\int_{0}^{s}E\left[z(\sigma)\right]\,d\sigma}\left(\overline{u}_{L}\right)\,ds=\ \ \ \ \ \ \ \ \ \ \ \nonumber \\
=\frac{1}{\tau\left(\tau+2E\left[z\left(\infty\right)\right]\right)}\phi_{2}\left(\tau\right)\left(\tau\left(\tau+2E\left[z\left(\infty\right)\right]\right)\right)\int_{0}^{\infty}e^{-is\tau}R\left(\tau-i\epsilon\right)sa_{2}e^{2i\int_{0}^{s}E\left[z(\sigma)\right]\,d\sigma}\left(\overline{u}_{L}\right)\,ds\nonumber \\
=\frac{1}{\tau\left(\tau+2E\left[z\left(\infty\right)\right]\right)}\phi_{2}\left(\tau\right)\int_{0}^{\infty}e^{is\tau}\left(\tau\left(\tau+2E\left[z\left(s\right)\right]\right)\right)R\left(\tau-i\epsilon\right)sa_{2}e^{2i\int_{0}^{s}E\left[z(\sigma)\right]\,d\sigma}\left(\overline{u}_{L}\right)\,ds\label{eq:IBPHL-1}\\+
\frac{1}{\tau\left(\tau+2E\left[z\left(\infty\right)\right]\right)}\phi_{2}\left(\tau\right)\int_{0}^{\infty}e^{is\tau}\left(\tau\left(2E\left[\left(\infty\right)\right]-2E\left[z\left(s\right)\right]\right)\right)R\left(\tau-i\epsilon\right)sa_{2}e^{2i\int_{0}^{s}E\left[z(\sigma)\right]\,d\sigma}\left(\overline{u}_{L}\right)\,ds.\nonumber 
\end{align}}
Note that in the support of $\phi_{2}$, $\frac{1}{\tau\left(\tau+2E\left[z\left(\infty\right)\right]\right)}$
is bounded. The last term on the RHS of \eqref{eq:IBPHL-1} can be estimated
by the same way as for \eqref{eq:h12L1}.

It remains to analyze
\[
\tau\phi_{2}\left(\tau\right)\int_{0}^{\infty}e^{is\tau}\left(\tau\left(\tau+2E\left[z\left(s\right)\right]\right)\right)R\left(\tau-i\epsilon\right)sa_{2}e^{2i\int_{0}^{s}E\left[z(\sigma)\right]\,d\sigma}\left(\overline{u}_{L}\right)\,ds.
\]
We again integrate by parts in $s$ as \eqref{eq:LF1exp3L1}{\small
\begin{align}
\tau\phi_{2}\left(\tau\right)\int_{0}^{\infty}\left(\tau+2E\left[z\left(s\right)\right]\right)e^{-is\tau}R\left(\tau-i\epsilon\right)sa_{2}\left(e^{2i\int_{0}^{s}E\left[z(\sigma)\right]\,d\sigma}\overline{u}_{L}\right)\,ds\nonumber \\
=\tau\phi_{2}\left(\tau\right)\int_{0}^{\infty}\partial_{s}\left(e^{-is\tau}e^{2i\int_{0}^{s}E\left[z(\sigma)\right]\,d\sigma}\right)R\left(\tau-i\epsilon\right)sa_{2}\left(\overline{u}_{L}\right)\,ds\nonumber \\
=-\tau\phi_{2}\left(\tau\right)\int_{0}^{\infty}e^{-is\tau}e^{2i\int_{0}^{s}E\left[z(\sigma)\right]\,d\sigma}R\left(\tau-i\epsilon\right)a_{2}\left(\overline{u}_{L}\right)\,ds\label{eq:LF1exp3L1-1}\\
-\tau\phi_{2}\left(\tau\right)\int_{0}^{\infty}e^{-is\tau}e^{2i\int_{0}^{s}E\left[z(\sigma)\right]\,d\sigma}R\left(\tau-i\epsilon\right)sa_{2}\left(\partial_{s}\overline{u}_{L}\right)\,ds.\nonumber 
\end{align}}
The remaining steps are identical as \eqref{eq:LF1exp3L11} and \eqref{eq:LF1exp3L12}.
Therefore, we can bound
\begin{align}
\lim_{\epsilon\rightarrow0^{+}}\left\Vert \left\langle x\right\rangle ^{-2}\phi_{2}\left(\tau\right)\partial_{\tau}\left(\phi_{2}\left(\tau\right)\left(\int_{0}^{\infty}e^{-is\tau}R\left(\tau-i\epsilon\right)a_{2}\left(e^{2i\int_{0}^{s}E\left[z(\sigma)\right]\,d\sigma}\overline{u}\right)\,ds\right)\right)\right\Vert _{L_{x}^{\infty}L_{\tau}^{2}}\nonumber \\
\lesssim\epsilon^{2}\left\Vert \left\langle x\right\rangle ^{-2}u_{L}\left(t\right)\right\Vert _{L_{x}^{\infty}L_{t}^{2}}+\epsilon^{2}\left\Vert \left\langle x\right\rangle ^{-2}\partial_{t}u_{L}\left(t\right)\right\Vert _{L_{x}^{\infty}L_{t}^{2}}\nonumber \\
+\epsilon^{2}\left\Vert \left\langle x\right\rangle ^{-2}s\left(-2iE\left[z(s)\right]\partial_{t}+\partial_{t}^{2}\right)u_{L}\left(t\right)\right\Vert _{L_{x}^{\infty}L_{t}^{2}}\lesssim\epsilon^{3}T^{\alpha}.\label{eq:HL-1}
\end{align}
Overall as \eqref{eq:addboot}, we conclude that
\begin{equation}
\left\Vert \left\langle x\right\rangle ^{-2}t\left(-2iE\left[z(t)\right]\partial_{t}+\partial_{t}^{2}\right)\left(h_{1,2}\right)_{L}\right\Vert _{L_{x}^{\infty}L_{t}^{2}\left[0,T\right]}+\left\Vert \left\langle x\right\rangle ^{-2}t\left(h_{1,2}\right)_{H}\right\Vert _{L_{x}^{\infty}L_{t}^{2}\left[0,T\right]}\lesssim\epsilon^{3}T^{\alpha}\label{eq:addboot-1}
\end{equation}
which also recasts the bootstrap conditions.


We recall that the following standard resolvent estimates. Also see Agmon \cite{Agm}.
\begin{lem}
\label{lem:limitingab}Suppose $H=-\partial_{xx}+V$ has no resonances nor eigenvalues.
Then we have
\[
\sup_{\tau}\left\Vert \left\langle x\right\rangle ^{-\alpha}R\left(\tau\right)\left\langle x\right\rangle ^{-\alpha}\right\Vert _{L_{\tau}^{\infty}}
\]
as a map from $L_{x}^{\infty}$ to $L_{x}^{\infty}$ for  $\alpha>1$.

Moreover, for the low frequency part, one has
\[
\sup_{\tau}\left\Vert \left\langle x\right\rangle ^{-\alpha-1}\phi_{1}\left(\tau\right)\tau\partial_{\tau}R\left(\tau\right)\left\langle x\right\rangle ^{-\alpha-1}\right\Vert _{L_{\tau}^{\infty}}
\]
as a map from  $L_{x}^{\infty}$ to $L_{x}^{\infty}$  for  $\alpha>1$.

For the high frequency part, one have
\[
\sup_{\tau}\left\Vert \left\langle x\right\rangle ^{-\alpha-1}\phi_{2}\left(\tau\right)\partial_{\tau}R\left(\tau\right)\left\langle x\right\rangle ^{-\alpha-1}\right\Vert _{L_{\tau}^{\infty}}
\]
as a map from  $L_{x}^{\infty}$ to $L_{x}^{\infty}$  for  $\alpha>1$. 
\end{lem}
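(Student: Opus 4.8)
The plan is to prove all three bounds from the explicit formula for the kernel of the limiting resolvent of $H=-\partial_{xx}+V$ in terms of Jost functions, together with the pointwise estimates on $\psi_\pm$, $m_\pm$ and on $T,R_\pm$ from Section~\ref{sec:prelim}, and then to conclude by a Schur-type estimate in the weighted spaces. Writing $\tau=k^2$ with $k$ on the real axis (upper half-plane if $\tau<0$), the kernel of $R(\tau)=\lim_{\epsilon\to0^+}(H-\tau+i\epsilon)^{-1}$ is
\[
R(\tau)(x,y)=\frac{T(k)}{2ik}\,\psi_+(x,k)\,\psi_-(y,k),\qquad x\ge y,
\]
and the symmetric expression for $x\le y$, since the Wronskian of $\psi_-$ and $\psi_+$ is $2ik/T(k)$. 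In the model problem $V$ is generic with no eigenvalues, so $\sigma(H)=[0,\infty)$ and, by Lemma~\ref{lem:estiTRTaylor}, $T(k)=\alpha k+o(k)$ with $\alpha\neq0$ near $k=0$; hence the prefactor $T(k)/(2ik)$ extends continuously to $k=0$ with a finite, nonzero value, which is exactly the removal of the one-dimensional zero-energy resonance of the free resolvent. Attributing this prefactor to whichever Jost factor is evaluated in its ``far'' direction, so that $T(k)\psi_\pm(x,k)/(2ik)$ grows at most linearly there uniformly in $k$ while the remaining factor stays bounded, and using $|m_\pm|\lesssim1$ together with \eqref{Mestimates1}, I would record that $R(\tau)(x,y)$ is continuous in $\tau\in\mathbb{R}$ and obeys $|R(\tau)(x,y)|\lesssim\min(\langle x\rangle,\langle y\rangle)$ uniformly in $\tau$, with the additional decay $|R(\tau)(x,y)|\lesssim|\tau|^{-1/2}$ for $|\tau|\gtrsim1$.

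The first estimate then follows by a Schur bound: the kernel of $\langle x\rangle^{-\alpha}R(\tau)\langle x\rangle^{-\alpha}$ is $\lesssim\langle x\rangle^{-\alpha}\langle y\rangle^{-\alpha}\min(\langle x\rangle,\langle y\rangle)$, and, splitting the $y$-integration at $|y|=|x|$, one checks that $\sup_x\int|\langle x\rangle^{-\alpha}R(\tau)(x,y)\langle y\rangle^{-\alpha}|\,dy<\infty$ uniformly in $\tau$ as soon as $\alpha>1$, which already gives $L^\infty_x\to L^\infty_x$ boundedness. For the derivative estimates I would use $\partial_\tau=\frac{1}{2k}\partial_k$ and observe that $\partial_k$ of the kernel generates three kinds of terms: (i) $\partial_k$ hitting $T(k)/(2ik)$, which stays bounded by Lemma~\ref{estiTR} and, near $k=0$, by the Taylor expansion of $T$; (ii) $\partial_k$ hitting the exponential $e^{\pm ik(x-y)}$, which produces a factor $i(x-y)=O(\langle x\rangle\langle y\rangle)$; and (iii) $\partial_k$ hitting $m_\pm$, which by \eqref{Mestimates1'} is a localized term carrying a $1/|k|$ singularity at $k=0$. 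For the low-frequency bound one considers $\phi_1(\tau)\,\tau\partial_\tau=\phi_1(\tau)\frac{k}{2}\partial_k$: the cutoff $\phi_1$ confines to $|k|\lesssim1$ (so all smooth factors are bounded), the prefactor $k$ cancels exactly the $1/|k|$ of type (iii), and — again attributing $T(k)/(2ik)$ appropriately and using $T(k)\sim\alpha k$ to tame the growth of $\psi_\pm$ — the resulting kernel is $\lesssim\langle x\rangle\langle y\rangle$; the two extra weights $\langle x\rangle^{-1}\langle y\rangle^{-1}$ bring it to $\lesssim\langle x\rangle^{-\alpha}\langle y\rangle^{-\alpha}$ and the same Schur argument closes it for $\alpha>1$. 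For the high-frequency bound the cutoff $\phi_2$ restricts to $|k|\gtrsim1$, so the factor $1/(2k)$ and all the $1/|k|$'s are harmless, the factor $i(x-y)$ is absorbed by the two extra weights as before, and the decay in $k$ of $T$, $m_\pm$ and their $k$-derivatives yields uniformity in $\tau$.

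The main obstacle is the low-frequency analysis of the derivative estimate: one must check that the genuine $\tau^{-1/2}$-type blow-up of $\partial_\tau R$ as $\tau\to0$ — coming from the $1/k$ in the free Green's function and from $\partial_k m_\pm$ — is \emph{exactly} matched by the factor $\tau=k^2$ in $\tau\partial_\tau$ and by the genericity-induced vanishing $T(k)\sim\alpha k$, while controlling the single spatial loss $x-y$ produced by differentiating the plane-wave part, which is why the statement needs the stronger weight $\langle x\rangle^{-\alpha-1}$ and why genericity (absence of a zero resonance) is essential. As an alternative, all three bounds are particular cases of the standard limiting absorption principle for one-dimensional Schr\"odinger operators with a generic decaying potential, and can be obtained from the Jensen--Kato-type stationary resolvent expansion near $\tau=0$ together with Agmon's weighted resolvent estimates; see Agmon~\cite{Agm}.
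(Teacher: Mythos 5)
Your proposal follows the same route as the paper: write the kernel of $R(\tau)$ explicitly in terms of Jost functions (and the Wronskian $W(k)=2ik/T(k)$), invoke the genericity condition $T(k)\sim\alpha k$ so that $T(k)/(2ik)$ is regular at $k=0$, pass $\partial_\tau=\tfrac{1}{2k}\partial_k$, and observe that the low-frequency cutoff $\phi_1(\tau)\tau$ precisely cancels the $k$-singularity coming from differentiating the resolvent near the bottom of the spectrum, while $\phi_2$ makes the factor $1/(2k)$ and the decaying $\jk$-dependence in $T,R_\pm,m_\pm$ innocuous at high frequency. Your write-up is actually more detailed than the paper's one-paragraph proof: in particular you correctly isolate the spatial loss $|x-y|$ produced by $\partial_k$ hitting the plane-wave factor $e^{ik(x-y)}$ — this is exactly why the derivative bounds require the extra weight $\jx^{-\alpha-1}$ rather than $\jx^{-\alpha}$, a point the paper leaves implicit — and you close with an explicit Schur estimate using the pointwise kernel bound $|R(\tau)(x,y)|\lesssim\min(\jx,\jy)$, which makes the condition $\alpha>1$ transparent. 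One small remark: you quote \eqref{Mestimates1'}, giving a genuine $1/|k|$ singularity for $\partial_k m_\pm$ and hence $\partial_\tau R\sim 1/\tau$, whereas the paper instead states $\partial_\tau R\sim\tau^{-1/2}$, which corresponds to the sharper estimate \eqref{Mestimates1} with $s=1$ (boundedness of $\partial_k m_\pm$ near $k=0$ under the faster weight $\mathcal{W}_\pm^{2}$); either estimate is compatible with exponentially decaying $V$, and both are killed by the factor $\phi_1(\tau)\tau$, so your argument is correct, just slightly less sharp than the paper's statement.
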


\begin{proof}
To show these bounds, one can use the explicit formulae for resolvents.
Using the Jost functions, we can write down the Green's function,
i.e., the kernel of the resolvents $R\left(\tau\pm0i\right)$ as 
\begin{equation}
G_{\pm}\left(x,y,k\right)=\begin{cases}
-\frac{f_{+}\left(x,\pm k\right)f_{-}\left(y,\pm k\right)}{W\left(\pm k\right)} & x>y\\
-\frac{f_{-}\left(x,\pm k\right)f_{+}\left(y,\pm k\right)}{W\left(\pm k\right)} & x<y
\end{cases}\label{eq:Green-1}
\end{equation}
for $\tau\geq0$ where $k=\sqrt{\tau}$ and
\begin{equation}
G_{\pm}\left(x,y,k\right)=\begin{cases}
-\frac{f_{+}\left(x,ik\right)f_{-}\left(y,ik\right)}{W\left(ik\right)} & x>y\\
-\frac{f_{-}\left(x,ik\right)f_{+}\left(y,ik\right)}{W\left(ik\right)} & x<y
\end{cases}\label{eq:Green-1-1}
\end{equation}
for $\tau<0$ and $k=\sqrt{-\tau}$. 

Then the $L_{\tau}^{\infty}$ norm follows directly by differentiating
the expressions above. For $\tau$ large, the bound is clear. When
$\tau$ is small, $\partial_{\tau}R\left(\tau\pm0i\right)\sim\frac{1}{\sqrt{\tau}}$
which will be canceled out by the factor $\phi_{1}\left(\lambda\right)\lambda$
and the desired bounds follow.
\end{proof}
\begin{rem}
When $-\partial_{x}^{2}+V$ has a negative eigenvalue $-\rho^{2}$,
the only difference is that $W(ik)$ has a simple pole at $k=\rho$
and $W\left(ik\right)\neq0$ otherwise. The one has the following
estimates
\[
\sup_{\tau<0}\left(\left|\left\langle x\right\rangle ^{-\alpha}R\left(\tau\right)P_{c}\left\langle x\right\rangle ^{-\alpha}\right|\right)
\]
\[
\sup_{\tau<0}\left(\left|\left\langle x\right\rangle ^{-\alpha-1}\phi_{1}\left(\tau\right)\tau\partial_{\tau}R\left(\tau\right)P_{c}\left\langle x\right\rangle ^{-\alpha-1}\right|\right)
\]
and
\[
\sup_{\tau<0}\left(\left\langle x\right\rangle ^{-\alpha-1}\phi_{2}\left(\tau\right)\partial_{\tau}R\left(\tau\right)P_{c}\left\langle x\right\rangle ^{-\alpha-1}\right).
\]
\end{rem}

\subsubsection{Weighted estimates}\label{subsubsec:firstorderweight}

Finally, we consider the weighted estimates for
\[
h_{1,1}(t,x)=\int_{1}^{t}e^{i(t-s)H}N_{1,1}\left(s\right)\,ds=\int_{1}^{t}e^{i(t-s)H}a_{1}u(s)\,ds
\]
\[
h_{1,2}(t,x)=\int_{1}^{t}e^{i(t-s)H}N_{1,1}\left(s\right)\,ds=\int_{1}^{t}e^{i(t-s)H}a_{2}e^{2i\int_{0}^{s}E\left[z(\sigma)\right]\,d\sigma}\left(\overline{u}\right)\,ds
\]
In terms of profile, $h_{a,j}=e^{-itH}h_{1,j}\left(t,x\right),$ we
need to estimate for $j=1,2$
\begin{align}
\partial_{k}\tilde{h}_{a,j}\left(t,k\right) & =-\int_{1}^{t}e^{-isk^{2}}2isk\iint\tilde{a}_{j}\left(\ell\right)\tilde{\mathrm{u}}_{j}\left(n\right)\nu\left(k,\ell,n\right)\,d\ell dn\nonumber \\
 & +\int_{1}^{t}e^{-isk^{2}}\iint\tilde{a}_{j}\left(\ell\right)\tilde{\mathrm{u}}_{j}\left(n\right)\partial_{k}\nu\left(k,\ell,n\right)\,d\ell dn\label{eq:kha}
\end{align}
where
\[
\mathrm{u}_{1}=u,\ \mathrm{u}_{2}=e^{2i\int_{0}^{s}E\left[z(\sigma)\right]\,d\sigma}\left(\overline{u}\right)
\]
and
\begin{equation}
\nu\left(k,\ell,n\right)=\int\overline{\mathcal{K}}\left(x,k\right)\mathcal{K}\left(x,\ell\right)\mathcal{K}\left(x,n\right)\,dx.\label{eq:nu3}
\end{equation}
We first estimate the second term on the RHS of \eqref{eq:kha}. This
term is relatively easy since there is no growth in $s$ here. The
analysis for $j=1$ and $j=2$ are the same. We only presnet the case
that $j=1$ here. The analysis is quite similar to the analysis for
\eqref{eq:inhomq2}.

Applying Plancherel's theorem and the smoothing estimate,{\footnotesize
\begin{align*}
\left\Vert \int_{1}^{t}e^{-isk^{2}}\iint\tilde{a}_{1}\left(\ell\right)\tilde{u}\left(n\right)\partial_{k}\nu\left(k,\ell,n\right)\,d\ell dn\right\Vert _{L_{k}^{2}}
\lesssim\left\Vert \tilde{\mathcal{F}}^{-1}\int_{1}^{t}e^{-isk^{2}}\iint\tilde{a}_{1}\left(\ell\right)\tilde{u}\left(n\right)\partial_{k}\nu\left(k,\ell,n\right)\,d\ell dn\right\Vert _{L_{x}^{2}}\\
\lesssim\left\Vert \left\langle x\right\rangle ^{\frac{5}{2}}\tilde{\mathcal{F}}^{-1}\left[\iint\tilde{a}_{1}\left(\ell\right)\tilde{u}\left(n\right)\partial_{k}\nu\left(k,\ell,n\right)\,d\ell dn\right]\right\Vert _{L_{t}^{2}L_{x}^{2}}.
\end{align*}}
The same  as before. we first consider the estimates without weights:
\[
\left\Vert \tilde{\mathcal{F}}^{-1}\left[\iint\tilde{a}_{1}\left(\ell\right)\tilde{u}\left(n\right)\partial_{k}\nu\left(k,\ell,n\right)\,d\ell dn\right]\right\Vert _{L^{2}}.
\]
Applying Plancherel's theorem, we estimate the $L^{2}$ norm of
\[
\iint\tilde{a}_{1}\left(\ell\right)\tilde{u}\left(n\right)\partial_{k}\nu\left(k,\ell,n\right)\,d\ell dn.
\]
Again, we focus on the case that for $k\geq0$ and perform integrate
by parts{\small
\begin{align}
\partial_{k}\nu\left(k,\ell,n\right) & =\int\partial_{k}\overline{\mathcal{K}}\left(x,k\right)\mathcal{K}\left(x,\ell\right)\mathcal{K}\left(x,n\right)\,dx\nonumber \\
 & =\int-ixe^{-ikx}T\left(k\right)\overline{m}_{+}\left(x,k\right)\mathcal{K}\left(x,\ell\right)\mathcal{K}\left(x,n\right)\,dx\nonumber \\
 & +\int e^{-ikx}\partial_{k}T\left(k\right)\overline{m}_{+}\left(x,k\right)\mathcal{K}\left(x,\ell\right)\mathcal{K}\left(x,n\right)\,dx\nonumber \\
 & +\int e^{-ikx}T\left(k\right)\partial_{k}\overline{m}_{+}\left(x,k\right)\mathcal{K}\left(x,\ell\right)\mathcal{K}\left(x,n\right)\,dx.\label{eq:partialkmu-1}
\end{align}}
Note that again by explicit computations, one has{\small
\begin{align*}
\iiint\tilde{a}\left(\ell\right)\tilde{u}\left(n\right)ixe^{-ikx}T\left(k\right)\overline{m}_{+}\left(x,k\right)\mathcal{K}\left(x,\ell\right)\mathcal{K}\left(x,n\right)\,dxd\ell dn
=\int\overline{\mathcal{K}}\left(x,k\right)ixa\left(x\right)u\,dx
\end{align*}}
and
\begin{align*}
\left\Vert \int\overline{\mathcal{K}}\left(x,k\right)ixa_{1}\left(x\right)uixe^{-ikx}\,dx\right\Vert _{L^{2}} & \lesssim\epsilon^{2}\left\Vert \left\langle x\right\rangle ^{-2}u\right\Vert _{L^{\infty}}\lesssim\epsilon^{2}s^{-1+\alpha}\left\Vert u\right\Vert _{X_{T}}
\end{align*}
where again, we applied the decay estimate from Corollary \ref{cor:directXT}.

Similarly, we also have
\begin{align*}
\iiint\tilde{a}_{1}\left(\ell\right)\tilde{u}\left(n\right)e^{-ikx}\partial_{k}T\left(k\right)\overline{m}_{+}\left(x,k\right)\mathcal{K}\left(x,\ell\right)\mathcal{K}\left(x,n\right)\,dxd\ell dn\\
=\int e^{-ikx}\partial_{k}T\left(k\right)\overline{m}_{+}\left(x,k\right)a_{1}\left(x\right)u\,dx,
\end{align*}
whence it follows
\begin{align*}
\left\Vert \int e^{-ikx}\partial_{k}T\left(k\right)\overline{m}_{+}\left(x,k\right)a_{1}\left(x\right)u\,dx\right\Vert _{L^{2}} & \lesssim\epsilon^{2}\left\Vert \left\langle x\right\rangle ^{-2}u\right\Vert _{L^{\infty}}\lesssim\epsilon^{2}s^{-1+\alpha}\left\Vert u\right\Vert _{X_{T}}.
\end{align*}
For the last piece from \eqref{eq:partialkmu-1}, one has
\begin{align*}
\iiiint\tilde{a}_{1}\left(\ell\right)\tilde{u}\left(n\right)e^{-ikx}T\left(k\right)\partial_{k}\overline{m}_{+}\left(x,k\right)\mathcal{K}\left(x,\ell\right)\mathcal{K}\left(x,n\right)\,dxd\ell dn\\
=\int e^{-ikx}T\left(k\right)\partial_{k}\overline{m}_{+}\left(x,k\right)a\left(x\right)u\,dx.
\end{align*}
Then apply the boundedness of the pseudo-differential operator, we
conclude that
\begin{align*}
\left\Vert \int e^{-ikx}T\left(k\right)\partial_{k}\overline{m}_{+}\left(x,k\right)a_{1}\left(x\right)uixe^{-ikx}\,dx\right\Vert _{L^{2}}
\lesssim\left\Vert \left\langle x\right\rangle ^{2}a_{1}\left(x\right)u\right\Vert _{L^{2}}
\lesssim\epsilon^{2}s^{-1+\alpha}\left\Vert u\right\Vert _{X_{T}}.
\end{align*}
For the weighted version, we again take the flat Fourier transform
and use the boundedness of the wave operator as \eqref{eq:weightedWO},
it follows that
\[
\left\Vert \left\langle x\right\rangle ^{\frac{5}{2}}\tilde{\mathcal{F}}^{-1}\left[\iint\tilde{a}_{1}\left(\ell\right)\tilde{u}\left(n\right)\partial_{k}\nu\left(k,\ell,n\right)\,d\ell dn\right]\right\Vert _{L_{x}^{2}.}\lesssim\epsilon^{2}s^{-1+\alpha}\left\Vert u\right\Vert _{X_{T}}.
\]
Performing the $L_{t}^{2}$ integration and summing up the above pieces,
one has
\begin{align}
\left\Vert \tilde{F}^{-1}\left[\iint\tilde{a}_{1}\left(\ell\right)\tilde{u}\left(n\right)\partial_{k}\nu\left(k,\ell,n\right)\,d\ell dn\right]\right\Vert _{L^{2}} & \lesssim\epsilon^{2}\left(\int_{1}^{t}\left|s^{-1+\alpha}\left\Vert u\right\Vert _{X_{T}}\right|^{2}\,ds\right)^{\frac{1}{2}}\nonumber \\
 & \lesssim\epsilon^{3}\left(\left\langle t\right\rangle ^{-\frac{1}{2}+2\alpha}+1\right).\label{eq:quadbdF-2}
\end{align}
Finally, we analyze the first term on the RHS of \eqref{eq:kha}
\begin{equation}
\int_{1}^{t}e^{-isk^{2}}2isk\iint\tilde{a}_{j}\left(\ell\right)\tilde{\mathrm{u}}_{j}\left(n\right)\nu\left(k,\ell,n\right)\,d\ell dn.\label{eq:hakfirst}
\end{equation}
We decompose $\tilde{\mathrm{u}}_{j}$ into high and low frequency
parts:{\small
\begin{align*}
\int_{0}^{t}\iint e^{-ik^{2}s}ks\left(\tilde{a}_{j}\left(\ell\right)\tilde{\mathrm{u}}_{j}\left(n\right)\right)\,\nu\left(k,\ell,n\right)d\ell dnds=\\
\int_{0}^{t}\iint e^{-ik^{2}s}ks\left(\tilde{a}_{j}\left(\ell\right)\tilde{\mathrm{u}}_{j,L}\left(n\right)\right)\,\nu\left(k,\ell,n\right)d\ell dnds\\
+\int_{0}^{t}\iint e^{-ik^{2}s}ks\left(\tilde{a}\left(\ell\right)\tilde{\mathrm{u}}_{j,H}\left(n\right)\right)\,\nu\left(k,\ell,n\right)d\ell dnds.
\end{align*}}
where
\[
\mathrm{u}_{1,A}=u_{A},\ \ \ \mathrm{u}_{2,A}=e^{2i\int_{0}^{s}E\left[z(\sigma)\right]\,d\sigma}\left(\overline{u}_{A}\right),\,\ A\in\left\{ L,H\right\} .
\]
The ways to deal with the high frequency parts are the same for both
$j=1,2$. Again, we only present $j=1$:
\begin{equation}
\int_{0}^{t}\int e^{-ik^{2}s}ks\left(\tilde{a}_{1}\left(\ell\right)\tilde{u}_{H}\left(n\right)\right)\,\nu\left(k,\ell,n\right)d\ell dnds.\label{eq:hakfirstH-1}
\end{equation}
We perform direct estimates
\begin{align*}
\left\Vert \int_{0}^{t}\int e^{-ik^{2}s}ks\left(\tilde{a}_{1}\left(\ell\right)\tilde{u}_{H}\left(n\right)\right)\,\nu\left(k,\ell,n\right)d\ell dnds\right\Vert _{L_{k}^{2}}\\
\lesssim\left\Vert \left\langle x\right\rangle ^{3}sa_{1}u_{H}\right\Vert _{L_{x}^{2}L_{t}^{2}}+\left\Vert \left\langle x\right\rangle ^{3}sa_{1}\partial_{x}u_{H}\right\Vert _{L_{x}^{2}L_{t}^{2}}\\
\lesssim\epsilon^{2}\left(\left\Vert \left\langle x\right\rangle ^{-2}s\partial_{x}u_{H}\right\Vert _{L_{x}^{2}L_{t}^{2}}+\left\Vert \left\langle x\right\rangle ^{-2}su_{H}\right\Vert _{L_{x}^{2}L_{t}^{2}}\right)
\lesssim\epsilon^{3}T^{\alpha}
\end{align*}
where in the last line, we used our auxiliary estimate \eqref{eq:boot4}.

The analysis for the low frequency parts are a little bit more involved.

We start with the analysis of $N_{1,1,L}=a_{1}u_{L}$. First of all,
integration by parts in $s$ using 
$
\frac{1}{-ik^{2}}\partial_{s}\left(e^{-ik^{2}s}\right)=e^{-ik^{2}s}
$
gives us
\begin{align}
\int_{1}^{t}ikse^{-ik^{2}s}\tilde{N}_{1,1,L}\left(s,k\right)\,ds & \sim\int_{1}^{t}\frac{1}{k}se^{-ik^{2}s}\partial_{s}\tilde{N}_{1,1,L}\left(s,k\right)\,ds\nonumber \\
 & +\int_{1}^{t}\frac{1}{k}e^{-ik^{2}s}\tilde{N}_{1,1,L}\left(s,k\right)\,ds\label{eq:h11IBP1}\\
 & +\frac{1}{k}te^{-ik^{2}t}\tilde{N}_{1,1,L}\left(t,k\right)+\frac{1}{k}e^{-ik^{2}}\tilde{N}_{1,1,L}\left(1,k\right).\nonumber 
\end{align}
For the boundary terms, due to the generic condition of the potential,
$\frac{1}{k}$ here can be canceled by the transform with frequency
$k$ in the measure. Applying Lemma \ref{lem:denoPSO},
\begin{align}
\left\Vert \iint e^{-ik^{2}t}\frac{1}{-ik}t\left(\tilde{a}_{1}\left(\ell\right)\tilde{u}_{L}\left(n\right)\right)\,\nu\left(k,\ell,n\right)d\ell dn\right\Vert _{L_{k}^{2}}\nonumber \\
\lesssim\left\Vert \left\langle x\right\rangle a_{1}(x)tu_{L}\left(t,x\right)\right\Vert _{L_{x}^{2}}\lesssim\epsilon^{3}T^{\alpha}.\label{eq:IBPLB}
\end{align}
Similarly, one has
\begin{align}
\left\Vert \iint e^{-ik^{2}}\frac{1}{-ik}\left(\tilde{a}_{1}\left(\ell\right)\tilde{u}_{L}\left(n\right)\right)\,\nu\left(k,\ell,n\right)d\ell dn\right\Vert _{L_{k}^{2}}\nonumber \\
\lesssim\left\Vert \left\langle x\right\rangle a_{1}(x)u_{L}\left(1,x\right)\right\Vert _{L_{x}^{2}}\lesssim\epsilon^{3}.\label{eq:IBPLB-1}
\end{align}
The second term on the RHS of \eqref{eq:h11IBP1} can be bounded using
the regular inhomogeneous smoothing estimate \eqref{eq:smoothing2} and the $L^{2}$ bound in Lemma \ref{lem:denoPSO},
\begin{align}
\left\Vert \int_{1}^{t}e^{-iHs}\frac{1}{\sqrt{H}}\left(au_{L}\right)\,ds\right\Vert _{L_{x}^{2}} & \lesssim\left\Vert \left\langle x\right\rangle ^{^{3}}\frac{1}{\sqrt{H}}\left(a_1u_{L}\right)\right\Vert _{L_{x}^{2}L_{t}^{2}}\lesssim\left\Vert \left\langle x\right\rangle ^{4}a_1u_{L}\right\Vert _{L_{x}^{2}L_{t}^{2}}\lesssim\epsilon^{3}\label{eq:h11ibpbulk1}
\end{align}
after switching back to the physical space using Plancherel's theorem.

It remains to analyze the first term on the RHS of \eqref{eq:h11IBP1}.
We write
\begin{align}
\int_{1}^{t}\frac{1}{k}se^{-ik^{2}s}\partial_{s}\tilde{N}_{1,1}\left(s,k\right)\,ds & =\int_{1}^{t}\frac{1}{k}se^{-ik^{2}s}e^{2i\int_{0}^{s}E\left[z(\sigma)\right]\,d\sigma}\left(e^{-2i\int_{0}^{s}E\left[z(\sigma)\right]\,d\sigma}\partial_{s}\tilde{N}_{1,1}\left(s,k\right)\right)\,ds\nonumber \\
 & =\int_{1}^{t}\frac{1}{k}s\frac{1}{-ik^{2}+2iE\left[z(s)\right]}\partial_{s}\left(e^{-ik^{2}s}e^{2i\int_{0}^{s}E\left[z(\sigma)\right]\,d\sigma}\right)\left(\mathcal{N}_{1,1}\left(s,k\right)\right)\,ds\label{eq:h11IBP2}
\end{align}
where to save to spaces, we introduced the notation
\[
\mathcal{N}_{1,1}\left(s,k\right):=e^{-2i\int_{0}^{s}E\left[z(\sigma)\right]\,d\sigma}\partial_{s}\tilde{N}_{1,1,L}\left(s,k\right).
\]
We note that $k^{2}-2E\left[z(s)\right]\geq\rho^{2}$ due to the estimate
of the parameter $E\left[z(s)\right]$ so the denominator $\frac{1}{-ik^{2}+2iE\left[z(s)\right]}$
appearing in the expression \eqref{eq:h11IBP2} is harmless.

Integrating by parts in $s$, one has{\small
\begin{align}
\int_{1}^{t}\frac{1}{k}s\frac{1}{-ik^{2}+2iE\left[z(s)\right]}\partial_{s}\left(e^{-ik^{2}s}e^{2i\int_{0}^{s}E\left[z(\sigma)\right]\,d\sigma}\right)\left(\mathcal{N}_{1,1}\left(s,k\right)\right)\,ds\nonumber \\
=-\int_{1}^{t}\frac{1}{k}s\frac{1}{-ik^{2}+2iE\left[z(s)\right]}\left(e^{-ik^{2}s}e^{2i\int_{0}^{s}E\left[z(\sigma)\right]\,d\sigma}\right)\partial_{s}\left(\mathcal{N}_{1,1}\left(s,k\right)\right)\,ds\nonumber \\
-\int_{1}^{t}\frac{1}{k}\frac{1}{-ik^{2}+2iE\left[z(s)\right]}\left(e^{-ik^{2}s}e^{2i\int_{0}^{s}E\left[z(\sigma)\right]\,d\sigma}\right)\left(\mathcal{N}_{1,1}\left(s,k\right)\right)\,ds\label{eq:h11IBP2exp}\\
\int_{1}^{t}s\frac{1}{k}\frac{E'\left[z(s)\right]\frac{d}{ds}\left|z\left(s\right)\right|}{\left(-ik^{2}+2iE\left[z(s)\right]\right)^{2}}\left(e^{-ik^{2}s}e^{2i\int_{0}^{s}E\left[z(\sigma)\right]\,d\sigma}\right)\left(\mathcal{N}_{1,1}\left(s,k\right)\right)\,ds\nonumber \\
+\frac{1}{k}t\frac{1}{-ik^{2}+2iE\left[z(t)\right]}\left(e^{-ik^{2}t}e^{2i\int_{0}^{t}E\left[z(\sigma)\right]\,d\sigma}\right)\left(\mathcal{N}_{1,1}\left(t,k\right)\right)\nonumber \\
+\frac{1}{k}\frac{1}{-ik^{2}+2iE\left[z(1)\right]}\left(e^{-ik^{2}}e^{2i\int_{0}^{1}E\left[z(\sigma)\right]\,d\sigma}\right)\left(\mathcal{N}_{1,1}\left(1,k\right)\right).\nonumber 
\end{align}}
The last two boundary terms can be bounded as \eqref{eq:IBPLB} and
\eqref{eq:IBPLB-1}. The second term on the RHS of \eqref{eq:h11IBP2}
can be estimated as \eqref{eq:h11ibpbulk1}:
\begin{align}
\left\Vert \int_{1}^{t}\frac{1}{k}\frac{1}{-ik^{2}+2iE\left[z(s)\right]}\left(e^{-ik^{2}s}e^{2i\int_{0}^{s}E\left[z(\sigma)\right]\,d\sigma}\right)\left(\mathcal{N}_{1,1}\left(s,k\right)\right)\,ds\right\Vert _{L_{x}^{2}}\label{eq:h11ibpbulk21}\\
\lesssim\left\Vert \left\langle x\right\rangle ^{^{3}}\frac{1}{\sqrt{H}}a_{1}\partial_{t}u_{L}\right\Vert _{L_{x}^{2}L_{t}^{2}}\nonumber 
\lesssim\left\Vert \left\langle x\right\rangle ^{4}a_{1}u_{L}\right\Vert _{L_{x}^{2}L_{t}^{2}}\lesssim\epsilon^{3} & .\nonumber 
\end{align}
Using the decay of $\frac{d}{ds}\left|z\left(s\right)\right|\sim s^{-2+2\alpha}$,
the third term on the RHS of \eqref{eq:h11IBP2} can be estimated as
\eqref{eq:h11ibpbulk21}.

Finally, as before, the most difficult part is given by
\begin{align*}
\int_{1}^{t}\frac{1}{k}s\frac{1}{-ik^{2}+2iE\left[z(s)\right]}\left(e^{-ik^{2}s}e^{2i\int_{0}^{s}E\left[z(\sigma)\right]\,d\sigma}\right)\partial_{s}\left(\mathcal{N}_{1,1}\left(s,k\right)\right)\,ds\\
=\int_{1}^{t}\frac{1}{k}s\frac{1}{-ik^{2}+2iE\left[z(s)\right]}\left(-2iE\left[z(s)\right]\partial_{s}+\partial_{s}^{2}\right)\tilde{N}_{1,1}\left(s,k\right)\,ds.
\end{align*}
As above, going back to the physical space and using the generic condition
to cancel the singularity $\frac{1}{k}$, we have
\begin{align}
\left\Vert \int_{1}^{t}\frac{1}{k}s\frac{1}{-ik^{2}+2iE\left[z(s)\right]}\left(-2iE\left[z(s)\right]\partial_{s}+\partial_{s}^{2}\right)\tilde{N}_{1,1,L}\left(s,k\right)\,ds\right\Vert _{L_{x}^{2}}\label{eq:h11ibpbulk22}\\
\lesssim\left\Vert \left\langle x\right\rangle ^{^{3}}\frac{1}{\sqrt{H}}sa_{1}\left(-2iE\left[z(s)\right]\partial_{s}+\partial_{s}^{2}\right)u_{L}\right\Vert _{L_{x}^{2}L_{t}^{2}}\nonumber \\
\lesssim\left\Vert \left\langle x\right\rangle ^{4}a_{1}\left(-2iE\left[z(s)\right]\partial_{s}+\partial_{s}^{2}\right)u_{L}\right\Vert _{L_{x}^{2}L_{t}^{2}}\nonumber
\lesssim\epsilon^{3}T^{\alpha} 
\end{align}
where in the last line we applied the auxiliary estimate  \eqref{eq:boot3}.

Therefore, we conclude that
\[
\sup_{t\in[0,T]}\left\Vert \int_{1}^{t}ikse^{-ik^{2}s}\tilde{N}_{1,1,L}\left(s,k\right)\,ds\right\Vert _{L_{k}^{2}}\lesssim\epsilon^{3}T^{\alpha}.
\]
It remains to bound
\[
\int_{0}^{t}\iint e^{-ik^{2}s}kse^{2i\int_{0}^{s}E\left[z(\sigma)\right]\,d\sigma}\left(\tilde{N}_{1,2,L}\right)\,ds
\]
where $N_{1,2,L}=a_{2}\bar{u}_{L}$. As before, we note that
\[
e^{-ik^{2}s}e^{2i\int_{0}^{s}E\left[z(\sigma)\right]\,d\sigma}=\frac{1}{-ik^{2}+2iE\left[z(s)\right]}\partial_{s}\left(e^{-ik^{2}s}e^{2i\int_{0}^{s}E\left[z(\sigma)\right]\,d\sigma}\right)
\]
where the denominator is strictly negative. Integrating by parts in
$s$ one has{\small
\begin{align}
-2i\int_{1}^{t}kse^{-ik^{2}s+2i\int_{0}^{s}E\left[z(\sigma)\right]\,d\sigma}\left(\tilde{N}_{1,2,L}\left(s,k\right)\right)\,ds\nonumber \\
=-2i\int_{1}^{t}ks\frac{1}{-ik^{2}+2iE\left[z(s)\right]}\partial_{s}\left(e^{-ik^{2}s}e^{2i\int_{0}^{s}E\left[z(\sigma)\right]\,d\sigma}\right)\left(\tilde{N}_{1,2,L}\left(s,k\right)\right)\,ds\label{eq:h12IBP1exp}\\
=-2ik\frac{1}{-ik^{2}+2iE\left[z(1)\right]}e^{-ik^{2}}e^{2i\int_{0}^{1}E\left[z(\sigma)\right]\,d\sigma}\left(\tilde{N}_{1,2,L}\left(1,k\right)\right)\nonumber \\
-2ik\frac{1}{-ik^{2}+2iE\left[z(t)\right]}e^{-ik^{2}t}e^{2i\int_{0}^{t}E\left[z(\sigma)\right]\,d\sigma}\left(\tilde{N}_{1,2,L}\left(t,k\right)\right)\nonumber \\
+2i\int_{1}^{t}k\frac{1}{-ik^{2}+2iE\left[z(s)\right]}\left(e^{-ik^{2}s}e^{2i\int_{0}^{s}E\left[z(\sigma)\right]\,d\sigma}\right)\left(\tilde{N}_{1,2,L}\left(s,k\right)\right)\,ds\nonumber \\
-2i\int_{1}^{t}ks\frac{2iE'\left[z(s)\right]\partial_{s}\left|z(s)\right|}{\left(-ik^{2}+2iE\left[z(s)\right]\right)^{2}}\left(e^{-ik^{2}s}e^{2i\int_{0}^{s}E\left[z(\sigma)\right]\,d\sigma}\right)\left(\tilde{N}_{1,2,L}\left(s,k\right)\right)\,ds\nonumber \\
+2i\int_{1}^{t}sk\frac{1}{-ik^{2}+2iE\left[z(s)\right]}\left(e^{-ik^{2}s}e^{2i\int_{0}^{s}E\left[z(\sigma)\right]\,d\sigma}\right)\partial_{s}\left(\tilde{N}_{1,2,L}\left(s,k\right)\right)\,ds.\nonumber 
\end{align}}
Both boundary terms appearing above can be estimated in the same manner
as \eqref{eq:IBPLB} and \eqref{eq:IBPLB-1}.

The third term on the RHS of \eqref{eq:h12IBP1exp} can be bouneded
by the same way as \eqref{eq:h11ibpbulk21} and the fourth term can
be estimated similarly using the decay of $\partial_{s}\left|z(s)\right|\sim s^{-2+2\alpha}$.

The most difficult term is again the last term above
\[
2i\int_{1}^{t}sk\frac{1}{-ik^{2}+2iE\left[z(s)\right]}\left(e^{-ik^{2}s}e^{2i\int_{0}^{s}E\left[z(\sigma)\right]\,d\sigma}\right)\partial_{s}\left(\tilde{N}_{1,2,L}\left(s,k\right)\right)\,ds.
\]
Performing integration by parts in $s$ using the identity $e^{-ik^{2}s}=\frac{1}{-ik^{2}}\partial_{s}\left(e^{-ik^{2}s}\right)$,
one has{\small
\begin{align}
2i\int_{1}^{t}sk\frac{1}{-ik^{2}+2iE\left[z(s)\right]}\frac{1}{-ik^{2}}\partial_{s}\left(e^{-ik^{2}s}\right)e^{2i\int_{0}^{s}E\left[z(\sigma)\right]\,d\sigma}\partial_{s}\left(\tilde{N}_{1,2,L}\left(s,k\right)\right)\,ds\nonumber \\
=-2\int_{1}^{t}s\frac{1}{-ik^{2}+2iE\left[z(s)\right]}\frac{1}{-k}e^{-ik^{2}s}\partial_{s}\left(e^{2i\int_{0}^{s}E\left[z(\sigma)\right]\,d\sigma}\partial_{s}\left(\tilde{N}_{1,2,L}\left(s,k\right)\right)\right)\,ds\nonumber \\
-2\int_{1}^{t}\frac{1}{-ik^{2}+2iE\left[z(s)\right]}\frac{1}{-k}e^{-ik^{2}s}e^{2i\int_{0}^{s}E\left[z(\sigma)\right]\,d\sigma}\partial_{s}\left(\tilde{N}_{1,2,L}\left(s,k\right)\right)\,ds\label{eq:h12IBP2exp}\\
+2\int_{1}^{t}s\frac{2iE'\left[z(s)\right]\partial_{s}\left|z(s)\right|}{\left(-ik^{2}+2iE\left[z(s)\right]\right)^{2}}\frac{1}{-k}e^{-ik^{2}s}e^{2i\int_{0}^{s}E\left[z(\sigma)\right]\,d\sigma}\partial_{s}\left(\tilde{N}_{1,2,L}\left(s,k\right)\right)\,ds\nonumber \\
+2\frac{1}{-ik^{2}+2iE\left[z(1)\right]}\frac{1}{-k}e^{-ik^{2}}e^{2i\int_{0}^{1}E\left[z(\sigma)\right]\,d\sigma}\partial_{s}\left(\tilde{N}_{1,2,L}\left(1,k\right)\right)\,ds\nonumber \\
+2t\frac{1}{-ik^{2}+2iE\left[z(s)\right]}\frac{1}{-k}e^{-ik^{2}t}e^{2i\int_{0}^{t}E\left[z(\sigma)\right]\,d\sigma}\partial_{s}\left(\tilde{N}_{1,2,L}\left(t,k\right)\right)\,ds & .\nonumber 
\end{align}}By the same as discussions above, boundary terms can be bounded directly
as \eqref{eq:IBPLB} and \eqref{eq:IBPLB-1}. For the bulk terms, the
second and third terms on the RHS of \eqref{eq:h12IBP2exp} can estimated
in the same manner as for those terms appearing in the analysis of
$N_{1,1,L}$.

Finally, for the first term on the RHS of \eqref{eq:h12IBP2exp}, we
note that
\begin{align*}
-2\int_{1}^{t}s\frac{1}{-ik^{2}+2iE\left[z(s)\right]}\frac{1}{-k}e^{-ik^{2}s}\partial_{s}\left(e^{2i\int_{0}^{s}E\left[z(\sigma)\right]\,d\sigma}\partial_{s}\left(\tilde{N}_{1,2,L}\left(s,k\right)\right)\right)\,ds\\
=-2\int_{1}^{t}s\frac{1}{-ik^{2}+2iE\left[z(s)\right]}\frac{1}{-k}e^{-ik^{2}s}e^{2i\int_{0}^{s}E\left[z(\sigma)\right]\,d\sigma}\left(2iE\left[z(s)\right]\partial_{s}+\partial_{s}^{2}\right)\tilde{N}_{1,2,L}\left(s,k\right)\,ds & .
\end{align*}
Therefore, proceeding in the same manner as \eqref{eq:h11ibpbulk22},
we can conclude that
\begin{align}
\left\Vert \int_{1}^{t}\frac{1}{k}s\frac{1}{-ik^{2}+2iE\left[z(s)\right]}\left(2iE\left[z(s)\right]\partial_{s}+\partial_{s}^{2}\right)\tilde{N}_{1,2,L}\left(s,k\right)\,ds\right\Vert _{L_{x}^{2}}\label{eq:h11ibpbulk22-1}\\
\lesssim\left\Vert \left\langle x\right\rangle ^{^{3}}\frac{1}{\sqrt{H}}se^{2i\int_{0}^{s}E\left[z(\sigma)\right]\,d\sigma}a_{2}\left(2iE\left[z(s)\right]\partial_{s}+\partial_{s}^{2}\right)\bar{u}_{L}\right\Vert _{L_{x}^{2}L_{t}^{2}}\nonumber \\
\lesssim\left\Vert \left\langle x\right\rangle ^{4}\bar{a}_{2}\left(-2iE\left[z(s)\right]\partial_{s}+\partial_{s}^{2}\right)u_{L}\right\Vert _{L_{x}^{2}L_{t}^{2}}
\lesssim\epsilon^{3}T^{\alpha} & .\nonumber 
\end{align}
Therefore, we conclude that
\[
\sup_{t\in[0,T]}\left\Vert \int_{1}^{t}ikse^{-ik^{2}s}\tilde{N}_{1,2,L}\left(s,k\right)\,ds\right\Vert _{L_{k}^{2}}\lesssim\epsilon^{3}T^{\alpha}.
\]
Finally, putting all estimates above together, we obtain that
\[
\left\Vert \partial_{k}\tilde{h}_{a,j}\left(t,k\right)\right\Vert _{L_{k}^{2}}\lesssim\epsilon^{3}T^{\alpha},\ j=1,2
\]
which recast the bootstrap conditions on the weighted estimates for
the first order perturbation.

\begin{prop}\label{pro:weightmainfirst}
For $1\leq t\leq T$, one has that, for some $C>0$,
\begin{align}\label{weightmainconcfirst}
\sup_{0\leq t\leq T}{\big\| \partial_{k}\int_{1}^{t}e^{-ik^{2}s}\tilde{N}_{1}\left(s\right)\,ds \big\|}_{L_{k}^{2}}
  \leq 
  C \epsilon^2 T^{\alpha} {\big\| u\big\|}_{X_T}.
\end{align}
\end{prop}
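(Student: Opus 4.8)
The plan is to carry out the analysis sketched in \S\ref{subsubsec:firstorderweight}, organizing the contribution of $N_1=N_{1,1}+N_{1,2}$, $N_{1,1}=a_1(x)u$, $N_{1,2}=a_2(x)e^{2i\int_0^sE[z(\sigma)]\,d\sigma}\bar u$, according to where the $k$-derivative falls. Expanding $a_j\mathrm{u}_j$ in the distorted basis against the bilinear Jost measure $\nu(k,\ell,n)=\int\overline{\mathcal K}(x,k)\mathcal K(x,\ell)\mathcal K(x,n)\,dx$ produces, as in \eqref{eq:kha}, a term in which $\partial_k$ hits $\nu$ (carrying no power of $s$) and a term in which $\partial_k$ hits the oscillatory factor $e^{-ik^2s}$ (carrying the growth $isk$). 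These two pieces are treated by very different methods, and the second is the heart of the matter.

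First I would dispense with the $\partial_k\nu$ term. Passing to physical space by Plancherel and using the inhomogeneous smoothing estimate \eqref{eq:smoothing2-2}, it suffices to bound $\jx^{5/2}\tilde{\mathcal F}^{-1}\big[\iint\tilde a_j(\ell)\tilde{\mathrm u}_j(n)\,\partial_k\nu\,d\ell\,dn\big]$ in $L^2_tL^2_x$. Integrating by parts in $x$ inside $\partial_k\nu$ as in \eqref{eq:partialkmu-1} yields: terms in which the $x$-derivative falls on $a_j$ or on $u$, estimated directly against $\jx^{m}a_j u$ or $\jx^{m}a_j\partial_x u$; and one term handled by the pseudo-differential bound of Lemma \ref{lem:pesudo1}. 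In all cases the improved local decay of Corollary \ref{cor:directXT} produces an integrand $\lesssim\epsilon^2 s^{-1+\alpha}\|u\|_{X_T}$, which is $L^2_s$-integrable, and the homogeneous weight is commuted through the flat Fourier transform via the boundedness of the wave operator $W_+^*$ exactly as in \eqref{eq:weightedWO}.

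For the $isk$ term I would decompose $\mathrm u_j$ into low/high time-frequency parts $u_L,u_H$ through \eqref{eq:LFt}--\eqref{eq:HFt}. The high-frequency piece is bounded directly in $L^2_x$ by $\jx^3 a_j s\,\partial_x^j u_H$ and closed by the auxiliary bootstrap assumption \eqref{eq:boot4}. The low-frequency piece requires integration by parts in $s$. For $N_{1,1,L}=a_1u_L$ one writes $e^{-ik^2s}=\tfrac1{-ik^2}\partial_s(e^{-ik^2s})$; the resulting $1/k$ is absorbed because the generic condition forces $\widetilde{N_{1,1,L}}(s,k)=O(k)$ near $k=0$, quantified by the denominator/pseudo-differential bound of Lemma \ref{lem:denoPSO}. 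The boundary term and the ``trivial'' bulk term are then controlled by \eqref{eq:smoothing2-2} and Corollary \ref{cor:directXT}, and the remaining bulk term $\int\tfrac1k s\,e^{-ik^2s}\partial_s\widetilde{N_{1,1,L}}\,ds$ is handled by inserting the phase $e^{2i\int_0^sE}$ and using the non-resonant identity $e^{-ik^2s}e^{2i\int_0^sE}=\tfrac1{-ik^2+2iE[z(s)]}\partial_s\big(e^{-ik^2s}e^{2i\int_0^sE}\big)$ --- legitimate since $k^2-2E[z(s)]\geq\rho^2>0$ --- to integrate by parts a second time. This yields a bulk term involving $(-2iE\partial_s+\partial_s^2)\widetilde{N_{1,1,L}}$ which, back in physical space and after cancelling $1/k$, is exactly of the form controlled by \eqref{eq:boot3}; the terms where $\partial_s$ hits $E[z(s)]$ carry $\tfrac{d}{ds}|z(s)|\lesssim s^{-2+2\alpha}$ from the modulation equation and are negligible. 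For $N_{1,2,L}=a_2\bar u_L$ the phase is already present, so one integrates by parts first with $\tfrac1{-ik^2+2iE}\partial_s(e^{-ik^2s}e^{2i\int_0^sE})$ and then again with $e^{-ik^2s}=\tfrac1{-ik^2}\partial_s(e^{-ik^2s})$, once more cancelling $1/k$ by genericity, and arriving at the same $(2iE\partial_s+\partial_s^2)\widetilde{N_{1,2,L}}$-type bulk term closed by \eqref{eq:boot3}. Summing over $j=1,2$ and over all pieces gives the claimed bound $\sup_{0\le t\le T}\|\partial_k\int_1^te^{-ik^2s}\tilde N_1(s)\,ds\|_{L^2_k}\lesssim\epsilon^2T^\alpha\|u\|_{X_T}$.

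The hard part will be this double integration by parts in $s$ for the low-frequency contribution: one must simultaneously (i) check the non-resonance inequalities $k^2>0$ and $k^2-2E[z(s)]\geq\rho^2$ used to divide by the phases, (ii) verify that each $1/k$ introduced along the way is absorbed by the generic vanishing of the Jost measure --- this is where Lemma \ref{lem:denoPSO} and Definition \ref{def:generic} enter --- and (iii) ensure that every bulk term surviving the integrations by parts matches one of the auxiliary smoothing bootstrap assumptions \eqref{eq:boot3}--\eqref{eq:boot4} rather than demanding additional time decay, which is unavailable in the cubic problem. The necessity of two (rather than one) integrations by parts stems from the extra oscillatory phase $e^{2i\int_0^sE}$ in $N_{1,2}$ together with the fact that the coefficients $a_j$ are only small, never time-decaying.
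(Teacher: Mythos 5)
Your proposal follows the paper's argument in \S\ref{subsubsec:firstorderweight} essentially step for step: the same split of \eqref{eq:kha} according to where $\partial_k$ lands, the same treatment of the $\partial_k\nu$ piece via \eqref{eq:smoothing2-2}, Lemma \ref{lem:pesudo1}, wave-operator boundedness and Corollary \ref{cor:directXT}, and the same double integration by parts in $s$ for the low-time-frequency contribution, using the non-resonance identities and Lemma \ref{lem:denoPSO} to absorb the $1/k$ singularity before closing with the auxiliary bootstrap estimates \eqref{eq:boot3}--\eqref{eq:boot4}. This is the paper's proof.
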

Finally, we record a pseudo-differential operators bound to deal with the singularities when we performed integration by parts in time.

\begin{lem}
\label{lem:denoPSO}Suppose that $H=-\partial_{xx}+V$ has no zero
resonances nor zero eigenvalues. Then we have the following estimate:
\[
\left\Vert \left|x\right|^{m}\frac{1}{\sqrt{H}}G\right\Vert _{L^{2}}\lesssim\left\Vert \left\langle x\right\rangle ^{m+1}G\right\Vert _{L^{2}}.
\]
\end{lem}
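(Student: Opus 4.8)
The plan is to pass to the distorted Fourier side, where by Lemma~\ref{lemtildeF} the operator $H^{-1/2}P_c$ becomes multiplication by $|\lambda|^{-1}$, so that
\[
\frac{1}{\sqrt H}G(x)=\int\mathcal{K}(x,\lambda)\,\frac{1}{|\lambda|}\,\widetilde G(\lambda)\,d\lambda ,
\]
and to exploit three consequences of genericity: (i) by Lemma~\ref{lem:estiTRTaylor}, $T(\lambda)=\alpha\lambda+o(\lambda)$ as $\lambda\to0$, so in view of \eqref{matK} the ratio $\mathcal{K}(x,\lambda)/|\lambda|$ is of the form $\tfrac{1}{\sqrt{2\pi}}\tfrac{T(|\lambda|)}{|\lambda|}\psi_{\pm}(x,\pm\lambda)$, which stays bounded and is as regular in $\lambda$ as $\mathcal{K}$ itself away from $\lambda=0$; (ii) $\widetilde G(0)=0$ for every $G$ (Lemma~\ref{lemtildeF}(ii)); (iii) $\|\widetilde G\|_{L^2}\lesssim\|G\|_{L^2}$ and $\|\partial_\lambda\widetilde G\|_{L^2}\lesssim\|\langle x\rangle G\|_{L^2}$ by \eqref{eq:weiF}, whence $|\lambda|^{-1}\widetilde G\in L^2$ by Hardy's inequality. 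Thus the operator is well defined and the only real issue is the weighted bound.

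I would split $\tfrac1{\sqrt H}=\tfrac1{\sqrt H}\phi_1(H)+\tfrac1{\sqrt H}\phi_2(H)$ into low and high frequencies as in Lemma~\ref{lem:imprsmoothing}. For the high-frequency piece the symbol $|\lambda|^{-1}\phi_2(\lambda^2)$ is smooth and bounded together with all its derivatives, so $\tfrac1{\sqrt H}\phi_2(H)$ is a pseudo-differential operator of order $-1$; writing $\mathcal{K}=\mathcal{K}_S+\mathcal{K}_R$ as in Chen--Pusateri \cite{CP} (plane-wave part plus localized regular part), its Schwartz kernel is a flat convolution kernel with fast off-diagonal decay plus a term localized in at least one variable, and the bound $\||x|^m\tfrac1{\sqrt H}\phi_2(H)G\|_{L^2}\lesssim\|\langle x\rangle^{m}G\|_{L^2}$ follows from Schur/Young-type estimates together with the pseudo-differential bound of Lemma~\ref{lem:pesudo1} (Hwang~\cite{Hwang}); no loss of weight is needed here.

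The low-frequency piece is the heart of the matter. Splitting $\lambda\ge0$ and $\lambda<0$ and using \eqref{matK},
\[
\frac{1}{\sqrt H}\phi_1(H)G(x)=\frac{1}{\sqrt{2\pi}}\int_0^\infty e^{i\lambda x}\,\frac{T(\lambda)}{\lambda}\,\phi_1(\lambda^2)\,m_+(x,\lambda)\,\widetilde G(\lambda)\,d\lambda+(\lambda<0\ \text{term}),
\]
where on $[0,\infty)$ the amplitude $\tfrac{T(\lambda)}{\lambda}\phi_1(\lambda^2)$ is $C^\infty$ with bounded derivatives and $m_+,\partial_\lambda^j m_+$ obey Lemma~\ref{lem:Mestimates} (at worst a $\langle x\rangle^{j}$ weight, with genuine decay on the favourable half-line). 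To estimate $\||x|^m(\cdot)\|_{L^2_x}$ I would take the flat Fourier transform and use boundedness of the wave operator $W_+^\ast=\widehat{\mathcal{F}}\,\widetilde{\mathcal{F}}^{-1}$ on $W^{m,2}$ (Weder~\cite{Wed}) — equivalently, integrate by parts $m$ times in $\lambda$ after writing $x^m e^{i\lambda x}=(-i\partial_\lambda)^m e^{i\lambda x}$. Derivatives falling on the amplitude or on $m_+$ are harmless by the symbol bounds above; the endpoint contribution at $\lambda=0$ is controlled using $\widetilde G(0)=0$ and the structure of $\mathcal{K}$ near the origin; and the term in which all derivatives fall on $\widetilde G$ is estimated by $\|\partial_\lambda^m\!\big(|\lambda|^{-1}\widetilde G\big)\|_{L^2}$-type quantities, which by $\widetilde G(0)=0$, the Hardy inequality and \eqref{eq:weiF} are bounded by $\|\langle x\rangle^{m+1}G\|_{L^2}$ — the single extra power $\langle x\rangle$ exactly absorbing the $|\lambda|^{-1}$ singularity, which is why the right-hand side carries $\langle x\rangle^{m+1}$ rather than $\langle x\rangle^{m}$. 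Adding the two frequency ranges gives the claim.

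The main obstacle I expect is precisely this low-frequency analysis: keeping control of the $|\lambda|^{-1}$ factor — equivalently, of the Lipschitz-but-not-$C^1$ corner of $\mathcal{K}(x,\lambda)$ at $\lambda=0$ created by the generic vanishing $T(0)=0$ — while taking $m$ weighted $\lambda$-derivatives, so that no non-$L^2$ endpoint term survives. This is exactly where $\widetilde G(0)=0$ and Hardy's inequality are indispensable and where the loss of one weight in the statement is unavoidable.
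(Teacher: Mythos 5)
Your proposal is essentially correct and its core mechanism coincides with the paper's: pass to the distorted Fourier side so that $1/\sqrt H$ becomes multiplication by $|\lambda|^{-1}$, absorb the singularity at $\lambda=0$ into the generic vanishing $T(\lambda)\sim\alpha\lambda$, and transfer the $|x|^m$ weight into $m$ frequency derivatives via boundedness of the wave operator $\mathcal{F}\widetilde{\mathcal{F}}^{-1}$, followed by a pseudo\-differential bound of the type of Lemma~\ref{lem:pesudo1}. The paper does all of this in four lines, with no frequency split and no appeal to Hardy: it writes $\widetilde g(k)=\frac{T(k)}{k}\int e^{-ikx}\,\overline{m_+(x,k)}\,G(x)\,dx$, treats $T(k)/k$ as a bounded symbol uniformly in $k$ (so the low/high regimes need no separate treatment), and deduces $\|\widetilde g\|_{H^m_k}\lesssim\|\langle x\rangle^{m+1}G\|_{L^2}$ from the Hwang-type pseudo\-differential bound, the $m+1$ arising from the $\langle x\rangle$--growth of $\partial_k^j m_+$ together with the one extra weight that the $L^2$-boundedness of the symbolic operator costs. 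Your version carries more machinery: (i) the low/high decomposition is not needed once one observes that $T(\lambda)/\lambda$ is globally bounded; (ii) the ``$\widetilde G(0)=0$ plus Hardy'' route is in mild tension with the ``$T/\lambda$ bounded'' route --- they are two ways of expressing the same cancellation, and you should commit to one, since keeping $|\lambda|^{-1}\widetilde G$ explicitly and applying Hardy is awkward for $m\ge1$ (higher derivatives of $|\lambda|^{-1}$ are genuinely singular, whereas $T(\lambda)/|\lambda|$ stays under control). You correctly identify that the single extra weight $\langle x\rangle$ is where the genericity is spent; in the paper's bookkeeping this surfaces through the pseudo\-differential estimate rather than through Hardy, but the two accountings agree. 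Finally, your high-frequency Schur argument is a perfectly good alternative, just unnecessary once the symbol $T(\lambda)/\lambda$ is seen to be uniformly nice.
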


\begin{proof}
Denote $g=\frac{1}{\sqrt{H}}\left(G\right)$. Then using Plancherel's
theorem, one has
\[
\left\Vert \left|x\right|^{m}\frac{1}{\sqrt{H}}\left(G\right)\right\Vert _{L_{x}^{2}}\sim\left\Vert \partial_{\ell}^{m}\mathcal{F}\circ\tilde{\mathcal{F}}^{-1}\left(\tilde{G}\right)\left(\ell\right)\right\Vert _{L_{\ell}^{2}}.
\]
Note that $\mathcal{F}\circ\tilde{\mathcal{F}}^{-1}=\mathcal{W}$
is the wave operator. By the boundedness of the wave operators in
Sobolev spaces, we have
\[
\left\Vert \partial_{\ell}^{m}\mathcal{F}\circ\tilde{\mathcal{F}}^{-1}\left(\tilde{g}\right)\left(\ell\right)\right\Vert _{L_{\ell}^{2}}\lesssim\left\Vert \tilde{g}\left(k\right)\right\Vert _{H_{k}^{m}}.
\]
Note that for $k\geq1$, by the explicit formula of distorted transform
\[
\tilde{g}\left(k\right)=\int\frac{1}{k}T\left(k\right)e^{-ikx}m\left(x,k\right)G(x)\,dx.
\]
Focusing on small frequency, we treat $\frac{1}{k}T\left(k\right)$
has a constant.

Therefore, using the boundedness of pseudo-differential operators,
\[
\left\Vert \int e^{-ikx}m\left(x,k\right)G(x)\,dx\right\Vert _{H_{k}^{m}}\lesssim\left\Vert \left\langle x\right\rangle ^{m+1}G\right\Vert _{L^{2}}
\]
the desired result follows.
\end{proof}

\subsection{Application to the full problem}\label{subsec:apptofull}
Here we give some details on the application of the arguments above to the full problem \eqref{eq:eta}.
Consider the equation for the radiation term
\begin{align*}
i\partial_{t}\eta-\partial_{xx}\eta+V\eta & =-2\left|\mathcal{Q}\left[z(\infty)\right]\right|^{2}\eta-\left(\mathcal{Q}\left[z(\infty)\right]\right)^{2}e^{2i\int_{0}^{t}E\left[z(\sigma)\right]\,d\sigma}\bar{\eta}\\
 & -2\left(\left|\mathcal{Q}\left[z\right]\right|^{2}-\left|\mathcal{Q}\left[z(\infty)\right]\right|\right)\eta\\
 & -\left(\left(\mathcal{Q}\left[z(\infty)\right]\right)^{2}-\left(\mathcal{Q}\left[z(\infty)\right]\right)^{2}\right)e^{2i\int_{0}^{t}E\left[z(\sigma)\right]\,d\sigma}\bar{\eta}\\
 & +\overline{Q\left[z\right]}\eta^{2}+2Q\left[z\right]\left|\eta\right|^{2}\\
 & +\left|\eta\right|^{2}\eta\\
 & +i\text{D}Q\left(\dot{z}-izE\left[z\right]\right)\\
 & =:N_{0,1}+N_{0,2}+N_{1,1}+N_{1,2}+N_{2}+N_{3}+M=:F.
\end{align*}
Collecting the first line together and putting the remaining terms
as $\mathrm{F}$, one has
\begin{align}
i\partial_{t}\eta-\partial_{xx}\eta+V\eta & =-2\left|\mathcal{Q}\left[z(\infty)\right]\right|^{2}\eta-\left(\mathcal{Q}\left[z(\infty)\right]\right)^{2}e^{2i\int_{0}^{t}E\left[z(\sigma)\right]\,d\sigma}\bar{\eta}+\mathrm{F}\label{eq:rewriteeta}\
\end{align}
Note that from our bootstrap assumptions and the modulation equations,
$N_{1,1}$ and $N_{1,2}$ satisfy
\[
\left\Vert \left\langle x\right\rangle ^{m}\partial_{x}^{j}2\left(\left|\mathcal{Q}\left[z\right]\right|^{2}-\left|\mathcal{Q}\left[z(\infty)\right]\right|\right)\eta\right\Vert _{L^{2}\bigcap L^{\infty}}\lesssim\epsilon^{5}t^{-2+3\alpha}
\]
and
\[
\left\Vert \left\langle x\right\rangle ^{m}\partial_{x}^{j}\left(\left(\mathcal{Q}\left[z(\infty)\right]\right)^{2}-\left(\mathcal{Q}\left[z(\infty)\right]\right)^{2}\right)e^{2i\int_{0}^{t}E\left[z(\sigma)\right]\,d\sigma}\bar{\eta}\right\Vert _{L^{2}\bigcap L^{\infty}}\lesssim\epsilon^{5}t^{-2+3\alpha}
\]
for $j=0,1$. These two terms have the similar estimates of quadratic
terms and the modulation term.

To establish the weighted estimates for profiles of $\eta$, we first
project the equation \eqref{eq:rewriteeta} onto the continuous spectrum with respect to
$H$. With the notations
\begin{equation}
\eta=g+\mathsf{a}\left(t\right)\phi\label{eq:decompeta}
\end{equation}
where $\mathsf{a}\left(t\right)\phi=\left(\eta,\phi\right)\phi=P_{d}\eta$
and $g=P_{c}\eta$. Now the profile is given by $f=e^{-iHt}g$.

\subsubsection{Linear transformation}

We now analyze the linear part of the equation for
$\eta$ more carefully. The key point is that the orthogonality conditions
do not imply that $\eta$ is in the continuous spectrum of $H=-\partial_{xx}+V$.
Although due to the comparison of continuous spaces, one has $\eta\sim P_{c}\eta$.
But this comparison is time-dependent and more importantly, the time
derivative of this comparison given by $\mathcal{K}(z)$ in Lemma \ref{lem:Diff} has no good estimates. This comparison only works well when we
compute the decay estimates for a  fixed time, see Subsection \ref{subsec:bound}. These
are sufficient to analyze the quadratic terms and the cubic term.
To handle the first order perturbations, we need to explore the Fourier
transform in time and smoothing estimates. Then this comparison is
not effective anymore. 


Taking the first line of the equation \eqref{eq:rewriteeta}, one has 
\begin{equation}\label{eq:etalinear}
    i\eta_{t}=-H\eta+A\eta+Be^{i2\int_{0}^{t}E\left[z(\sigma)\right]\,d\sigma}\overline{\eta}
\end{equation}
where we denoted $A=2\left|\mathcal{Q}\left[z(\infty)\right]\right|^{2}$
and $B=\mathcal{Q}^{2}\left[z(\infty)\right]$. Without loss of generality,
we can assume that $A$ and $B$ are real-valued. Otherwise, we just
multiply $\eta$ by the constant phase given by $z(\infty)$. Here
again, we use $t=\infty$ for the sake of convenience. One can also
use $t=T$ to define $A$ and $B$. 


Projecting \eqref{eq:etalinear} onto the continuous spectrum with respect to $H$, one
has
\begin{equation}
ig_{t}=-Hg+P_{c}\left(A\left(g+\mathsf{a}(t)\phi\right)\right)+e^{i2\int_{0}^{t}E\left[z(\sigma)\right]\,d\sigma}P_{c}\left(B\left(\overline{g}+\overline{\mathsf{a}}(t)\phi\right)\right).\label{eq:linearg}
\end{equation}
To get a better understanding of $g$, we introduce a refined decomposition:
to find $\mathfrak{A}(x)\in P_{c}L^{2}$ and $\mathfrak{B}(x)\in P_{c}L^{2}$
and decompose
\begin{equation}
g=r+\mathsf{a}(t)\mathfrak{A}(x)+e^{i2\int_{0}^{t}E\left[z(\sigma)\right]\,d\sigma}\overline{\mathsf{a}(t)}\mathfrak{B}(x)\label{eq:refdecomp}
\end{equation}
such that plugging the decomposition \eqref{eq:refdecomp} above into
the linear equation \eqref{eq:linearg}, it results in an equation for
$r$, whose the RHS, approximately only $r$ is involved.

Now we recall the equation for $\mathsf{a}(t)$. Again, we are only
interested in the linear level. One has the linear equation:
\begin{align*}
i\dot{\mathsf{a}}(t) & =\rho^{2}\mathsf{a}(t)+\left(Ar+e^{i2\int_{0}^{t}E\left[z(\sigma)\right]\,d\sigma}B\overline{r},\phi\right)\\
 & +\mathsf{a}(t)\left(A\phi,\phi\right)+e^{i2\int_{0}^{t}E\left[z(\sigma)\right]\,d\sigma}\overline{\mathsf{a}}(t)\left(B\phi,\phi\right)\\
 & +\left(A\left(\mathsf{a}(t)\mathfrak{A}(x)+e^{i2\int_{0}^{t}E\left[z(\sigma)\right]\,d\sigma}\overline{\mathsf{a}(t)}\mathfrak{B}(x)\right),\phi\right)\\
 & +\left(e^{i2\int_{0}^{t}E\left[z(\sigma)\right]\,d\sigma}B\overline{\left(\mathsf{a}(t)\mathfrak{A}(x)+e^{i2\int_{0}^{t}E\left[z(\sigma)\right]\,d\sigma}\overline{\mathsf{a}(t)}\mathfrak{B}(x)\right)},\phi\right)
\end{align*}
We compute that
\[
\left(A\left(\mathsf{a}(t)\mathfrak{A}(x)+e^{i2\int_{0}^{t}E\left[z(\sigma)\right]\,d\sigma}\overline{\mathsf{a}(t)}\mathfrak{B}(x)\right),\phi\right)=\mathsf{a}(t)\left(A\mathfrak{A},\phi\right)+e^{i2\int_{0}^{t}E\left[z(\sigma)\right]\,d\sigma}\overline{\mathsf{a}(t)}\left(A\mathfrak{V},\phi\right)
\]
and{\small
\[
\left(e^{i2\int_{0}^{t}E\left[z(\sigma)\right]\,d\sigma}B\overline{\left(\mathsf{a}(t)\mathfrak{A}(x)+e^{i2\int_{0}^{t}E\left[z(\sigma)\right]\,d\sigma}\overline{\mathsf{a}(t)}\mathfrak{B}(x)\right)},\phi\right)=\mathsf{a}\left(t\right)\left(B\mathfrak{B},\phi\right)+e^{i2\int_{0}^{t}E\left[z(\sigma)\right]\,d\sigma}\overline{\mathsf{a}(t)}\left(B\mathfrak{A},\phi\right).
\]}
Therefore, one has
\begin{align}
i\dot{\mathsf{a}}(t) & =\rho^{2}\mathsf{a}(t)\label{eq:aeq}\\
 & +\left[\left(A\phi,\phi\right)+\left(A\mathfrak{A},\phi\right)+\left(B\mathfrak{B},\phi\right)\right]\mathsf{a}(t)\nonumber \\
 & +\left[\left(B\phi,\phi\right)+\left(A\mathfrak{V},\phi\right)+\left(B\mathfrak{A},\phi\right)\right]e^{i2\int_{0}^{t}E\left[z(\sigma)\right]\,d\sigma}\overline{\mathsf{a}}(t)\nonumber \\
 & +\left(Ar+e^{i2\int_{0}^{t}E\left[z(\sigma)\right]\,d\sigma}B\overline{r},\phi\right)\nonumber 
\end{align}
and
\begin{align*}
-ie^{i2\int_{0}^{t}E\left[z(\sigma)\right]\,d\sigma}\overline{\dot{\mathsf{a}}(t)} & =\rho^{2}e^{i2\int_{0}^{t}E\left[z(\sigma)\right]\,d\sigma}\overline{\mathsf{a}}(t)\\
 & +\left[\left(A\phi,\phi\right)+\left(A\mathfrak{A},\phi\right)+\left(B\mathfrak{B},\phi\right)\right]e^{i2\int_{0}^{t}E\left[z(\sigma)\right]\,d\sigma}\overline{\mathsf{a}}(t)\\
 & +\left[\left(B\phi,\phi\right)+\left(A\mathfrak{V},\phi\right)+\left(B\mathfrak{A},\phi\right)\right]\mathsf{a}(t)\\
 & +\left(Br+Ae^{i2\int_{0}^{t}E\left[z(\sigma)\right]\,d\sigma}\overline{r},\phi\right).
\end{align*}
We also record the formula
\begin{align}
ie^{i2\int_{0}^{t}E\left[z(\sigma)\right]\,d\sigma}\overline{\dot{\mathsf{a}}(t)}-2E\left[z(t)\right]e^{i2\int_{0}^{t}E\left[z(\sigma)\right]\,d\sigma}\overline{\mathsf{a}(t)}\label{eq:thetaaeq}\\
=\left(-\rho^{2}-2E\left[z(t)\right]\right)e^{i2\int_{0}^{t}E\left[z(\sigma)\right]\,d\sigma}\overline{\mathsf{a}}(t)\nonumber \\
-\left[\left(A\phi,\phi\right)+\left(A\mathfrak{A},\phi\right)+\left(B\mathfrak{B},\phi\right)\right]e^{i2\int_{0}^{t}E\left[z(\sigma)\right]\,d\sigma}\overline{\mathsf{a}}(t)\nonumber \\
-\left[\left(B\phi,\phi\right)+\left(A\mathfrak{V},\phi\right)+\left(B\mathfrak{A},\phi\right)\right]\mathsf{a}(t)\nonumber \\
-\left(Br+Ae^{i2\int_{0}^{t}E\left[z(\sigma)\right]\,d\sigma}\overline{r},\phi\right).\nonumber 
\end{align}
Now we compute the RHS of the equation \eqref{eq:linearg} with the
refined decomposition \eqref{eq:refdecomp}. Explicitly, one has
\begin{align*}
-Hg & =-H\left(r+\mathsf{a}(t)\mathfrak{A}(x)+e^{i2\int_{0}^{t}E\left[z(\sigma)\right]\,d\sigma}\overline{\mathsf{a}(t)}\mathfrak{B}(x)\right)\\
 & =-Hr-\mathsf{a}(t)H\mathfrak{A}-e^{i2\int_{0}^{t}E\left[z(\sigma)\right]\,d\sigma}\overline{\mathsf{a}(t)}H\mathfrak{B}.
\end{align*}
We also have
\begin{align}
P_{c}\left(A\left(g+\mathsf{a}(t)\phi\right)\right) & =P_{c}\left(A\left(r+\mathsf{a}(t)\mathfrak{A}(x)+e^{i2\int_{0}^{t}E\left[z(\sigma)\right]\,d\sigma}\overline{\mathsf{a}(t)}\mathfrak{B}(x)+\mathsf{a}(t)\phi\right)\right)\nonumber \\
 & =P_{c}\left(Ar\right)+\mathsf{a}(t)P_{c}\left(A\mathfrak{A}+A\phi\right)+e^{i2\int_{0}^{t}E\left[z(\sigma)\right]\,d\sigma}\overline{\mathsf{a}(t)}P_{c}\left(A\mathfrak{B}\right)\label{eq:g11}
\end{align}
and{\small
\begin{align}
e^{i2\int_{0}^{t}E\left[z(\sigma)\right]\,d\sigma}P_{c}\left(B\left(\overline{g}+\overline{\mathsf{a}}(t)\phi\right)\right) & =e^{i2\int_{0}^{t}E\left[z(\sigma)\right]\,d\sigma}P_{c}\left(B\left(\overline{r+\mathsf{a}(t)\mathfrak{A}(x)+e^{i2\int_{0}^{t}E\left[z(\sigma)\right]\,d\sigma}\overline{\mathsf{a}(t)}\mathfrak{B}(x)}+\overline{\mathsf{a}}(t)\phi\right)\right)\nonumber \\
 & =e^{i2\int_{0}^{t}E\left[z(\sigma)\right]\,d\sigma}P_{c}\left(B\overline{r}\right)+e^{i2\int_{0}^{t}E\left[z(\sigma)\right]\,d\sigma}\overline{\mathsf{a}(t)}P_{c}\left(B\mathfrak{A}+B\phi\right)+\mathsf{a}(t)P_{c}\left(B\mathfrak{B}\right).\label{eq:g12}
\end{align}}
Now we expand the RHS of the equation \eqref{eq:linearg},{\small
\[
i\left(r+\mathsf{a}(t)\mathfrak{A}(x)+e^{i2\int_{0}^{t}E\left[z(\sigma)\right]\,d\sigma}\overline{\mathsf{a}(t)}\mathfrak{B}(x)\right)_{t}=ir_{t}+i\mathsf{a}_{t}(t)\mathfrak{A}+i\frac{d}{dt}\left(e^{i2\int_{0}^{t}E\left[z(\sigma)\right]\,d\sigma}\overline{\mathsf{a}(t)}\right)\mathfrak{B}.
\]}
From equations \eqref{eq:aeq}, \eqref{eq:thetaaeq}, we have
\begin{align*}
i\dot{\mathsf{a}}(t)\mathfrak{A} & =\rho^{2}\mathsf{a}(t)\mathfrak{A}\\
 & +\left[\left(A\phi,\phi\right)+\left(A\mathfrak{A},\phi\right)+\left(B\mathfrak{B},\phi\right)\right]\mathsf{a}(t)\mathfrak{A}\\
 & +\left[\left(B\phi,\phi\right)+\left(A\mathfrak{V},\phi\right)+\left(B\mathfrak{A},\phi\right)\right]e^{i2\int_{0}^{t}E\left[z(\sigma)\right]\,d\sigma}\overline{\mathsf{a}}(t)\mathfrak{A}\\
 & +\left(Ar+e^{i2\int_{0}^{t}E\left[z(\sigma)\right]\,d\sigma}B\overline{r},\phi\right)\mathfrak{A}
\end{align*}
and{\small
\begin{align*}
\left(ie^{i2\int_{0}^{t}E\left[z(\sigma)\right]\,d\sigma}\overline{\dot{\mathsf{a}}(t)}-2E\left[z(t)\right]e^{i2\int_{0}^{t}E\left[z(\sigma)\right]\,d\sigma}\overline{\mathsf{a}(t)}\right)\mathfrak{B} & =\left(-\rho^{2}-2E\left[z(t)\right]\right)e^{i2\int_{0}^{t}E\left[z(\sigma)\right]\,d\sigma}\overline{\mathsf{a}}(t)\mathfrak{B}\\
 & -\left[\left(A\phi,\phi\right)+\left(A\mathfrak{A},\phi\right)+\left(B\mathfrak{B},\phi\right)\right]e^{i2\int_{0}^{t}E\left[z(\sigma)\right]\,d\sigma}\overline{\mathsf{a}}(t)\mathfrak{B}\\
 & -\left[\left(B\phi,\phi\right)+\left(A\mathfrak{V},\phi\right)+\left(B\mathfrak{A},\phi\right)\right]\mathsf{a}(t)\mathfrak{B}\\
 & -\left(Br+Ae^{i2\int_{0}^{t}E\left[z(\sigma)\right]\,d\sigma}\overline{r},\phi\right)\mathfrak{B}.
\end{align*}}Putting the computations above together, then from the LHS of the
equation of $g$, for the parts with $\mathsf{a}$ and $e^{i2\int_{0}^{t}E\left[z(\sigma)\right]\,d\sigma}\overline{\mathsf{a}}$,
one has{\footnotesize
\begin{align}
i\dot{\mathsf{a}}(t)\mathfrak{A}+\left(ie^{i2\int_{0}^{t}E\left[z(\sigma)\right]\,d\sigma}\overline{\dot{\mathsf{a}}(t)}-2E\left[z(t)\right]e^{i2\int_{0}^{t}E\left[z(\sigma)\right]\,d\sigma}\overline{\mathsf{a}(t)}\right)\mathfrak{B}\label{eq:togethera}\\
=\rho^{2}\mathsf{a}(t)\mathfrak{A}+\left(-\rho^{2}-2E\left[z(t)\right]\right)e^{i2\int_{0}^{t}E\left[z(\sigma)\right]\,d\sigma}\mathfrak{B}\nonumber \\
+\left\{ \left[\left(A\phi,\phi\right)+\left(A\mathfrak{A},\phi\right)+\left(B\mathfrak{B},\phi\right)\right]\mathfrak{A}-\left[\left(B\phi,\phi\right)+\left(A\mathfrak{V},\phi\right)+\left(B\mathfrak{A},\phi\right)\right]\mathfrak{B}\right\} \mathsf{a}(t)\nonumber \\
+\left\{ \left[\left(B\phi,\phi\right)+\left(A\mathfrak{V},\phi\right)+\left(B\mathfrak{A},\phi\right)\right]\mathfrak{A}-\left[\left(A\phi,\phi\right)+\left(A\mathfrak{A},\phi\right)+\left(B\mathfrak{B},\phi\right)\right]\mathfrak{B}\right\} e^{i2\int_{0}^{t}E\left[z(\sigma)\right]\,d\sigma}\overline{\mathsf{a}}(t)\nonumber \\
\left(Ar+e^{i2\int_{0}^{t}E\left[z(\sigma)\right]\,d\sigma}B\overline{r},\phi\right)\mathfrak{A}-\left(Br+Ae^{i2\int_{0}^{t}E\left[z(\sigma)\right]\,d\sigma}\overline{r},\phi\right)\mathfrak{B}\nonumber 
\end{align}}
and on the RHS of the equation, from \eqref{eq:g11} and \eqref{eq:g12},
we have
\begin{align}
-\mathsf{a}(t)H\mathfrak{A}-e^{i2\int_{0}^{t}E\left[z(\sigma)\right]\,d\sigma}\overline{\mathsf{a}(t)}H\mathfrak{B}\label{eq:RHSaa}\\
+\mathsf{a}(t)P_{c}\left(A\mathfrak{A}+A\phi\right)+e^{i2\int_{0}^{t}E\left[z(\sigma)\right]\,d\sigma}\overline{\mathsf{a}(t)}P_{c}\left(A\mathfrak{B}\right)\nonumber \\
+e^{i2\int_{0}^{t}E\left[z(\sigma)\right]\,d\sigma}\overline{\mathsf{a}(t)}P_{c}\left(B\mathfrak{A}+B\phi\right)+\mathsf{a}(t)P_{c}\left(B\mathfrak{B}\right)\nonumber \\
=\left(-H\mathfrak{A}+P_{c}\left(A\mathfrak{A}+A\phi\right)+P_{c}\left(B\mathfrak{B}\right)\right)\mathsf{a}(t)\nonumber \\
+\left(-H\mathfrak{B}+P_{c}\left(B\mathfrak{A}+B\phi\right)+P_{c}\left(A\mathfrak{B}\right)\right)e^{i2\int_{0}^{t}E\left[z(\sigma)\right]\,d\sigma}\overline{\mathsf{a}(t)}.\nonumber 
\end{align}
Matching the coefficients of $\mathsf{a}(t)$ and $e^{i2\int_{0}^{t}E\left[z(\sigma)\right]\,d\sigma}\overline{\mathsf{a}(t)}$
from \eqref{eq:togethera} and \eqref{eq:RHSaa}, one has the following
system:
\begin{align}
-H\mathfrak{A}+P_{c}\left(A\mathfrak{A}+A\phi\right)+P_{c}\left(B\mathfrak{B}\right) =\qquad\qquad\qquad\qquad\qquad\qquad\qquad\qquad\qquad\qquad\qquad& \label{eq:ell1}\\
\left\{ \left[\left(A\phi,\phi\right)+\left(A\mathfrak{A},\phi\right)+\left(B\mathfrak{B},\phi\right)\right]\mathfrak{A}-\left[\left(B\phi,\phi\right)+\left(A\mathfrak{V},\phi\right)+\left(B\mathfrak{A},\phi\right)\right]\mathfrak{B}\right\} +\rho^{2}\mathfrak{A}\nonumber 
\end{align}
and
\begin{align*}
-H\mathfrak{B}+P_{c}\left(B\mathfrak{A}+B\phi\right)+P_{c}\left(A\mathfrak{B}\right)=\qquad\qquad\qquad\qquad\qquad\qquad\qquad\qquad\qquad\qquad\qquad\qquad\qquad& \\
\left\{ \left[\left(B\phi,\phi\right)+\left(A\mathfrak{V},\phi\right)+\left(B\mathfrak{A},\phi\right)\right]\mathfrak{A}-\left[\left(A\phi,\phi\right)+\left(A\mathfrak{A},\phi\right)+\left(B\mathfrak{B},\phi\right)\right]\mathfrak{B}\right\} +\left(-\rho^{2}-2E\left[z(t)\right]\right)\mathfrak{B}.
\end{align*}
Notice that the later one is time-dependent. We replace $E\left[z(t)\right]$
by $E\left[z(\infty)\right]$. From the bootstrap assumption and the
modulation equation, $\left|E\left[z(t)\right]-E\left[z(\infty)\right]\right|\lesssim\epsilon^{2}t^{-1+2\alpha}.$
So we obtain an approximate equation for the second equation{\small
\begin{align}
-H\mathfrak{B}+P_{c}\left(B\mathfrak{A}+B\phi\right)+P_{c}\left(A\mathfrak{B}\right)=\qquad\qquad\qquad\qquad\qquad\qquad\qquad\qquad\qquad\qquad\qquad\qquad\qquad& \qquad\qquad\label{eq:ell2}\\
\left\{ \left[\left(B\phi,\phi\right)+\left(A\mathfrak{V},\phi\right)+\left(B\mathfrak{A},\phi\right)\right]\mathfrak{A}-\left[\left(A\phi,\phi\right)+\left(A\mathfrak{A},\phi\right)+\left(B\mathfrak{B},\phi\right)\right]\mathfrak{B}\right\} +\left(-\rho^{2}-2E\left[z(\infty)\right]\right)\mathfrak{B}.\nonumber 
\end{align}}Using the smallness of of the coefficients $A$ and $B$, via the
mapping properties of $(H+\rho^{2})P_c$ and $(H-\rho^{2}-2E\left[z(\infty)\right])P_c$
(note that $-\rho^{2}-2E\left[z(\infty)\right]\geq\rho^{2}$) , one
can construct a pair of smooth solutions $\left(\mathfrak{A},\mathfrak{B}\right)$
which decay exponentially to the system \eqref{eq:ell1} and \eqref{eq:ell2}.
This is similar to the construction of the nonlinear bound states
as Lemma \ref{lem:NLB}. 

After finding $\left(\mathfrak{A},\mathfrak{B}\right)$, combing the
computations above, the equation for $r$ is given by $r=P_{c}r$,
\begin{align}
ir_{t} & =-Hr+P_{c}\left(Ar\right)+e^{i2\int_{0}^{t}E\left[z(\sigma)\right]\,d\sigma}P_{c}\left(B\overline{r}\right)\nonumber \\
 & -\left(Ar+e^{i2\int_{0}^{t}E\left[z(\sigma)\right]\,d\sigma}B\overline{r},\phi\right)\mathfrak{A}-\left(Br+Ae^{i2\int_{0}^{t}E\left[z(\sigma)\right]\,d\sigma}\overline{r},\phi\right)\mathfrak{B}\label{eq:rlinear}\\
 & -2\left(E\left[z(t)\right]-E\left[z(\infty)\right]\right)\overline{\mathsf{a}(t)}e^{i2\int_{0}^{t}E\left[z(\sigma)\right]\,d\sigma}\mathfrak{B}.\nonumber 
\end{align}
Notice that the last term of the equation above is of quadratic form.
Importantly, we realize that the equation above has the same structure
of the first order perturbation of the model problem.

\subsubsection{Auxiliary bootstrap estimates}

Projecting the equation \eqref{eq:rewriteeta} onto the continuous spectrum,
the first line of it recasts the form of \eqref{eq:linearg}. From our
discussion above, we have a refined decomposition
\begin{equation}
g=r+\mathsf{a}(t)\mathfrak{A}(x)+e^{i2\int_{0}^{t}E\left[z(\sigma)\right]\,d\sigma}\overline{\mathsf{a}(t)}\mathfrak{B}(x)\label{eq:refinedg}
\end{equation}
with $r=P_{c}r$. From the discussion on \eqref{eq:rlinear}, the equation
for $r$ is given by
\begin{align}
ir_{t} & =-Hr+P_{c}\left(Ar\right)+e^{i2\int_{0}^{t}E\left[z(\sigma)\right]\,d\sigma}P_{c}\left(B\overline{r}\right)\nonumber \\
 & -\left(Ar+e^{i2\int_{0}^{t}E\left[z(\sigma)\right]\,d\sigma}B\overline{r},\phi\right)\mathfrak{A}-\left(Br+Ae^{i2\int_{0}^{t}E\left[z(\sigma)\right]\,d\sigma}\overline{r},\phi\right)\mathfrak{B}\label{eq:rlinear-1}\\
 & -2\left(E\left[z(t)\right]-E\left[z(\infty)\right]\right)\overline{\mathsf{a}(t)}e^{i2\int_{0}^{t}E\left[z(\sigma)\right]\,d\sigma}\mathfrak{B}+P_{c}\mathrm{F}.\nonumber 
\end{align}
Note that the last line of the form of quadratic and cubic terms. 

Now we impose the auxiliary estimates for $r$:
\begin{equation}
\left\Vert \left\langle x\right\rangle ^{-2}t\left(\partial_{t}^{2}-2E\left[z(t)\right]\partial_{t}\right)r_{L}\right\Vert _{L_{x}^{\infty}L^{2}\left[1,T\right]}+\left\Vert \left\langle x\right\rangle ^{-2}tr_{H}\right\Vert _{L_{x}^{\infty}L^{2}\left[1,T\right]}\lesssim\epsilon T^{\alpha}.\label{eq:auxestimater}
\end{equation}
Note that the linear perturbation of on the RHS of \eqref{eq:rlinear-1}
has the same form as the the model problem. Therefore, with the bootstrap
assumptions on $X_{T}$, one can recover the bootstrap auxiliary estimates
for $r$ just as the model problem in \S \ref{subsubsec:bootauxmodel}.

\subsubsection{Weighted estimates for the profile and analysis for ODEs}

We write the Duhamel expansion for the profile using the equation
\eqref{eq:rewriteeta}
\begin{equation}
\tilde{f}\left(t,k\right)=\tilde{f}\left(1,k\right)+\int_{1}^{t}e^{-ik^{2}s}\left(\tilde{N}_{0,1}\left(s\right)+\tilde{N}_{0,2}(s)+\widetilde{P_{c}\mathrm{F}}\left(s\right)\right)\,ds\label{eq:duhamelf-2}
\end{equation}
where
\[
N_{0,1}=-2P_{c}\left(\left|\mathcal{Q}\left[z(\infty)\right]\right|^{2}\eta\right)
\]
and
\[
N_{0,2}=-P_{c}\left(\left(\mathcal{Q}\left[z(\infty)\right]\right)^{2}e^{2i\int_{0}^{t}E\left[z(\sigma)\right]\,d\sigma}\bar{\eta}\right).
\]
The analysis of the higher order terms in $\mathrm{F}$ will be the
same as the model problem. So we focus on  the pieces contributed
by $N_{0,1}$ and $N_{0,2}$. Now we plug the decomposition of $\eta$, 
\eqref{eq:decompeta}, and the refined decomposition \eqref{eq:refinedg}
onto $N_{0,1}$ and $N_{0,2}$:
\begin{align*}
N_{0,1} & =-2P_{c}\left(\left|\mathcal{Q}\left[z(\infty)\right]\right|^{2}r\right)-2P_{c}\left(\left|\mathcal{Q}\left[z(\infty)\right]\right|^{2}\left(\phi+\mathfrak{A}\right)\right)\mathsf{a}(t)\\
 & -2P_{c}\left(\left|\mathcal{Q}\left[z(\infty)\right]\right|^{2}\mathfrak{B}\right)e^{i2\int_{0}^{t}E\left[z(\sigma)\right]\,d\sigma}\overline{\mathsf{a}(t)}\\
 & =:N_{0,1,r}+N_{0,1,a}+N_{0,1,\bar{a}}
\end{align*}
and
\begin{align*}
N_{0,2} & =-P_{c}\left(\left(\mathcal{Q}\left[z(\infty)\right]\right)^{2}e^{2i\int_{0}^{t}E\left[z(\sigma)\right]\,d\sigma}\bar{r}\right)-P_{c}\left(\left(\mathcal{Q}\left[z(\infty)\right]\right)^{2}\mathfrak{V}\right)\mathsf{a}(t)\\
 & -P_{c}\left(\left(\mathcal{Q}\left[z(\infty)\right]\right)^{2}\left(\phi+\mathfrak{A}\right)\right)e^{i2\int_{0}^{t}E\left[z(\sigma)\right]\,d\sigma}\overline{\mathsf{a}(t)}\\
 & =:N_{0,2,r}+N_{0,2,a}+N_{0,2,\bar{a}}.
\end{align*}
We focus on the analysis of $N_{0,1}$:
\begin{align}
\partial_{k}\int_{1}^{t}e^{-ik^{2}s}\left(\tilde{N}_{0,1}\left(s\right)\right)\,ds & =\partial_{k}\int_{1}^{t}e^{-ik^{2}s}\left(\tilde{N}_{0,1,r}\left(s\right)\right)\,ds+\partial_{k}\int_{1}^{t}e^{-ik^{2}s}\left(\tilde{N}_{0,1,a}\left(s\right)\right)\,ds\nonumber \\
 & +\partial_{k}\int_{1}^{t}e^{-ik^{2}s}\left(\tilde{N}_{0,1,\bar{a}}\left(s\right)\right)\,ds\label{eq:PkN10}
\end{align}
since the other piece will be similar as in the model problem. 

First of all, with the estimates for $r$, see \eqref{eq:auxestimater},
one can split $r$ into the high and low frequency parts and the the
same argument as the model problem will give us
\begin{equation}
\left\Vert \partial_{k}\int_{1}^{t}e^{-ik^{2}s}\left(\tilde{N}_{0,1,r}\left(s\right)\right)\,ds\right\Vert _{L_{k}^{2}}\lesssim\epsilon^{3}t^{\alpha}.\label{eq:rpart1}
\end{equation}
It remains to analyze the last two terms on the RHS of \eqref{eq:PkN10}.
We will give the detailed analysis of the second term on the RHS of
\eqref{eq:PkN10} since the other one can be analyzed in the same manner.

We write
\begin{align}
\partial_{k}\int_{1}^{t}e^{-ik^{2}s}\left(\tilde{N}_{0,1,a}\left(s\right)\right)\,ds & =-\int_{1}^{t}e^{-isk^{2}}2isk\int\mathsf{a}(t)\tilde{\mathcal{U}}_{1}\left(\ell\right)\nu\left(k,\ell\right)\,d\ell ds\nonumber \\
 & +\int_{1}^{t}e^{-isk^{2}}\int\mathsf{a}(t)\tilde{\mathrm{\mathcal{U}}}_{1}\left(\ell\right)\partial_{k}\nu\left(k,\ell\right)\,d\ell ds\label{eq:khanb}
\end{align}
where 
\[
\mathcal{U}_{1}=2P_{c}\left(\left|\mathcal{Q}\left[z(\infty)\right]\right|^{2}\left(\phi+\mathfrak{A}\right)\right)
\]
and
\begin{equation}
\nu\left(k,\ell\right)=\int\overline{\mathcal{K}}\left(x,k\right)\mathcal{K}\left(x,\ell\right)\,dx.\label{eq:nu3-1}
\end{equation}
Note that the last term of \eqref{eq:khanb} can estimated in the same
manner as those terms with $r$ involved since $\mathsf{a}(t)$ enjoys
the same decay estimates due to the comparison of the continuous spectrum.

Here we only analyze
\begin{equation}
\int_{1}^{t}e^{-isk^{2}}2isk\int\mathsf{a}(t)\tilde{\mathcal{U}}_{1}\left(t\right)\nu\left(k,\ell\right)\,d\ell ds.\label{eq:U1a}
\end{equation}
Note that the $k$ here is less important since morally it results
in taking spacial derivatives of $2P_{c}\left(\left|\mathcal{Q}\left[z(\infty)\right]\right|^{2}\left(\phi+\mathfrak{A}\right)\right)\mathsf{a}(t)$.
By construction, the coefficient $2P_{c}\left(\left|\mathcal{Q}\left[z(\infty)\right]\right|^{2}\left(\phi+\mathfrak{A}\right)\right)$
is smooth. So the influence of $k$ here is easy to understand.

Denote
\[
\mathsf{b}\left(t\right):=e^{i\rho^{2}t}\mathsf{a}\left(t\right).
\]
We first derive the equations for $\mathsf{b}(t)$. Recall that the
related ODEs for $\mathsf{a}$ are given by
\begin{align*}
i\dot{\mathsf{a}}(t) & =\rho^{2}\mathsf{a}(t)\\
 & +\left[\left(A\phi,\phi\right)+\left(A\mathfrak{A},\phi\right)+\left(B\mathfrak{B},\phi\right)\right]\mathsf{a}(t)\\
 & +\left[\left(B\phi,\phi\right)+\left(A\mathfrak{V},\phi\right)+\left(B\mathfrak{A},\phi\right)\right]e^{i2\int_{0}^{t}E\left[z(\sigma)\right]\,d\sigma}\overline{\mathsf{a}}(t)\\
 & +\left(Ar+e^{i2\int_{0}^{t}E\left[z(\sigma)\right]\,d\sigma}B\overline{r},\phi\right)+\left(\mathrm{F},\phi\right)\\
 & =:\mathsf{c}_{1}\mathsf{a}(t)+\mathsf{c}_{2}e^{i2\int_{0}^{t}E\left[z(\sigma)\right]\,d\sigma}\overline{\mathsf{a}}(t)+M
\end{align*}
and
\begin{align*}
-ie^{i2\int_{0}^{t}E\left[z(\sigma)\right]\,d\sigma}\overline{\dot{\mathsf{a}}(t)} & =\rho^{2}e^{i2\int_{0}^{t}E\left[z(\sigma)\right]\,d\sigma}\overline{\mathsf{a}}(t)\\
 & +\mathsf{c}_{1}e^{i2\int_{0}^{t}E\left[z(\sigma)\right]\,d\sigma}\overline{\mathsf{a}}(t)+\mathsf{c}_{2}\mathsf{a}(t)\\
 & +\left(Br+Ae^{i2\int_{0}^{t}E\left[z(\sigma)\right]\,d\sigma}\overline{r},\phi\right)+e^{i2\int_{0}^{t}E\left[z(\sigma)\right]\,d\sigma}\left(\mathrm{\overline{F}},\phi\right).
\end{align*}
Note that due to the smallness of the soliton, $\left|\mathsf{c}_{j}\right|\lesssim\epsilon^{2}$.
Here we also remark that in the expressions above, $\left(Ar+e^{i2\int_{0}^{t}E\left[z(\sigma)\right]\,d\sigma}B\overline{r},\phi\right)$
have the same structures as $N_{0,j,r}$ which can be estimated using
the bootstrap estimates \eqref{eq:auxestimater}. And the term $\left(\mathrm{F},\phi\right)$
can be bounded as the quadratic and cubic terms. Therefore, $M$ can
be estimated without further manipulations.

From the ODEs from $\mathsf{a}$, we have the following formulae
for the ODEs of $\mathsf{b}$:
\begin{equation}
e^{-i\rho^{2}t}\dot{\mathsf{b}}\left(t\right)=e^{-i\rho^{2}t}\mathsf{b}\left(t\right)\mathsf{c}_{1}+\mathsf{c}_{2}e^{2i\int_{0}^{t}E\left[z(\sigma)\right]\,d\sigma}e^{i\rho^{2}t}\mathrm{\overline{\mathsf{b}}}\left(t\right)+M\left(t\right).\label{eq:beq2}
\end{equation}
\begin{equation}
e^{2i\int_{0}^{t}E\left[z(\sigma)\right]\,d\sigma}e^{i\rho^{2}t}\dot{\mathsf{\bar{b}}}\left(t\right)=e^{2i\int_{0}^{t}E\left[z(\sigma)\right]\,d\sigma}e^{i\rho^{2}t}\mathsf{\bar{b}}\left(t\right)\mathsf{c}_{1}+\mathsf{c}_{2}e^{-i\rho^{2}t}\mathrm{\mathsf{b}}\left(t\right)+e^{2i\int_{0}^{t}E\left[z(\sigma)\right]\,d\sigma}\overline{M}\left(t\right)\label{eq:beq3}
\end{equation}
Now we consider the weighted estimate \eqref{eq:U1a}:
\[
\int_{1}^{t}e^{-isk^{2}}2isk\int\mathsf{a}(t)\tilde{\mathcal{U}}_{1}(\ell)\nu\left(k,\ell\right)\,d\ell ds=\int_{1}^{t}e^{-isk^{2}}2isk\int e^{-i\rho^{2}s}\mathsf{b}\left(s\right)\tilde{\mathcal{U}}_{1}\left(\ell\right)\nu\left(k,\ell\right)\,d\ell ds.
\]
Note that $k^{2}+\rho^{2}\ge\rho^{2}>0$. Using the identity
\[
e^{-isk^{2}}e^{-i\rho^{2}s}=-\frac{1}{i\left(k^{2}+\rho^{2}\right)}\frac{d}{ds}\left(e^{-isk^{2}}e^{-i\rho^{2}s}\right).
\]
we can perform integration by parts in $s$ and get{\footnotesize
\begin{align}
\int_{1}^{t}e^{-isk^{2}}2isk\int e^{-i\rho^{2}s}\mathsf{b}\left(s\right)\tilde{\mathcal{U}}_{1}\left(\ell\right)\nu\left(k,\ell\right)\,d\ell ds & \sim\frac{1}{i\left(k^{2}+\rho^{2}\right)}\int_{1}^{t}e^{-isk^{2}}2ik\int e^{-i\rho^{2}s}\mathsf{b}\left(s\right)\tilde{\mathcal{U}}_{1}\left(\ell\right)\nu\left(k,\ell\right)\,d\ell ds\nonumber \\
 & +\frac{1}{i\left(k^{2}+\rho^{2}\right)}\int_{1}^{t}e^{-isk^{2}}2isk\int e^{-i\rho^{2}s}\dot{\mathsf{b}}\left(s\right)\tilde{\mathcal{U}}_{1}\left(\ell\right)\nu\left(k,\ell\right)\,d\ell ds\label{eq:bibp1}\\
 & +\frac{1}{i\left(k^{2}+\rho^{2}\right)}e^{-itk^{2}}2ikt\int e^{-i\rho^{2}t}\mathsf{b}\left(t\right)\tilde{\mathcal{G}}_{1}\left(\ell\right)\nu\left(k,\ell\right)\,d\ell\nonumber \\
 & +\frac{1}{i\left(k^{2}+\rho^{2}\right)}e^{-ik^{2}}2ik\int e^{-i\rho^{2}}\mathsf{b}\left(1\right)\tilde{\mathcal{G}}_{1}\left(\ell\right)\nu\left(k,\ell\right)\,d\ell.\nonumber 
\end{align}}
For convenience, we denote the term on the LHS of the equation above
as
\[
\mathcal{I}_{1}\left(\mathsf{b}\right)\left(k,t\right):=\int_{1}^{t}e^{-isk^{2}}2isk\int e^{-i\rho^{2}s}\mathsf{b}\left(s\right)\tilde{\mathcal{U}}_{1}\left(\ell\right)\nu\left(k,\ell\right)\,d\ell ds.
\]
The first term of the RHS of \eqref{eq:bibp1} above can be easily bounded
using the decay estimate of $\mathsf{b}\left(s\right)\sim s^{-1+\alpha}$
\begin{equation}
\left\Vert \frac{1}{i\left(k^{2}+\rho^{2}\right)}\int_{1}^{t}e^{-isk^{2}}2ik\int e^{-\rho^{2}s}\mathsf{b}\left(s\right)\tilde{\mathcal{U}}_{1}\left(\ell\right)\nu\left(k,\ell\right)\,d\ell\right\Vert _{L_{k}^{2}}\lesssim\epsilon^{3}.\label{eq:b11}
\end{equation}
The boundary terms can be also estimated as
\begin{equation}
\left\Vert \frac{1}{i\left(k^{2}+\rho^{2}\right)}e^{-itk^{2}}2ikt\int e^{-i\rho^{2}t}\mathsf{b}\left(t\right)\tilde{\mathcal{G}}_{1}\left(\ell\right)\nu\left(k,\ell\right)\,d\ell\right\Vert _{L_{k}^{2}}\lesssim\epsilon^{3}t^{\alpha}\label{eq:b13}
\end{equation}
and the other one is easier. 

We focus on the second term of the RHS of \eqref{eq:bibp1},
\[
\frac{1}{i\left(k^{2}+\rho^{2}\right)}\int_{1}^{t}e^{-isk^{2}}2isk\int e^{-i\rho^{2}s}\dot{\mathsf{b}}\left(s\right)\tilde{\mathcal{U}}_{1}\left(\ell\right)\nu\left(k,\ell\right)\,d\ell ds.
\]
Using the equation for $\mathsf{b}(t)$, \eqref{eq:beq2}, one has
\begin{align}
\frac{1}{i\left(k^{2}+\rho^{2}\right)}\int_{1}^{t}e^{-isk^{2}}2isk\int e^{-i\rho^{2}s}\dot{\mathsf{b}}\left(s\right)\tilde{\mathcal{U}}_{1}\left(\ell\right)\nu\left(k,\ell\right)\,d\ell ds\nonumber \\
=\frac{1}{i\left(k^{2}+\rho^{2}\right)}\int_{1}^{t}e^{-isk^{2}}2isk\int e^{-i\rho^{2}s}\mathsf{c}_{1}\mathsf{b}\left(s\right)\tilde{\mathcal{U}}_{1}\left(\ell\right)\nu\left(k,\ell\right)\,d\ell ds\label{eq:firstb}\\
+\frac{1}{i\left(k^{2}+\rho^{2}\right)}\int e^{-isk^{2}}2isk\int\mathsf{c}_{2}e^{2i\int_{0}^{s}E\left[z(\sigma)\right]\,d\sigma}e^{i\rho^{2}s}\mathrm{\overline{\mathsf{b}}}\left(s\right)\tilde{\mathcal{U}}_{1}\left(\ell\right)\nu\left(k,\ell\right)\,d\ell ds\nonumber \\
+\frac{1}{i\left(k^{2}+\rho^{2}\right)}\int_{1}^{t}e^{-isk^{2}}2isk\int M\left(s\right)\tilde{\mathcal{U}}_{1}\left(\ell\right)\nu\left(k,\ell\right)\,d\ell ds.\nonumber 
\end{align}
The last term can be again handled as the quadratic terms
\begin{equation}
\left\Vert \frac{1}{i\left(k^{2}+\rho^{2}\right)}\int_{1}^{t}e^{-isk^{2}}2isk\int M\left(s\right)\tilde{\mathcal{U}}_{1}\left(\ell\right)\nu\left(k,\ell\right)\,d\ell ds\right\Vert _{L_{k}^{2}}\lesssim\epsilon^{4}.\label{eq:Mb1}
\end{equation}
We denote
\begin{equation}
\mathcal{I}_{2}\left(\bar{\mathsf{b}}\right)\left(k,t\right):=\int_{1}^{t}e^{-isk^{2}}2isk\int e^{2i\int_{0}^{s}E\left[z(\sigma)\right]\,d\sigma}e^{i\rho^{2}s}\mathrm{\overline{\mathsf{b}}}\left(s\right)\tilde{\mathcal{U}}_{1}\left(\ell\right)\nu\left(k,\ell\right)\,d\ell ds.\label{eq:I2b}
\end{equation}
Using these notations, from \eqref{eq:firstb} and estimates \eqref{eq:b11},
\eqref{eq:b13}, \eqref{eq:Mb1}, we conclude that
\begin{align}
\left\Vert \mathcal{I}_{1}\left(\mathsf{b}\right)\left(k,t\right)\right\Vert _{L_{k}^{2}} & \lesssim\left|\mathsf{c}_{1}\right|\left\Vert \mathcal{I}_{1}\left(\mathsf{b}\right)\left(k,t\right)\right\Vert _{L_{k}^{2}}+\left|\mathsf{c}_{2}\right|\left\Vert \mathcal{I}_{2}\left(\bar{\mathsf{b}}\right)\left(k,t\right)\right\Vert _{L_{k}^{2}}\label{eq:I1I21}\\
 & +\epsilon^{3}t^{\alpha}\nonumber 
\end{align}
which implies
\[
\left\Vert \mathcal{I}_{1}\left(\mathsf{b}\right)\left(k,t\right)\right\Vert _{L_{k}^{2}}\lesssim\left|\mathsf{c}_{2}\right|\left\Vert \mathcal{I}_{2}\left(\bar{\mathsf{b}}\right)\left(k,t\right)\right\Vert _{L_{k}^{2}}+\epsilon^{3}t^{\alpha}
\]
due to smallness of $\left|\mathsf{c}_{j}\right|$.

Now we consider integral given by $\mathcal{I}_{2}\left(\bar{\mathsf{b}}\right)$.
The analysis here is similar to what we did for $\mathcal{I}_{1}\left(\mathsf{b}\right)$.
Here we note that $k^{2}-2E\left[z(t)\right]-\rho^{2}\gtrsim\rho^{2}$.
Now we perform integration by parts using the identity
\[
e^{-isk^{2}}e^{i\rho^{2}s}e^{2i\int_{0}^{s}E\left[z(\sigma)\right]\,d\sigma}=-\frac{1}{i\left(k^{2}-\rho^{2}-2E\left[s\right]\right)}\frac{d}{ds}\left(e^{-isk^{2}}e^{i\rho^{2}s}e^{2i\int_{0}^{s}E\left[z(\sigma)\right]\,d\sigma}\right)
\]
and obtain that{\small
\begin{align}
\int_{1}^{t}e^{-isk^{2}}2isk\int e^{2i\int_{0}^{s}E\left[z(\sigma)\right]\,d\sigma}e^{i\rho^{2}s}\mathrm{\overline{\mathsf{b}}}\left(s\right)\tilde{\mathcal{U}}_{1}\left(\ell\right)\nu\left(k,\ell\right)\,d\ell ds\label{eq:IBPbbar}\\
\sim\int_{1}^{t}e^{-isk^{2}}2ik\frac{1}{i\left(k^{2}-\rho^{2}-2E\left[s\right]\right)}\int e^{2i\int_{0}^{s}E\left[z(\sigma)\right]\,d\sigma}e^{i\rho^{2}s}\mathrm{\overline{\mathsf{b}}}\left(s\right)\tilde{\mathcal{U}}_{1}\left(\ell\right)\nu\left(k,\ell\right)\,d\ell ds\nonumber \\
+\frac{1}{i\left(k^{2}+\rho^{2}\right)}\int_{1}^{t}e^{-isk^{2}}2isk\frac{1}{i\left(k^{2}-\rho^{2}-2E\left[s\right]\right)}\int e^{2i\int_{0}^{s}E\left[z(\sigma)\right]\,d\sigma}e^{i\rho^{2}s}\mathrm{\overline{\dot{\mathsf{b}}}}\left(s\right)\tilde{\mathcal{U}}_{1}\left(\ell\right)\nu\left(k,\ell\right)\,d\ell ds\nonumber \\
+\frac{1}{i\left(k^{2}+\rho^{2}\right)}e^{-itk^{2}}2ikt\frac{1}{i\left(k^{2}-\rho^{2}-2E\left[t\right]\right)}\int e^{2i\int_{0}^{t}E\left[z(\sigma)\right]\,d\sigma}e^{i\rho^{2}t}\mathrm{\overline{\mathsf{b}}}\left(t\right)\tilde{\mathcal{U}}_{1}\left(\ell\right)\nu\left(k,\ell\right)\,d\ell\nonumber \\
+\frac{1}{i\left(k^{2}+\rho^{2}\right)}e^{-ik^{2}}2ik\frac{1}{i\left(k^{2}-\rho^{2}-2E\left[1\right]\right)}\int e^{2i\int_{0}^{s}E\left[z(\sigma)\right]\,d\sigma}e^{i\rho^{2}}\mathrm{\overline{\mathsf{b}}}\left(1\right)\tilde{\mathcal{U}}_{1}\left(\ell\right)\nu\left(k,\ell\right)\,d\ell & .\nonumber 
\end{align}}
The first bulk term above can be estimated as \eqref{eq:b11}. The boundary
terms can be bounded in the same manner as \eqref{eq:b13}

For the second bulk term, we again use the equation for $\dot{\mathsf{b}}$, \eqref{eq:beq3},
and get{\small
\begin{align}
\frac{1}{i\left(k^{2}+\rho^{2}\right)}\int_{1}^{t}e^{-isk^{2}}2isk\frac{1}{i\left(k^{2}-\rho^{2}-2E\left[s\right]\right)}\int e^{2i\int_{0}^{s}E\left[z(\sigma)\right]\,d\sigma}e^{i\rho^{2}s}\mathrm{\overline{\dot{\mathsf{b}}}}\left(s\right)\tilde{\mathcal{U}}_{1}\left(\ell\right)\nu\left(k,\ell\right)\,d\ell ds\label{eq:bbarode}\\
=\frac{1}{i\left(k^{2}+\rho^{2}\right)}\int_{1}^{t}e^{-isk^{2}}2isk\frac{1}{i\left(k^{2}-\rho^{2}-2E\left[s\right]\right)}\int e^{2i\int_{0}^{s}E\left[z(\sigma)\right]\,d\sigma}e^{i\rho^{2}s}\mathsf{\bar{b}}\left(s\right)\mathsf{c}_{1}\tilde{\mathcal{U}}_{1}\left(\ell\right)\nu\left(k,\ell\right)\,d\ell ds\nonumber \\
+\frac{1}{i\left(k^{2}+\rho^{2}\right)}\int_{1}^{t}e^{-isk^{2}}2isk\frac{1}{i\left(k^{2}-\rho^{2}-2E\left[s\right]\right)}\int\mathsf{c}_{2}\mathrm{\mathsf{b}}\left(s\right)\tilde{\mathcal{U}}_{1}\left(\ell\right)\nu\left(k,\ell\right)\,d\ell ds\nonumber \\
+\frac{1}{i\left(k^{2}+\rho^{2}\right)}\int_{1}^{t}e^{-isk^{2}}2isk\frac{1}{i\left(k^{2}-\rho^{2}-2E\left[s\right]\right)}\int e^{2i\int_{0}^{s}E\left[z(\sigma)\right]\,d\sigma}M\left(s\right)\tilde{\mathcal{U}}_{1}\left(\ell\right)\nu\left(k,\ell\right)\,d\ell ds.\nonumber 
\end{align}}
As in \eqref{eq:b13}, we know the last term of the RHS above can be
bounded as
\[
\left\Vert \frac{1}{i\left(k^{2}+\rho^{2}\right)}\int_{1}^{t}e^{-isk^{2}}2isk\frac{1}{i\left(k^{2}-\rho^{2}-2E\left[s\right]\right)}\int e^{2i\int_{0}^{s}E\left[z(\sigma)\right]\,d\sigma}M\left(s\right)\tilde{\mathcal{G}}_{1}\left(\ell\right)\nu\left(k,\ell\right)\,d\ell ds\right\Vert _{L_{k}^{2}}\lesssim\epsilon^{4}.
\]
Therefore, similarly to \eqref{eq:I1I21}, from the expansion \eqref{eq:bbarode}
and expression \eqref{eq:IBPbbar}, we conclude that
\begin{align}
\left\Vert \mathcal{I}_{2}\left(\bar{\mathsf{b}}\right)\left(k,t\right)\right\Vert _{L_{k}^{2}} & \lesssim\left|\mathsf{c}_{1}\right|\left\Vert \mathcal{I}_{2}\left(\bar{\mathsf{b}}\right)\left(k,t\right)\right\Vert _{L_{k}^{2}}+\left|\mathsf{c}_{2}\right|\left\Vert \mathcal{I}_{1}\left(\mathsf{b}\right)\left(k,t\right)\right\Vert _{L_{k}^{2}}\label{eq:I1I21-1}\\
 & +\epsilon^{3}t^{\alpha}\nonumber 
\end{align}
which implies
\[
\left\Vert \mathcal{I}_{2}\left(\bar{\mathsf{b}}\right)\left(k,t\right)\right\Vert _{L_{k}^{2}}\lesssim\left|\mathsf{c}_{2}\right|\left\Vert \mathcal{I}_{1}\left(\mathsf{b}\right)\left(k,t\right)\right\Vert _{L_{k}^{2}}+\epsilon^{3}t^{\alpha}.
\]
Combing \eqref{eq:I1I21} with \eqref{eq:I1I21-1}, one obtains
\[
\left\Vert \mathcal{I}_{1}\left(\mathsf{b}\right)\left(k,t\right)\right\Vert _{L_{k}^{2}}+\left\Vert \mathcal{I}_{2}\left(\bar{\mathsf{b}}\right)\left(k,t\right)\right\Vert _{L_{k}^{2}}\lesssim\epsilon^{3}t^{\alpha}.
\]
Finally, we note that the same analysis can be applied to the term
\[
\partial_{k}\int_{1}^{t}e^{-ik^{2}s}\left(\tilde{N}_{0,1,\bar{a}}\left(s\right)\right)\,ds.
\]
We just note that the structure of the integral above is similar to
the structure of $\mathcal{I}_{2}\left(\bar{\mathsf{b}}\right)$.

Putting the part with $r$, we conclude that
\[
\left\Vert \partial_{k}\int_{1}^{t}e^{-ik^{2}s}\left(\tilde{N}_{0,1}\left(s\right)\right)\,ds\right\Vert _{L_{k}^{2}}\lesssim\epsilon^{3}t^{\alpha}
\]
and then
\[
\left\Vert \int_{1}^{t}e^{-ik^{2}s}\left(\tilde{N}_{0,1}\left(s\right)+\tilde{N}_{0,2}(s)\right)\,ds\right\Vert _{L_{k}^{2}}\lesssim\epsilon^{3}t^{\alpha}.
\]
Therefore, we recover the bootstrap assumptions for the weighted estimates
of the profile.
\begin{rem}
\label{rem:odenormal} We remark that the same analysis above can
also be applied in other settings, for example, the pointwise bounds
for the profile. The logic is to combine the decomposition \eqref{eq:decompeta}
and the refined one \eqref{eq:refinedg}. For the $r$ part, we use
the analysis of the model problem and for the remaining pieces, we
apply the integration by parts in time to analyze  $\mathsf{a}(t)$. 
\end{rem}

\begin{rem}
Note that in the analysis of $\mathcal{I}_{1}\left(\mathsf{b}\right)$
and $\mathcal{I}_{2}\left(\bar{\mathsf{b}}\right)$ above , we only
perform integration by parts in $s$ twice the equations for $\mathsf{b}\left(t\right)$
\eqref{eq:beq2} and \eqref{eq:beq3}. Actually, one can iterate this
process and use the smallness of $\mathsf{c}_{j}$ to obtain summable
series. By doing this, one can obtain bulk terms with only $r$ and
$e^{i2\int_{0}^{t}E\left[z(\sigma)\right]\,d\sigma}\overline{r}$
involved after paying the price of the boundary terms.
\end{rem}

\section{Pointwise bound}\label{sec:pointwise}

In this section, we show the global pointwise bound for the Fourier
transform of the profile $\tilde{f}\left(t,k\right)$ from the model problem \eqref{eq:modelu} in the bootstrap space \eqref{eq:bootstrap1-1}.

Again the key difficulties here will be the first order perturbations.  So we write  Duhamel's formula for the profile as
\begin{align}\label{eq:eqprofile}
\partial_{t}\tilde{f}\left(t,k\right) & =\iint e^{-isk^{2}}\tilde{a}_{1}\left(n\right)\tilde{u}\left(s,m\right)\,\nu\left(n,m,k\right)dmdn\\
 & +\iint e^{-isk^{2}}\tilde{a}_{2}\left(n\right)e^{2i\int_{0}^{t}E\left[z(\sigma)\right]\,d\sigma}\tilde{\bar{u}}\left(s,m\right)\,\nu\left(n,m,k\right)dmdn\\
 & +e^{-isk^{2}}\tilde{F}\left(\left|u\right|^{2}u\right)\left(t,k\right)\\
 & +e^{-isk^{2}}\tilde{F}\left(bu^{2}\right)\left(t,k\right).
\end{align}

Note that  the quadratic terms have localized coefficients, so one can use the improved
decay rate and obtain  an integrable inhomogenous term. The cubic term structure is computed in Chen-Pusateri \cite{CP}. 
\begin{prop}\label{proasy}
For $1 \leq t \leq T$, and $|k|\gtrsim t^{-3\alpha}$, using notations above and defining the modified profile
\begin{align}
\label{secasmod}
w (t,k):= \exp\Big(\frac{i}{2} \int_0^t |\wt{f}(s,k)|^2 \, \frac{ds}{1+s} \Big) \wt{f}(t,k),
\end{align}
for every  $1<t_1 < t_2 < T$ we have,
\begin{align}
\label{secas10}
\big| w(t_1,k) - w(t_2,k) | \lesssim (\epsilon  + {\| u \|}_{X_T}^{3} ) \, t_1^{-\varepsilon/2}.
\end{align}
for some $\varepsilon>0$.

\end{prop}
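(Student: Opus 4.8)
The plan is to estimate $\partial_t w(t,k)$ directly and show it is integrable in time away from the zero frequency, so that $w(t,\cdot)$ is Cauchy. Writing $\partial_t \wt{f}(t,k)$ from the Duhamel equation \eqref{eq:eqprofile}, the modified profile satisfies
\[
\partial_t w(t,k) = \exp\Big(\tfrac{i}{2}\int_0^t |\wt{f}(s,k)|^2 \tfrac{ds}{1+s}\Big)\Big[\partial_t \wt{f}(t,k) + \tfrac{i}{2}\tfrac{|\wt{f}(t,k)|^2}{1+t}\wt{f}(t,k)\Big].
\]
The first step is to isolate the genuinely non-integrable contribution to $\partial_t \wt{f}$, which comes from the cubic term $N_3 = |u|^2 u$: after inserting the linear asymptotics, $\wt{F}(|u|^2u)(t,k) e^{-itk^2}$ contains the resonant piece $\tfrac{i}{2}(1+t)^{-1}|\wt{f}(t,k)|^2\wt{f}(t,k)$ plus remainders that decay like $t^{-1-\varepsilon}$ (this is exactly the space-time resonance structure from Chen-Pusateri \cite{CP} and Kato-Pusateri \cite{KP}, where one splits $\mu = \mu_S + \mu_R$, uses stationary phase on the singular part to extract the diagonal $\ell=m=n=k$ contribution, and the regular part is a localized term gaining extra decay). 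The phase correction in $w$ is designed precisely to cancel that resonant piece, so $\partial_t w$ inherits only the $t^{-1-\varepsilon}$ remainder from $N_3$; the restriction $|k|\gtrsim t^{-3\alpha}$ is what makes the stationary phase analysis and the remainder estimates uniform — near $k=0$ one loses powers of $k$ which are compensated by $|k| \gtrsim t^{-3\alpha}$ at the cost of shrinking $\varepsilon$.

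The second step handles the quadratic term $N_2 = bu^2$: since $b$ is Schwartz (exponentially localized) with $|b|\lesssim\epsilon$, the term $\wt{F}(bu^2)(t,k)$ is bounded pointwise in $k$ by $\|\jx^{2} b u^2\|_{L^1_x} \lesssim \epsilon \|\jx^{-2}u\|_{L^\infty_x}^2 \lesssim \epsilon^3 t^{-2+2\alpha}$ using Corollary \ref{cor:directXT}, which is integrable. The third step handles the first-order perturbations $N_{1,1}+N_{1,2} = a_1 u + a_2 e^{2i\int_0^t E\,d\sigma}\bar u$: here a naive bound gives only $\|\jx^2 a_j u\|_{L^1_x}\lesssim \epsilon^2 t^{-1+\alpha}$, which is NOT integrable, so — exactly as in \S\ref{subsubsec:firstorderweight} — one must integrate by parts in $s$ in the expression $\int_1^t e^{-isk^2}\wt{N}_1(s,k)\,ds$. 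Using $e^{-isk^2} = \frac{1}{-ik^2}\partial_s e^{-isk^2}$ for $N_{1,1}$ (the $1/k^2$ singularity being tamed by the generic condition $\wt{N}_{1,1}(s,k)\sim k$ as $k\to 0$, together with $|k|\gtrsim t^{-3\alpha}$) and the non-resonant identity \eqref{eq:phaseintro}, i.e. $e^{-isk^2}e^{2i\int_0^s E\,d\sigma} = \frac{1}{i(2E[z(s)]-k^2)}\partial_s(\cdots)$ with $k^2 - 2E[z(s)]>\rho^2$, for $N_{1,2}$, one trades the lack of time decay for a derivative hitting $u$; the resulting bulk terms involve $\partial_t u_L$ and $(\partial_t^2 - 2iE\partial_t)u_L$, which are controlled by the auxiliary smoothing estimates \eqref{eq:boot3}, \eqref{eq:boot4} and feed back into the $X_T$ norm, producing an $\epsilon^3 t^{-1-\varepsilon}$-type contribution after the decompositions into low/high time-frequency. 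As in the weighted estimate, one may need to integrate by parts twice because of the extra phase in $N_{1,2}$.

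Collecting the three steps, $|\partial_t w(t,k)| \lesssim (\epsilon + \|u\|_{X_T}^3) t^{-1-\varepsilon}$ uniformly for $|k|\gtrsim t^{-3\alpha}$ (after shrinking $\varepsilon$ to absorb the $3\alpha$ losses), and integrating from $t_1$ to $t_2$ gives \eqref{secas10}. The main obstacle is the first-order perturbation term: unlike the standard normal-form situation, integration by parts in time does not generate decay from the equation but only the small constant $\epsilon^2$, so one genuinely needs the auxiliary smoothing bootstrap norms and the non-resonance observations \eqref{eq:phaseintro}--\eqref{eq:introb2}; checking that the iterated integration by parts closes — that every bulk and boundary term is either $L^1_t$-integrable or absorbable by smallness — is the delicate part, and it is essentially a rerun of \S\ref{subsubsec:firstorderweight} at the level of $L^\infty_k$ rather than $L^2_k$, which is actually easier since one does not apply $\partial_k$. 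For the full problem one additionally uses the decomposition $\eta = g + \mathsf{a}(t)\phi$ and the refined decomposition \eqref{eq:refdecomp1}, treating the $r$-part by the model-problem argument and the $\mathsf{a}$-parts by integration by parts in time using the ODEs \eqref{eq:beq2}--\eqref{eq:beq3}, as indicated in Remark \ref{rem:odenormal}.
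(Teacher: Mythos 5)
Your proposal is correct and follows essentially the same route as the paper: extract the resonant cubic contribution and cancel it with the phase in $w$, bound the quadratic term by the improved local decay (giving an integrable $t^{-2+2\alpha}$), and handle the first-order perturbations by integrating by parts in $s$ twice using the non-resonance identities and the auxiliary smoothing estimates on $u_L,u_H$. One small caveat on your opening framing: $\partial_t w$ is \emph{not} pointwise integrable in $t$ because of the first-order terms, so the argument must (as you in fact do later in your sketch) estimate the oscillatory Duhamel integral $\int_{t_1}^{t_2}e^{-isk^2}\widetilde{N_1}\,ds$ directly via integration by parts rather than bounding $|\partial_t w|$ absolutely; the paper also introduces an extra dyadic-type cutoff $\Psi_2(|k|s^\gamma)$ with $\gamma=2\alpha$ to manage the $1/k^2$ denominators uniformly, a detail you subsume in the restriction $|k|\gtrsim t^{-3\alpha}$.
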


 \begin{proof}
We work on inhomogeneous terms from the equation for the profile \eqref{eq:eqprofile} separately. First of all, we analyze the quadratic term.   By the bootstrap assumption on the pointwise decay
of $u$, we know that
\begin{equation}\label{eq:quadpoint}
\left\Vert e^{-isk^{2}}\tilde{F}\left(bu^{2}\right)\left(t,k\right)\right\Vert _{L_{k}^{\infty}}\lesssim\epsilon^{3}t^{-2+2\alpha}
\end{equation}
see Corollary \ref{cor:directXT}.

Secondly, from the computations of Proposition 6.1 in Chen-Pusateri \cite{CP}, for the cubic term, one has
\begin{equation}\label{eq:cubicpoint}
    e^{-isk^{2}}\tilde{F}\left(\left|u\right|^{2}u\right)\left(t,k\right)=\frac{1}{2t}\left|\tilde{f}\left(t,k\right)\right|^{2}\tilde{f}\left(t,k\right)+\mathcal{O}\left(t^{-1-\varrho}\right)
\end{equation}
for $\left|k\right|\geq t^{-3\alpha}$ and some $\varrho>0$.

Multiplying the equation \eqref{eq:eqprofile} by the factor $\exp\left(\frac{i}{2}\int_{0}^{t}\left|\tilde{f}\left(s,k\right)\right|\frac{ds}{1+s}\right)$
and setting
\[
w\left(t,k\right)=\exp\left(\frac{i}{2}\int_{0}^{t}\left|\tilde{f}\left(s,k\right)\right|^{2}\frac{ds}{1+s}\right)\tilde{f}\left(t,k\right)
\]
from the equation \eqref{eq:eqprofile}, estimates \eqref{eq:quadpoint} and \eqref{eq:cubicpoint}, we have
we have
we have
\begin{align*}
\left|w\left(t_{2},k\right)-w\left(t_{1},k\right)\right| & \lesssim\left|\int_{t_{1}}^{t_{2}}\exp\left(\frac{i}{2}\int_{0}^{s}\left|\tilde{f}\left(\sigma,k\right)\right|^{2}\frac{d\sigma}{1+\sigma}\right)e^{-isk^{2}}\tilde{N}_{1,1}\left(s,k\right)\,ds\right|\\
 & +\left|\int_{t_{1}}^{t_{2}}\exp\left(\frac{i}{2}\int_{0}^{s}\left|\tilde{f}\left(\sigma,k\right)\right|^{2}\frac{d\sigma}{1+\sigma}\right)e^{-isk^{2}}\tilde{N}_{1,2}\left(s,k\right)\,ds\right|\\
 & +\epsilon^{3}\int_{t_{1}}^{t_{2}}s^{-2+2\alpha}\,ds+\int_{t_{1}}^{t_{2}}\mathcal{O}\left(s^{-1-\varrho}\right)\,ds
\end{align*}
where we denoted
\[
\tilde{N}_{1,1}\left(s,k\right):=\iint\tilde{a}_{1}\left(n\right)\tilde{u}\left(s,m\right)\,\nu\left(n,m,k\right)dmdn
\]
and
\[
\tilde{N}_{1,2}\left(s,k\right):=\iint\tilde{a}_{2}\left(n\right)e^{2i\int_{0}^{t}E\left[z(\sigma)\right]\,d\sigma}\tilde{\bar{u}}\left(s,m\right)\,\nu\left(n,m,k\right)dmdn.
\]
It remains to analyze
\begin{equation}
\int_{t_{1}}^{t_{2}}\exp\left(\frac{i}{2}\int_{0}^{s}\left|\tilde{f}\left(\sigma,k\right)\right|^{2}\frac{d\sigma}{1+\sigma}\right)\iint e^{-isk^{2}}\tilde{a}_{1}\left(n\right)\tilde{u}\left(s,m\right)\,\nu\left(n,m,k\right)dmdnds\label{eq:pointwiseh1}
\end{equation}
and{\small
\begin{equation}
\int_{t_{1}}^{t_{2}}\exp\left(\frac{i}{2}\int_{0}^{s}\left|\tilde{f}\left(\sigma,k\right)\right|^{2}\frac{d\sigma}{1+\sigma}\right)\iint e^{-isk^{2}}\tilde{a}_{2}\left(n\right)e^{2i\int_{0}^{t}E\left[z(\sigma)\right]\,d\sigma}\tilde{\bar{u}}\left(s,m\right)\,\nu\left(n,m,k\right)dmdnds\label{eq:pointwiseh2}
\end{equation}}
We will only present the analysis for \eqref{eq:pointwiseh1} since
the analysis of the second one will be similar to the first one with the same
manipulation as in the analysis of weighted estimates for first order
perturbations.

We divide the frequency $k$ into two pieces $\left|k\right|\leq s^{-\gamma}$
and $\left|k\right|\geq s^{-\gamma}$. Let $\varPsi_{j}$ be smooth
functions such that $\varPsi_{1}+\varPsi_{2}=1$, $\varPsi_{1}=1$
on $\left[0,1\right]$ and $\varPsi_{1}=0$ on $[2,\infty)$.

In the first region, due to the localization of $a\left(x\right)$
and the genericity of the potential, we can bound{\footnotesize
\begin{align}
\int_{t_{1}}^{t_{2}}\left|\varPsi_{1}\left(\left|k\right|s^{\gamma}\right)\exp\left(\frac{i}{2}\int_{0}^{s}\left|\tilde{f}\left(\sigma,k\right)\right|^{2}\frac{d\sigma}{1+\sigma}\right)\iint e^{-isk^{2}}\tilde{a}_{1}\left(n\right)\tilde{u}\left(s,m\right)\,\nu\left(n,m,k\right)dmdn\right|ds\nonumber \\
\lesssim\int_{t_{1}}^{t_{2}}\left|\varPsi_{1}\left(\left|k\right|s^{\gamma}\right)\exp\left(\frac{i}{2}\int_{0}^{s}\left|\tilde{f}\left(\sigma,k\right)\right|^{2}\frac{d\sigma}{1+\sigma}\right)e^{-isk^{2}}\int\mathcal{K}\left(x,k\right)\left(a_{1}\left(x\right)u\left(s,x\right)\right)dx\right|ds\nonumber \\
\lesssim\int_{t_{1}}^{t_{2}}\left|s^{-\gamma}\int\left(\left\langle x\right\rangle a_{1}\left(x\right)u\left(s,x\right)\right)dx\right|ds\nonumber \\
\lesssim\epsilon^{3}\left(t_{1}^{-\gamma+\alpha}+t_{2}^{-\gamma+\alpha}\right).\label{eq:lowfrepointwise}
\end{align}}
In the region $\left|k\right|\geq s^{-\gamma},$ we split $u=u_{L}+u_{H}$
as before with respect to the Fourier transform in $t$. Then we can
write{\footnotesize
\begin{align*}
\int_{t_{1}}^{t_{2}}\varPsi_{2}\left(\left|k\right|s^{\gamma}\right)\exp\left(\frac{i}{2}\int_{0}^{s}\left|\tilde{f}\left(\sigma,k\right)\right|^{2}\frac{d\sigma}{1+\sigma}\right)e^{-isk^{2}}\int\mathcal{K}\left(x,k\right)\left(a_{1}\left(x\right)u\left(s,x\right)\right)dxds\\
=\int_{t_{1}}^{t_{2}}\varPsi_{2}\left(\left|k\right|s^{\gamma}\right)\exp\left(\frac{i}{2}\int_{0}^{s}\left|\tilde{f}\left(\sigma,k\right)\right|^{2}\frac{d\sigma}{1+\sigma}\right)e^{-isk^{2}}\int\mathcal{K}\left(x,k\right)\left(a_{1}\left(x\right)u_{L}\left(s,x\right)\right)dxds\\
+\int_{t_{1}}^{t_{2}}\varPsi_{2}\left(\left|k\right|s^{\gamma}\right)\exp\left(\frac{i}{2}\int_{0}^{s}\left|\tilde{f}\left(\sigma,k\right)\right|^{2}\frac{d\sigma}{1+\sigma}\right)e^{-isk^{2}}\int\mathcal{K}\left(x,k\right)\left(a_{1}\left(x\right)u_{H}\left(s,x\right)\right)dxds.
\end{align*}}
For the high frequency part, one can bound
\begin{align*}
\left|\int_{t_{1}}^{t_{2}}\varPsi_{2}\left(\left|k\right|s^{\gamma}\right)\exp\left(\frac{i}{2}\int_{0}^{s}\left|\tilde{f}\left(\sigma,k\right)\right|^{2}\frac{d\sigma}{1+\sigma}\right)e^{-isk^{2}}\int\mathcal{K}\left(x,k\right)\left(a_{1}\left(x\right)u_{H}\left(s,x\right)\right)dxds\right|\\
\lesssim\left\Vert \left\langle s\right\rangle ^{-\frac{1}{2}-\varepsilon}\right\Vert _{L^{2}\left[t_{1},t_{2}\right]}\left\Vert \int\mathcal{K}\left(x,k\right)\left(a\left(x\right)\left\langle s\right\rangle ^{\frac{1}{2}+\varepsilon}u_{H}\left(s,x\right)\right)dx\right\Vert _{L_{t}^{2}\left[1,t\right]}
\lesssim\epsilon^{3}\left(t_{1}^{-\frac{\varepsilon}{2}}+t_{2}^{-\frac{\varepsilon}{2}}\right)
\end{align*}
provided that $0<\varepsilon<\frac{1}{2}-\alpha$, where in the last line, we applied Lemma \ref{lem:beta}.

For the low frequency part, integration by parts in $s$ results in{\small
\begin{align*}
\int_{t_{1}}^{t_{2}}\varPsi_{2}\left(\left|k\right|s^{\gamma}\right)\exp\left(\frac{i}{2}\int_{0}^{s}\left|\tilde{f}\left(\sigma,k\right)\right|^{2}\frac{d\sigma}{1+\sigma}\right)\int e^{-isk^{2}}\mathcal{K}\left(x,k\right)\left(a_{1}\left(x\right)u_{L}\left(s,x\right)\right)dxds\\
\sim\frac{1}{k^{2}}\int\varPsi_{2}\left(\left|k\right|t_{2}^{\gamma}\right)e^{-ik^{2}}\mathcal{K}\left(x,k\right)\left(a_{1}\left(x\right)u_{L}\left(t_{2},x\right)\right)dx\\
+\frac{1}{k^{2}}\int\varPsi_{2}\left(\left|k\right|t_{1}^{\gamma}\right)e^{-isk^{2}}\mathcal{K}\left(x,k\right)\left(a_{1}\left(x\right)u_{L}\left(t_{1},x\right)\right)dx\\
+\int_{t_{1}}^{t_{2}}\varPsi_{2}\left(\left|k\right|s^{\gamma}\right)\exp\left(\frac{i}{2}\int_{0}^{s}\left|\tilde{f}\left(\sigma,k\right)\right|^{2}\frac{d\sigma}{1+\sigma}\right)\frac{1}{k^{2}}\int e^{-isk^{2}}\mathcal{K}\left(x,k\right)\left(a_{1}\left(x\right)\partial_{s}u_{L}\left(s,x\right)\right)dxds\\
+\int_{t_{1}}^{t_{2}}\partial_{s}\left(\varPsi_{2}\left(\left|k\right|s^{\gamma}\right)\exp\left(\frac{i}{2}\int_{0}^{s}\left|\tilde{f}\left(\sigma,k\right)\right|^{2}\frac{d\sigma}{1+\sigma}\right)\right)\frac{1}{k^{2}}\int e^{-isk^{2}}\mathcal{K}\left(x,k\right)\left(a_{1}\left(x\right)u_{L}\left(s,x\right)\right)dxds.\end{align*}}
We note that{\footnotesize
\begin{align*}
\partial_{s}\left(\varPsi_{2}\left(\left|k\right|s^{\gamma}\right)\exp\left(\frac{i}{2}\int_{0}^{s}\left|\tilde{f}\left(\sigma,k\right)\right|^{2}\frac{d\sigma}{1+\sigma}\right)\right)\\
\sim\left|\tilde{f}\left(s,k\right)\right|^{2}\frac{1}{1+s}\left(\varPsi_{2}\left(\left|k\right|s^{\gamma}\right)\exp\left(\frac{i}{2}\int_{0}^{s}\left|\tilde{f}\left(\sigma,k\right)\right|^{2}\frac{d\sigma}{1+\sigma}\right)\right)\\
-\gamma s^{\gamma-1}\left|k\right|\varPsi_{2}'\left(\left|k\right|s^{\gamma}\right)\left(\varPsi_{2}\left(\left|k\right|s^{\gamma}\right)\exp\left(\frac{i}{2}\int_{0}^{s}\left|\tilde{f}\left(\sigma,k\right)\right|^{2}\frac{d\sigma}{1+\sigma}\right)\right).
\end{align*}}
Therefore with the localization of $a\left(x\right),$ we can estimate
\begin{align}
\left|\int_{t_{1}}^{t_{2}}\partial_{s}\left(\exp\left(\frac{i}{2}\int_{0}^{s}\left|\tilde{f}\left(\sigma,k\right)\right|^{2}\frac{d\sigma}{1+\sigma}\right)\right)\frac{1}{k^{2}}\int e^{-isk^{2}}\mathcal{K}\left(x,k\right)\left(a_{1}\left(x\right)u_{L}\left(s,x\right)\right)dxds\right|\nonumber \\
\lesssim\epsilon^{3}\int_{t_{1}}^{t_{2}}s^{-1}s^{-1+\alpha}s^{2\gamma}\,ds\lesssim\epsilon^{3}\left(t_{1}^{-1+\alpha+2\gamma}+t_{2}^{-1+\alpha+2\gamma}\right) & .\label{eq:pointwisebulk1}
\end{align}
For the boundary terms, it is easy to bound for $j=1,2$
\begin{equation}
\left|\frac{1}{k^{2}}\varPsi_{2}\left(\left|k\right|t_{j}^{\gamma}\right)\int e^{-it_{j}k^{2}}\mathcal{K}\left(x,k\right)\left(a\left(x\right)u_{L}\left(t_{j},x\right)\right)dx\right|\lesssim\epsilon^{3}t_{j}^{2\gamma}t_{j}^{-1+\alpha}\label{eq:pointwiseboundary}
\end{equation}
It remains to estimate
\[
\int_{t_{1}}^{t_{2}}\varPsi_{2}\left(\left|k\right|s^{\gamma}\right)\exp\left(\frac{i}{2}\int_{0}^{s}\left|\tilde{f}\left(\sigma,k\right)\right|^{2}\frac{d\sigma}{1+\sigma}\right)\frac{1}{k^{2}}\int e^{-isk^{2}}\mathcal{K}\left(x,k\right)\left(a_{1}\left(x\right)\partial_{s}u_{L}\left(s,x\right)\right)dxds.
\]
As in the analysis of weighted estimates, we denote
\[
\mathcal{N}_{1,1}\left(s,k\right):=e^{-2i\int_{0}^{s}E\left[z(\sigma)\right]\,d\sigma}\partial_{s}\tilde{N}_{1,1,L}\left(s,k\right)
\]
and $N_{1,1,L}=a_{1}u_{L}$. Then we write{\small
\begin{align*}
\int_{t_{1}}^{t_{2}}\varPsi_{2}\left(\left|k\right|s^{\gamma}\right)\exp\left(\frac{i}{2}\int_{0}^{s}\left|\tilde{f}\left(\sigma,k\right)\right|^{2}\frac{d\sigma}{1+\sigma}\right)\frac{1}{k^{2}}\int e^{-isk^{2}}\mathcal{K}\left(x,k\right)\left(a_{1}\left(x\right)\partial_{s}u_{L}\left(s,x\right)\right)dxds\\
=\int_{t_{1}}^{t_{2}}\varPsi_{2}\left(\left|k\right|s^{\gamma}\right)\exp\left(\frac{i}{2}\int_{0}^{s}\left|\tilde{f}\left(\sigma,k\right)\right|^{2}\frac{d\sigma}{1+\sigma}\right)e^{-isk^{2}}e^{2i\int_{0}^{s}E\left[z(\sigma)\right]\,d\sigma}\frac{1}{k^{2}}\mathcal{N}_{1,1}\left(s,k\right)\,ds.
\end{align*}}
Using the identity
\[
e^{-ik^{2}s}e^{2i\int_{0}^{s}E\left[z(\sigma)\right]\,d\sigma}=\frac{1}{-ik^{2}+2iE\left[z(s)\right]}\partial_{s}\left(e^{-ik^{2}s}e^{2i\int_{0}^{s}E\left[z(\sigma)\right]\,d\sigma}\right)
\]
to integrate by parts in $s$ again, we have{\footnotesize
\begin{align*}
\int_{t_{1}}^{t_{2}}\frac{1}{k^{2}}\varPsi_{2}\left(\left|k\right|s^{\gamma}\right)\exp\left(\frac{i}{2}\int_{0}^{s}\left|\tilde{f}\left(\sigma,k\right)\right|^{2}\frac{d\sigma}{1+\sigma}\right)\frac{1}{-ik^{2}+2iE\left[z(s)\right]}\partial_{s}\left(e^{-ik^{2}s}e^{2i\int_{0}^{s}E\left[z(\sigma)\right]\,d\sigma}\right)\left(\mathcal{N}_{1,1}\left(s,k\right)\right)\,ds\nonumber \\
=-\int_{1}^{t}\frac{1}{k^{2}}\partial_{s}\left(\varPsi_{2}\left(\left|k\right|s^{\gamma}\right)\exp\left(\frac{i}{2}\int_{0}^{s}\left|\tilde{f}\left(\sigma,k\right)\right|^{2}\frac{d\sigma}{1+\sigma}\right)\right)\frac{1}{-ik^{2}+2iE\left[z(s)\right]}\left(e^{-ik^{2}s}e^{2i\int_{0}^{s}E\left[z(\sigma)\right]\,d\sigma}\right)\left(\mathcal{N}_{1,1}\left(s,k\right)\right)\,ds\nonumber \\
+\int_{t_{1}}^{t_{2}}\frac{1}{k^{2}}\frac{E'\left[z(s)\right]\frac{d}{ds}\left|z\left(s\right)\right|}{\left(-ik^{2}+2iE\left[z(s)\right]\right)^{2}}\varPsi_{2}\left(\left|k\right|s^{\gamma}\right)\exp\left(\frac{i}{2}\int_{0}^{s}\left|\tilde{f}\left(\sigma,k\right)\right|^{2}\frac{d\sigma}{1+\sigma}\right)\left(e^{-ik^{2}s}e^{2i\int_{0}^{s}E\left[z(\sigma)\right]\,d\sigma}\right)\left(\mathcal{N}_{1,1}\left(s,k\right)\right)\,ds\nonumber \\
-\int_{t_{1}}^{t_{2}}\frac{1}{k^{2}}\frac{1}{-ik^{2}+2iE\left[z(s)\right]}\varPsi_{2}\left(\left|k\right|s^{\gamma}\right)\exp\left(\frac{i}{2}\int_{0}^{s}\left|\tilde{f}\left(\sigma,k\right)\right|^{2}\frac{d\sigma}{1+\sigma}\right)\left(e^{-ik^{2}s}e^{2i\int_{0}^{s}E\left[z(\sigma)\right]\,d\sigma}\right)\partial_{s}\left(\mathcal{N}_{1,1}\left(s,k\right)\right)\,ds\\
+\frac{1}{k^{2}}\left(\varPsi_{2}\left(\left|k\right|t_{1}^{\gamma}\right)\exp\left(\frac{i}{2}\int_{0}^{t_{1}}\left|\tilde{f}\left(\sigma,k\right)\right|^{2}\frac{d\sigma}{1+\sigma}\right)\right)\frac{1}{-ik^{2}+2iE\left[z(t_{1})\right]}\left(e^{-ik^{2}t_{1}}e^{2i\int_{0}^{t_{1}}E\left[z(\sigma)\right]\,d\sigma}\right)\left(\mathcal{N}_{1,1}\left(t_{1},k\right)\right)\nonumber \\
+\frac{1}{k^{2}}\left(\varPsi_{2}\left(\left|k\right|t_{2}^{\gamma}\right)\exp\left(\frac{i}{2}\int_{0}^{t_{2}}\left|\tilde{f}\left(\sigma,k\right)\right|^{2}\frac{d\sigma}{1+\sigma}\right)\right)\frac{1}{-ik^{2}+2iE\left[z(t_{2})\right]}\left(e^{-ik^{2}t_{2}}e^{2i\int_{0}^{t_{2}}E\left[z(\sigma)\right]\,d\sigma}\right)\left(\mathcal{N}_{1,1}\left(t_{2},k\right)\right).\nonumber 
\end{align*}}
The last two terms above are boundary terms which can be bounded as
\eqref{eq:pointwiseboundary}. The first two terms above can be estimated
by the same way as \eqref{eq:pointwisebulk1} with the decay of $\frac{d}{ds}\left|z\left(s\right)\right|$.

Finally, it remains to estimate{\footnotesize
\[
\int_{t_{1}}^{t_{2}}\frac{1}{k^{2}}\frac{1}{-ik^{2}+2iE\left[z(s)\right]}\varPsi_{2}\left(\left|k\right|s^{\gamma}\right)\exp\left(\frac{i}{2}\int_{0}^{s}\left|\tilde{f}\left(\sigma,k\right)\right|^{2}\frac{d\sigma}{1+\sigma}\right)\left(e^{-ik^{2}s}e^{2i\int_{0}^{s}E\left[z(\sigma)\right]\,d\sigma}\right)\partial_{s}\left(\mathcal{N}_{1,1}\left(s,k\right)\right)\,ds.
\]}
By construction,
\[
\left(e^{-ik^{2}s}e^{2i\int_{0}^{s}E\left[z(\sigma)\right]\,d\sigma}\right)\partial_{s}\left(\mathcal{N}_{1,1}\left(s,k\right)\right)=e^{-ik^{2}s}\int\mathcal{K}\left(x,k\right)a_{1}\left(-2iE\left[z(s)\right]\partial_{s}+\partial_{s}^{2}\right)u_{L}\,dx.
\]
Therefore as the high frequency part, one has{\footnotesize
\begin{align*}
\left|\int_{t_{1}}^{t_{2}}\frac{1}{k^{2}}\frac{1}{-ik^{2}+2iE\left[z(s)\right]}\varPsi_{2}\left(\left|k\right|s^{\gamma}\right)\exp\left(\frac{i}{2}\int_{0}^{s}\left|\tilde{f}\left(\sigma,k\right)\right|^{2}\frac{d\sigma}{1+\sigma}\right)\left(e^{-ik^{2}s}e^{2i\int_{0}^{s}E\left[z(\sigma)\right]\,d\sigma}\right)\partial_{s}\left(\mathcal{N}_{1,1}\left(s,k\right)\right)\,ds\right|\\
\lesssim\left\Vert \left\langle s\right\rangle ^{-\frac{1}{2}-\varepsilon}\right\Vert _{L^{2}\left[t_{1},t_{2}\right]}\left\Vert \frac{1}{k^{2}}\varPsi\left(\left|k\right|\geq s^{-\gamma}\right)\int\mathcal{K}\left(x,k\right)a_{1}\left(-2iE\left[z(s)\right]\partial_{s}+\partial_{s}^{2}\right)u_{L}\,dx\right\Vert _{L_{t}^{2}\left[1,t\right]}\\
\lesssim\left\Vert \left\langle s\right\rangle ^{-\frac{1}{2}-\varepsilon}\right\Vert _{L^{2}\left[t_{1},t_{2}\right]}\left\Vert \int\frac{\mathcal{K}\left(x,k\right)}{k}\left(\left\langle x\right\rangle a_{1}\left(x\right)\left\langle s\right\rangle ^{\frac{1}{2}+\varepsilon}s^{\gamma}\left(-2iE\left[z(s)\right]\partial_{s}+\partial_{s}^{2}\right)u_{L}\left(s,x\right)\right)dx\right\Vert _{L_{t}^{2}\left[1,t\right]}\\
\lesssim\left(t_{1}^{-\frac{\varepsilon}{2}}+t_{2}^{-\frac{\varepsilon}{2}}\right)\left\Vert \left\langle x\right\rangle a\left(x\right)\left\langle s\right\rangle ^{\frac{1}{2}+\varepsilon}s^{\gamma}\left(-2iE\left[z(s)\right]\partial_{s}+\partial_{s}^{2}\right)u_{L}\left(s,x\right)\right\Vert _{L_{x}^{1}L_{t}^{2}\left[1,t\right]}
\lesssim\left(t_{1}^{-\frac{\varepsilon}{2}}+t_{2}^{-\frac{\varepsilon}{2}}\right)\epsilon^{3}
\end{align*}}
provided that $0<\varepsilon<\frac{1}{2}-\gamma-\alpha$, where in the last line, we again applied Lemma \ref{lem:beta}.

Finally, we pick $\gamma=2\alpha$ and $0<\varepsilon<\frac{1}{2}-3\alpha$.
It follows that{\footnotesize
\[
\left|\int_{t_{1}}^{t_{2}}\exp\left(\frac{i}{2}\int_{0}^{s}\left|\tilde{f}\left(\sigma,k\right)\right|^{2}\frac{d\sigma}{1+\sigma}\right)\iint e^{-isk^{2}}\tilde{a}_{1}\left(n\right)\tilde{u}\left(m\right)\,\nu\left(n,m,k\right)dmdnds\right|\lesssim\epsilon^{3}\left(t_{1}^{-\frac{\varepsilon}{2}}+t_{2}^{-\frac{\varepsilon}{2}}\right).
\]}
Similarly, one also has{\footnotesize
\[
\left|\int_{t_{1}}^{t_{2}}\exp\left(\frac{i}{2}\int_{0}^{s}\left|\tilde{f}\left(\sigma,k\right)\right|^{2}\frac{d\sigma}{1+\sigma}\right)\iint e^{-isk^{2}}\tilde{a}_{2}\left(n\right)e^{2i\int_{0}^{t}E\left[z(\sigma)\right]\,d\sigma}\tilde{\bar{u}}\left(s,m\right)\,\nu\left(n,m,k\right)dmdnds\right|\lesssim\epsilon^{3}\left(t_{1}^{-\frac{\varepsilon}{2}}+t_{2}^{-\frac{\varepsilon}{2}}\right).
\]}
Therefore we conclude that
\begin{align*}
\left|w\left(t_{2},k\right)-w\left(t_{1},k\right)\right| & \lesssim\epsilon^{3}\left(t_{1}^{-\frac{\varepsilon}{2}}+t_{2}^{-\frac{\varepsilon}{2}}\right).
\end{align*}
These recover the bootstrap conditions for the pointwise estimates
of the profile.
\end{proof}


\smallskip
\subsection{Bootstrap argument and proof of Theorem \ref{thm:mainu}}
Finally, we use Proposition \ref{proasy} and Proposition \ref{pro:weightmain1}  to close 
our bootstrap argument, obtain a global solution, and complete the proof of Theorem \ref{thm:mainu}.

Recall the definition of $X_T$ in \eqref{eq:bootstrap1-1}, 
and assume that for $\epsilon_1 := \epsilon^{2/3}$. We make the \emph{a priori} assumption
\begin{align}\label{apriori0}
{\| u \|}_{X_T} \leq \epsilon_1.
\end{align}
Proposition \ref{pro:weightmain1} implies, see also \eqref{eq:weiF},
\begin{align}\label{apriori1}
{\big\| \partial_{k}\tilde{f}(t) \big\|}_{L_{k}^{2}}
  \leq \epsilon + C |t|^{\alpha} \epsilon_1^3 \leq 2\epsilon |t|^{\alpha},
\end{align}
provided $\epsilon\leq \epsilon_0$ small enough.
Then observe that, by our assumptions, 
$\tilde{f}\left(t,0\right)=0$ for all $t \in [0,T]$ so that, for $\left|k\right|\lesssim\left|t\right|^{-3\alpha}$, 
we have
\begin{align}\label{apriori2}
|\tilde{f}\left(t,k\right)| \leq \left|\int_{0}^{k} \partial_{\eta}\tilde{f}\left(t,\eta\right)\,d\eta\right|
  \leq \sqrt{\left|k\right|} \,
  {\| \partial_{k}\tilde{f} \|}_{L^{2}} \lesssim |t|^{-\frac{\alpha}{2}} 2\epsilon . 
\end{align}
In particular, the low-frequency part of $\tilde{f}(t,k)$ goes to zero as $t\rightarrow \infty$,
and we can reduce matters to considering only $|k| \geq |t|^{-3\alpha}$.
Under this latter condition, using \eqref{secasmod} and \eqref{secas10} in Proposition \ref{proasy}
we deduce that
\begin{align}\label{apriori3}
|\tilde{f}\left(t,k\right)| = |w(t,k)| \leq |w(1,k)| + C \epsilon \lesssim \epsilon. 
\end{align}
\eqref{apriori1}-\eqref{apriori3} imply ${\| u \|}_{X_T} \leq \epsilon_1/2$, improving on \eqref{apriori0},
so that a standard continuation argument gives us a global solution which is bounded in the $X_\infty$ norm. 
Eventually, we obtain the following:

\begin{cor} 
Let $u = e^{it(\partial_{xx}+V)}f$ be the global-in-time solution obtained above.
With the same notation of Proposition \ref{proasy}, we have that
$w(t)$ is a Cauchy sequence in time with values in $L^\infty$.
Letting $W_{+\infty} := \lim_{t \rightarrow \infty} w(t)$ we obtain the asymptotics \eqref{mainasy1}.
\end{cor}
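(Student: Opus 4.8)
The plan is to deduce everything from Proposition \ref{proasy} and the a priori bounds \eqref{apriori1}--\eqref{apriori3}. First I would record that, since \eqref{apriori1}--\eqref{apriori3} strictly improve the a priori assumption \eqref{apriori0}, a standard continuation argument already furnishes the global solution $u$ with ${\| u \|}_{X_\infty}\lesssim\epsilon$; consequently the conclusion of Proposition \ref{proasy} holds on every $[1,T]$ with a constant $\lesssim\epsilon$, uniformly in $T$. It then suffices to show that the modified profile $w(t,\cdot)$ from \eqref{secasmod} is Cauchy in $L^\infty_k$ as $t\to\infty$: the existence of $W_{+\infty}\in L^\infty$ is immediate from completeness, and the rate in \eqref{mainasy1} follows by sending the larger time to infinity, since by construction $w(t,k)$ is exactly the quantity inside the absolute value in \eqref{mainasy1}.

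To prove the Cauchy property I would fix $1<t_1<t_2$ and estimate ${\| w(t_1)-w(t_2) \|}_{L^\infty_k}$ by splitting $k$-space into three ranges adapted to the time-dependent cutoff $|k|\sim t^{-3\alpha}$. On $\{|k|\ge t_1^{-3\alpha}\}$ the frequency is admissible for every $t\in[t_1,t_2]$, so Proposition \ref{proasy} applies directly and gives $|w(t_1,k)-w(t_2,k)|\lesssim\epsilon\,t_1^{-\varepsilon/2}$. On $\{|k|\le t_2^{-3\alpha}\}$ I would instead use that $\tilde f(t,0)=0$ together with \eqref{apriori2} and the fact that the prefactor in \eqref{secasmod} has modulus one, so $|w(t_j,k)|=|\tilde f(t_j,k)|\lesssim\epsilon\,t_j^{-\alpha/2}\le\epsilon\,t_1^{-\alpha/2}$ for $j=1,2$, whence the difference is $\lesssim\epsilon\,t_1^{-\alpha/2}$. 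The remaining annulus $\{t_2^{-3\alpha}\le|k|\le t_1^{-3\alpha}\}$ is the only delicate point, and I would handle it by introducing the reference time $t_\ast:=|k|^{-1/(3\alpha)}$, which satisfies $t_1\le t_\ast\le t_2$ and $|k|\sim t_\ast^{-3\alpha}$: Proposition \ref{proasy} on $[t_\ast,t_2]$ controls $|w(t_\ast,k)-w(t_2,k)|\lesssim\epsilon\,t_\ast^{-\varepsilon/2}$, \eqref{apriori2} at time $t_\ast$ controls $|w(t_\ast,k)|=|\tilde f(t_\ast,k)|\lesssim\epsilon\,t_\ast^{-\alpha/2}$, and \eqref{apriori2} at time $t_1$ controls $|w(t_1,k)|\lesssim\epsilon\,t_1^{-\alpha/2}$; combining these with $t_1\le t_\ast$ gives $|w(t_1,k)-w(t_2,k)|\lesssim\epsilon\,t_1^{-\min(\varepsilon,\alpha)/2}$ here as well.

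Collecting the three bounds yields ${\| w(t_1)-w(t_2) \|}_{L^\infty_k}\lesssim\epsilon\,t_1^{-\beta}$ with $\beta:=\tfrac12\min(\varepsilon,\alpha)\in(0,\alpha)$, uniformly in $t_2>t_1$, so $\{w(t)\}_{t\ge1}$ is Cauchy and $W_{+\infty}:=\lim_{t\to\infty}w(t)$ exists in $L^\infty$; letting $t_2\to\infty$ gives $|w(t,k)-W_{+\infty}(k)|\lesssim\epsilon\,t^{-\beta}$ for all $k$, which is \eqref{mainasy1}. The main obstacle is precisely the bookkeeping in the middle paragraph: because the admissible-frequency region in Proposition \ref{proasy} shrinks in time, a direct two-time comparison is unavailable on the annulus $t_2^{-3\alpha}\le|k|\le t_1^{-3\alpha}$, and one must interpolate through the intermediate time $t_\ast$ at which that frequency first becomes admissible, checking that the resulting powers of $t_\ast$ are all dominated by powers of $t_1$. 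Everything else is bookkeeping with the already-established estimates.
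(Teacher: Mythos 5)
Your argument is correct and fills in a genuine gap that the paper leaves implicit. The paper states this corollary as an immediate consequence of the paragraph above it (where it is observed that the low-frequency part of $\tilde f(t,\cdot)$ vanishes as $t\to\infty$ and that ``we can reduce matters to considering only $|k|\geq |t|^{-3\alpha}$''), without spelling out the two-time comparison needed for the Cauchy property. As you note, a naive reduction runs into trouble: for $t_1<t_2$, the admissible set $|k|\geq t_1^{-3\alpha}$ differs from $|k|\geq t_2^{-3\alpha}$, and on the intervening annulus neither the small-$k$ estimate \eqref{apriori2} at time $t_2$ nor Proposition \ref{proasy} on $[t_1,t_2]$ applies directly. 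Your introduction of the intermediate time $t_*:=|k|^{-1/(3\alpha)}$ — so that $k$ sits exactly at the admissibility threshold at $t_*$, $t_1\le t_*\le t_2$, Proposition \ref{proasy} applies on $[t_*,t_2]$, and \eqref{apriori2} applies at both $t_1$ and $t_*$ — is precisely the right way to stitch these estimates together; and since the modified-phase prefactor in \eqref{secasmod} has modulus one, the low-frequency bounds on $\tilde f$ transfer to $w$ as you use. The resulting rate $\beta=\tfrac12\min(\varepsilon,\alpha)$ is consistent with the range $\beta\in(0,\alpha)$ asserted in Theorem \ref{thm:mainu}. In short: same strategy as the paper's implicit argument (split $k$ according to the scale $t^{-3\alpha}$, use \eqref{apriori2} below and Proposition \ref{proasy} above), but with the annulus region handled carefully rather than glossed over.
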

\subsection{The pointwise bound for the full problem}
The pointwise bound for the profile of the full problem can be obtained using the refined decomposition \eqref{eq:refdecomp} with a similar argument for weighted estimates, see Remark \ref{rem:odenormal}.  We omit details here.

\appendix
\section{Elliptic equations and projections}\label{sec:NBS}
In this appendix, for the sake of completeness, we provide some details on the existence of small nonlinear bound states and the comparison of continuous spectrum.
\begin{lem}[Nonlinear bound states]
\label{lem:NLB} There exists $\delta>0$
such that for $z\in\mathbb{C}$ with $\left|z\right|\le\delta$, there
is a solution $Q\left[z\right]\left(x\right)\in H^{2}\bigcap W^{1,1}$
of 
\[
\left(-\partial_{xx}+V\right)Q-\left|Q\right|^{2}Q=EQ
\]
with $E=E\left[\left|z\right|\right]\in\mathbb{R}$ such that
\[
Q\left[z\right]=z\phi+q\left[z\right],\,\,\left(q,\phi_{0}\right)=0.
\]
The pair $\left(q,E\right)$ is unique in the class
\begin{equation}
\left\Vert q\right\Vert _{H^{2}}\leq\delta,\,\,\left|E-\left(-\rho^{2}\right)\right|\leq\delta.\label{eq:class}
\end{equation}
Moreover, $Q\left[ze^{i\alpha}\right]=Q\left[z\right]e^{i\alpha}$,
$Q\left[\left|z\right|\right]$ is a real-valued function, and 
\[
q\left[z\right]=o\left(z^{2}\right),\,\text{in}\,H^{2}\bigcap W^{1,1}
\]
\[
\text{D}Q\left[z\right]=\left(1,i\right)\phi_{0}+o\left(z\right),\,\,\text{D}^{2}Q\left[z\right]=o\left(1\right)\,\text{in}\,\,H^{2}\bigcap W^{1,1},
\]
\[
E\left[z\right]=-\rho^{2}+o\left(z\right),\,\,\text{D}E\left[z\right]=o\left(1\right)
\]
as $z\rightarrow0$.

We finally notice that since our potential decays fast and by \eqref{eq:class},
the nonlinear bound state $Q\left[z\right]$ decays exponentially.
\end{lem}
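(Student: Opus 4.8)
The plan is to construct the nonlinear bound state via the implicit function theorem (or a Lyapunov--Schmidt reduction) around the linear eigenstate $z\phi$, treating the cubic nonlinearity as a small perturbation. First I would set up the decomposition $Q = z\phi + q$ with the gauge normalization $(q,\phi)=0$ and, writing $E = -\rho^2 + \mu$ for a small real parameter $\mu$, rewrite the equation $(-\partial_{xx}+V)Q - |Q|^2 Q = EQ$ as a system: projecting onto $\phi$ gives a scalar equation determining $\mu$ in terms of $z$ and $q$, while projecting onto the continuous spectral subspace $P_c L^2$ gives an equation for $q$. On $P_c L^2$ the operator $(-\partial_{xx}+V) + \rho^2$ is invertible (its spectrum is $[\rho^2,\infty)$ away from $0$ since $\rho>0$), and this inverse maps $W^{1,1}$-type weighted spaces into $H^2\cap W^{1,1}$ because of the exponential decay of the resolvent kernel; this is the linear ingredient that lets the fixed-point argument run in the space $H^2\cap W^{1,1}$ rather than just $H^2$.

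Next I would define the nonlinear map $\Phi(q,\mu,z) := (q,\mu) - L^{-1}[\text{nonlinear terms}]$ and verify its hypotheses for the implicit function theorem: $\Phi$ is smooth (the cubic nonlinearity is smooth as a map on $H^2\cap W^{1,1}$, which is an algebra in one dimension and embeds in $W^{1,\infty}$), $\Phi(0,0,0)=0$, and the differential in $(q,\mu)$ at the origin is the identity. This yields, for $|z|\le\delta$, a unique solution $(q[z],E[z])$ in the class $\|q\|_{H^2}\le\delta$, $|E+\rho^2|\le\delta$, with the quantitative bounds $\|q[z]\|_{H^2\cap W^{1,1}}\lesssim |z|^2$ and $E[z] = -\rho^2 + o(z)$ obtained by bootstrapping the fixed-point equation (the quadratic bound on $q$ comes from the fact that the lowest-order forcing term is $|z\phi|^2 z\phi \sim |z|^3$, actually giving $q = O(|z|^3)$, consistent with $q=o(z^2)$). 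The gauge covariance $Q[ze^{i\alpha}] = Q[z]e^{i\alpha}$ follows from uniqueness: if $Q[z]$ solves the equation then $e^{i\alpha}Q[z]$ solves it with the same $E$ (since the equation is gauge covariant and $E$ is real), and it lies in the same uniqueness class, hence equals $Q[ze^{i\alpha}]$; in particular $Q[|z|]$ is real-valued because $\overline{Q[|z|]}$ is another solution in the class. Differentiating the fixed-point identity (or the defining equation) in $z$ and using smoothness gives $\mathrm{D}Q[z] = (1,i)\phi + o(z)$, $\mathrm{D}^2 Q[z] = o(1)$, and $\mathrm{D}E[z] = o(1)$ as $z\to 0$.

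Finally, the exponential decay of $Q[z]$: since $q[z]\in H^2\cap W^{1,1}$ solves $(-\partial_{xx}+V+\rho^2)q = F$ on $P_c L^2$ with $F = |Q|^2Q + \mu Q$ a function that is itself controlled by $q$ and the exponentially decaying $\phi$, one iterates the pointwise resolvent bound: the Green's function of $(-\partial_{xx}+V+\rho^2)$ on the continuous subspace decays like $e^{-\rho|x-y|}$, and combined with the exponential decay of $V$ and of $\phi$ a standard bootstrap (first getting $q\in L^\infty$ with some decay, then improving the rate up to any exponent below $\rho$) shows $|Q[z](x)|\lesssim e^{-\rho'|x|}$ for $\rho' < \rho$; one may then absorb the loss and conclude decay at rate $\rho$ minus epsilon, which suffices for all later uses.

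The main obstacle, I expect, is not the abstract implicit function theorem step but rather setting up the correct function-space framework so that the $W^{1,1}$ (and exponential-decay) information is preserved: one must check that the projection $P_c$, the inverse of $(-\partial_{xx}+V+\rho^2)$ restricted to $P_c L^2$, and the cubic map all act boundedly on $H^2\cap W^{1,1}$ (or a suitable exponentially weighted space), which requires the mapping properties of the resolvent kernel and the genericity/spectral hypotheses on $V$; once that infrastructure is in place, the existence, uniqueness, covariance, and expansions follow routinely.
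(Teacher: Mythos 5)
The paper does not give its own proof of this lemma: it simply refers the reader to the Appendix of Gustafson--Nakanishi--Tsai \cite{GNT}. Your sketch---Lyapunov--Schmidt decomposition $Q = z\phi + q$, $(q,\phi)=0$, $E=-\rho^2+\mu$, inversion of $(H+\rho^2)|_{P_cL^2}$, implicit function theorem, gauge covariance by uniqueness, exponential decay by a resolvent bootstrap---is precisely the argument used in that reference, so it is essentially the same approach.
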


\begin{proof}
For the completed proof, see the Appendix of Gustafson-Nakanishi-Tsai
\cite{GNT}.
\end{proof}
The important point for the small soliton is that the spectral projections
in our stability analysis stay comparable with projections with respect
to the fixed Schr\"odinger operator:
\[
H=-\partial_{xx}+V
\]
not the one influenced by the soliton. In the modulation process,
we need to project onto the continuous spectrum with respect to the
operator
\[
\mathrm{H}\left(z\right)g=-\partial_{xx}g+Vg-2\left|Q\left[z\right]\right|^{2}g-\left(Q\left[z\right]\right)^{2}\bar{g}.
\]
The following lemma precisely characterizes the difference between
the projections onto the continuous subspaces for $H$ and $\mathrm{H}\left(z\right)$ 
respectively. See Definition \ref{def:Conti} for the definition for the continuous space for $\mathrm{H}(z)$.
\begin{lem}[Difference of continuous spectral projections]
\label{lem:Diff}Suppose
$\delta>0$ is small enough. Then $\forall z\in\mathbb{C}$ and $\left|z\right|<\delta$,
there is a bijection 
\[
\mathcal{K}\left(z\right):\,\mathcal{H}_{c}\left[0\right]\rightarrow\mathcal{H}_{c}\left[z\right]
\]
such that\textup{
\[
\mathcal{K}\left(z\right)\left(P_{c}|_{\mathcal{H}_{c}\left[z\right]}\right)=I,\,\left(P_{c}|_{\mathcal{H}\left[z\right]}\right)\mathcal{K}\left(z\right)=I
\]
and}
\[
\mathcal{K}\left(z\right)-I
\]
\textup{is compact and continuous in $z$ with respect to the operator
norm on any space $Y$ such that 
\[
H^{2}\bigcap W^{1,1}\subset Y\subset H^{-2}+L^{\infty}.
\]
}
\end{lem}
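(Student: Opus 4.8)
The plan is to construct $\mathcal{K}(z)$ explicitly as the natural ``slanted'' projection that identifies the two finite-codimension subspaces, and then verify the listed properties by a perturbation argument in $z$. Recall that $\mathcal{H}_c[0] = P_c L^2$ is the continuous spectral subspace of $H=-\partial_{xx}+V$, i.e.\ the orthogonal complement of $\mathrm{span}_{\mathbb{R}}\{\phi, i\phi\}$ under the reduced inner product $\langle\cdot,\cdot\rangle$, while $\mathcal{H}_c[z]$ is the $\langle i\eta,\text{D}_1Q[z]\rangle = \langle i\eta,\text{D}_2Q[z]\rangle = 0$ subspace. By Lemma \ref{lem:NLB} we have $\text{D}_jQ[z] = (\delta_{j1}+i\delta_{j2})\phi + o(z)$ in $H^2\cap W^{1,1}$, so the two $2$-dimensional complements are $O(|z|)$-close. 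First I would record that for $|z|$ small enough both $L^2 = \mathcal{H}_c[0] \oplus \mathrm{span}_{\mathbb{R}}\{\phi,i\phi\}$ and $L^2 = \mathcal{H}_c[z] \oplus \mathrm{span}_{\mathbb{R}}\{\phi, i\phi\}$ are valid (not necessarily orthogonal) direct sum decompositions: the relevant $2\times 2$ Gram-type matrix $\big(\langle i\phi^{(k)}, \text{D}_jQ[z]\rangle\big)_{j,k}$ (with $\phi^{(1)}=\phi$, $\phi^{(2)}=i\phi$) is a small perturbation of $\big(\langle i\phi^{(k)},\phi^{(j)}\rangle\big) = \begin{psmallmatrix}0&-1\\1&0\end{psmallmatrix}$, hence invertible. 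This gives a bounded projection $\Pi_z$ of $L^2$ onto $\mathcal{H}_c[z]$ along $\mathrm{span}\{\phi,i\phi\}$, and similarly $\Pi_0 = P_c$.

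Next I would simply \emph{define}
\[
\mathcal{K}(z) := \Pi_z\big|_{\mathcal{H}_c[0]} : \mathcal{H}_c[0] \longrightarrow \mathcal{H}_c[z].
\]
To see this is a bijection with the stated one-sided inverses, note that $\Pi_z - I$ maps into $\mathrm{span}\{\phi,i\phi\}$, so for $\eta \in \mathcal{H}_c[z]$ we have $P_c\mathcal{K}(z)\eta = P_c\Pi_z\eta = \eta + P_c(\Pi_z\eta - \eta)$; since $\Pi_z\eta = \eta$ for $\eta$ already in $\mathcal{H}_c[z]$ this is not quite what is wanted directly, so instead I would argue abstractly: $P_c|_{\mathcal{H}_c[z]}$ and $\Pi_z|_{\mathcal{H}_c[0]}$ are mutually inverse because each is the restriction of a projection whose kernel ($\mathrm{span}\{\phi,i\phi\}$) is transverse to both ranges $\mathcal{H}_c[0]$ and $\mathcal{H}_c[z]$; this is the standard fact that two subspaces transverse to a common complement are isomorphic via the compression maps, and the compositions are the identities $I_{\mathcal{H}_c[z]}$ and $I_{\mathcal{H}_c[0]}$. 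Then $\mathcal{K}(z)(P_c|_{\mathcal{H}_c[z]}) = I$ and $(P_c|_{\mathcal{H}_c[z]})\mathcal{K}(z) = I$ follow.

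For the quantitative statements I would write $\mathcal{K}(z) - I = \Pi_z - \Pi_0$ restricted to $\mathcal{H}_c[0]$, and expand: $\Pi_z\eta = \eta - \sum_{j} c_j(z,\eta)\,\phi^{(j)}$ where the coefficients $c_j(z,\eta)$ are obtained by solving the small linear system above and are linear in $\eta$ through pairings of the form $\langle i\eta, \text{D}_jQ[z]\rangle$. Since $\eta\in\mathcal{H}_c[0]$ kills $\langle i\eta,\phi\rangle$ and $\langle i\eta,i\phi\rangle$, only the $o(z)$ remainders $\text{D}_jQ[z] - (\delta_{j1}+i\delta_{j2})\phi$ contribute, giving $|c_j(z,\eta)| \lesssim |z|\,\|\eta\|$ in the relevant weighted/Sobolev norms (using the $H^2\cap W^{1,1}$ decay of $Q[z]$, so the pairings make sense on any $Y$ with $H^2\cap W^{1,1}\subset Y\subset H^{-2}+L^\infty$ by duality). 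Thus $\mathcal{K}(z)-I$ is a finite-rank operator (range in $\mathrm{span}\{\phi,i\phi\}$), hence compact on $Y$, with operator norm $O(|z|)\to 0$; continuity in $z$ follows from continuity of $z\mapsto \text{D}_jQ[z]$ in $H^2\cap W^{1,1}$ (Lemma \ref{lem:NLB}) and of the matrix inversion. The main obstacle is purely bookkeeping: making the duality pairing $\langle i\eta, \text{D}_jQ[z]\rangle$ rigorous uniformly over the whole scale of spaces $Y$ between $H^2\cap W^{1,1}$ and $H^{-2}+L^\infty$, which is where the exponential decay of $\phi$ and of $q[z]$ is used, and checking that the finite-rank corrections are bounded on each such $Y$ — there is no analytic difficulty beyond this, since everything reduces to a $2\times 2$ perturbed-matrix computation once the decay is in hand.
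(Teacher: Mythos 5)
Your proof is correct and takes essentially the same approach as the paper: both define $\mathcal{K}(z)$ as the oblique (slanted) projection onto $\mathcal{H}_c[z]$ along $\mathrm{span}_{\mathbb{R}}\{\phi,i\phi\}$ restricted to $\mathcal{H}_c[0]$ — the paper writes this explicitly as $\mathcal{K}(z)\eta = \eta + (\alpha(z)\eta)\phi$ with $\alpha(z)$ found by solving a $2\times 2$ linear system whose unperturbed matrix is $\begin{pmatrix}0&-1\\1&0\end{pmatrix}$ — and both obtain the one-sided inverse identities, finite-rank/compactness, and $z$-continuity from the same elementary computations using the Lemma \ref{lem:NLB} bounds on $\text{D}_jQ[z]-\phi^{(j)}$. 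Your momentary hesitation in the middle paragraph (trying a direct computation before switching to the abstract transversality argument) is harmless, since the abstract argument you fall back on is valid and the paper's direct verification of the two identities amounts to exactly the same bookkeeping.
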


\begin{proof}
Seee Lemma 2.2 in Gustafson-Nakanishi-Tsai \cite{GNT}.

\end{proof}

\section{Wellposedness}\label{sec:GWP}

Consider the cubic equation 
\begin{equation}
i\partial_{t}u-\partial_{xx}u+Vu\pm\left|u\right|^{2}u=0\label{eq:nonlineq}
\end{equation}
where $V$ is a generic potential. For the $L^{2}$
problem, one can use $L_{t}^{4}L_{x}^{\infty}\bigcap C_{t}L_{x}^{2}$
to do contraction. 

To establish the $H^{1}$ wellposedness, one consider the boundedness
of $\frac{\partial_{x}}{\sqrt{-\partial_{xx}+V+\left\Vert V\right\Vert _{L^{\infty}}+1}}$
in $L^{p}$ for $p=2$. Denote $\mathcal{D}=\sqrt{-\partial_{xx}+V+\left\Vert V\right\Vert _{L^{\infty}}+1}$.
Then $\mathcal{D}$ commutes with $e^{itH}$. Then one should show
that $\mathcal{D}u\in C_{t}L_{x}^{2}$. 
\begin{thm}\label{thm:L2existence}
For any $u_{0}\in L^{2}$, there  exists a global-in-time solution
to 
\[
i\partial_{t}u-\partial_{xx}u+Vu\pm\left|u\right|^{2}u=0
\]
whose $L^{2}$ norm is preserved.
\end{thm}

\begin{proof}
It is easy to check that if $u$ is smooth solution to the above equation,
then the $L^{2}$ norm is preserved since $V$ is a real-valued function. 

We write the solution using Duhamel's formula with respect to the perturbed
Schr\"odinger operator $H$,
\begin{equation}
u=e^{-itH}u_{0}-i\int_{0}^{t}e^{-\left(t-s\right)iH}\left(\left|u\right|^{2}u\left(s\right)\right)\,ds.\label{eq:mildsol}
\end{equation}
Then for given $T\geq0$ to be chose later, we define the following
space
\[
X_{T}:=\left\{ f\left(x,t\right),\,\left\Vert f\right\Vert _{L_{t}^{\infty}\left(\left[0,T\right]:L_{x}^{2}\right)}<\infty,\left\Vert f\right\Vert _{L_{t}^{4}\left(\left[0,T\right]:L_{x}^{\infty}\right)}<\infty\right\} .
\]
Define the map $\mathrm{F}$ from $X_T$ to $X_T$ as
\begin{equation}
    \mathrm{F}(f):= e^{-itH}u_{0}-i\int_{0}^{t}e^{-\left(t-s\right)iH}\left(\left|f\right|^{2}f\left(s\right)\right)\,ds.
\end{equation}
The desired solution will be the fixed point of the map $\mathrm{F}$. 
We will show that if $T$ is small depending on the size the $L^2$ norm of $u_0$, $\mathrm{F}$ is a contraction map from $X_T$ to $X_T$.  

To show \emph{a priori} estimates, putting the $X_{T}$ norm on both sides of \eqref{eq:mildsol}, by the Strichartz estimate
for $e^{-itH}$ and the fact it is unitary in $L^{2}$, one has 
\[
\left\Vert e^{-itH}u_{0}\right\Vert _{X_{T}}\lesssim\left\Vert u_{0}\right\Vert _{L^{2}}.
\]
For the inhomogeneous term, 
\[
\left\Vert i\int_{0}^{t}e^{-\left(t-s\right)iH}\left(\left|u\right|^{2}u\left(s\right)\right)\,ds\right\Vert _{X_{T}}\lesssim\left\Vert \left|u\right|^{2}u\right\Vert _{L_{t}^{1}\left(\left[0,T\right]:L_{x}^{2}\right)}.
\]
Then we notice that 
\begin{align*}
\left\Vert \left|u\right|^{2}u\right\Vert _{L_{t}^{1}\left(\left[0,T\right]:L_{x}^{2}\right)} & \lesssim T^{\frac{1}{2}}\left\Vert u\right\Vert _{L_{t}^{4}\left(\left[0,T\right]:L_{x}^{\infty}\right)}^{2}\sup_{0\leq t\leq T}\left\Vert u\right\Vert _{L_{x}^{2}}\\
 & \lesssim T^{\frac{1}{2}}\left\Vert u\right\Vert _{X_{T}}^{3}.
\end{align*}
Suppose $\left\Vert u_{0}\right\Vert _{L^{2}}=M$ and choose $T=M^{-4}$,
then one can use the  estimate above to show $\mathrm{F}$ is a contraction in a ball in
$X_{T}$ with radius $2CM$ where $C$ is the constant from the $L_{t}^{4}L_{x}^{\infty}$
Strichartz estimates.

Hence by the fixed point theorem, we can construct a unique solution $u\in X_{T}$ which solves
the equation \eqref{eq:mildsol} satisfying 
\[
\left\Vert u\right\Vert _{X_{T}}\leq2CM.
\]
Moreover by similar construction, one can obtain the continuity dependence
with respect to the initial data. In other words, if $u_{0,n}\rightarrow u_{0}$
in $L^{2}$ as $n\rightarrow\infty$ with $u_{0,n}$ smooth then the
associated solutions satisfy 
\[
\left\Vert u_{n}-u\right\Vert _{X_{T}}\rightarrow0,\,n\rightarrow\infty.
\]
Since the $L^{2}$ norm of $u_{n}$ is preserved, i.e., $\left\Vert u_{n}\right\Vert _{L^{2}}=\left\Vert u_{0,n}\right\Vert _{L^{2}}$,
we can obtain that 
\[
\left\Vert u\right\Vert _{L_{t}^{\infty}\left(\left[0,T\right]:L_{x}^{2}\right)}=\left\Vert u_{0}\right\Vert _{L^{2}}.
\]
Then we can repeat the above construction from $T$ to $2T$ and inductively
to obtain a global-in-time solution. 
\end{proof}

We record a local $H^{1}$ well-posedness result and a global
$H^{1}$ wellposedness result for solutions with small $H^{1}$ norms.
\begin{thm}\label{thm:lwpH1}
For any $u_{0}\in H^{1}$, there  exists a local-in-time solution
to 
\[
i\partial_{t}u-\partial_{xx}u+Vu\pm\left|u\right|^{2}u=0.
\]
The solution can be extended as long as the $H^{1}$ norm is finite.
\end{thm}

\begin{proof}
Taking $\mathcal{D}=\sqrt{-\partial_{xx}+V+\left\Vert V\right\Vert _{L^{\infty}}+1}$
and by the general theory of boundedness of wave operators, one has 
\[
\left\Vert \mathcal{D}f\right\Vert _{L^{2}}\sim\left\Vert \partial_{x}f\right\Vert _{L^{2}}+\left\Vert f\right\Vert _{L^{2}}.
\]
Using the weighted norms and local improved decay, we might use the
regular differentiation $\partial_{x}$. But here we try to avoid
the weighted norms. Using $\mathcal{D}$ which commutes with $-\partial_{xx}+V$,
we have 
\[
\partial_{t}\left(\mathcal{D}u\right)-\partial_{xx}\left(\mathcal{D}u\right)+V\left(\mathcal{D}u\right)+\mathcal{D}\left(|u|^{2}u\right)=0.
\]
To estimate the $L^{2}$ norm of $\partial_{x}u$, it suffices to
estimate the $L^{2}$ norm of $\mathcal{D}u$ since by the $L^{2}$
theory, we have nice control of $u$ in the $L^{2}$ space. Using the Duhamel formula, one has 
\[
\mathcal{D}u=e^{-itH}\left(\mathcal{D}u_{0}\right)-i\int_{0}^{t}e^{-\left(t-s\right)iH}\mathcal{D}\left(\left|u\right|^{2}u\left(s\right)\right)\,ds.
\]
To estimate the $L^{2}$ norm, we put the $L^{2}$ norm on both sides
and conclude that
\[
\left\Vert \mathcal{D}u\right\Vert _{L^{2}}\leq\left\Vert \mathcal{D}u_{0}\right\Vert _{L^{2}}+\int_{0}^{t}\left\Vert \mathcal{D}\left(\left|u\right|^{2}u\left(s\right)\right)\right\Vert _{L_{x}^{2}}.
\]
Then we perform similar construction as in Theorem \ref{thm:L2existence} but now we need to estimate
for fixed $T$
\[
\int_{0}^{T}\left\Vert \mathcal{D}\left(\left|u\right|^{2}u\left(s\right)\right)\right\Vert _{L_{x}^{2}}.
\]
Notice that by the boundedness of wave operators,
\[
\int_{0}^{T}\left\Vert \mathcal{D}\left(\left|u\right|^{2}u\left(s\right)\right)\right\Vert _{L_{x}^{2}}\sim\int_{0}^{T}\left\Vert \partial_{x}\left(\left|u\right|^{2}u\left(s\right)\right)\right\Vert _{L_{x}^{2}}+\int_{0}^{T}\left\Vert \left(\left|u\right|^{2}u\left(s\right)\right)\right\Vert _{L_{x}^{2}}.
\]
The last term on the RHS, i.e.,
\[
\int_{0}^{T}\left\Vert \left(\left|u\right|^{2}u\left(s\right)\right)\right\Vert _{L_{x}^{2}}
\]
can be handled by the same way in Theorem \ref{thm:L2existence}. For the first term, we note
that
\begin{align*}
\int_{0}^{T}\left\Vert \partial_{x}\left(\left|u\right|^{2}u\left(s\right)\right)\right\Vert _{L_{x}^{2}} & \lesssim T^{\frac{1}{2}}\left\Vert u\right\Vert _{L_{t}^{4}\left(\left[0,T\right]:L_{x}^{\infty}\right)}^{2}\sup_{0\leq t\leq T}\left\Vert \partial_{x}u\right\Vert _{L_{x}^{2}}\\
 & \lesssim T^{\frac{1}{2}}\left\Vert u\right\Vert _{L_{t}^{4}\left(\left[0,T\right]:L_{x}^{\infty}\right)}^{2}\sup_{0\leq t\leq T}\left\Vert u\right\Vert _{L_{x}^{2}}\\
 & +T^{\frac{1}{2}}\left\Vert u\right\Vert _{L_{t}^{4}\left(\left[0,T\right]:L_{x}^{\infty}\right)}^{2}\sup_{0\leq t\leq T}\left\Vert Ju\right\Vert _{L_{x}^{2}}.
\end{align*}
Actually, by the Sobolev embedding in $1$D,
\[
\int_{0}^{T}\left\Vert \partial_{x}\left(\left|u\right|^{2}u\left(s\right)\right)\right\Vert _{L_{x}^{2}}\lesssim T\left(\sup_{0\leq t\leq T}\left\Vert \partial_{x}u\right\Vert _{L_{x}^{2}}\right)^{3}.
\]
 Then we can do contraction as in Theorem \ref{thm:L2existence} and the procedure can be
extended provided $\left\Vert \partial_{x}u\right\Vert _{L^{2}}$
is finite.
\end{proof}
\begin{thm}\label{thm:smallglobal}
For any $u_{0}\in H^{1}$ such that $\left\Vert u_{0}\right\Vert _{H^{1}}\ll\delta$
where $\delta$ is a small enough positive number. Then there is exists
a global-in-time solution to 
\[
i\partial_{t}u-\partial_{xx}u+Vu\pm\left|u\right|^{2}u=0.
\]
\end{thm}

\begin{proof}
By  Theorem \ref{thm:lwpH1}, we just need to show the $H^{1}$ norm of the
solution remains bounded.

Now we use the conservation of Hamiltonian
\[
H\left(u\right)=\int\left|\partial_{x}u\right|^{2}+V\left|u\right|^{2}\pm\frac{1}{4}\left|u\right|^{4}.
\]
Since $\left\Vert u\right\Vert _{H^{1}}$ is small, then by the Sobolev
embedding,
\[
\int\frac{1}{4}\left|u\right|^{4}\leq\delta^{2}\left\Vert u\right\Vert _{H^{1}}^{2}.
\]
So using the $L^{2}$ conservation, one has
\[
H\left(u\right)=\int\left|\partial_{x}u\right|^{2}+V\left|u\right|^{2}\pm\frac{1}{4}\left|u\right|^{4}\sim\left\Vert u\right\Vert _{H^{1}}^{2}.
\]
Therefore we can always apply the above Theorem \ref{thm:lwpH1} to extend the solution
and obtain a global solution.
\end{proof}
\subsection{Applications of  the global well-posedness theory}
In this part, using the information from the global existence theory above.  Our goal is to illustrate that it suffices to perform \emph{a priori} estimates in this paper.

To illustrate the idea, we still consider the model problem \eqref{eq:modelu}
\begin{equation}
i\partial_{t}u-\partial_{xx}u+Vu=a_{1}\left(x\right)u+a_{2}(x)e^{2i\int_{0}^{t}E\left[z(\sigma)\right]\,d\sigma}\bar{u}+b\left(x\right)u^{2}+\left|u\right|^{2}u
\end{equation}
with $a_1(x)$, $a_2(x)$ and $b(x)$ being smooth functions which decay exponentially such that $|a_j(x)|\lesssim\epsilon ^2 $ and $|b(x)|\lesssim \epsilon $ under the assumption that
\[
\left|\frac{d}{dt}E\left[z\left(t\right)\right]\right|\lesssim\epsilon^{2}t^{-2+2\alpha}
\]
and $\|u\|_{H^1}\lesssim \epsilon.$ The last assumption is ensured by the well-posedness theory in Theorem \ref{thm:smallglobal}.

To obtain the existence and quantitative estimates for the solution to the model problem \eqref{eq:modelu5}, we again use the fixed point theorem applied to
\begin{equation}
    \mathrm{F}(h):= e^{-itH}u_{0}-i\int_{0}^{t}e^{-\left(t-s\right)iH}\left(G(h)\left(s\right)\right)\,ds
\end{equation}
where
\begin{equation}
    G(h):=a_{1}\left(x\right)h+a_{2}(x)e^{2i\int_{0}^{t}E\left[z(\sigma)\right]\,d\sigma}\bar{h}+b\left(x\right)h^{2}+\left|h\right|^{2}h
\end{equation}
But now we will perform \emph{a priori} estimates and contractions in different spaces.

The contraction is achieved in a weaker topology
\[
Y_T:=\left\{ u |\,\left\Vert u\right\Vert _{L_{t}^{\infty}\left(\left[0,T\right]:H_{x}^{1}\right)}<\infty,\left\Vert u\right\Vert _{L_{t}^{4}\left(\left[0,T\right]:L_{x}^{\infty}\right)}<\infty\right\}
\]with the $T$ depending on the size of initial data as in Theorem \ref{thm:L2existence} and Theorem \ref{thm:lwpH1}. To extend the contraction globally, we use the fact that the $H^1$ norm of the solution is globally bounded by construction.  From the contraction in the this topology, there is a unique solution to 
\begin{equation}
    u=\mathrm{F}(u).
\end{equation}
Next, we show that the unique fixed point actually enjoys better estimates.
 
Letting $h(t,x)$ be a Schwartz function in $\mathbb{R}\times \mathbb{R}$, we decompose $h=h_{L}+h_{H}$ where
\begin{equation}
\mathcal{F}_{T}\left(h_{L}\right)\left(\tau\right)=\phi_{1}\left(\tau\right)\mathcal{F}_{T}\left(h\right)\left(\tau\right)
\end{equation}
\begin{equation}
\mathcal{F}_{T}\left(h_{H}\right)\left(\tau\right)=\phi_{2}\left(\tau\right)\mathcal{F}_{T}\left(h\right)\left(\tau\right)
\end{equation}
and
\[
\mathcal{F}_{T}\left(h\right)\left(\tau\right)=\frac{1}{\sqrt{2\pi}}\int e^{-it\tau}h\left(t\right)\,dt
\]
is the  Fourier transform with respect to time.

We define
\begin{equation}
\|h\|_{L}:=\sup_{T\in \mathbb{R}^+}\left\{T^{-\alpha}\left\Vert \left\langle x\right\rangle ^{-2}t\left(-2iE\left[z(t)\right]\partial_{t}+\partial_{t}^{2}\right)\left(h_{L}\right)\right\Vert _{L_{x}^{\infty}L_{t}^{2}\left[0,T\right]}\right\}
\end{equation}
and
\begin{equation}
\|h\|_{H}:=\sum_{j=0}^1\sup_{T\in \mathbb{R}^+}\left\{T^{-\alpha}
\left\Vert \left\langle x\right\rangle ^{-2}\partial_{x}^{j}t\left(u_{H}\right)\right\Vert _{L_{x}^{\infty}L_{t}^{2}\left[0,T\right]}\right\}
\end{equation}

Fixing an $0<\alpha\ll 1$, we do  refined estimates in
\begin{equation}
X:=\left\{ u|\,\left\Vert u\right\Vert _{L_{t}^{\infty}\left(\left[0,\infty\right];H^{1}\right)}+\left\Vert \tilde{f}\right\Vert _{L_{t}^{\infty}\left(\left[0,\infty\right];L_{k}^{\infty}\right)}+\sup_{t\geq0}\left\Vert t^{-\alpha}\tilde{f}(t)\right\Vert _{H_{k}^{1}}+\|u\|_{L}+\|u\|_{H}\right\} 
\end{equation} where $f:=e^{-iHt}u$.
Following the iteration scheme, we write
\begin{equation}
    u_{n+1}=\mathrm{F}(u_n).
\end{equation}
One can show that
\begin{equation}
    \|u_{n+1}\|_X \lesssim \|u_n\|_X\lesssim \|u_0\|_{H^{1,1}}.
\end{equation}
To show the conclusion above, the analysis is equivalent to the \emph{ a priori} estimate and the bootstrap analysis in this paper. 
From \emph{a priori} estimates, the unique solution $u$ actually is in the strong topology $X$, 
\begin{equation}
 \|u\|_X \lesssim \|u_0\|_{H^{1,1}}
\end{equation}
whence,  it enjoys the desired estimates.

%

\end{document}